\newcommand{\Rmnum}[1]{\expandafter\@slowromancap\romannumeral #1@}
\providecommand{\U}[1]{\protect \rule{.1in}{.1in}}
\newtheorem{theorem}{Theorem}[section]
\newtheorem{definition}{Definition}[section]
\newtheorem{lemma}{Lemma}[section]
\newtheorem{proposition}{Proposition}[section]
\newtheorem{remark}{Remark}[section]
\newtheorem{example}{Example}[section]
\def\esssup{\hbox{\rm ess$\,$\rm sup$\,$}}
\def\esssup{\mathop{\rm esssup}}
\def\essinf{\mathop{\rm essinf}}
\def\sup{\mathop{\rm sup}}
\begin{document}

\title{Zero-sum and nonzero-sum differential games without Isaacs condition
\footnote{The work has been supported by the NSF of P.R.China (Nos. 11071144, 11171187, 11222110), Shandong Province (Nos. BS2011SF010, JQ201202), SRF for
 ROCS (SEM), Century Excellent Talents in University (No. NCET-12-0331), 111 Project (No. B12023).}}

\author{
 Juan Li$^{1}$, Wenqiang Li$^{1,}\footnotemark[4]$ \\
{\small$^{1}$ School of Mathematics
and Statistics, Shandong University, Weihai,}\\
{\small Weihai, 264209, P. R. China.}\\
{\small \textit{E-mails: juanli@sdu.edu.cn; wenqianglis2009@gmail.com}}\\
}
\footnotetext[4]{Corresponding author.}
\date{July 17, 2015}
\maketitle

\bigskip
\noindent \textbf{Abstract.}
In this paper we study the zero-sum and nonzero-sum differential games with not assuming Isaacs condition.
Along with the partition $\pi$ of the time interval $[0,T]$, we choose the suitable random non-anticipative strategy with delay
to study our differential games with asymmetric information.
Using Fenchel transformation, we prove that the limits of the upper value function $W^\pi$ and lower value function $V^\pi$
 coincide when the mesh of partition $\pi$ tends to 0. Moreover, we give a characterization for the Nash equilibrium payoff (NEP, for short) of our nonzero-sum differential games without Isaacs condition, then we prove the existence of the NEP of our games. Finally, by considering all the strategies along with all partitions, we give a new characterization for the value of our zero-sum differential game with asymmetric information under some equivalent Isaacs condition.
\bigskip

\noindent \textbf{Keywords.}
Zero-sum and nonzero-sum differential game, asymmetric information, Isaacs condition, Nash equilibrium payoffs, Fenchel transformation.

\section{{\protect \large {Introduction}}}
Zero-sum stochastic differential games have developed rapidly since the pioneering work \cite{FS1989} by Fleming and Souganidis after
they firstly introduced the Isaacs condition and characterized the value of these games as a viscosity solution of some Hamilton-Jacobi-Isaacs equation. Hamad\`ene and Lepeltier in \cite{HL1995} characterized the value of zero-sum stochastic differential game as a solution of some backward stochastic differential equation (BSDE, for short) under the equivalent Isaacs condition. Cardaliaguet in \cite{C2007} studied the zero-sum differential game with asymmetric information under the Isaacs condition and characterized the value of this game as a dual solution of some Hamilton-Jacobi equation.

On the other hand, nonzero-sum differential games with Isaacs condition have been studied by many authors. When playing ``control against control", Hamad\`ene, Lepeltier and  Peng in \cite{HLP1997} characterized the Nash equilibrium point for nonzero-sum stochastic differential games as the solution of some BSDE. Then Buckdahn, Cardaliaguet and Rainer in \cite{BCR2004} gave a new definition of Nash equilibrium payoffs (NEP, for short) and
characterized this NEP, then they obtained the NEP through a approximated method with playing ``strategy against strategy". Rainer in \cite{R2007} compared the two approaches used in \cite{BCR2004} and \cite{HLP1997}  for nonzero-sum stochastic differential game and obtained that the two definitions of Nash equilibrium payoffs coincide when they both exist. Lin in \cite{L2012} generalized the result of \cite{BCR2004} into the case with the nonlinear payoffs.

Recently, some authors (such as, Buckdahn, Li, Quincampoix, etc., in \cite{BLQ20131}, \cite{BLQ20132} and \cite{BQRX2014}) tried to investigate the zero-sum stochastic differential games  without the Isaacs condition. In \cite{BLQ20131} and \cite{BLQ20132}, the authors considered the zero-sum differential games and zero-sum stochastic differential games, respectively, with symmetric information and without Isaacs condition by using a suitable notion of mixed strategies and proved the existence of the value of these games. Buckdahn, Quincampoix, Rainer and Xu in \cite{BQRX2014} generalized the case without Isaacs condition into the zero-sum differential games with asymmetric information. All these papers (\cite{BLQ20131}, \cite{BLQ20132} and \cite{BQRX2014}) used the strategy along the partition $\pi$ of  the time interval $[0,T]$ and showed that the upper and lower value function $W^\pi$, $V^\pi$ defined along the partition $\pi$ coincide as the mesh of $\pi$ tends to 0. Although they have proved the upper and lower value function $W^\pi$, $V^\pi$ converge to the same function $U$ when $|\pi|$ (the mesh of $\pi$) tends to 0, one may want to know the characterization for this value $U$. For this, we try to
give a characterization for the value $U$ in Section 5. Inspired by the above papers without Isaacs condition, we want to study the NEP for the non-zero sum differential games without Isaacs condition in Section 4.

More details, we consider the following  dynamics:
\begin{equation}\label{equ 1.1}
X_s=x+\int_t^sf(X_r,u_r,v_r)dr,\ s\in[t,T],
\end{equation}
where $u$ and $v$ are stochastic processes taking value in compact $U$ and $V$, respectively, $f: \mathbb{R}^n\times U\times V
\mapsto \mathbb{R}^n$ is bounded, Lipschitz in $x$, uniformly in $(u,v)$. For any fixed partition $\pi$ of time interval $[0,T]$,
we give a generalized defintion of non-anticipative strategy with delay along this partition $\pi$ (see, Def. \ref{def 2.2}) which has the property that: for any partitions
$\pi_1$ and $\pi_2$ with``$\pi_1\subset\pi_2$'' (the partition points of $\pi_2$ contain all of the partition points of $\pi_1$), it holds the strategy set $\mathcal{A}^{\pi_1}(t,T)\subset\mathcal{A}^{\pi_2}(t,T)$ for Player \Rmnum {1}; similarly, we have that for Player \Rmnum {2}. Along with the partition $\pi$, we define the upper and lower value functions $W^\pi(t,x,p,q)$ and $V^\pi(t,x,p,q)$ (more details see Section 2) for our zero-sum differential game with asymmetric.

In Section 3, we firstly show that the upper and lower value function $W^\pi$ and $V^\pi$ defined by the strategy from $\mathcal{A}^\pi(t,T)$ and $\mathcal{B}^\pi(t,T)$
are just the upper and lower value function $W_1^\pi$ and $V_1^\pi$ defined by the strategy from $\mathcal{A}_1^\pi(t,T)$ and $\mathcal{B}_1^\pi(t,T)$ which is the subset of $\mathcal{A}^\pi(t,T)$ and $\mathcal{B}^\pi(t,T)$, respectively. Then, with the help of Fenchel transform, we prove a sub-dynamic programming principle (sub-DPP, for short) for the conjugate functions of $W^\pi$ and $V^\pi$, then  we show that $W^\pi$ and $V^\pi$ converge to the same function $U$ as the mesh of $\pi$ tends to 0 without Isaacs condition. Moreover, this value $U$ can be characterized as the unique dual viscosity solution of the following Hamilton-Jacobi-Isaacs equation
\begin{equation}\label{equ 1.2}
\left\{
\begin{array}{ll}
\frac{\partial V}{\partial t}(t,x)+H(x,D{V}(t,x))=0,&(t,x)\in[0,T]\times\mathbb{R}^n,\\
V(T,x)=\sum_{i,j}p_iq_jg_{ij}(x),& (p,q)\in\Delta(I)\times\Delta(J),
\end{array}
\right.
\end{equation}
where $H(x,\xi)=\inf_{\mu\in\mathcal{P}(U)}\sup_{\nu\in\mathcal{P}(V)}
\big(\int_{U\times V}f(x,u,v)\mu(du)\nu(dv)\cdot\xi\big).$

In Section 4, we mainly consider  the nonzero-sum differential game with symmetric information (i.e., $I=J=1$) and without Isaac condition.
Inspired by the definition of NEP used in \cite{BCR2004}, we introduce a new definition of NEP for our nonzero-sum differential games. Using the value function $U$ that we find in Section 3, we show a characterization for our NEP.
Furthermore, we prove the existence of the NEP for our nonzero-sum differential games without Isaacs condition using this characterization.

In Section 5, we give a characterization for the value function $U$ (found in Section 3) of our zero-sum differential games with asymmetric information. With
the property that $\mathcal{A}^{\pi_1}(t,T)\subset\mathcal{A}^{\pi_2}(t,T)$ if ``$\pi_1\subset\pi_2$", we can consider the strategies in $\mathcal{A}(t,T)$ where $\mathcal{A}(t,T)$ is the union of the $\mathcal{A}^{\pi}(t,T)$ with all the partitions $\pi$ for Player \Rmnum {1}, similarly that for Player \Rmnum {2}. Then we show that the upper and lower value function $W(t,x,p,q)$, $V(t,x,p,q)$ defined by the strategies from $\mathcal{A}(t,T)$ and $\mathcal{B}(t,T)$ coincide with the value function $U(t,x,p,q)$ which is the unique dual viscosity solution of Hamilton-Jacobi-Isaac equation (\ref{equ 1.2}) under some equivalent Isaacs condition. Therefore, we also provide a new numerical method for calculating the value of the zero-sum differential game with asymmetric information. At last, we give an example to illustrate that the equivalent Isaacs condition is necessary.

Our paper is organized as follows. In Section 2 we give some introduction about the dynamic and the strategies for our games. Section 3 is devoted to proving the existence of the value of our zero-sum differential game with asymmetric information and without Isaacs condition. In Section 4 we prove the existence of the Nash equilibrium payoffs of our nonzero-sum differential game with symmetric information and without Isaacs condition. Finally, we give a characterization for the value of the zero-sum differential games under some equivalent Isaacs condition in Section 5.

\section{ {\protect \large Preliminaries}}

Let $(\Omega, \mathcal{F}, P)$ be the canonical Wiener space, that is, $\Omega$ is the set of
continuous functions from $[0,T]$ to $\mathbb{R}^2$, $\mathcal{F}$ is the completed
$\sigma$-algebra on $\Omega$, $P$ is the Wiener measure. We define the canonical process
$B_t(\omega)=(B_t^1(\omega),B_t^2(\omega))=(\omega_1(t),\omega_2(t))$, $t\in[0,T]$, $\omega=(\omega_1,\omega_2)\in\Omega$.
 Then $B$ is a 2-dimensional Brownian motion on $(\Omega, \mathcal{F}, P)$ and $B^1$ is independent of $B^2$. We denote by
 $\{\mathcal{F}_{t,s}, s\geq t\}$ the filtration generated by the Brownian motion $B$, where
 $\mathcal{F}_{t,s}=\sigma\{B_r-B_t, r\in[t,s]\}\vee\mathcal{N}$, $\mathcal{N}$ is the set of null-set of $P$.

For any given partition $\pi=\{0=t_0<t_1<\cdots<t_N=T\}$ of the interval $[0,T]$,
 we define random variables $\zeta_{i,j}^\pi=\Phi_{0,1}(\frac{
{B_{t_j}^i-B_{t_{j-1}}^i}}{\sqrt{t_j-t_{j-1}}})$, $i=1,2,$ $j=1,2,\ldots,N$, where $\Phi_{0,1}(x)=\frac{1}{\sqrt{2\pi}}
\int_{-\infty}^x \exp\{-\frac{y^2}{2}\}dy$, $x\in\mathbb{R}$.
 Obviously, $\{\zeta_{i,j}^\pi\}_{
 1\leq j\leq N,\\
 }$ $i=1,2,$ is a family of independent random variables with uniform distribution on $[0,1]$.
Let $U$ and $V$ be the compact metric spaces which are the control state spaces used
by Player \Rmnum {1} and \Rmnum {2}, respectively. Let $\mathcal{P}(U)$ and $\mathcal{P}(V)$ be the space of
all probability measures over $U$ and $V$, respectively. From Skorohod's Representation Theorem, $\mathcal{P}(U)$ (resp.,
$\mathcal{P}(V)$) coincides with the set of the distributions of all $U$-valued (resp., $V$-valued) random variables.

Now we introduce the admissible controls for both players.

For any $t\in[0,T]$, the $U$-valued and Lebesgue measurable functions $(u_s)_{s\in[t,T]}$ form the set of
admissible controls for Player \Rmnum {1}, the $V$-valued and Lebesgue measurable functions
$(v_s)_{s\in[t,T]}$  that
 for Player \Rmnum {2}.
We denote by $\mathcal{U}_{t,T}$ the set of admissible controls $(u_s)_{s\in[t,T]}$ for Player \Rmnum {1} and by
$\mathcal{V}_{t,T}$ the set of admissible controls $(v_s)_{s\in[t,T]}$ for Player \Rmnum {2}.

For any given $t\in[0,T]$, $x\in\mathbb{R}^n$, we consider the following
ordinary differential equation
\begin{equation}\label{equ 2.1}
X_s=x+\int_t^sf(X_r,u_r,v_r)dr,\ s\in[t,T],
\end{equation}
where $u\in\mathcal{U}_{t,T}$ and $v\in\mathcal{V}_{t,T}$,
and the coefficient $f:\mathbb{R}^n\times
U\times V\mapsto \mathbb{R}^n$ is supposed to be bounded, continuous with respect to $(u,v)$ and Lipschitz continuous in $x$, uniformly with
respect to $u$ and $v$. Therefore, equation (\ref{equ 2.1}) has a unique solution and we denote it by $X^{t,x,u,v}$. From standard estimates we obtain that there exists a constant $C>0$ such that, for all $(t,x), (t',x')\in
[0,T]\times\mathbb{R}^n$, for all $s\in[t\vee t',T]$,
\begin{equation}\label{equ 2.1.1}
\begin{split}
&(1)\ |X_s^{t,x,u,v}-x|\leq C(s-t),\\
&(2)\ |X_s^{t,x,u,v}-X_s^{t',x',u,v}|\leq C(|t-t'|+|x-x'|).
\end{split}
\end{equation}

The cost functionals of the zero-sum differential games are defined by the $I\times J$ functionals
$g_{ij}(X_T^{t,x,u,v})$, $i=1,2,\ldots,I$, $j=1,2,\ldots,J$,
where the mappings $g_{ij}:\mathbb{R}^n\mapsto\mathbb{R}$ are Lipschitz continuous and bounded. Player \Rmnum{1} wants to minimize $g_{ij}(X_T^{t,x,u,v})$,
i.e., it is a cost functional for him/her, while
Player \Rmnum{2} wants to maximize $g_{ij}(X_T^{t,x,u,v})$ a payoff for him/her. The cost functionals of nonzero-sum differential
games are defined in Section 5.

The rules for our zero-sum differential game with asymmetric information are as follows:\\
$(1)$ At the beginning of the game, a pair $(i,j)$ is chosen randomly with the probability $(p,q)\in\Delta(I)\times\Delta(J)$,
where $\Delta(I)$ is the set of probabilities $p=(p_i)_{i=1,\ldots,I}$ on $\{1,\ldots,I\}$ and $\sum_{i=1}^Ip_i=1$; $\Delta(J)$ is defined similarly.
Both players know the probability $(p,q)$.\\
$(2)$ The choice of $i$ is only communicated with Player \Rmnum{1}, while the choice of $j$ is only communicated with Player \Rmnum{2}. But both players observe their opponent's controls.

Generally speaking, differential games with ``control against control" don't admit a dynamic programming principle and the value does, in general, not exist.
Thus, we study the game of the type ``nonanticipative strategy with delay against nonanticipative strategy with delay".
Considering the asymmetry of the information, the players want to hide a part of their private information. For this they randomize their
strategies, and the kind of randomization we choose is the key to obtain a value for our zero-sum game in a framework without Isaacs condition.

Let we consider an arbitrarily given partition $\pi=\{0=t_0<t_1<\ldots<t_N=T\}$ and assume $t\in[t_{k-1},t_k)$. We
give the definition of random non-anticipative strategies with delay for a game over the time interval $[t,T]$.
\begin{definition}\label{def 2.2}
A random  non-anticipative strategy with delay (NAD, for short) along
the partition $\pi$ for Player \Rmnum {1} is a mapping $\alpha: \Omega\times[t,T]\times\mathcal{V}_{t,T}\mapsto\mathcal{U}_{t,T}$ of the
form
$$\alpha(\omega,v)(s)=\alpha_k(\omega,\zeta_{1,k-1}^\pi(\omega),v)(s)I_{[t,t_k)}(s)+
\sum_{l=k+1}^N\alpha_l(\omega,(\zeta_{k-1}^\pi,\ldots,\zeta_{l-2}^\pi,\zeta_{1,l-1}^\pi)(\omega),v)(s)I_
{[t_{l-1},t_l)}(s),$$
where $\zeta_{l}^\pi=(\zeta_{1,l}^\pi,\zeta_{2,l}^\pi)$, $k-1\leq l\leq N-2$, and for $k\leq l\leq N$, the  $\alpha_l:$
 $\Omega\times\mathbb{R}^{2(l-k)+1}\times[t\vee t_{l-1},t_l]\times\mathcal{V}_{t,T}\mapsto\mathcal{U}_{t,T}$, are $\mathcal{F}_{0,t_{k-2}}\otimes\mathcal{B}(\mathbb{R}^{2(l-k)+1})\otimes\mathcal{B}([t\vee t_{l-1},t_l])\otimes\mathcal{B}(\mathcal{V}_{t,T})$-measurable
functions satisfying:
For all $v,v'\in\mathcal{V}_{t,T}$, it holds that, whenever $v=v'$ a.e. on $[t,t_{l-1}]$, we have for all $\omega\in\Omega$,
for all $x\in\mathbb{R}^{2(l-k)+1}$,
$\alpha_l(\omega,x,v)(s)=
\alpha_l(\omega,x,v')(s)$, a.e. on $[t\vee t_{l-1},t_l],\  k+1\leq l\leq N$.\\
\indent Similarly, a random NAD strategy along
the partition $\pi$ for Player \Rmnum {2} is a mapping $\beta: \Omega\times[t,T]\times\mathcal{U}_{t,T}\mapsto\mathcal{V}_{t,T}$ of the
form
$$\beta(\omega,u)(s)=\beta_k(\omega,\zeta_{2,k-1}^\pi(\omega),v)(s)I_{[t,t_k)}(s)+
\sum_{l=k+1}^N\beta_l(\omega,(\zeta_{k-1}^\pi,\ldots,\zeta_{l-2}^\pi,\zeta_{2,l-1}^\pi)(\omega),v)(s)I_
{[t_{l-1},t_l)}(s),$$
where $\zeta_{l}^\pi=(\zeta_{1,l}^\pi,\zeta_{2,l}^\pi)$, $k-1\leq l\leq N-2$, and for $k\leq l\leq N$, the  $\beta_l:$
 $\Omega\times\mathbb{R}^{2(l-k)+1}\times[t\vee t_{l-1},t_l]\times\mathcal{U}_{t,T}\mapsto\mathcal{V}_{t,T}$, are $\mathcal{F}_{0,t_{k-2}}\otimes\mathcal{B}(\mathbb{R}^{2(l-k)+1})\otimes\mathcal{B}([t\vee t_{l-1},t_l])\otimes\mathcal{B}(\mathcal{U}_{t,T})$-measurable
functions satisfying:
For all $u,u'\in\mathcal{U}_{t,T}$, it holds that, whenever $u=u'$ a.e. on $[t,t_{l-1}]$, we have for all $\omega\in\Omega$,
for all $x\in\mathbb{R}^{2(l-k)+1}$,
$\beta_l(\omega,x,u)(s)=
\beta_l(\omega,x,u')(s)$, a.e. on $[t\vee t_{l-1},t_l],\  k+1\leq l\leq N$.
\end{definition}

The set of all such random NAD strategies for Player \Rmnum {1} along the partition $\pi$ is denoted by $\mathcal{A}^{\pi}(t,T)$, and similarly
$\mathcal{B}^{\pi}(t,T)$ is that for Player \Rmnum {2}, $\mathcal{A}_0^\pi(t,T)$ and $\mathcal{B}_0^\pi(t,T)$ are the sets of pure (i.e. deterministic)
 strategies for player \Rmnum {1} and \Rmnum {2}. Then, we know for any partitions $\pi,\pi'$ of interval $[t,T]$ with $\pi\subset\pi'$,
it holds $\mathcal{A}^{\pi}(t,T)\subset\mathcal{A}^{\pi'}(t,T)$. Moreover we define
\begin{equation}\label{equ 2.2}
\mathcal{A}(t,T):=\bigcup_{\pi}\mathcal{A}^{\pi}(t,T),\ \  \mathcal{B}(t,T):=\bigcup_{\pi}\mathcal{B}^{\pi}(t,T).
\end{equation}
\begin{definition}\label{def 2.3}
We say that $\alpha\in\mathcal{A}_1^\pi(t,T)$, if the mapping $\alpha: \Omega\times[t,T]\times\mathcal{V}_{t,T}\mapsto\mathcal{U}_{t,T}$ has the
form
$$\alpha(\omega,v)(s)=\alpha_k(\zeta_{1,k-1}^\pi(\omega),v)(s)I_{[t,t_k)}(s)+
\sum_{l=k+1}^N\alpha_l((\zeta_{k-1}^\pi,\ldots,\zeta_{l-2}^\pi,\zeta_{1,l-1}^\pi)(\omega),v)(s)I_
{[t_{l-1},t_l)}(s),$$
where $\zeta_{l}^\pi=(\zeta_{1,l}^\pi,\zeta_{2,l}^\pi)$, $k-1\leq l\leq N-2$, and for $k\leq l\leq N$, the  $\alpha_l:$
 $\mathbb{R}^{2(l-k)+1}\times[t\vee t_{l-1},t_l]\times\mathcal{V}_{t,T}\mapsto\mathcal{U}_{t,T}$, are $\mathcal{B}(\mathbb{R}^{2(l-k)+1})\otimes\mathcal{B}([t\vee t_{l-1},t_l])\otimes\mathcal{B}(\mathcal{V}_{t,T})$-measurable
functions satisfying:
For all $v,v'\in\mathcal{V}_{t,T}$, it holds that, whenever $v=v'$ a.e. on $[t,t_{l-1}]$, we have
for all $x\in\mathbb{R}^{2(l-k)+1}$,
$\alpha_l(x,v)(s)=
\alpha_l(x,v')(s)$, a.e. on $[t\vee t_{l-1},t_l],\  k+1\leq l\leq N$.
Similarly, we have  $\beta\in\mathcal{B}_1^\pi(t,T)$.
\end{definition}
Obviously, from the Definition \ref{def 2.2} and \ref{def 2.3} we know $\mathcal{A}_0^\pi(t,T)\subset\mathcal{A}_1^\pi(t,T)
\subset\mathcal{A}^\pi(t,T)$, $\mathcal{B}_0^\pi(t,T)\subset\mathcal{B}_1^\pi(t,T)
\subset\mathcal{B}^\pi(t,T)$.

From the definition of a NAD strategy, we get the following lemma which is crucial throughout the paper. Such
 a result was established the first time by Buckdahn, Cardaliaguet and Rainer \cite{BCR2004}, Lemma 2.4.
\begin{lemma}\label{le 2.1}
For any $\alpha\in\mathcal{A}(t,T)$ and $\beta\in\mathcal{B}(t,T)$, there exists a unique measurable mapping $\Omega\ni\omega
\mapsto (u_\omega,v_\omega)\in\mathcal{U}_{t,T}\times\mathcal{V}_{t,T}$, such that, for all $\omega\in\Omega$,
$$\alpha(\omega,v_\omega)=u_\omega,\ \beta(\omega,u_\omega)=v_\omega,\ a.e.\ \text{on}\ [t,T]. $$
\end{lemma}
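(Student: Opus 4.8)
The plan is to reduce the problem to a single partition and then to resolve the two coupled fixed-point equations block by block along that partition, using the delay structure of Definition \ref{def 2.2} to remove any genuine coupling inside a single block.

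First I would pass to a common partition. Since $\alpha\in\mathcal{A}(t,T)=\bigcup_{\pi}\mathcal{A}^{\pi}(t,T)$ and $\beta\in\mathcal{B}(t,T)=\bigcup_{\pi}\mathcal{B}^{\pi}(t,T)$, there are partitions $\pi_1,\pi_2$ of $[t,T]$ with $\alpha\in\mathcal{A}^{\pi_1}(t,T)$ and $\beta\in\mathcal{B}^{\pi_2}(t,T)$. Taking $\pi:=\pi_1\cup\pi_2$ and invoking the inclusion $\mathcal{A}^{\pi'}(t,T)\subset\mathcal{A}^{\pi}(t,T)$ (and its analogue for Player \Rmnum{2}) valid whenever $\pi'\subset\pi$, I may assume that $\alpha\in\mathcal{A}^{\pi}(t,T)$ and $\beta\in\mathcal{B}^{\pi}(t,T)$ for one and the same partition $\pi=\{0=t_0<\cdots<t_N=T\}$, with $t\in[t_{k-1},t_k)$.

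Next, fixing $\omega\in\Omega$, I would build $(u_\omega,v_\omega)$ by a finite induction over the blocks $[t\vee t_{l-1},t_l)$, $l=k,\ldots,N$. On the base block $[t,t_k)$ the delay requirement, combined with $t\ge t_{k-1}$ (so that the relevant trace of $v$ lives on the degenerate interval $[t,t_{k-1}]$), makes $\alpha(\omega,v)|_{[t,t_k)}$ independent of $v$, and symmetrically $\beta(\omega,u)|_{[t,t_k)}$ independent of $u$; I thus set $u_\omega,v_\omega$ on $[t,t_k)$ equal to these predetermined controls. For the inductive step, assume $(u_\omega,v_\omega)$ is already defined on $[t,t_{l-1})$. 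By the delay property, $\alpha_l(\omega,\cdot,v)$ depends on $v$ only through $v|_{[t,t_{l-1}]}$ and $\beta_l(\omega,\cdot,u)$ only through $u|_{[t,t_{l-1}]}$, both of which are already known. Hence the prescriptions
\[
u_\omega\big|_{[t_{l-1},t_l)}:=\alpha_l\big(\omega,(\zeta_{k-1}^{\pi},\ldots,\zeta_{l-2}^{\pi},\zeta_{1,l-1}^{\pi})(\omega),v_\omega\big)\big|_{[t_{l-1},t_l)},
\]
\[
v_\omega\big|_{[t_{l-1},t_l)}:=\beta_l\big(\omega,(\zeta_{k-1}^{\pi},\ldots,\zeta_{l-2}^{\pi},\zeta_{2,l-1}^{\pi})(\omega),u_\omega\big)\big|_{[t_{l-1},t_l)}
\]
are unambiguous, since their right-hand sides only read the parts of $u_\omega,v_\omega$ constructed in previous steps. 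After $N-k+1$ steps this yields $(u_\omega,v_\omega)\in\mathcal{U}_{t,T}\times\mathcal{V}_{t,T}$ with $\alpha(\omega,v_\omega)=u_\omega$ and $\beta(\omega,u_\omega)=v_\omega$ a.e.\ on $[t,T]$.

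Uniqueness would follow from the same induction: any other solution must agree with $(u_\omega,v_\omega)$ on $[t,t_k)$ because neither strategy sees the opponent's control there, and if two solutions agree a.e.\ on $[t,t_{l-1})$ then the delay dependence forces them to agree a.e.\ on $[t_{l-1},t_l)$ as well. For the measurability of $\omega\mapsto(u_\omega,v_\omega)$, each block is produced by composing the jointly measurable maps $\alpha_l,\beta_l$ with the measurable randomizations $\zeta_{i,j}^{\pi}$ and with the controls constructed on the earlier blocks, which are measurable in $\omega$ by the inductive hypothesis; finiteness of the number of blocks then gives a measurable map into $\mathcal{U}_{t,T}\times\mathcal{V}_{t,T}$. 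I expect the main obstacle to be the careful verification of the delay-induced decoupling inside each block — that the value of $u_\omega$ on $[t_{l-1},t_l)$ never depends on $v_\omega$ on that same block, and vice versa, so that no true simultaneous fixed point has to be solved — together with the measurability bookkeeping needed to assemble the fiberwise construction into a single measurable map on $\Omega$ with values in the control spaces.
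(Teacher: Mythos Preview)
Your proposal is correct and follows essentially the same approach as the paper: reduce to a common refined partition $\pi=\pi_1\cup\pi_2$, then exploit the delay structure to construct $(u_\omega,v_\omega)$ inductively over the subintervals of $\pi$, with uniqueness and measurability coming from the same induction. The only cosmetic difference is that the paper phrases the induction as a Picard-type iteration $u^j_\omega=\alpha(\omega,v^{j-1}_\omega)$, $v^j_\omega=\beta(\omega,u^{j-1}_\omega)$ that stabilizes block by block, and it spells out explicitly why $\zeta^{\pi_1}_{i,j}$ is a measurable function of the $\zeta^{\pi}$-variables (justifying the inclusion $\mathcal{A}^{\pi_1}(t,T)\subset\mathcal{A}^{\pi}(t,T)$ that you invoke).
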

A proof of Lemma \ref{le 2.1} for a similar context can be found in \cite{BQRX2014}. However, since
our framework is slightly more general, for the reader's convenience we prefer to give it here.
\begin{proof}
For any $\alpha\in\mathcal{A}(t,T)$, from (\ref{equ 2.2}) we know there exist a partition $\pi_1$ of interval $[0,T]$,
such that $\alpha\in\mathcal{A}^{\pi_1}(t,T)$. Similarly, there exist a partition $\pi_2$ of
 interval $[0,T]$, such that $\beta\in\mathcal{B}^{\pi_2}(t,T)$. Define $\pi=\pi_1\cup\pi_2$ which
 combines $\pi_1$ and $\pi_2$, and notice that then $\alpha\in\mathcal{A}^\pi(t,T),$ and  $\beta\in\mathcal{B}^\pi(t,T)$.

 Indeed, if, for example, $\pi=\{0=t_0<t_1<\cdots<t_N=T\}$ and $t_{l-1},t_{l+1}\in\pi_1$, but $t_l\notin\pi_1$, then for
 $[t_{l-1},t_{l+1}]$ as $j$-th subinterval of the partition $\pi_1$,
 $\zeta_{i,j}^{\pi_1}=\Phi_{0,1}(\frac{B_{t_{l+1}}^i-B_{t_{l-1}}^i}{\sqrt{t_{l+1}-t_{l-1}}})=\Phi_{0,1}\Big(\frac{1}{\sqrt{t_{l+1}-t_{l-1}}}
 \big(\sqrt{t_{l}-t_{l-1}}\Phi_{0,1}^{-1}(\Phi_{0,1}(\frac{B_{t_{l}}^i-B_{t_{l-1}}^i}{\sqrt{t_{l}-t_{l-1}}}))+
 \sqrt{t_{l+1}-t_{l}}\Phi_{0,1}^{-1}(\Phi_{0,1}(\frac{B_{t_{l+1}}^i-B_{t_{l}}^i}{\sqrt{t_{l+1}-t_{l}}})\big)\Big),$
 i.e., $\zeta_{i,j}^{\pi_1}$ is a measurable function of $(\zeta^\pi_{i,l},\zeta^\pi_{i,l+1})$, $i=1,2.$ The above situation
 can be extended  into an obvious manner to the general case $\pi_1\subset\pi$ and  allows to show that $\mathcal{A}^{\pi_1}(t,T)\subset\mathcal{A}^\pi(t,T)$.

 Assume $\pi=\{0=t_0<t_1<\ldots<t_N=T\}$, and $t\in[t_{k-1},t_k)$, $0\leq k\leq N$. For each $\omega\in\Omega$, $\alpha(\omega,v)$ (respectively, $\beta(\omega,u)$)
 restricted to $[t,t_k]$ depends only on $v\in\mathcal{V}_{t,T}$ (respectively, $u\in\mathcal{U}_{t,T}$) restricted to $[t,t_{k-1}]$. Since $[t,t_{k-1}]$ is empty or a singleton, from the property of delay we know
$\alpha(\omega,v), \beta(\omega,u)$ restricted to $[t,t_{k}]$ do not depend on $v$ and $u$. Then we can define $u_\omega^1=\alpha(\omega,v^0)$, $v_\omega^1=\beta(\omega,u^0)$, for any $v^0\in\mathcal{V}_{t,T}$ and $u^0\in\mathcal{U}_{t,T}$, and
 the mapping $\Omega\ni\omega\mapsto (u_\omega^1,v_\omega^1)\in\mathcal{U}_{t,T}\times\mathcal{V}_{t,T}$ is measurable. Then we have
$$\alpha(\omega,v^1)=u^1,\ \beta(\omega,u^1)=v^1\ a.e.\ \text{on}\ [t,t_k].$$
Now we assume that for $j\geq 2$, $\alpha(\omega,v_\omega^{j-1})=u_\omega^{j-1},\ \beta(\omega,u_\omega^{j-1})=v_\omega^{j-1}$, a.e. on $[t,t_{j+k-2}]$,
and $\omega\mapsto(u_\omega^{j-1},v_\omega^{j-1})$ is measurable.\\
Then we define $u_\omega^j=\alpha(\omega,v_\omega^{j-1})$, $v_\omega^j=\beta(\omega,u_\omega^{j-1})$. Obviously, $u_\omega^j=u_\omega^{j-1},\ v_\omega^j=v_\omega^{j-1}$, a.e. on $[t,t_{j+k-2}]$. From the property of delay, we have $\alpha(\omega,v_\omega^j)=\alpha(\omega,v_\omega^{j-1})=u_\omega^j,\ \beta(\omega,u_\omega^j)=\beta(\omega,u_\omega^{j-1})=v_\omega^j$, a.e. on $[t,t_{j+k-1}]$, and $\omega\mapsto(u_\omega^{j},v_\omega^{j})$ is measurable.
Consequently, we get the existence of the measurable mapping $\Omega\ni\omega
\mapsto (u_\omega,v_\omega)\in\mathcal{U}_{t,T}\times\mathcal{V}_{t,T}$ satisfying this lemma and the uniqueness is obvious from the above construction.
\end{proof}
\begin{remark}\label{re 2.1}
This lemma implies that, for any partition $\pi$ of $[0,T]$:\\
For any $\alpha\in\mathcal{A}^\pi(t,T)$, $\beta\in\mathcal{B}^\pi(t,T)$, but also
 for any $\alpha\in\mathcal{A}(t,T)$, $\beta\in\mathcal{B}^\pi(t,T)$, and
 for any $\alpha\in\mathcal{A}^\pi(t,T)$, $\beta\in\mathcal{B}(t,T)$, there exists the
unique mapping $\Omega\ni\omega\mapsto(u_\omega,v_\omega)\in\mathcal{U}_{t,T}\times\mathcal{V}_{t,T}$, such that
 for all $\omega\in\Omega$,
$$\alpha(\omega,v_\omega)=u_\omega,\ \beta(\omega,u_\omega)=v_\omega,\ a.e.\ \text{on}\ [t,T]. $$
\end{remark}
\begin{remark}\label{re 2.2}
The control processes $u$ and $v$ along the partition $\pi$ satisfying Lemma \ref{le 2.1} have the following form:
$$
\left\{
\begin{array}{c}
u(\omega,s)=u^k(\omega,\zeta^\pi_{1,k-1},s)\cdot I_{[t,t_k)}(s)+\sum\limits_{l=k+1}^N u^l(\omega,\zeta^\pi_{k-1},\ldots,\zeta^\pi_{l-2},\zeta^\pi_{1,l-1},s)\cdot I_{[t_{l-1},t_l)}(s),\\
v(\omega,s)=v^k(\omega,\zeta^\pi_{2,k-1},s)\cdot I_{[t,t_k)}(s)+\sum\limits_{l=k+1}^N v^l(\omega,\zeta^\pi_{k-1},\ldots,\zeta^\pi_{l-2},\zeta^\pi_{2,l-1},s)\cdot I_{[t_{l-1},t_l)}(s),
\end{array}
\right.
$$
where $u^l,v^l$  are $\mathcal{F}_{0,t_{k-2}}\otimes\mathcal{B}(\mathbb{R}^{2(l-k)+1})\otimes\mathcal{B}([t\vee t_{l-1},t_l])$-measurable
functions, $k\leq l\leq N$. We denoted by $\mathcal{U}_{t,T}^{\pi}$ and $\mathcal{V}_{t,T}^{\pi}$ the set of the processes $u$ and $v$, respectively, which have the
above forms. The corresponding controls set constructed by $\mathcal{A}_1^{\pi}(t,T)$ and $\mathcal{B}_1^{\pi}(t,T)$ we denoted by $\mathcal{U}_{t,T}^{\pi,1}$ and $\mathcal{V}_{t,T}^{\pi,1}$. The only difference between $\mathcal{U}_{t,T}^{\pi}$ and $\mathcal{U}_{t,T}^{\pi,1}$ is that, if $u\in\mathcal{U}_{t,T}^{\pi,1}$, then $u_l$, $k\leq l\leq N$ is just $\mathcal{B}(\mathbb{R}^{2(l-k)+1})\otimes\mathcal{B}([t\vee t_{l-1},t_l])$-measurable.
\end{remark}
\begin{remark}\label{re 2.2.1}
We write $\hat{\alpha}\in(\mathcal{A}^\pi(t,T))^I$, if $\hat{\alpha}=(\alpha_1,\ldots,\alpha_I)$ and
$\alpha_i\in\mathcal{A}^\pi(t,T)$, $i=1,\ldots,I$, and  $\hat{\beta}\in(\mathcal{B}^\pi(t,T))^J$, if $\hat{\beta}=(\beta_1,\ldots,\beta_J)$ and
$\beta_j\in\mathcal{B}^\pi(t,T)$, $j=1,\ldots,J$. Similarly, we  have
$\hat{\alpha}\in(\mathcal{A}(t,T))^I$, $\hat{\beta}\in(\mathcal{B}(t,T))^J$.
\end{remark}
Let $(p,q)\in\Delta(I)\times\Delta(J),\ (t,x)\in[0,T]\times\mathbb{R}^n$, $\pi=\{0=t_0<t_1<\ldots<t_N=T\}$ and
$t\in[t_{k-1},t_k)$, we define the payoff functionals  $$J(t,x,\hat{\alpha},\hat{\beta},p,q)=\sum_{i=1}^{I}\sum_{j=1}^Jp_iq_jE[g_{ij}(X_T^{t,x,\alpha_i,\beta_j})].$$
Now we define the following upper value functions and lower value functions, respectively,
\begin{eqnarray}
W^\pi(t,x,p,q)&=&\inf_{\hat{\alpha}\in(\mathcal{A}^\pi(t,T))^I}\sup_{\hat{\beta}\in(\mathcal{B}^\pi(t,T))^J}
J(t,x,\hat{\alpha},\hat{\beta},p,q),\\
V^\pi(t,x,p,q)&=&\sup_{\hat{\beta}\in(\mathcal{B}^\pi(t,T))^J}\inf_{\hat{\alpha}\in(\mathcal{A}^\pi(t,T))^I}
J(t,x,\hat{\alpha},\hat{\beta},p,q),\\
W(t,x,p,q)&=&\inf_{\hat{\alpha}\in(\mathcal{A}(t,T))^I}\sup_{\hat{\beta}\in(\mathcal{B}(t,T))^J}
J(t,x,\hat{\alpha},\hat{\beta},p,q),\\
V(t,x,p,q)&=&\sup_{\hat{\beta}\in(\mathcal{B}(t,T))^J}\inf_{\hat{\alpha}\in(\mathcal{A}(t,T))^I}
J(t,x,\hat{\alpha},\hat{\beta},p,q).
\end{eqnarray}
\begin{definition}\label{re 2.2.2}
Let $\varepsilon>0$, we say that $\hat{\alpha}\in(\mathcal{A}^\pi(t,T))^I$ is an $\varepsilon$-optimal randomized strategy for
$W^\pi(t,x,p,q)$, if for all $(t,x,p,q)\in[0,T]\times\mathbb{R}^n\times\Delta(I)\times\Delta(J)$, it holds
\begin{equation}\label{equ 2.12}
|W^\pi(t,x,p,q)-\sup_{\hat{\beta}\in(\mathcal{B}^\pi(t,T))^J}
J(t,x,\hat{\alpha},\hat{\beta},p,q)|\leq \varepsilon.
\end{equation}
We say that $\hat{\beta}\in(\mathcal{B}^\pi(t,T))^J$ is an $\varepsilon$-optimal randomized strategy for
$V^\pi(t,x,p,q)$, if for all $(t,x,p,q)\in[0,T]\times\mathbb{R}^n\times\Delta(I)\times\Delta(J)$, it holds
\begin{equation}\label{equ 2.13}
|V^\pi(t,x,p,q)-\inf_{\hat{\alpha}\in(\mathcal{A}^\pi(t,T))^I}
J(t,x,\hat{\alpha},\hat{\beta},p,q)|\leq \varepsilon.
\end{equation}
Similarly, we define $\varepsilon$-optimal strategies for the other upper and lower value functions.
\end{definition}

\section{{\protect \large {The functions $W^\pi(t,x,p,q)$ and
 $V^\pi(t,x,p,q)$ without Isaacs condition}}}

In this section we mainly prove that when the mesh of the partition $\pi$ tends to 0,
the functions  $W^\pi$ and $V^\pi$ converge uniformly to the
same function which is the unique dual solution of some Hamilton-Jacobi-Isaacs (HJI, for short) equation.
For this, we introduce the following functions:
\begin{eqnarray}
W^\pi_1(t,x,p,q)&=&\inf_{\hat{\alpha}\in(\mathcal{A}_1^\pi(t,T))^I}\sup_{\hat{\beta}\in(\mathcal{B}_1^\pi(t,T))^J}
J(t,x,\hat{\alpha},\hat{\beta},p,q),\\
V^\pi_1(t,x,p,q)&=&\sup_{\hat{\beta}\in(\mathcal{B}_1^\pi(t,T))^J}\inf_{\hat{\alpha}\in(\mathcal{A}_1^\pi(t,T))^I}
J(t,x,\hat{\alpha},\hat{\beta},p,q).
\end{eqnarray}
\begin{theorem}\label{le 3.1.19}
For any $(t,x,p,q)\in[0,T]\times\mathbb{R}^n\times\Delta(I)\times\Delta(J)$,  it holds
$V^\pi(t,x,p,q)=V^\pi_1(t,x,p,q),$ $W^\pi(t,x,p,q)=W^\pi_1(t,x,p,q).$
\end{theorem}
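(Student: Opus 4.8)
The plan is to exploit the only structural difference between $\mathcal{A}^\pi(t,T)$ and $\mathcal{A}_1^\pi(t,T)$ (and likewise between $\mathcal{B}^\pi(t,T)$ and $\mathcal{B}_1^\pi(t,T)$): a strategy in $\mathcal{A}^\pi(t,T)$ may, through its coefficients $\alpha_l$, depend measurably on $\omega$ via $\mathcal{F}_{0,t_{k-2}}$, whereas one in $\mathcal{A}_1^\pi(t,T)$ may not. Since $t\in[t_{k-1},t_k)$, the $\sigma$-algebra $\mathcal{F}_{0,t_{k-2}}$ is generated by Brownian increments strictly before $t_{k-1}$ and is therefore independent of every increment on $[t_{k-2},T]$; in particular it is independent of all the randomization variables $\zeta_{i,j}^\pi$, $j\ge k-1$, that actually enter the strategies, and of the (ODE) dynamics. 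I would accordingly write $\omega=(\omega^-,\omega^+)$, where $\omega^-$ carries the $\mathcal{F}_{0,t_{k-2}}$-information and $\omega^+$ the increments after $t_{k-2}$, these two parts being independent.

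The key identity I would establish is the disintegration
\[
J(t,x,\hat{\alpha},\hat{\beta},p,q)=E_{\omega^-}\big[J(t,x,\hat{\alpha}^{\omega^-},\hat{\beta}^{\omega^-},p,q)\big],
\]
where, for a.e. fixed $\omega^-$, the frozen strategies $\hat{\alpha}^{\omega^-}:=\hat{\alpha}((\omega^-,\cdot),\cdot)$ and $\hat{\beta}^{\omega^-}$ belong to $(\mathcal{A}_1^\pi(t,T))^I$ and $(\mathcal{B}_1^\pi(t,T))^J$, respectively: freezing $\omega^-$ turns each $\mathcal{F}_{0,t_{k-2}}$-measurable coefficient into a deterministic function of the $\zeta$-values and the opponent's control, which is exactly the form required in Definition \ref{def 2.3}. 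This identity follows from Fubini together with the fact that, by Lemma \ref{le 2.1}, the realized control pair $(u_\omega,v_\omega)$ is built pointwise in $\omega$, and that $X_T^{t,x,u,v}$ depends on $\omega$ only through $(u_\omega,v_\omega)$, whose dependence on $\omega^-$ is solely through the explicit coefficient dependence.

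Given this identity, each equality splits into two inequalities. For the two ``easy'' directions I would use only averaging: if $\hat{\alpha}\in(\mathcal{A}_1^\pi(t,T))^I$ carries no $\omega^-$-dependence, then $J(\hat{\alpha},\hat{\beta})=E_{\omega^-}[J(\hat{\alpha},\hat{\beta}^{\omega^-})]\le\sup_{\hat{\beta}'\in(\mathcal{B}_1^\pi)^J}J(\hat{\alpha},\hat{\beta}')$, so that $\sup_{(\mathcal{B}^\pi)^J}J(\hat{\alpha},\cdot)=\sup_{(\mathcal{B}_1^\pi)^J}J(\hat{\alpha},\cdot)$, whence $W^\pi\le W^\pi_1$; symmetrically one obtains $V^\pi\ge V^\pi_1$. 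For the two reverse directions, fix $\hat{\alpha}\in(\mathcal{A}^\pi(t,T))^I$; for each $\omega^-$ the frozen $\hat{\alpha}^{\omega^-}\in(\mathcal{A}_1^\pi)^I$ satisfies $\sup_{(\mathcal{B}_1^\pi)^J}J(\hat{\alpha}^{\omega^-},\cdot)\ge W^\pi_1$, so I would let Player \Rmnum{2} answer each $\omega^-$ by an $\varepsilon$-optimal $\hat{\beta}^{\omega^-}\in(\mathcal{B}_1^\pi)^J$; assembling these into a single $\mathcal{F}_{0,t_{k-2}}$-measurable $\hat{\beta}\in(\mathcal{B}^\pi)^J$ and applying the disintegration gives $\sup_{(\mathcal{B}^\pi)^J}J(\hat{\alpha},\cdot)\ge J(\hat{\alpha},\hat{\beta})\ge W^\pi_1-\varepsilon$, hence $W^\pi\ge W^\pi_1$; the argument for $V^\pi\le V^\pi_1$ is symmetric with the roles of the players exchanged.

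The main obstacle is precisely this last assembly step: one must select the $\varepsilon$-optimal responses $\omega^-\mapsto\hat{\beta}^{\omega^-}$ in a jointly measurable way, so that the pasted object genuinely satisfies the $\mathcal{F}_{0,t_{k-2}}$-measurability and the non-anticipativity-with-delay conditions of Definition \ref{def 2.2}. I would handle this with a measurable selection theorem, after verifying that $\hat{\beta}\mapsto J(t,x,\hat{\alpha}^{\omega^-},\hat{\beta},p,q)$ depends on $\omega^-$ measurably. The independence of $\mathcal{F}_{0,t_{k-2}}$ from the in-game randomness is exactly what guarantees that pasting frozen $\mathcal{B}_1^\pi$-strategies back along $\omega^-$ returns an admissible $\mathcal{B}^\pi$-strategy, and likewise on the $\mathcal{A}$-side.
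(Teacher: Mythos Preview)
Your disintegration idea is sound, and the two ``easy'' inequalities ($W^\pi\le W^\pi_1$ and $V^\pi\ge V^\pi_1$) follow exactly as you outline. Your route, however, is genuinely different from the paper's, and the place where your argument remains incomplete is precisely the one you flag yourself: the measurable selection $\omega^-\mapsto\hat\beta^{\omega^-}$ (and symmetrically $\omega^-\mapsto\hat\alpha^{\omega^-}$). The space $(\mathcal{B}_1^\pi(t,T))^J$ consists of Borel maps from $[0,1]^{2(N-k)+1}\times\mathcal{U}_{t,T}$ into $\mathcal{V}_{t,T}$ subject to the delay constraints; endowing it with a Polish structure for which $(\omega^-,\hat\beta)\mapsto J(t,x,\hat\alpha^{\omega^-},\hat\beta,p,q)$ is jointly measurable, and then invoking a selection theorem, is not routine and you have not done it. Until that is carried out, the ``hard'' directions $W^\pi\ge W^\pi_1$ and $V^\pi\le V^\pi_1$ are not established.

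The paper sidesteps selection altogether by a different mechanism. It introduces the auxiliary random variable
\[
\tilde V^\pi(t,x,p,q)=\esssup_{\hat\beta\in(\mathcal{B}^\pi)^J}\essinf_{\hat\alpha\in(\mathcal{A}^\pi)^I}\sum_{i,j}p_iq_jE\big[g_{ij}(X_T^{t,x,\alpha_i,\beta_j})\,\big|\,\mathcal{F}_{0,t_{k-2}}\big],
\]
and shows, via a Cameron--Martin translation argument (for each $h$ in the Cameron--Martin space supported on $[0,t_{k-2}]$, $\tau_h$ induces bijections of $(\mathcal{A}^\pi)^I$ and $(\mathcal{B}^\pi)^J$, hence $\tilde V^\pi\circ\tau_h=\tilde V^\pi$ a.s.), that $\tilde V^\pi$ is deterministic. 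Once this is known, a \emph{single} $\varepsilon$-optimal $\hat\beta\in(\mathcal{B}^\pi)^J$ for the esssup can be chosen; freezing $\omega^-$ in that one strategy and using your very disintegration identity (which the paper writes as $E[g_{ij}\mid\mathcal{F}_{0,t_{k-2}}](\omega)=E[g_{ij}(X_T^{t,x,\alpha_i,\beta_j^{\bar\omega}})]$) yields $\tilde V^\pi\le V_1^\pi$; the reverse inequality and the equality $\tilde V^\pi=V^\pi$ are obtained similarly. In short, the esssup/essinf construction together with the invariance lemma plays exactly the role that your measurable selection would play, but without any selection at all. If you want to keep your direct approach, you must either supply a genuine selection argument on the strategy space or observe that the paper's determinism lemma furnishes the missing step.
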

We only give the proof for $V^\pi(t,x,p,q)=V^\pi_1(t,x,p,q)$, the proof for $W^\pi(t,x,p,q)=W^\pi_1(t,x,p,q)$ is similar.
In order to show that,  we need the following auxiliary lower value function:
\begin{equation}\label{equ 3.1.20}
\tilde{V}^\pi(t,x,p,q)=\esssup\limits_{\hat{\beta}\in(\mathcal{B}^\pi(t,T))^J}\essinf\limits_{\hat{\alpha}\in(\mathcal{A}^\pi(t,T))^I}
\sum_{i=1}^{I}\sum_{j=1}^Jp_iq_jE[g_{ij}(X_T^{t,x,\alpha_i,\beta_j})|\mathcal{F}_{0,t_{k-2}}].
\end{equation}
\begin{lemma}\label{le 3.1.20.1}
For all $(t,x,p,q)\in[0,T]\times\mathbb{R}^n\times\Delta(I)\times\Delta(J)$, the function $\tilde{V}^\pi(t,x,p,q)$ is deterministic, i.e., independent of $\mathcal{F}_{0,t_{k-2}}$, then we have $\tilde{V}^\pi(t,x,p,q)=E[\tilde{V}^\pi(t,x,p,q)]$, $P$-a.s.
\end{lemma}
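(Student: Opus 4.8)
The plan is to show that, after conditioning on $\mathcal{F}_{0,t_{k-2}}$, the esssup--essinf defining $\tilde V^\pi$ collapses to a sup--inf over \emph{deterministic} strategies, whence its independence of $\mathcal{F}_{0,t_{k-2}}$ is immediate. First I would note that $\tilde V^\pi(t,x,p,q)$ is $\mathcal{F}_{0,t_{k-2}}$-measurable, being an esssup of essinfs of $\mathcal{F}_{0,t_{k-2}}$-measurable random variables. The decisive structural fact is that all randomization entering the strategies is carried by the variables $\zeta^\pi_{i,l}$ with $l\geq k-1$, each built from increments of $B$ after $t_{k-2}$ and hence independent of $\mathcal{F}_{0,t_{k-2}}$. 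Writing $\Omega\cong\Omega_1\times\Omega_2$ with $\Omega_1$ carrying $\mathcal{F}_{0,t_{k-2}}$ and $\Omega_2$ the increments after $t_{k-2}$, every $\alpha_i\in\mathcal{A}^\pi(t,T)$ has components depending on $\omega$ only through $\omega_1\in\Omega_1$, while, since the dynamics \eqref{equ 2.1} is a pathwise ODE with no direct Brownian term, $X_T^{t,x,\alpha_i,\beta_j}$ depends on $\omega$ only through $\omega_1$ and the $\zeta$'s.

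Next I would establish a freezing identity: for a.e.\ $\omega_1$,
\[
E\big[g_{ij}(X_T^{t,x,\alpha_i,\beta_j})\,\big|\,\mathcal{F}_{0,t_{k-2}}\big](\omega_1)=E\big[g_{ij}(X_T^{t,x,\alpha_i^{\omega_1},\beta_j^{\omega_1}})\big],
\]
where $\alpha_i^{\omega_1}$ (resp.\ $\beta_j^{\omega_1}$) is obtained by substituting the value $\omega_1$ into the $\mathcal{F}_{0,t_{k-2}}$-slot. By Definition \ref{def 2.3} such a frozen strategy no longer depends on $\omega$ and hence lies in $\mathcal{A}_1^\pi(t,T)$ (resp.\ $\mathcal{B}_1^\pi(t,T)$); conversely every element of $\mathcal{A}_1^\pi(t,T)$ arises this way, being a constant-in-$\omega$ member of $\mathcal{A}^\pi(t,T)$ that freezes to itself. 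The identity is just the disintegration of the conditional expectation against the independent complement $\Omega_2$, where one checks, using uniqueness in Lemma \ref{le 2.1}, that the play $(u_\omega,v_\omega)$ with $\omega_1$ frozen coincides with the canonical play of the frozen strategies.

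With this identity I would pass the essinf and the esssup through it. For fixed $\hat\beta$ and a.e.\ $\omega_1$ the inner essinf equals $\inf_{\hat a\in(\mathcal{A}_1^\pi(t,T))^I}\sum_{i,j}p_iq_jE[g_{ij}(X_T^{t,x,a_i,\beta_j^{\omega_1}})]$: the bound ``$\geq$'' holds because each $\hat\alpha$ freezes to an admissible deterministic $\hat a$, and ``$\leq$'' because each deterministic $\hat a$ is itself an admissible $\hat\alpha$. Taking the esssup over $\hat\beta$ and arguing identically on the $\beta$-side then yields, for a.e.\ $\omega_1$,
\[
\tilde V^\pi(t,x,p,q)(\omega_1)=\sup_{\hat b\in(\mathcal{B}_1^\pi(t,T))^J}\ \inf_{\hat a\in(\mathcal{A}_1^\pi(t,T))^I}\ \sum_{i,j}p_iq_jE\big[g_{ij}(X_T^{t,x,a_i,b_j})\big],
\]
whose right-hand side does not involve $\omega_1$ (indeed it is exactly $V_1^\pi(t,x,p,q)$). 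Hence $\tilde V^\pi$ is $P$-a.s.\ equal to this deterministic constant, and in particular $\tilde V^\pi=E[\tilde V^\pi]$, $P$-a.s.

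The main obstacle I anticipate is the rigorous interchange of the essential infimum and supremum, taken over the uncountable families $(\mathcal{A}^\pi(t,T))^I$ and $(\mathcal{B}^\pi(t,T))^J$, with the pointwise-in-$\omega_1$ infimum and supremum over the frozen deterministic strategies. The bounds ``$\geq$'' (essinf) and ``$\leq$'' (esssup) are immediate, but the reverse ones require that the pointwise extremum be approached, uniformly in $\omega_1$, along a \emph{single} countable subfamily of deterministic strategies; I would produce such a family from the stability of the strategy classes under pasting along $\pi$ together with the estimates \eqref{equ 2.1.1}, then pass to the limit. A second, more routine point is the joint measurability of $\omega_1\mapsto(\alpha_i^{\omega_1},\beta_j^{\omega_1})$ and the legitimacy of the disintegration of $E[\cdot\mid\mathcal{F}_{0,t_{k-2}}]$ against the independent complement, both following from the product structure of the Wiener space and standard measurable-selection arguments.
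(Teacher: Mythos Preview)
Your route is genuinely different from the paper's. The paper proves the lemma by a Cameron--Martin translation argument: for each $h$ in the Cameron--Martin space supported on $[0,t_{k-2}]$ the shift $\tau_h(\omega)=\omega+h$ induces a bijection $\alpha\mapsto\alpha^h$ on $\mathcal{A}^\pi(t,T)$ (only the $\mathcal{F}_{0,t_{k-2}}$-measurable slot moves; the $\zeta^\pi_{i,l}$, built from increments after $t_{k-2}$, are unchanged), and likewise on $\mathcal{B}^\pi(t,T)$. This yields $\tilde V^\pi\circ\tau_h=\tilde V^\pi$ $P$-a.s.\ for every such $h$, and one then invokes the standard fact (Lemma~4.1 in \cite{BL2008}) that a random variable invariant under all these shifts is deterministic. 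No essinf/esssup--versus--pointwise comparison is ever needed.

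Your approach is more ambitious: it would identify $\tilde V^\pi$ directly with the deterministic number $V_1^\pi(t,x,p,q)$, which in the paper is obtained only afterwards (Step~1 of Theorem~\ref{le 3.1.19}) \emph{using} the present lemma as input. The price is precisely the obstacle you single out, and it is a genuine gap rather than a routine detail. The direction ``$\mathrm{essinf}\geq L_{\hat\beta}$'' follows pointwise from freezing, as you say; but for ``$\mathrm{essinf}\leq L_{\hat\beta}$'' you only have $\mathrm{essinf}\leq f_{\hat a}$ a.s.\ for each deterministic $\hat a$, with a null set depending on $\hat a$, and $(\mathcal{A}_1^\pi(t,T))^I$ is an uncountable, non-Polish space of Borel maps. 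Neither the Lipschitz estimate \eqref{equ 2.1.1} nor ``pasting along $\pi$'' delivers a countable subfamily achieving the pointwise infimum in any obvious way; the honest fix is a measurable-selection argument (pick an $\varepsilon$-optimal $\hat a(\omega_1)$ measurably, glue to obtain $\hat\alpha\in(\mathcal{A}^\pi)^I$), but applying a selection theorem on this strategy space needs real care. The paper's translation-invariance proof sidesteps all of this at the cost of quoting an external lemma; your argument, if the selection step can be made rigorous, is more self-contained and proves strictly more.
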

\begin{proof}
For $\Omega=C([0,T];\mathbb{R}^2)$,
we assume\\
$H=\{h\in\Omega: \exists\text{ Radon-Nikodym\
derivative}\ \dot{h}\in L^2([0,T];\mathbb{R}^2), h(s)=h(s\wedge t_{k-2}), s\in[0,T]\},$ then we know  $H$ is the Cameron-Martin space.
For any $h\in H$, we define the mapping $\tau_h:\Omega\mapsto\Omega$ by $\tau_h(\omega):=\omega+h$, $\omega\in\Omega$. Then, we know
$\tau_h$ is a bijection and $\tau_h^{-1}=\tau_{-h}$. \\
For any $\alpha\in\mathcal{A}^{\pi}(t,T)$, we know $\alpha$ has the form of
$$\alpha(\omega,v)(s)=\alpha_k(\omega,\zeta_{1,k-1}^\pi(\omega),v)(s)I_{[t,t_k)}(s)+
\sum_{l=k+1}^N\alpha_l(\omega,(\zeta_{k-1}^\pi,\ldots,\zeta_{l-2}^\pi,\zeta_{1,l-1}^\pi)(\omega),v)(s)I_
{[t_{l-1},t_l)}(s).$$
Then, for any $h\in H$, we define
\begin{eqnarray*}
\begin{split}
&\alpha^h(\omega,v)(s)\\
&:=\alpha_k(\tau_h(\omega),\zeta_{1,k-1}^\pi(\omega),v)(s)I_{[t,t_k)}(s)+
\sum_{l=k+1}^N\alpha_l(\tau_h(\omega),(\zeta_{k-1}^\pi,\ldots,\zeta_{l-2}^\pi,\zeta_{1,l-1}^\pi)(\omega),v)(s)I_
{[t_{l-1},t_l)}(s).
\end{split}
\end{eqnarray*}
Obviously, we know $\alpha^h\in\mathcal{A}^{\pi}(t,T)$, and the mapping $\alpha\mapsto\alpha^h$ is a bijection on $\mathcal{A}^{\pi}(t,T)$.
For any $h\in H$, $\beta\in\mathcal{B}^\pi(t,T)$,  $\beta^h$ is similarly defined and
$\beta\mapsto\beta^h$ is a bijection on $\mathcal{B}^{\pi}(t,T)$. Then we get
\begin{equation}\label{equ 3.1.20.1}
E[g_{ij}(X_T^{t,x,\alpha_i,\beta_j})|\mathcal{F}_{0,t_{k-2}}]\circ\tau_h=E[g_{ij}(X_T^{t,x,\alpha^h_i,\beta^h_j})|\mathcal{F}_{0,t_{k-2}}],\ P\text{-a.s.}
\end{equation}
We now define $I(t,x,p,q,\hat{\beta}):=\essinf\limits_{\hat{\alpha}\in(\mathcal{A}^\pi(t,T))^I}
\sum\limits_{i=1}^{I}\sum\limits_{j=1}^Jp_iq_jE[g_{ij}(X_T^{t,x,\alpha_i,\beta_j})|\mathcal{F}_{0,t_{k-2}}]$,  $\hat{\beta}\in(\mathcal{B}^\pi(t,T))^J$.
Since $I(t,x,p,q,\hat{\beta})\leq
\sum\limits_{i=1}^{I}\sum\limits_{j=1}^Jp_iq_jE[g_{ij}(X_T^{t,x,\alpha_i,\beta_j})|\mathcal{F}_{0,t_{k-2}}]$, $P$-a.s., from (\ref{equ 3.1.20.1}) we get
\begin{equation}\label{equ 3.1.20.2}
I(t,x,p,q,\hat{\beta})\circ\tau_h\leq
\sum\limits_{i=1}^{I}\sum\limits_{j=1}^Jp_iq_jE[g_{ij}(X_T^{t,x,\alpha_i^h,\beta_j^h})|\mathcal{F}_{0,t_{k-2}}],\ P\text{-a.s.}
\end{equation}
On the other hand, for any random variable $\xi$, such that $\xi\leq \sum\limits_{i=1}^{I}\sum\limits_{j=1}^Jp_iq_jE[g_{ij}(X_T^{t,x,\alpha_i^h,\beta_j^h})|\mathcal{F}_{0,t_{k-2}}]$, $P$-a.s., we have that
$\xi\circ\tau_{-h}\leq \sum\limits_{i=1}^{I}\sum\limits_{j=1}^Jp_iq_jE[g_{ij}(X_T^{t,x,\alpha_i,\beta_j})|\mathcal{F}_{0,t_{k-2}}],$ $P$-a.s., for all
$\hat{\alpha}\in\mathcal{A}^\pi(t,T)$, then we know $\xi\circ\tau_{-h}\leq I(t,x,p,q,\hat{\beta})$, $P$-a.s., which means that $\xi\leq I(t,x,p,q,\hat{\beta})\circ\tau_h$. Thus we have
\begin{equation}\label{equ 3.1.20.3}
\begin{split}
I(t,x,p,q,\hat{\beta})\circ\tau_h=&\essinf\limits_{\hat{\alpha}\in(\mathcal{A}^\pi(t,T))^I}
\sum\limits_{i=1}^{I}\sum\limits_{j=1}^Jp_iq_jE[g_{ij}(X_T^{t,x,\alpha_i^h,\beta_j^h})|\mathcal{F}_{0,t_{k-2}}],\ P\text{-a.s.}
\end{split}
\end{equation}
Using the similar method, we obtain
\begin{equation}\label{equ 3.1.20.4}
\Big(\esssup\limits_{\hat{\beta}\in(\mathcal{B}^\pi(t,T))^J}I(t,x,p,q,\hat{\beta})\Big)\circ\tau_h=
\esssup\limits_{\hat{\beta}\in(\mathcal{B}^\pi(t,T))^J}\Big(I(t,x,p,q,\hat{\beta})\circ\tau_h\Big),\ P\text{-a.s.}
\end{equation}
Therefore, for all $h\in H$, from (\ref{equ 3.1.20.4}) and (\ref{equ 3.1.20.3})  we get, $P$-a.s.,
\begin{equation}\label{equ 3.1.20.5}
\begin{split}
&\tilde{V}^\pi(t,x,p,q)\circ\tau_h=\Big(\esssup\limits_{\hat{\beta}\in(\mathcal{B}^\pi(t,T))^J}I(t,x,p,q,\hat{\beta})\Big)\circ\tau_h\\
&=\esssup\limits_{\hat{\beta}\in(\mathcal{B}^\pi(t,T))^J}\essinf\limits_{\hat{\alpha}\in(\mathcal{A}^\pi(t,T))^I}
\sum\limits_{i=1}^{I}\sum\limits_{j=1}^Jp_iq_jE[g_{ij}(X_T^{t,x,\alpha_i^h,\beta_j^h})|\mathcal{F}_{0,t_{k-2}}]\\
&=\esssup\limits_{\hat{\beta}\in(\mathcal{B}^\pi(t,T))^J}\essinf\limits_{\hat{\alpha}\in(\mathcal{A}^\pi(t,T))^I}
\sum\limits_{i=1}^{I}\sum\limits_{j=1}^Jp_iq_jE[g_{ij}(X_T^{t,x,\alpha_i,\beta_j})|\mathcal{F}_{0,t_{k-2}}]
=\tilde{V}^\pi(t,x,p,q).
\end{split}
\end{equation}
Then combined with Lemma 4.1 in \cite{BL2008}, we obtain our desired results.
\end{proof}
Now we give the proof of Theorem \ref{le 3.1.19}.
\begin{proof}
Step 1:  We prove $\tilde{V}^\pi(t,x,p,q)=V_1^\pi(t,x,p,q)$, for all $(t,x,p,q)\in[0,T]\times\mathbb{R}^n\times\Delta(I)\times\Delta(J)$.

For any $\hat{\beta}\in(\mathcal{B}_1^\pi(t,T))^J$ (independent of $\mathcal{F}_{t_{k-2}}$), we have
$$\tilde{V}^\pi(t,x,p,q)\geq \essinf\limits_{\hat{\alpha}\in(\mathcal{A}^\pi(t,T))^I}
\sum\limits_{i=1}^{I}\sum\limits_{j=1}^Jp_iq_jE[g_{ij}(X_T^{t,x,\alpha_i,\beta_j})|\mathcal{F}_{0,t_{k-2}}],\ P\text{-a.s.}$$
For any $\varepsilon>0$, there exists $\hat{\alpha}\in(\mathcal{A}^\pi(t,T))^I$ (depending on $\varepsilon$, $\hat{\beta}$), such that
\begin{equation}\label{equ 3.1.20.6}
\tilde{V}^\pi(t,x,p,q)\geq
\sum\limits_{i=1}^{I}\sum\limits_{j=1}^Jp_iq_jE[g_{ij}(X_T^{t,x,\alpha_i,\beta_j})|\mathcal{F}_{0,t_{k-2}}]-\varepsilon,\ P\text{-a.s.}
\end{equation}
From Lemma \ref{le 3.1.20.1} and (\ref{equ 3.1.20.6}), we have
\begin{equation}\label{equ 3.1.20.7}
\begin{split}
&\tilde{V}^\pi(t,x,p,q)=E[\tilde{V}^\pi(t,x,p,q)]\geq
\sum\limits_{i=1}^{I}\sum\limits_{j=1}^Jp_iq_jE[g_{ij}(X_T^{t,x,\alpha_i,\beta_j})]-\varepsilon\\
&\geq \inf_{\hat{\alpha}\in(\mathcal{A}^\pi_1(t,T))^I}
\sum\limits_{i=1}^{I}\sum\limits_{j=1}^Jp_iq_jE[g_{ij}(X_T^{t,x,\alpha_i,\beta_j})]-\varepsilon.
\end{split}
\end{equation}
Since (\ref{equ 3.1.20.7}) holds for any $\hat{\beta}\in(\mathcal{B}_1^\pi(t,T))^J$, we get
\begin{equation}\label{equ 3.1.20.8}
\tilde{V}^\pi(t,x,p,q)\geq \sup_{\hat{\beta}\in(\mathcal{B}^\pi_1(t,T))^J}\inf_{\hat{\alpha}\in(\mathcal{A}^\pi_1(t,T))^I}
\sum\limits_{i=1}^{I}\sum\limits_{j=1}^Jp_iq_jE[g_{ij}(X_T^{t,x,\alpha_i,\beta_j})]-\varepsilon=V_1^\pi(t,x,p,q)-\varepsilon.
\end{equation}
From the arbitrariness of $\varepsilon$, we obtain $\tilde{V}^\pi(t,x,p,q)\geq V_1^\pi(t,x,p,q)$.\\
On the other hand, for any $\varepsilon>0$, there exists $\hat{\beta}\in(\mathcal{B}^\pi(t,T))^J$, such that, $P$-a.s.,
\begin{equation}\label{equ 3.1.20.9}
\begin{split}
\tilde{V}^\pi(t,x,p,q)&\leq \essinf\limits_{\hat{\alpha}\in(\mathcal{A}^\pi(t,T))^I}
\sum\limits_{i=1}^{I}\sum\limits_{j=1}^Jp_iq_jE[g_{ij}(X_T^{t,x,\alpha_i,\beta_j})|\mathcal{F}_{0,t_{k-2}}]+\varepsilon\\
&\leq \essinf\limits_{\hat{\alpha}\in(\mathcal{A}_1^\pi(t,T))^I}
\sum\limits_{i=1}^{I}\sum\limits_{j=1}^Jp_iq_jE[g_{ij}(X_T^{t,x,\alpha_i,\beta_j})|\mathcal{F}_{0,t_{k-2}}]+\varepsilon.
\end{split}
\end{equation}
Notice that $E[g_{ij}(X_T^{t,x,\alpha_i,\beta_j})|\mathcal{F}_{0,t_{k-2}}](\omega)=E[g_{ij}(X_T^{t,x,\alpha_i,\beta_j^{\bar{\omega}}})]$,
$P(d\omega)$-a.s., where $\bar{\omega}(s)=\omega(s\wedge t_{k_2})$, $s\in[0,T]$. Thus, from (\ref{equ 3.1.20.9}) we have
\begin{equation}\label{equ 3.1.20.10}
\begin{split}
&\tilde{V}^\pi(t,x,p,q)\leq \essinf\limits_{\hat{\alpha}\in(\mathcal{A}_1^\pi(t,T))^I}
\sum\limits_{i=1}^{I}\sum\limits_{j=1}^Jp_iq_jE[g_{ij}(X_T^{t,x,\alpha_i,\beta_j^{\bar{\omega}}})]+\varepsilon\\
&\leq \esssup\limits_{\hat{\beta}\in(\mathcal{B}_1^\pi(t,T))^J}\essinf\limits_{\hat{\alpha}\in(\mathcal{A}_1^\pi(t,T))^I}
\sum\limits_{i=1}^{I}\sum\limits_{j=1}^Jp_iq_jE[g_{ij}(X_T^{t,x,\alpha_i,\beta_j^{\bar{\omega}}})]+\varepsilon
=V_1^\pi(t,x,p,q)+\varepsilon.
\end{split}
\end{equation}
From the arbitrariness of $\varepsilon$, we obtain $\tilde{V}^\pi(t,x,p,q)\leq V_1^\pi(t,x,p,q)$.\\
Step 2:  We prove $\tilde{V}^\pi(t,x,p,q)=V^\pi(t,x,p,q)$, for all $(t,x,p,q)\in[0,T]\times\mathbb{R}^n\times\Delta(I)\times\Delta(J)$.\\
For any $\varepsilon>0$, there exists $\hat{\alpha}\in(\mathcal{A}^\pi(t,T))^I$, such that, $P$-a.s.,
\begin{equation}\label{equ 3.1.20.11}
\begin{split}
&\tilde{V}^\pi(t,x,p,q)\geq \essinf\limits_{\hat{\alpha}\in(\mathcal{A}^\pi(t,T))^I}
\sum\limits_{i=1}^{I}\sum\limits_{j=1}^Jp_iq_jE[g_{ij}(X_T^{t,x,\alpha_i,\beta_j})|\mathcal{F}_{0,t_{k-2}}]\\
&\geq
\sum\limits_{i=1}^{I}\sum\limits_{j=1}^Jp_iq_jE[g_{ij}(X_T^{t,x,\alpha_i,\beta_j})|\mathcal{F}_{0,t_{k-2}}]-\varepsilon.
\end{split}
\end{equation}
From Lemma \ref{le 3.1.20.1} and (\ref{equ 3.1.20.11}), we have
\begin{equation}\label{equ 3.1.20.12}
\begin{split}
&\tilde{V}^\pi(t,x,p,q)=E[\tilde{V}^\pi(t,x,p,q)]\geq
\sum\limits_{i=1}^{I}\sum\limits_{j=1}^Jp_iq_jE[g_{ij}(X_T^{t,x,\alpha_i,\beta_j})]-\varepsilon\\
&\geq \inf\limits_{\hat{\alpha}\in(\mathcal{A}^\pi(t,T))^I}\sum\limits_{i=1}^{I}\sum\limits_{j=1}^Jp_iq_jE[g_{ij}(X_T^{t,x,\alpha_i,\beta_j})]-\varepsilon.
\end{split}
\end{equation}
Thanks to (\ref{equ 3.1.20.12}) holds for any $\hat{\beta}\in(\mathcal{B}^\pi(t,T))^J$ and from the arbitrariness of $\varepsilon$, we have $\tilde{V}^\pi(t,x,p,q)\geq V^\pi(t,x,p,q)$.
On the other hand, for any $\varepsilon>0$, there exists $\hat{\beta}\in(\mathcal{B}^\pi(t,T))^J$, such that, $P$-a.s.,
\begin{equation}\label{equ 3.1.20.13}
\begin{split}
&\tilde{V}^\pi(t,x,p,q)\leq \essinf\limits_{\hat{\alpha}\in(\mathcal{A}^\pi(t,T))^I}
\sum\limits_{i=1}^{I}\sum\limits_{j=1}^Jp_iq_jE[g_{ij}(X_T^{t,x,\alpha_i,\beta_j})|\mathcal{F}_{0,t_{k-2}}]+\varepsilon\\
&\leq
\sum\limits_{i=1}^{I}\sum\limits_{j=1}^Jp_iq_jE[g_{ij}(X_T^{t,x,\alpha_i,\beta_j})|\mathcal{F}_{0,t_{k-2}}]+\varepsilon.
\end{split}
\end{equation}
From Lemma \ref{le 3.1.20.1}, thanks to (\ref{equ 3.1.20.13}) holds for every $\hat{\alpha}\in(\mathcal{A}^\pi(t,T))^I$, we have
\begin{equation}\label{equ 3.1.20.14}
\begin{split}
&\tilde{V}^\pi(t,x,p,q)=E[\tilde{V}^\pi(t,x,p,q)]\leq
\sum\limits_{i=1}^{I}\sum\limits_{j=1}^Jp_iq_jE[g_{ij}(X_T^{t,x,\alpha_i,\beta_j})]+\varepsilon\\
&\leq \inf\limits_{\hat{\alpha}\in(\mathcal{A}^\pi(t,T))^I}\sum\limits_{i=1}^{I}\sum\limits_{j=1}^Jp_iq_jE[g_{ij}(X_T^{t,x,\alpha_i,\beta_j})]+\varepsilon\\
&\leq\sup\limits_{\hat{\beta}\in(\mathcal{B}^\pi(t,T))^J}\inf\limits_{\hat{\alpha}\in(\mathcal{A}^\pi(t,T))^I}
\sum\limits_{i=1}^{I}\sum\limits_{j=1}^Jp_iq_jE[g_{ij}(X_T^{t,x,\alpha_i,\beta_j})]+\varepsilon
=V^\pi(t,x,p,q)+\varepsilon.
\end{split}
\end{equation}
Thus, we obtain  $\tilde{V}^\pi(t,x,p,q)\leq V^\pi(t,x,p,q)$. Finally, from Step 1 and Step 2, we have $V^\pi(t,x,p,q)=\tilde{V}^\pi(t,x,p,q)= V_1^\pi(t,x,p,q)$.
\end{proof}
We now prove that when the mesh of the partition $\pi$ tends to 0,
the functions  $W_1^\pi$ and $V_1^\pi$ converge uniformly to the
same function which is the unique dual solution of some HJI equation.
\begin{lemma}\label{le 3.1}
The functions $W_1^\pi$ and $V_1^\pi$ are Lipschitz continuous with respect to $(t,x,p,q)$, uniformly with respect to $\pi$.
\end{lemma}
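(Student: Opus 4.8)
The plan is to reduce the Lipschitz estimate to a pointwise-in-strategies estimate on the payoff $J$, exploiting the elementary fact that if $|F_1(z)-F_2(z)|\le\delta$ for every index $z$, then $\sup_z F_1$ and $\sup_z F_2$ (and likewise the infima) differ by at most $\delta$; applying this twice shows that an inf--sup or sup--inf of $J$ inherits any modulus $J$ enjoys, \emph{provided the sets over which one optimizes do not move}. This immediately handles the variables $x$ and $(p,q)$, for which $\mathcal{A}_1^\pi(t,T)$ and $\mathcal{B}_1^\pi(t,T)$ are fixed. Concretely, for fixed $t$ the realized control pair $(u_i,v_j)$ attached to $(\alpha_i,\beta_j)$ by Lemma \ref{le 2.1} does not depend on $x$, so part (2) of (\ref{equ 2.1.1}) with equal initial times gives $|X_T^{t,x,u_i,v_j}-X_T^{t,x',u_i,v_j}|\le C|x-x'|$, and the Lipschitz bound $L$ on the $g_{ij}$ yields $|J(t,x,\hat\alpha,\hat\beta,p,q)-J(t,x',\hat\alpha,\hat\beta,p,q)|\le CL|x-x'|$ uniformly in $(\hat\alpha,\hat\beta,\pi)$. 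For $(p,q)$ I would use only the bound on $g$: $|J(\cdots,p,q)-J(\cdots,p',q')|\le \|g\|_\infty\sum_{i,j}|p_iq_j-p_i'q_j'|\le\|g\|_\infty(\sum_i|p_i-p_i'|+\sum_j|q_j-q_j'|)$, again uniformly. Both constants depend only on $f$ and $g$, hence are independent of $\pi$.

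The hard part is continuity in $t$, because moving $t$ to $t'$ genuinely changes the strategy classes, so the inf--sup fact does not apply directly. Assume $t<t'$. I would transport strategies between the two origins by freezing a fixed control on the gap $[t,t')$: a ``prepending'' map $\Psi$ sending $\mathcal{B}_1^\pi(t',T)\to\mathcal{B}_1^\pi(t,T)$ (play a fixed $v_0$ on $[t,t')$ and ignore the opponent there), and a ``bake-in'' map $\Phi_{v_0}$ sending $\mathcal{A}_1^\pi(t,T)\to\mathcal{A}_1^\pi(t',T)$ via $\Phi_{v_0}(\alpha)(v')=\alpha(v_0\oplus v')$, where $v_0\oplus v'$ is $v_0$ on $[t,t')$ and $v'$ on $[t',T]$; the symmetric maps with a fixed $u_0$ serve for the reverse inequality. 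Because $v_0$ is fixed, the delay structure is preserved and these maps land in the stated NAD classes, using that a strategy is always allowed to depend on only part of the admissible information and randomization.

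The crux is that these maps are compatible with the fixed point of Lemma \ref{le 2.1}. If player II plays $\Psi\hat\beta'$ and player I plays $\hat\alpha$, then on $[t,t')$ player II plays the fixed $v_0$ and player I's response is thereby determined, so by part (1) of (\ref{equ 2.1.1}) the state at $t'$ satisfies $|X_{t'}^{t,x,\cdots}-x|\le C(t'-t)$; more importantly, the uniqueness in Lemma \ref{le 2.1} forces the realized controls of $(\hat\alpha,\Psi\hat\beta')$ over $[t,T]$ and of $(\Phi_{v_0}\hat\alpha,\hat\beta')$ over $[t',T]$ to coincide on $[t',T]$. The two trajectories then share controls on $[t',T]$ and start there from $X_{t'}^{t,x,\cdots}$ versus $x$, so the flow property together with part (2) of (\ref{equ 2.1.1}) gives $|X_T^{t,x,\cdots}-X_T^{t',x,\cdots}|\le C^2|t-t'|$, whence the payoffs differ by at most $C^2L|t-t'|$. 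Feeding this through the elementary inf/sup comparisons between the transported families and the full strategy classes (the $\Psi$-image being a subclass of the target) yields $V_1^\pi(t',x)\le V_1^\pi(t,x)+C^2L|t-t'|$; the symmetric construction with $u_0$ gives the reverse inequality, and the identical argument applies to $W_1^\pi$. The constant $C^2L$ involves only $f$ and $g$, so the estimate is uniform in $\pi$.

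I expect the main obstacle to be precisely this realized-control matching: one must verify that prepending or baking in a fixed control genuinely preserves membership in $\mathcal{A}_1^\pi(t,T)$ and $\mathcal{B}_1^\pi(t,T)$ and leaves the Lemma \ref{le 2.1} fixed point unchanged on $[t',T]$, including the bookkeeping when $[t,t')$ straddles several partition points, so that one block $[t_{m-1},t_m)$ of the $[t,T]$-strategy is split by $t'$ into a frozen piece $[t_{m-1},t')$ and an active piece $[t',t_m)$. Getting this compatibility right is what makes the argument work for arbitrary $t<t'$ in a single step, rather than forcing a chaining through the partition points where the half-open intervals would make the endpoints delicate.
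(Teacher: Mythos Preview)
Your treatment of the $x$- and $(p,q)$-Lipschitz constants is fine and matches the paper. The structure you propose for the $t$-variable is also the right one and is essentially what the paper does: freeze a fixed control on the gap $[t,t')$, relate the realized control pairs on $[t',T]$ via Lemma~\ref{le 2.1}, and read off a trajectory bound from~(\ref{equ 2.1.1}). The gap is exactly the point you flag at the end but do not resolve.

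Concretely, your ``bake-in'' map $\Phi_{v_0}:\mathcal{A}_1^\pi(t,T)\to\mathcal{A}_1^\pi(t',T)$ does \emph{not} land in $\mathcal{A}_1^\pi(t',T)$ when $t\in[t_{k-1},t_k)$ and $t'\in[t_{l-1},t_l)$ with $l>k$. By Definition~\ref{def 2.3}, a strategy in $\mathcal{A}_1^\pi(t',T)$ may, on $[t_{m-1},t_m)$, depend only on $(\zeta_{l-1}^\pi,\ldots,\zeta_{m-2}^\pi,\zeta_{1,m-1}^\pi)$, whereas $\alpha\in\mathcal{A}_1^\pi(t,T)$ already uses the extra variables $\zeta_{k-1}^\pi,\ldots,\zeta_{l-2}^\pi$; baking in $v_0$ does nothing to remove that dependence. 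The same obstruction hits your symmetric map $\Phi_{u_0}'$. Your sentence ``a strategy is always allowed to depend on only part of the admissible information and randomization'' justifies the \emph{prepending} maps $\Psi,\Psi'$ (fewer variables are fine), but not the bake-in maps (more variables are not).

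The paper closes precisely this hole by a law-matching trick. It transports an $\varepsilon$-optimal $\hat\beta\in(\mathcal{B}_1^\pi(t,T))^J$ to $(\beta_j')\in(\mathcal{B}_1^\pi(t',T))^J$ as follows: since $\zeta_{2,l-1}^\pi$ is uniform on $[0,1]$, one can build $2(l-k)+1$ i.i.d.\ uniform variables $\eta_{k-1}^1,\eta_{k-1}^2,\ldots,\eta_{l-1}^1$ as measurable functions of it (independent of all other $\zeta_{i,j}^\pi$), and then replace the forbidden block $(\zeta_{k-1}^\pi,\ldots,\zeta_{l-2}^\pi,\zeta_{2,l-1}^\pi)$ in $\tilde\beta_j$ by $(\eta_{k-1}^1\circ\zeta_{2,l-1}^\pi,\ldots,\eta_{l-1}^1\circ\zeta_{2,l-1}^\pi)$. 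The resulting $\beta_j'$ now conforms to Definition~\ref{def 2.3} for initial time $t'$, and for every deterministic $u$ the pair $\beta_j'(u)$ and $\tilde\beta_j(u)$ have the same conditional law given $(\zeta_{l-1}^\pi,\ldots,\zeta_{N-1}^\pi)$, so the expected payoffs coincide; this is exactly~(\ref{equ 3.1}). After that, your trajectory comparison goes through verbatim. Without this repackaging (or an equivalent argument that only the \emph{law} of the randomization matters and that a single uniform variable can carry arbitrarily many independent uniforms), the $t$-Lipschitz step is incomplete.
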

\begin{proof}
We just give the proof for $V_1^\pi$, the proof of $W_1^\pi$ is similar.

Since the cost functionals $g_{ij}$ are bounded, from the definition of $V_1^\pi$, we obviously have that
 $V_1^\pi$ is Lipschitz with respect to $p$ and $q$.
 For any $t\in[0,T]$, $(u,v)\in\mathcal{U}_{t,T}\times\mathcal{V}_{t,T}$, from (\ref{equ 2.1.1}), the functional
$g_{ij}(X_T^{t,x,u,v})$ is Lipschitz continuous with respect to $x$, then for any $(\hat{\alpha},\hat{\beta})\in(\mathcal{A}_1^\pi(t,T))^I\times(\mathcal{B}_1^\pi(t,T))^J$,
we have that $J(t,x,\hat{\alpha},\hat{\beta},p,q)$
is Lipschitz continuous with respect to $x$. Moreover, the Lipschitz constant only depends on the Lipschitz constants of $g_{ij}$
and the bound of $f$. Thus we
have $V_1^{\pi}$ is Lipschitz with respect to $x$.

Now we only need to show $V_1^\pi$ is Lipschitz with respect to $t$. Let $x\in\mathbb{R}^n$, $(p,q)\in\Delta(I)\times\Delta(J)$, and $t<t'<T$ be arbitrarily fixed.
Let $\hat{\beta}=(\beta_j)_{j=1,2,\ldots,J}\in(\mathcal{B}_1^\pi(t,T))^J$ be an $\varepsilon$-optimal strategy for $V_1^\pi(t,x,p,q)$. We
define a strategy $\beta'_j\in\mathcal{B}_1^\pi(t',T)$ associated with $\beta_j$.
For this end, we put for all $u\in\mathcal{U}_{t',T}$,
 $$\tilde{\beta_j}(\omega,u)=\beta_j(\omega,\tilde{u}),\ \text{where}\ \tilde{u}(s)=
\left\{
\begin{array}{ll}
\bar{u},&s\in[t,t'),\\
u(s),&s\in[t',T],
\end{array}
\right.
$$
and $\bar{u}\in U$ is an arbitrarily given constant control.

If $t'<t_k$, then $\tilde{\beta}_j\in\mathcal{B}_1^\pi(t',T)$ and we define $\beta'_j=\tilde{\beta}_j.$
Otherwise, we let $l\geq k+1$ be such that $t_{l-1}\leq t'< t_{l}$. We now consider $2(l-k)+1$ random variables
$\eta_{k-1}^i,\ldots,\eta_{l-2}^i,\eta_{l-1}^1$, $i=1,2$, defined on $([0,1],\mathcal{B}([0,1]),dx)$ with $\eta_{l-1}^1(x)=x,\ x\in[0,1]$, which are mutually independent, independent
of $\zeta_{i,j}^\pi$, $(i,j)\neq(2,l-1)$, and uniformly
distributed on $[0,1]$. Then the composed random variables $\eta_{k-1}^1\circ\zeta_{2,l-1}^\pi,
\eta_{k-1}^2\circ\zeta_{2,l-1}^\pi,\ldots
,\eta_{l-1}^1\circ\zeta_{2,l-1}^\pi,$ are mutually independent,
independent of all $\zeta_{i,j}^\pi,\ (i,j)\neq(2,l-1)$,
uniformly distributed random variables.\\
For any $u\in\mathcal{U}_{t',T}$, $s\in[t',T]$, we define
 \begin{eqnarray*}
&&\beta'_j(\omega,u)(s)\\
&&=\sum_{m=l}^N\tilde{\beta}_{j,m}\big((\eta_{k-1}^1\circ\zeta_{2,l-1}^\pi,\eta_{k-1}^2\circ\zeta_{2,l-1}^\pi,\ldots,
\eta_{l-1}^1\circ\zeta_{2,l-1}^\pi,\zeta_{1,l-1}^\pi,\zeta_{l}^\pi,\ldots,
\zeta_{m-2}^\pi,\zeta_{2,m-1}^\pi)
(\omega),u\big)(s)\cdot\\
&&\ \ \ I_{[t'\vee t_{m-1},t_m)}(s).
\end{eqnarray*}
where $\tilde{\beta}_{j,m}(\omega,u)(s)=\tilde{\beta}_{j}(\omega,u)(s) I_{[t\vee t_{m-1},t_m)}(s).$
Then we have $\beta'_j\in\mathcal{B}_1^\pi(t',T)$. Notice that for all $u\in\mathcal{U}_{t',T}$,
$\beta'_j(u)$ and $\tilde{\beta}_j(u)$ obey the same law knowing $\zeta_{l-1}^\pi,\ldots,\zeta_{N-1}^\pi$. Therefore,
$E[g_{ij}(X_T^{t',x,u,\beta'_j(u)})]=E[g_{ij}(X_T^{t',x,u,\tilde{\beta}_j(u)})].$
Then for all $\hat{\alpha}\in(\mathcal{A}_1^\pi(t',T))^I$,
\begin{equation}\label{equ 3.1}
J(t',x,\hat{\alpha},(\beta'_j),p,q)=J(t',x,\hat{\alpha},(\tilde{\beta}_j),p,q).
\end{equation}
Now for any $\alpha\in\mathcal{A}_1^\pi(t',T)$,
we define a strategy $\alpha'\in\mathcal{A}_1^\pi(t,T)$ associated with $\alpha$ as follows, for all $v\in\mathcal{V}_{t,T}$,
$$\alpha'(\omega,v)(s)=
\left\{
\begin{array}{ll}
\bar{u}(s),&s\in[t,t'),\\
\alpha(\omega,v|_{[t',T]})(s),&s\in[t',T].
\end{array}
\right.$$
Through the above construction and from Lemma \ref{le 2.1}, the couples of admissible controls related to the couples of strategies
$(\alpha',\beta_j)$ and $(\alpha,\tilde{\beta}_j)$ coincide on the interval $[t',T]$.
Hence, using the standard estimate and Gronwall inequality we have
\begin{equation}\label{equ 3.2}
E[|X_s^{t,x,\alpha',\beta_j}-X_s^{t',x,\alpha,\tilde{\beta}_j}|]\leq M|t'-t|,\ s\in[t',T],
\end{equation}
where the constant $M$ only depends on the bound of $f$ as well as the Lipschitz constant of $f$.
Thus, for any $\hat{\alpha}\in(\mathcal{A}_1^\pi(t',T))^I$, from (\ref{equ 3.1}), (\ref{equ 3.2}) and (\ref{equ 2.13}), we have
\begin{eqnarray*}
&&J(t',x,\hat{\alpha},(\beta'_j),p,q)=J(t',x,\hat{\alpha},(\tilde{\beta}_j),p,q)
\geq J(t,x,\hat{\alpha}',\hat{\beta},p,q)-C|t'-t| \\
&&\geq\inf_{\hat{\alpha}^{''}\in(\mathcal{A}_1^\pi(t,T))^I}J(t,x,\hat{\alpha}^{''},\hat{\beta},p,q)-C|t'-t|
\geq V_1^\pi(t,x,p,q)-\varepsilon-C|t'-t|,
\end{eqnarray*}
Therefore,
\begin{equation}\label{equ 3.3}
V_1^\pi(t',x,p,q)\geq V_1^\pi(t,x,p,q)-\varepsilon-C|t'-t|.
\end{equation}
Similarly, if we assume that $\hat{\beta}\in(\mathcal{B}_1^\pi(t',T))^J$  is $\varepsilon$-optimal for $V_1^\pi(t',x,p,q)$, then we can get
\begin{equation}\label{equ 3.4}
V_1^\pi(t,x,p,q)\geq V_1^\pi(t',x,p,q)-\varepsilon-C|t'-t|.
\end{equation}
Moreover, from the arbitrariness of $\varepsilon>0$, we obtain $V_1^\pi$ is Lipschitz continuous in $t$.
\end{proof}
\begin{lemma}\label{le 3.2}
For any $(t,x)\in[0,T]\times\mathbb{R}^n$, the functions $W_1^\pi(t,x,p,q)$ and $V_1^\pi(t,x,p,q)$ both are convex in $p$ and concave in $q$
on $\Delta(I)$ and $\Delta(J)$.
\end{lemma}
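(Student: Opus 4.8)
The plan is to exploit the bilinearity of the payoff $J(t,x,\hat\alpha,\hat\beta,p,q)=\sum_{i,j}p_iq_jE[g_{ij}(X_T^{t,x,\alpha_i,\beta_j})]$ in $(p,q)$ for fixed strategies, together with the structural fact that a vector strategy $\hat\alpha=(\alpha_1,\dots,\alpha_I)$ (resp. $\hat\beta=(\beta_1,\dots,\beta_J)$) carries one component per type, so that the inner optimization decouples across types. For the two ``easy'' directions this already suffices. Fixing $q$ and writing $p=\lambda p^1+(1-\lambda)p^2$, the fact that each $\alpha_i$ enters only the $i$-th summand gives, for every fixed $\hat\beta$,
\begin{equation*}
\inf_{\hat\alpha\in(\mathcal{A}_1^\pi(t,T))^I}J(t,x,\hat\alpha,\hat\beta,p,q)=\sum_{i=1}^I p_i\,\inf_{\alpha_i\in\mathcal{A}_1^\pi(t,T)}\sum_{j=1}^J q_jE[g_{ij}(X_T^{t,x,\alpha_i,\beta_j})],
\end{equation*}
which is affine in $p$; taking $\sup_{\hat\beta}$ and using subadditivity of $\sup$ yields convexity of $V_1^\pi$ in $p$. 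Symmetrically, for $W_1^\pi=\inf_{\hat\alpha}\sup_{\hat\beta}J$ the inner $\sup_{\hat\beta}$ decouples across $j$ into a function affine in $q$, and since an infimum of affine functions is concave, $W_1^\pi$ is concave in $q$.

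The two remaining directions — concavity of $V_1^\pi$ in $q$ and convexity of $W_1^\pi$ in $p$ — require a splitting/randomization argument, which I describe for $V_1^\pi$; the other case is its mirror image with the roles of the players interchanged. Write $q=\lambda q^1+(1-\lambda)q^2$ and let $\hat\beta^1,\hat\beta^2$ be $\varepsilon$-optimal for $V_1^\pi(t,x,p,q^1)$ and $V_1^\pi(t,x,p,q^2)$. Player~\Rmnum{2}, who observes $j$, assembles a single strategy $\hat\beta=(\beta_j)$ that, for each $j$ with $q_j>0$, plays $\beta_j^1$ with probability $\mu_j:=\lambda q_j^1/q_j$ and $\beta_j^2$ with probability $1-\mu_j$, the branch being chosen by a private randomization device that is independent of everything else. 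Such a device can be extracted from the family $\{\zeta_{2,l}^\pi\}$ by splitting a uniform variable into independent uniform variables, exactly as the auxiliary variables $\eta_\cdot$ are produced in the proof of Lemma \ref{le 3.1}, so that $\beta_j$ still belongs to $\mathcal{B}_1^\pi(t,T)$. The weights are designed so that $q_j\mu_j=\lambda q_j^1$ and $q_j(1-\mu_j)=(1-\lambda)q_j^2$; indices with $q_j=0$ are irrelevant and $\beta_j$ may be set arbitrarily there.

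The key step is the linear decomposition of the payoff. For each fixed $\alpha_i$ and each $j$, conditioning on the value of the selection device and invoking Lemma \ref{le 2.1} (in the extended form of Remark \ref{re 2.1}) shows that on the branch where $\beta_j=\beta_j^1$ the realized control pair coincides with the one produced by $(\alpha_i,\beta_j^1)$, and likewise for $\beta_j^2$; as the device is independent of all other randomness, $E[g_{ij}(X_T^{t,x,\alpha_i,\beta_j})]=\mu_jE[g_{ij}(X_T^{t,x,\alpha_i,\beta_j^1})]+(1-\mu_j)E[g_{ij}(X_T^{t,x,\alpha_i,\beta_j^2})]$. Multiplying by $p_iq_j$, summing, and using $\inf(\,\cdot+\cdot\,)\ge\inf(\cdot)+\inf(\cdot)$ on the decoupled infima over $\alpha_i$ gives
\begin{equation*}
\inf_{\hat\alpha}J(t,x,\hat\alpha,\hat\beta,p,q)\ge\lambda\inf_{\hat\alpha}J(t,x,\hat\alpha,\hat\beta^1,p,q^1)+(1-\lambda)\inf_{\hat\alpha}J(t,x,\hat\alpha,\hat\beta^2,p,q^2),
\end{equation*}
so that $V_1^\pi(t,x,p,q)\ge\lambda V_1^\pi(t,x,p,q^1)+(1-\lambda)V_1^\pi(t,x,p,q^2)-\varepsilon$; letting $\varepsilon\to0$ yields concavity in $q$. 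Convexity of $W_1^\pi$ in $p$ follows identically, with Player~\Rmnum{1} splitting over $i$ using his own device $\{\zeta_{1,l}^\pi\}$.

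The main obstacle is precisely this decomposition. One must realize the randomized (split) strategy inside $\mathcal{B}_1^\pi(t,T)$ (resp. $\mathcal{A}_1^\pi(t,T)$) via a device independent of the opponent's randomization, and then verify through Lemma \ref{le 2.1} that conditioning on that device collapses the realized dynamics onto those of the two constituent strategies, so that the expectation splits linearly even though the opponent is allowed to react to the observed control. The independence of the device is what makes the branchwise law of the constituent game unaffected, and hence what makes the whole argument go through; everything else is the routine bookkeeping of separating the decoupled infima and suprema and passing $\varepsilon$ to zero.
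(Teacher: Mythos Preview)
Your approach is the paper's: the easy directions via the decoupling $\inf_{\hat\alpha}J=\sum_ip_i\inf_{\alpha}\sum_jq_jE[g_{ij}(X_T^{t,x,\alpha,\beta_j})]$ (the paper's (\ref{equ 3.5})), which is affine in $p$ before the outer $\sup$, and the hard direction via a randomized split of one of Player~\Rmnum{2}'s private uniforms into a selector with type-dependent weights $\mu_j=\lambda q_j^1/q_j$ (the paper's $1-c_j$) and a rescaled residual, yielding the linear decomposition $E[g_{ij}(X_T^{t,x,\alpha_i,\beta_j})]=\mu_jE[g_{ij}(X_T^{t,x,\alpha_i,\beta_j^1})]+(1-\mu_j)E[g_{ij}(X_T^{t,x,\alpha_i,\beta_j^2})]$ with the \emph{same} $\alpha_i$, after which superadditivity of $\inf$ concludes. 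The paper carries out the split on the coordinate $\zeta_{2,N-1}^\pi$ --- the unique private bit of Player~\Rmnum{2} that $\alpha$ never observes --- which is exactly what secures the independence you invoke (``the device is independent of all other randomness''); this specific choice is worth naming, since a selector built from an earlier $\zeta_{2,l}^\pi$ would be visible to $\alpha$ on later subintervals and the branchwise law would not collapse to that of $(\alpha_i,\beta_j^m)$ with the same $\alpha_i$.
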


\begin{proof}
We just give the proof for $V_1^\pi$, the proof of $W_1^\pi$ is similar.\\
It is obvious that
\begin{equation}\label{equ 3.5}
V_1^\pi(t,x,p,q)=\sup_{(\beta_j)\in(\mathcal{B}_1^\pi(t,T))^J}\sum^I_{i=1}p_i
\inf_{\alpha\in\mathcal{A}_1^\pi(t,T)}\sum^J_{j=1}q_jE[g_{ij}(X_T^{t,x,\alpha,\beta_j})].
\end{equation}
then we know $V_1^\pi(t,x,p,q)$ is convex in $p$.

Now we prove that $V_1^\pi(t,x,p,q)$ is concave in $q$. Let $(t,x,p)\in[0,T]\times\mathbb{R}^n\times\Delta(I)$, $q^0,q^1\in\Delta(J), \lambda\in(0,1)$, and let $\hat{\beta}^0=(\beta^0_j)_{j=1,...,J}\in(\mathcal{B}_1^\pi(t,T))^J$ and $\hat{\beta}^1=(\beta^1_j)_{j=1,...,J}\in(\mathcal{B}_1^\pi(t,T))^J$ be $\varepsilon$-optimal for $V_1^\pi(t,x,p,q^0)$ and $V_1^\pi(t,x,p,q^1)$, respectively. For $q^0=(q^0_1,\ldots,q^0_J)$ and $q^1=(q^1_1,\ldots,q^1_J)$, we define $q^\lambda_j=(1-\lambda)q^0_j+\lambda q^1_j$ and  $q^\lambda=(q^\lambda_1,\ldots,q^\lambda_J)\in\Delta(J)$. Without loss of generality, we assume $q^\lambda_j>0$, $j=1,\ldots,J$, then we define $c_j=\frac{(1-\lambda)q^0_j}{q^\lambda_j}$, $j=1,\ldots,J$. For $\omega\in \Omega$, $u\in\mathcal{U}_{t,T}$, $s\in[t,T)$, we define the strategy $\hat{\beta}^\lambda=(\beta_j^\lambda)_{j=1,\ldots,J}$ and
$\beta^\lambda_j(y_1,\ldots,y_{2(N-k)+1},u)(s)=\\
\beta_{j}^0(y_1,y_2,\ldots,y_{2(N-k)},\frac{1}{c_j}y_{2(N-k)+1},u)(s)
+\beta_{j}^1(y_1,y_2,\ldots,y_{2(N-k)},\frac{1}{1-c_j}(y_{2(N-k)+1}-c_j),u)(s),$\\
where
 $\beta^i_j((\zeta_{k-1}^\pi,\ldots,\zeta_{N-2}^\pi,\zeta_{2,N-1}^\pi)(\omega),u)(s)
=\sum\limits_{l=k}^N\beta^i_{lj}((\zeta_{k-1}^\pi,\ldots,\zeta_{l-2}^\pi,\zeta_{2,l-1}^\pi)(\omega),u)I_{[t\vee t_{l-1},t_l]}(s)$, $i=0,1$, respectively.
Then we have $(\beta^\lambda_j)\in(\mathcal{B}_1^\pi(t,T))^J$. Therefore, we have
\begin{eqnarray*}
&& \inf_{\alpha\in(\mathcal{A}_1^\pi(t,T))^I}J(t,x,\hat{\alpha},\hat{\beta}^\lambda,p,q^\lambda)=\sum_{i=1}^Ip_i
\inf_{\alpha\in\mathcal{A}_1^\pi(t,T)}\sum_{j=1}^J
q_j^\lambda E[g_{ij}(X_T^{t,x,\alpha,\beta^\lambda_j})]\\ &=&\sum^{I}_{i=1}p_i\inf_{\alpha\in\mathcal{A}_1^\pi(t,T)}\sum_{j=1}^Jq^\lambda_j\Big(\int_{[0,c_j]}E[g_{ij}(X_T^{t,x,\alpha,\beta^0_j
((\zeta_{2,k-1}^\pi,\zeta_{1,k-1}^\pi,\zeta_{k}^\pi,...,\zeta_{N-2}^\pi,\frac{1}{c_j}y_{2(N-k)+1})(\omega),\cdot)})]dy_{2(N-k)+1}\\
&&+\int_{[c_j,1]}E[g_{ij}(X_T^{t,x,\alpha,\beta^1_j((\zeta_{2,k-1}^\pi,\zeta_{1,k-1}^\pi,\zeta_{k}^\pi,...,\zeta_{N-2}^\pi,\frac{1}{1-c_j}(y_{2(N-k)+1}-c_j))
(\omega),\cdot)})]dy_{2(N-k)+1}\Big)\\
&=& \sum_{i=1}^{I}p_i\inf_{\alpha\in\mathcal{A}_1^\pi(t,T)}\sum_{j=1}^Jq^\lambda_j\Big[\frac{(1-\lambda)q_j^0}{q^\lambda_j}E[g_{ij}
(X_T^{t,x,\alpha,\beta^0_j})]+
\frac{\lambda q^1_j}{q^\lambda_j}E[g_{ij}(X_T^{t,x,\alpha,\beta^1_j})]\Big]\\
&\geq&(1-\lambda)\sum_{i=1}^{I}p_i\inf_{\alpha\in\mathcal{A}_1^\pi(t,T)}\sum_{j=1}^Jq^0_jE[g_{ij}(X_T^{t,x,\alpha,\beta^0_j})]
+\lambda\sum_{i=1}^{I}p_i\inf_{\alpha\in\mathcal{A}_1^\pi(t,T)}\sum_{j=1}^Jq^1_jE[g_{ij}(X_T^{t,x,\alpha,\beta^1_j})]\\
&\geq&(1-\lambda)V_1^\pi(t,x,p,q^0)+\lambda V_1^\pi(t,x,p,q^1)-2\varepsilon,
\end{eqnarray*}
since $\hat{\beta}^0$ and $\hat{\beta}^1$ are $\varepsilon$-optimal strategies for $V_1^\pi(t,x,p,q^0)$ and $V_1^\pi(t,x,p,q^1)$, respectively.
Thus,
\begin{equation}\label{equ 3.6}
V_1^\pi(t,x,p,q^\lambda)\geq\inf_{\alpha\in(\mathcal{A}_1^\pi(t,T))^I}J(t,x,\hat{\alpha},\hat{\beta}^\lambda,p,q^\lambda)
\geq(1-\lambda)V_1^\pi(t,x,p,q^0)+\lambda V_1^\pi(t,x,p,q^1)-\varepsilon.
\end{equation}
Thanks to the arbitrariness of $\varepsilon$, we obtain the desired result.
\end{proof}

Now we introduce the Fenchel transforms (refer to \cite{C2007}). Assume a mapping $\psi:[0,T]\times\mathbb{R}^n\times\Delta(I)\times\Delta(J)\rightarrow\mathbb{R}$ convex in $p$ and concave in $q$ on $\Delta(I)$ and $\Delta(J)$, respectively, then we define its convex conjugate (with respect to variable $p$) $\psi^*$ by
\begin{equation}\label{equ 3.7}
\psi^*(t,x,\bar{p},q)=\sup_{p\in\Delta(I)}\{\bar{p}\cdot p -\psi(t,x,p,q)\},\
(t,x,\bar{p},q)\in[0,T]\times\mathbb{R}^n\times\mathbb{R}^I\times\Delta(J),
\end{equation}
and its concave conjugate (with respect to variable $q$) $\psi^\#$ by
\begin{equation}\label{equ 3.8}
\psi^\#(t,x,p,\bar{q})=\inf_{q\in\Delta(J)}\{\bar{q}\cdot q-\psi(t,x,p,q)\},\
(t,x,p,\bar{q})\in[0,T]\times\mathbb{R}^n\times\Delta(I)\times\mathbb{R}^J.
\end{equation}
Using these notations we denote by $V_1^{\pi*}(W_1^{\pi\#})$ for the convex (respectively, concave) conjugate of $V_1^\pi$ (respectively, $W_1^\pi$) with respect to $p$ (respectively, $q$).
\begin{lemma}\label{le 3.3}
For all $(t,x,\bar{p},q)\in[0,T]\times\mathbb{R}^n\times\mathbb{R}^I\times\Delta(J)$, we have
\begin{equation}\label{equ 3.9}
V_1^{\pi*}(t,x,\bar{p},q)=\inf_{(\beta_j)\in(\mathcal{B}_1^\pi(t,T))^J}\sup_{\alpha\in\mathcal{A}_1^\pi(t,T)}
\max_{i\in\{1,...,I\}}\{\bar{p}_i
-\sum_{j=1}^Jq_jE[g_{ij}(X_T^{t,x,\alpha,\beta_j})]\}.
\end{equation}
\end{lemma}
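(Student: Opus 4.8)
The plan is to recognise the claimed identity as a minimax equality for a function that is affine in $p$, so that the proof splits into a purely algebraic reduction plus a single application of a minimax theorem. First I would insert the explicit formula (\ref{equ 3.5}) for $V_1^\pi$ into the definition (\ref{equ 3.7}) of the convex conjugate. Abbreviating $G_i(\hat\beta):=\inf_{\alpha\in\mathcal{A}_1^\pi(t,T)}\sum_{j=1}^Jq_jE[g_{ij}(X_T^{t,x,\alpha,\beta_j})]$ and $a_i(\hat\beta):=\bar p_i-G_i(\hat\beta)$, formula (\ref{equ 3.5}) reads $V_1^\pi(t,x,p,q)=\sup_{\hat\beta\in(\mathcal{B}_1^\pi(t,T))^J}\sum_{i=1}^Ip_iG_i(\hat\beta)$, and therefore
\[
V_1^{\pi*}(t,x,\bar p,q)=\sup_{p\in\Delta(I)}\Big\{\bar p\cdot p-\sup_{\hat\beta}\sum_{i=1}^Ip_iG_i(\hat\beta)\Big\}=\sup_{p\in\Delta(I)}\inf_{\hat\beta\in(\mathcal{B}_1^\pi(t,T))^J}\sum_{i=1}^Ip_ia_i(\hat\beta).
\]

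On the other side I would rewrite the right-hand member of the statement. Because a supremum commutes with a finite maximum, $\sup_{\alpha}\max_{i}\{\bar p_i-\sum_jq_jE[g_{ij}(X_T^{t,x,\alpha,\beta_j})]\}=\max_i\{\bar p_i-\inf_\alpha\sum_jq_jE[g_{ij}(X_T^{t,x,\alpha,\beta_j})]\}=\max_ia_i(\hat\beta)$; and since a linear functional on the simplex attains its maximum at a vertex, $\max_ia_i(\hat\beta)=\sup_{p\in\Delta(I)}\sum_ip_ia_i(\hat\beta)$. Hence the right-hand side of the lemma equals $\inf_{\hat\beta}\sup_{p\in\Delta(I)}\sum_ip_ia_i(\hat\beta)$. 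Writing $\Theta(p,\hat\beta):=\sum_ip_ia_i(\hat\beta)=\bar p\cdot p-\sum_ip_iG_i(\hat\beta)$, the assertion to prove is precisely the minimax equality $\sup_p\inf_{\hat\beta}\Theta=\inf_{\hat\beta}\sup_p\Theta$.

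The elementary inequality $\sup_p\inf_{\hat\beta}\Theta\le\inf_{\hat\beta}\sup_p\Theta$ costs nothing. The reverse inequality is where the work lies, and I would derive it from Sion's minimax theorem. The variable $p$ lives in the compact convex simplex $\Delta(I)$ and $\Theta$ is affine---hence concave and continuous---in $p$, so the only substantial hypothesis to verify is that $\Theta(p,\cdot)$ is quasi-convex and lower semicontinuous in $\hat\beta$ over a convex strategy set; for fixed $p\ge0$ this reduces to the quasi-concavity of $\hat\beta\mapsto\sum_ip_iG_i(\hat\beta)$, and since $G_i$ is an infimum over $\alpha$ of the maps $\hat\beta\mapsto\sum_jq_jE[g_{ij}(X_T^{t,x,\alpha,\beta_j})]$, it is enough that each of these maps be affine along a suitable randomised combination of strategies.

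The main obstacle is therefore the convex structure of the strategy space: $(\mathcal{B}_1^\pi(t,T))^J$ is not linearly convex, and convexity has to be produced by randomisation. Here I would recycle the construction from the proof of Lemma \ref{le 3.2}: given $\hat\beta^0,\hat\beta^1\in(\mathcal{B}_1^\pi(t,T))^J$ and $\lambda\in(0,1)$, one splits the uniform randomisation variable carried by the strategy into parts of length $1-\lambda$ and $\lambda$ and plays $\hat\beta^0$ on the first part and $\hat\beta^1$ on the second; this yields $\hat\beta^\lambda\in(\mathcal{B}_1^\pi(t,T))^J$ with $E[g_{ij}(X_T^{t,x,\alpha,\beta_j^\lambda})]=(1-\lambda)E[g_{ij}(X_T^{t,x,\alpha,\beta_j^0})]+\lambda E[g_{ij}(X_T^{t,x,\alpha,\beta_j^1})]$ for every $\alpha$, which is the affineness needed above. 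The delicate point in making this rigorous is to confirm that the interpolation of the payoff is not destroyed by the infimum over Player \Rmnum{1}'s responses $\alpha$---that is, that $\alpha$ cannot react to the internal randomisation mixing $\hat\beta^0$ and $\hat\beta^1$---and this is exactly guaranteed by the delay and non-anticipativity built into Definition \ref{def 2.3}. Once the hypotheses of Sion's theorem are in place, it gives $\inf_{\hat\beta}\sup_p\Theta\le\sup_p\inf_{\hat\beta}\Theta$, which together with the trivial inequality closes the proof.
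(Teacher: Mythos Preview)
Your reduction to the minimax identity $\sup_{p\in\Delta(I)}\inf_{\hat\beta}\Theta(p,\hat\beta)=\inf_{\hat\beta}\sup_{p\in\Delta(I)}\Theta(p,\hat\beta)$ is correct, and the rewriting of both sides via (\ref{equ 3.5}) and the identity $\max_i a_i=\sup_{p\in\Delta(I)}\sum_ip_ia_i$ is exactly right. But the final step---invoking Sion's theorem on $\Delta(I)\times(\mathcal{B}_1^\pi(t,T))^J$---is where the argument breaks down. Sion's theorem requires both factors to be convex subsets of linear topological spaces, and the randomised mixture $\hat\beta^\lambda$ you build from Lemma~\ref{le 3.2} does \emph{not} turn $(\mathcal{B}_1^\pi(t,T))^J$ into such a set: it only shows that for every $\hat\beta^0,\hat\beta^1,\lambda$ there exists \emph{some} $\hat\beta^\lambda$ with $\Theta(p,\hat\beta^\lambda)\le(1-\lambda)\Theta(p,\hat\beta^0)+\lambda\Theta(p,\hat\beta^1)$. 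That is a ``convex-like'' property in the sense of Ky~Fan, not the genuine convexity Sion asks for; you would also need a topology on the strategy space and lower semicontinuity of $\Theta(p,\cdot)$, neither of which you supply. The gap is reparable---either cite a Fan-type minimax theorem for concavelike/convexlike functions, or push the problem into $\mathbb{R}^I$ by working with the set $A^+:=\{(a_1(\hat\beta),\dots,a_I(\hat\beta)):\hat\beta\}+\mathbb{R}^I_+$, which your randomisation argument does make convex, and apply Sion on $\Delta(I)\times A^+$---but as written the appeal to Sion is not justified.

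The paper sidesteps this difficulty entirely by a different and shorter route: it lets $F$ denote the right-hand side of (\ref{equ 3.9}), observes that $F$ is convex in $\bar p$, and then computes the Fenchel conjugate $F^*(t,x,p,q)$ directly. After the same $\sup_\alpha\max_i=\max_i\{\bar p_i-h_i\}$ manipulation you use, the calculation of $F^*$ reduces to $\sup_{\bar p\in\mathbb{R}^I}\min_i\{p\cdot\bar p-\bar p_i+h_i\}$, which is a minimax only over finite-dimensional Euclidean variables with a bilinear integrand; interchanging $\sup_{\bar p}$ and $\inf_{\bar{\bar p}\in\Delta(I)}$ there is elementary (the inner sup is $+\infty$ unless $\bar{\bar p}=p$), and yields $F^*=V_1^\pi$ via (\ref{equ 3.5}). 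Biconjugation then gives $V_1^{\pi*}=F^{**}=F$. The virtue of this approach is that the only minimax step lives in $\mathbb{R}^I\times\Delta(I)$, so no structure on the strategy space is needed at all.
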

\begin{proof}
We define
\begin{equation}\label{equ 3.11}
F(t,x,\bar{p},q)=\inf_{(\beta_j)\in(\mathcal{B}_1^\pi(t,T))^J}\sup_{\alpha\in\mathcal{A}_1^\pi(t,T)}
\max_{i\in\{1,...,I\}}\{\bar{p}_i
-\sum_{j=1}^Jq_jE[g_{ij}(X_T^{t,x,\alpha,\beta_j})]\}.
\end{equation}
It is obviously that $F(t,x,\bar{p},q)$ is convex with
respect to $\bar{p}$.
From (\ref{equ 3.7}) and (\ref{equ 3.11}), we have
\begin{eqnarray}\label{equ 3.12}
\begin{split}
F^*(t,x,p,q)&=\sup_{\bar{p}\in\mathbb{R}^I}\{p\cdot \bar{p}-\inf_{(\beta_j)}\max_{i\in\{1,...,I\}}\{\bar{p}_i-\inf_{\alpha}
\sum_{j=1}^Jq_jE[g_{ij}(X_T^{t,x,\alpha,\beta_j})]\}\}\\
&=\sup_{(\beta_j)}\sup_{\bar{p}\in\mathbb{R}^I}\min_{i\in\{1,...,I\}}\{p\cdot \bar{p}-\bar{p}_i+\inf_{\alpha}\sum_{j=1}^Jq_jE[g_{ij}(X_T^{t,x,\alpha,\beta_j})]\}\\
&=\sup_{(\beta_j)}\sup_{\bar{p}\in\mathbb{R}^I}\min_{i\in\{1,...,I\}}\{p\cdot\bar{p}-\bar{p}_i+h_i\},
\end{split}
\end{eqnarray}
where we define $h_i:=\inf\limits_{\alpha}\sum\limits_{j=1}^Jq_jE[g_{ij}(X_T^{t,x,\alpha,\beta_j})]$,
$1\leq i\leq I$.\\
On the other hand,
\begin{eqnarray}\label{equ 3.13}
\begin{split}
\sup_{\bar{p}\in\mathbb{R}^I}\min_{i\in\{1,...,I\}}\{p\cdot\bar{p}-\bar{p}_i+h_i\}
&=\sup_{\bar{p}\in\mathbb{R}^I}\{p\cdot \bar{p}+\min_{i\in\{1,...,I\}}\{h_i-\bar{p}_i\}\}
=\sup_{\bar{p}\in\mathbb{R}^I}\{p\cdot\bar{p}+\inf_{\bar{\bar{p}}\in\Delta(I)}(h-\bar{p})\bar{\bar{p}}\}\\
&=\sup_{\bar{p}\in\mathbb{R}^I}\inf_{\bar{\bar{p}}\in\Delta(I)}\{(h-\bar{p})\bar{\bar{p}}+p\cdot\bar{p}\}
=\inf_{\bar{\bar{p}}\in\Delta(I)}\sup_{\bar{p}\in\mathbb{R}^I}\{(p-\bar{\bar{p}})\bar{p}+h\cdot\bar{\bar{p}}\}\\
&=h\cdot p.
\end{split}
\end{eqnarray}
From (\ref{equ 3.12}), (\ref{equ 3.13}) and (\ref{equ 3.5}), we get
\begin{equation}\label{equ 3.14}
F^*(t,x,p,q)=\sup_{(\beta_j)}\sum_{i=1}^Ip_i\inf_{\alpha}\sum_{j=1}^Jq_jE[g_{ij}(X_T^{t,x,\alpha,\beta_j})]=V_1^\pi(t,x,p,q).
\end{equation}
Since $F$ is convex in $\bar{p}$, we have $V_1^{\pi*}=F^{**}=F$.
 \end{proof}
 Using the definition of $V_1^{\pi*}$ and $W_1^{\pi\#}$, from Lemma \ref{le 3.1} we have the following lemma.
 \begin{lemma}\label{le 3.4}
 For all the partition $\pi$ of the interval $[0,T]$,
 the convex conjugate function $V_1^{\pi*}(t,x,\bar{p},q)$ is Lipschitz with respect to $(t,x,\bar{p},q)$,
 the concave conjugate function $W_1^{\pi\#}(t,x,p,\bar{q})$ is Lipschitz with respect to $(t,x,p,\bar{q})$.
 \end{lemma}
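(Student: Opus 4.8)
The plan is to exploit the elementary but robust principle that the Fenchel transform of a function which is Lipschitz in its parameters is again Lipschitz, provided the conjugation is carried out over a bounded set. Here that set is $\Delta(I)$ (respectively $\Delta(J)$), which is compact, and the uniform-in-$\pi$ Lipschitz bound for $V_1^\pi$ (respectively $W_1^\pi$) is exactly the content of Lemma \ref{le 3.1}. I would treat $V_1^{\pi*}$ in detail; the argument for $W_1^{\pi\#}$ is entirely symmetric, with the supremum over $\Delta(I)$ replaced by an infimum over $\Delta(J)$ and the roles of $p$ and $q$ interchanged.

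First I would establish Lipschitz continuity of $V_1^{\pi*}(t,x,\bar{p},q)$ jointly in $(t,x,q)$ with $\bar{p}$ held fixed. Starting from the definition (\ref{equ 3.7}), namely $V_1^{\pi*}(t,x,\bar{p},q)=\sup_{p\in\Delta(I)}\{\bar{p}\cdot p-V_1^\pi(t,x,p,q)\}$, I use the standard bound $|\sup_p A(p)-\sup_p B(p)|\leq\sup_p|A(p)-B(p)|$. Taking $A(p)=\bar{p}\cdot p-V_1^\pi(t,x,p,q)$ and $B(p)=\bar{p}\cdot p-V_1^\pi(t',x',p,q')$, the linear terms $\bar{p}\cdot p$ cancel, leaving
\[
|V_1^{\pi*}(t,x,\bar{p},q)-V_1^{\pi*}(t',x',\bar{p},q')|\leq\sup_{p\in\Delta(I)}|V_1^\pi(t',x',p,q')-V_1^\pi(t,x,p,q)|\leq L\big(|t-t'|+|x-x'|+|q-q'|\big),
\]
where $L$ is the uniform-in-$\pi$ Lipschitz constant of $V_1^\pi$ furnished by Lemma \ref{le 3.1}.

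Next I would treat the dependence on $\bar{p}$ with $(t,x,q)$ fixed. The same sup-difference inequality, applied now with the $V_1^\pi$ terms cancelling, gives $|V_1^{\pi*}(t,x,\bar{p},q)-V_1^{\pi*}(t,x,\bar{p}',q)|\leq\sup_{p\in\Delta(I)}|(\bar{p}-\bar{p}')\cdot p|\leq C|\bar{p}-\bar{p}'|$, where $C=\sup_{p\in\Delta(I)}|p|$ is finite precisely because $\Delta(I)$ is bounded. Combining the two estimates through the triangle inequality yields joint Lipschitz continuity of $V_1^{\pi*}$ in $(t,x,\bar{p},q)$ with a constant depending only on $L$ and $C$, hence not on $\pi$. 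The argument for $W_1^{\pi\#}$ is identical after replacing the supremum in (\ref{equ 3.7}) by the infimum over $\Delta(J)$ in (\ref{equ 3.8}) and using $|\inf_q A-\inf_q B|\leq\sup_q|A-B|$ together with the boundedness of $\Delta(J)$.

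I do not expect a serious obstacle here; it is essentially a soft stability property of the Fenchel transform. The only point requiring a little care is that $\bar{p}$ ranges over the unbounded set $\mathbb{R}^I$, so one cannot simply invoke Lipschitz continuity of the integrand in $\bar{p}$ on a compact set. Instead the control of the $\bar{p}$-variation must come from the linearity of $\bar{p}\cdot p$ combined with the boundedness of the conjugation domain $\Delta(I)$, which is exactly what makes $C=\sup_{p\in\Delta(I)}|p|$ finite. It is the compactness of the simplices $\Delta(I)$ and $\Delta(J)$, rather than any property of the dynamics, that drives the result.
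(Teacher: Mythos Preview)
Your proposal is correct and follows exactly the approach indicated by the paper: the paper does not give a detailed proof of this lemma but simply notes that it follows ``using the definition of $V_1^{\pi*}$ and $W_1^{\pi\#}$, from Lemma \ref{le 3.1}'', which is precisely what you have spelled out in detail. Your observation that the uniformity in $\pi$ of the Lipschitz constant is inherited from Lemma \ref{le 3.1} together with the fixed bound $\sup_{p\in\Delta(I)}|p|$ is correct and worth making explicit, since this uniformity is needed later for the Arzel\`a--Ascoli argument in Lemma \ref{le 3.6}.
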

Generally speaking, the game with asymmetric information does not have the dynamic programming principle,
but it has s sub-dynamic programming principle.
\begin{lemma}\label{le 3.5}
 For any $(t,x,\bar{p},q)\in[t_{k-1},t_k)\times\mathbb{R}^n\times\mathbb{R}^I\times\Delta(J)$, and for all $l\ (k\leq l \leq N)$,
we have
\begin{equation}\label{equ 3.16}
\begin{split}
V_1^{\pi*}(t,x,\bar{p},q)\leq \inf_{\beta\in\mathcal{B}_1^\pi(t,t_l)}\sup_{\alpha\in\mathcal{A}_1^\pi(t,t_l)}
E[V_1^{\pi*}(t_l,X_{t_l}^{t,x,\alpha,\beta},\bar{p},q)].
\end{split}
\end{equation}
\end{lemma}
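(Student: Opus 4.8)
The plan is to establish the sub-dynamic programming principle for the convex conjugate $V_1^{\pi*}$ by combining the variational formula from Lemma \ref{le 3.3} with a splitting of the strategies across the intermediate time $t_l$. Recall from Lemma \ref{le 3.3} that
\begin{equation*}
V_1^{\pi*}(t,x,\bar{p},q)=\inf_{(\beta_j)\in(\mathcal{B}_1^\pi(t,T))^J}\sup_{\alpha\in\mathcal{A}_1^\pi(t,T)}
\max_{i\in\{1,\ldots,I\}}\Big\{\bar{p}_i-\sum_{j=1}^Jq_jE[g_{ij}(X_T^{t,x,\alpha,\beta_j})]\Big\}.
\end{equation*}
The idea is that Player II can first play some family $\hat\beta=(\beta_j)$ on the short interval $[t,t_l]$ to drive the state to $X_{t_l}^{t,x,\alpha,\beta}$, and from time $t_l$ onward continue with a (nearly) optimal family for the conjugate value $V_1^{\pi*}(t_l,\cdot,\bar p,q)$ started at the terminal point of the first piece. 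First I would fix $\varepsilon>0$ and, using Lemma \ref{le 3.4} (Lipschitz continuity of $V_1^{\pi*}$ in all variables), choose for each point $y$ lying in a finite $\varepsilon$-net of the (bounded, by \eqref{equ 2.1.1}) reachable set an $\varepsilon$-optimal family $\hat\beta^{y}\in(\mathcal{B}_1^\pi(t_l,T))^J$ realizing $V_1^{\pi*}(t_l,y,\bar p,q)$.

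Next I would concatenate strategies. Given an arbitrary $\varepsilon$-optimal $\hat\beta^{1}\in(\mathcal{B}_1^\pi(t,t_l))^J$ for the inner game $\inf_\beta\sup_\alpha E[V_1^{\pi*}(t_l,X_{t_l}^{t,x,\alpha,\beta},\bar p,q)]$, I would build a full strategy family on $[t,T]$ by running $\hat\beta^{1}$ on $[t,t_l]$ and then switching, on $[t_l,T]$, to the pre-selected $\hat\beta^{y}$ corresponding to the realized endpoint $y\approx X_{t_l}$. Because the strategies in $\mathcal{A}_1^\pi$ and $\mathcal{B}_1^\pi$ have the delay structure synchronized to the partition $\pi$ and $t_l$ is a partition point, this concatenation again lies in $(\mathcal{B}_1^\pi(t,T))^J$; this is exactly where the partition-compatibility built into Definitions \ref{def 2.2}--\ref{def 2.3} is used. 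For this concatenated family, Lemma \ref{le 2.1} (and Remark \ref{re 2.1}) guarantees that against any $\alpha\in\mathcal{A}_1^\pi(t,T)$ the admissible controls, hence the trajectory, on $[t_l,T]$ coincide with those produced by the sub-game started at $X_{t_l}$, so the flow property $X_T^{t,x,\alpha,\beta}=X_T^{t_l,X_{t_l},\alpha,\beta^{y}}$ lets me replace the terminal payoff by the conjugate value at time $t_l$ up to the $\varepsilon$-errors from the net and from $\varepsilon$-optimality.

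The estimate then reads, schematically,
\begin{equation*}
V_1^{\pi*}(t,x,\bar p,q)\le \sup_{\alpha}\max_i\Big\{\bar p_i-\sum_j q_j E[g_{ij}(X_T)]\Big\}
\le \sup_{\alpha}E\big[V_1^{\pi*}(t_l,X_{t_l}^{t,x,\alpha,\beta^{1}},\bar p,q)\big]+C\varepsilon,
\end{equation*}
and taking the infimum over $\beta^{1}\in\mathcal{B}_1^\pi(t,t_l)$ and letting $\varepsilon\to0$ yields \eqref{equ 3.16}. I expect the main obstacle to be the measurable-selection and pasting step: one must verify that gluing the endpoint-dependent families $\hat\beta^{y}$ onto $\hat\beta^{1}$ produces a genuinely admissible NAD strategy in $\mathcal{B}_1^\pi(t,T)$ (respecting both measurability in $\omega$ and the delay constraint at $t_l$), and that the $\max_i$ over the private indices commutes appropriately with conditioning at $t_l$. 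The reachable-set $\varepsilon$-net discretization, justified by the uniform Lipschitz bound of Lemma \ref{le 3.4}, is the device I would use to make this pasting rigorous while keeping the accumulated error of order $\varepsilon$; the trajectory estimate \eqref{equ 2.1.1} controls the dependence on the endpoint so that the net error stays linear in $\varepsilon$.
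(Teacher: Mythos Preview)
Your proposal is correct and follows essentially the same route as the paper: invoke the variational representation from Lemma~\ref{le 3.3}, discretize the bounded reachable set by a finite Borel partition (justified by Lemma~\ref{le 3.4}), concatenate a first-stage strategy on $[t,t_l]$ with endpoint-dependent $\varepsilon$-optimal continuation families on $[t_l,T]$, and control the error via the flow property and Lipschitz regularity. One small correction: the first-stage strategy in the sub-DPP is a \emph{single} $\beta^0\in\mathcal{B}_1^\pi(t,t_l)$, not a $J$-tuple; all $J$ components of the concatenated family agree with $\beta^0$ on $[t,t_l]$ and only branch after $t_l$, which is precisely why the endpoint $X_{t_l}^{t,x,\alpha,\beta^0}$ is well-defined independently of $j$.
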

\begin{proof}
We define
\begin{equation}\label{equ 3.17}
G(t,t_l,x,\bar{p},q)=\inf_{\beta\in\mathcal{B}_1^\pi(t,t_l)}\sup_{\alpha\in\mathcal{A}_1^\pi(t,t_l)}
E[V_1^{\pi*}(t_l,X_{t_l}^{t,x,\alpha,\beta},\bar{p},q)].
\end{equation}
For any given $\varepsilon>0$, let $\beta^0\in\mathcal{B}_1^\pi(t,t_l)$ be an $\varepsilon$-optimal
strategy for $G(t,t_l,x,\bar{p},q)$, i.e.,
\begin{equation}\label{equ 3.17.1}
|G(t,t_l,x,\bar{p},q)-\sup_{\alpha\in\mathcal{A}_1^\pi(t,t_l)}
E[V_1^{\pi*}(t_l,X_{t_l}^{t,x,\alpha,\beta^0},\bar{p},q)]|\leq \varepsilon.
\end{equation}
For any $y\in\mathbb{R}^n$, there exists an $\varepsilon$-optimal
strategy $\hat{\beta}^y=(\beta_j^y)_{j=1,\ldots,J}\in(\mathcal{B}_1^\pi(t_l,T))^J$ for
$V_1^{\pi*}(t_l,y,\bar{p},q)$ for Player \Rmnum {2}, i.e.,
\begin{equation}\label{equ 3.17.2}
|V_1^{\pi*}(t_l,y,\bar{p},q)-\sup_{\alpha\in\mathcal{A}_1^\pi(t_l,T)}
\max_{i\in\{1,...,I\}}\{\bar{p}_i
-\sum_{j=1}^Jq_jE[g_{ij}(X_T^{t_l,y,\alpha,\beta^y_j})]\}|\leq \varepsilon.
\end{equation}
 Because
$\sup_{\alpha\in\mathcal{A}_1^\pi(t_l,T)}\max_{i\in\{1,\ldots,I\}}\big\{\bar{p}_i-
\sum_{j=1}^Jq_jE[g_{ij}(X_T^{t_l,y,\alpha,\beta_j^y})] \big\}$ and $V_1^{\pi*}(t_l,y,\bar{p},q)$ are Lipschitz continuous
with respect to $y$, $\hat{\beta}^y$ is a $(2\varepsilon)$-optimal strategies for $V_1^{\pi*}(t_l,z,\bar{p},q)$, if
$z\in B_r(y)$, where $B_r(y)$ is the ball with small enough radius $r$.

Since the coefficient $f$ is bounded, there exists some $R>0$ large enough such that all the value of $X_{t_l}^{t,x,\alpha,\beta}$ belong to the ball
$B_R(0)$. Then we assume $(O_n)$, $n=1,\ldots,n_0$, is a finite Borel partition of $B_R(0)$. For any $x_n\in O_n$, we
denote
$\beta_j^n=\beta_j^{x_n}$, $n=1,\ldots,n_0$, the strategy $(\beta_j^n)$ is $(2\varepsilon)$-optimal for $V_1^{\pi*}
(t_l,z,\bar{p},q)$, for any $z\in O_n$, i.e.,
\begin{equation}\label{equ 3.17.3}
|V_1^{\pi*}
(t_l,z,\bar{p},q)-\sup_{\alpha\in\mathcal{A}_1^\pi(t_l,T)}
\max_{i\in\{1,...,I\}}\{\bar{p}_i
-\sum_{j=1}^Jq_jE[g_{ij}(X_T^{t_l,z,\alpha,\beta^n_j})]\}|\leq 2\varepsilon.
\end{equation}
For any $\omega\in\Omega$ and $u\in\mathcal{U}_{t,T}$, we define
$$\beta_j(\omega,u)(s)=\left\{
\begin{array}{ll}
\beta^0(\omega,u)(s),&s\in[t,t_l),\\
\sum\limits_{n=1}^{n_0}\beta_j^n(\omega,u|_{[t_l,T)})\cdot I_{\{X_{t_l}^{t,x,u,\beta^0(u)}\in O_n\}},& s\in [t_l,T].
\end{array}
\right.
$$
Then, we have $\beta_j\in\mathcal{B}_1^\pi(t,T)$. For any $\alpha\in\mathcal{A}_1^\pi(t,T)$, we know $\alpha$ has the following form:
\begin{eqnarray*}
&&\alpha(\omega,v)(s)=\sum_{m=k}^l\alpha_m((\zeta^\pi_{1,k-1},\zeta^\pi_{2,k-1},\ldots,\zeta^\pi_{1,m-1})(\omega),v)(s)I_{[t\vee t_{m-1},t_m)}(s)\\
&&+\sum_{m=l+1}^N\alpha_m((\zeta^\pi_{1,k-1},\zeta^\pi_{2,k-1},\ldots,\zeta^\pi_{1,l-1},\zeta^\pi_{2,l-1},\zeta^\pi_{1,l},\zeta^\pi_{2,l},\zeta^\pi_{l+1},\ldots,
\zeta^\pi_{m-2},
\zeta^\pi_{1,m-1})(\omega),v)(s)I_{[t_{m-1},t_m)}(s).
\end{eqnarray*}
For $s\in[t_l,T]$, we define $\tilde{\alpha}(\omega,Q,v)(s)
=\sum_{m=l+1}^N\alpha_m(Q,(\zeta^\pi_{1,l},\zeta^\pi_{2,l},\ldots,
\zeta^\pi_{1,m-1})(\omega),v)(s)I_{[t_{m-1},t_m)}(s)$, where
$Q$ is a $2(l-k)+2$-dimensional constant vector. Obviously, we know $\tilde{\alpha}(Q)\in\mathcal{A}_1^\pi(t_l,T)$. Then for any $\alpha\in\mathcal{A}_1^\pi(t,T)$, due to $X_{t_l}^{t,x,\alpha,\beta^0}$ and
$Q_0=(\zeta^\pi_{1,k-1},\zeta^\pi_{2,k-1},\ldots,\zeta^\pi_{1,l-1},\zeta^\pi_{2,l-1})$ are $\mathcal{F}_{t_{k-2},t_{l-1}}$-measurable,
$\beta_j^n$ and $\tilde{\alpha}$ are $\mathcal{F}_{t_{l-1},T}$-measurable, we have
\begin{equation}\label{equ 3.18}
\begin{split}
&E[g_{ij}(X_T^{t,x,\alpha,\beta_j})]=E[\sum_{n=1}^{n_0}g_{ij}(X_T^{t_l,X_{t_l}^{t,x,\alpha,\beta^0},
\tilde{\alpha}(Q_0),\beta_j^n})
\cdot I_{\{X_{t_l}^{t,x,\alpha,\beta^0}\in O_n\}}]\\
&=E[\sum_{n=1}^{n_0}E[g_{ij}(X_T^{t_l,y,
\tilde{\alpha}(Q),\beta_j^n})]_{y=X_{t_l}^{t,x,\alpha,\beta^0},Q=Q_0}
\cdot I_{\{X_{t_l}^{t,x,\alpha,\beta^0}\in O_n\}}].
\end{split}
\end{equation}
 From (\ref{equ 3.18}),
(\ref{equ 3.17.3}) and (\ref{equ 3.17.1}), we have
\begin{eqnarray}\label{equ 3.19}
\begin{split}
&\max_{i\in\{1,\ldots,I\}}\big\{\bar{p}_i-\sum_{j=1}^Jq_j E[g_{ij}(X_T^{t,x,\alpha,\beta_j})]\big\}\\
=&\max_{i\in\{1,\ldots,I\}}\Big\{\bar{p}_i-\sum_{j=1}^Jq_j E\big[\sum_{n=1}^{n_0}E[g_{ij}(X_T^{t_l,y,
\tilde{\alpha}(Q),\beta_j^n})]_{y=X_{t_l}^{t,x,\alpha,\beta^0},Q=Q_0}
\cdot I_{\{X_{t_l}^{t,x,\alpha,\beta^0}\in O_n\}}\big]\Big\}\\
\leq& E\big[\sum_{n=1}^{n_0}\max_{i\in\{1,\ldots,I\}}\{\bar{p}_i-\sum_{j=1}^Jq_j
E[g_{ij}(X_T^{t_l,y,\tilde{\alpha}(Q),\beta_j^n})]\}_{y=X_{t_l}^{t,x,\alpha,\beta^0},Q=Q_0}\cdot I_{\{X_{t_l}^{t,x,\alpha,\beta^0}\in O_n\}}\big]\\
\leq& E\big[\sum_{n=1}^{n_0}\sup_{\alpha'\in\mathcal{A}_1^\pi(t_l,T)}\max_{i\in\{1,\ldots,I\}}\{\bar{p}_i-\sum_{j=1}^Jq_j
E[g_{ij}(X_T^{t_l,y,\alpha',\beta_j^n})]\}_{y=X_{t_l}^{t,x,\alpha,\beta^0}}\cdot I_{\{X_{t_l}^{t,x,\alpha,\beta^0}\in O_n\}}\big]\\
\leq& E[\sum_{n=1}^{n_0}V_1^{\pi*}(t_l,X_{t_l}^{t,x,\alpha,\beta^0},\bar{p},q)\cdot I_{\{X_{t_l}^{t,x,\alpha,\beta^0}\in O_n\}}]+2\varepsilon\\
\leq& G(t,t_l,x,\bar{p},q)+3\varepsilon,
\end{split}
\end{eqnarray}
which means that $V_1^{\pi*}(t,x,\bar{p},q)\leq G(t,t_l,x,\bar{p},q)$.
\end{proof}
We assume $(\pi_n)_{n\geq 1}$ is a  sequence  partitions of the interval $[0,T]$ satisfying that when $n\rightarrow\infty$, the mesh of
the partition $|\pi_n|$ tends to zero.
From Lemma \ref{le 3.4}, applying the Arzel\`a-Ascoli Theorem to $V_1^{\pi_n*}(t,x,\bar{p},q)$ and
$W_1^{\pi_n\#}(t,x,p,\bar{q})$, we have the following lemma.
\begin{lemma}\label{le 3.6}
There exists a subsequence of partitions $(\pi_n)_{n\geq 1}$, still denoted by $(\pi_n)_{n\geq 1}$ and two
functions $\tilde{V}:[0,T]\times\mathbb{R}^n\times\mathbb{R}^I\times\Delta(J)\mapsto\mathbb{R}$
and $\tilde{W}:[0,T]\times\mathbb{R}^n\times\Delta(I)\times\mathbb{R}^J\mapsto\mathbb{R}$ such that $(V_1^{\pi_n*},
W_1^{\pi_n\#})\rightarrow(\tilde{V},\tilde{W})$ uniformly on compacts in $[0,T]\times\mathbb{R}^n\times\Delta(I)
\times\Delta(J)\times\mathbb{R}^I\times\mathbb{R}^J$.
\end{lemma}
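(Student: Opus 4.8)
The plan is to read this lemma as a direct application of the Arzel\`a--Ascoli theorem combined with a diagonal extraction, since the two families $\{V_1^{\pi_n*}\}_{n\ge 1}$ and $\{W_1^{\pi_n\#}\}_{n\ge 1}$ are indexed by $n$ but live on domains that are not compact. Concretely, I would first verify the two hypotheses of Arzel\`a--Ascoli — uniform equicontinuity and uniform boundedness on compact sets — with constants independent of $n$, and then exhaust the domain by an increasing sequence of compacts and diagonalize to obtain a single subsequence along which both families converge uniformly on compacts; the uniform Lipschitz bound then passes to the limits $\tilde V,\tilde W$.

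For equicontinuity, Lemma \ref{le 3.4} already states that each $V_1^{\pi*}$ is Lipschitz in $(t,x,\bar p,q)$ and each $W_1^{\pi\#}$ is Lipschitz in $(t,x,p,\bar q)$; tracing the constants back through Lemma \ref{le 3.1}, they are controlled only by the Lipschitz constants of the $g_{ij}$ and the bound of $f$, hence are independent of $\pi$, so the families are uniformly equicontinuous on the whole domain. For uniform boundedness, I would use that the $g_{ij}$ are bounded, say by $C_g$, so that $|V_1^\pi|\le C_g$ for every $\pi$; then from the definition (\ref{equ 3.7}) and $p\in\Delta(I)$ one obtains, for $\bar p$ ranging in a fixed compact $K\subset\mathbb{R}^I$,
\[
|V_1^{\pi*}(t,x,\bar p,q)|\le \sup_{p\in\Delta(I)}|\bar p\cdot p|+C_g\le \max_{\bar p\in K}\|\bar p\|_\infty+C_g,
\]
a bound uniform in $\pi$ and in $(t,x,q)$. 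The concave conjugate $W_1^{\pi\#}$ is bounded analogously via (\ref{equ 3.8}) on compact sets of $\bar q$.

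With these two estimates in hand I would fix an increasing sequence of compact sets $K_m$ exhausting $[0,T]\times\mathbb{R}^n\times\mathbb{R}^I\times\Delta(J)$: on $K_1$, Arzel\`a--Ascoli yields a subsequence along which $V_1^{\pi_n*}$ converges uniformly; refining successively on $K_2,K_3,\dots$ and taking the diagonal subsequence produces one subsequence along which $V_1^{\pi_n*}$ converges uniformly on every compact to a limit $\tilde V$ that inherits the uniform Lipschitz bound. Running the identical argument for $W_1^{\pi_n\#}$ along this already-extracted subsequence (passing to a further subsequence once more) delivers $\tilde W$ and hence the common subsequence asserted in the statement. The one point demanding care — and the only genuine obstacle — is precisely that the domain is unbounded in $x$ and in the conjugate variables $\bar p$ (resp.\ $\bar q$), so a single application of Arzel\`a--Ascoli on the full domain is not available and the compact-exhaustion plus diagonalization is essential; once the uniform-in-$\pi$ bounds from Lemmas \ref{le 3.1} and \ref{le 3.4} are recorded, the rest is routine.
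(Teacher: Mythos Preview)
Your proposal is correct and follows exactly the approach the paper takes: the paper simply states ``From Lemma \ref{le 3.4}, applying the Arzel\`a--Ascoli Theorem to $V_1^{\pi_n*}$ and $W_1^{\pi_n\#}$, we have the following lemma,'' without spelling out the compact-exhaustion or the uniform-boundedness estimate. You have merely filled in the routine details (tracing the uniform-in-$\pi$ Lipschitz constant back to Lemma \ref{le 3.1}, bounding the conjugates via the boundedness of the $g_{ij}$, and diagonalizing over an exhaustion by compacts) that the paper leaves implicit.
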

\begin{remark}\label{re 3.1}
Notice that from Lemma \ref{le 3.4}, the limit functions $\tilde{V}$ and $\tilde{W}$ are Lipschitz continuous with respect to all
their variables.
\end{remark}
Now we prove that the limit functions $\tilde{V}$ and $\tilde{W}$ are a viscosity subsolution  and a viscosity
supersolution of some HJI equation, respectively. For more details on viscosity solutions, the reader
is referred to \cite{CIL1992}.
\begin{lemma}\label{th 3.1}
The limit function $\tilde{V}(t,x,\bar{p},q)$ is
a viscosity subsolution of the following HJI equation
\begin{equation}\label{equ 3.20}
\left\{
\begin{array}{ll}
\frac{\partial\tilde{V}}{\partial t}(t,x)+H^*(x,D\tilde{V}(t,x))=0,&(t,x)\in[0,T]\times\mathbb{R}^n,\\
\tilde{V}(T,x)=\max\limits_{i\in\{1,\ldots,I\}}\{\bar{p}_i-\sum\limits_{j=1}^Jq_jg_{ij}(x)\},& (\bar{p},q)
\in\mathbb{R}^I\times\Delta(J),
\end{array}
\right.
\end{equation}
where
\begin{eqnarray*}
H^*(x,\xi)=-H(x,-\xi)&=&\inf_{\nu\in\mathcal{P}(V)}\sup_{\mu\in\mathcal{P}(U)}
\big(\int_{U\times V}f(x,u,v)\mu(du)\nu(dv)\cdot\xi\big)\\
&=&\sup_{\mu\in\mathcal{P}(U)}\inf_{\nu\in\mathcal{P}(V)}\big(\int_{U\times V}f(x,u,v)\mu(du)\nu(dv)\cdot\xi\big).
\end{eqnarray*}
\end{lemma}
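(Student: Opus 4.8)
The plan is to establish the two defining properties of a viscosity subsolution of \eqref{equ 3.20} separately: the terminal condition and the interior differential inequality. The terminal condition is immediate. Since $X_T^{T,x,u,v}=x$, we have $V_1^\pi(T,x,p,q)=\sum_{i,j}p_iq_jg_{ij}(x)$ for every partition $\pi$, and taking the convex conjugate in $p$ over $\Delta(I)$ gives
\[
V_1^{\pi*}(T,x,\bar p,q)=\sup_{p\in\Delta(I)}\sum_i p_i\Big(\bar p_i-\sum_j q_j g_{ij}(x)\Big)=\max_{i\in\{1,\ldots,I\}}\Big\{\bar p_i-\sum_j q_j g_{ij}(x)\Big\},
\]
which is independent of $\pi$ and therefore passes to the limit to give $\tilde V(T,x)=\max_i\{\bar p_i-\sum_j q_j g_{ij}(x)\}$.

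For the interior inequality I fix the parameters $(\bar p,q)$ and regard $\tilde V$ as a function of $(t,x)$. Let $\phi\in C^1$ be a test function such that $\tilde V-\phi$ has a strict local maximum at $(t_0,x_0)$ with $t_0<T$, normalized so that $\tilde V(t_0,x_0)=\phi(t_0,x_0)$; the goal is $\partial_t\phi(t_0,x_0)+H^*(x_0,D\phi(t_0,x_0))\ge 0$. By the uniform convergence on compacts from Lemma \ref{le 3.6}, the continuity of $V_1^{\pi_n*}$ (Lemma \ref{le 3.4}), and the standard perturbed-maximum argument, there are points $(t_n,x_n)$ at which $V_1^{\pi_n*}-\phi$ has a local maximum, with $(t_n,x_n)\to(t_0,x_0)$ and $V_1^{\pi_n*}(t_n,x_n)\to\tilde V(t_0,x_0)$. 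Writing $t_n\in[t_{k_n-1},t_{k_n})$ and applying the sub-dynamic programming principle of Lemma \ref{le 3.5} with $t_l=t_{k_n}$ (so the step $h_n:=t_{k_n}-t_n\le|\pi_n|\to 0$), together with the local-maximum inequality $V_1^{\pi_n*}(t_{k_n},y)\le\phi(t_{k_n},y)+[V_1^{\pi_n*}(t_n,x_n)-\phi(t_n,x_n)]$, valid for $y$ near $x_0$ and hence for $y=X_{t_{k_n}}^{t_n,x_n,\alpha,\beta}$ since $|X_{t_{k_n}}-x_n|\le Ch_n$ by \eqref{equ 2.1.1}, I obtain
\[
\phi(t_n,x_n)\le\inf_{\beta\in\mathcal{B}_1^{\pi_n}(t_n,t_{k_n})}\sup_{\alpha\in\mathcal{A}_1^{\pi_n}(t_n,t_{k_n})}E\big[\phi(t_{k_n},X_{t_{k_n}}^{t_n,x_n,\alpha,\beta})\big].
\]

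Next I expand $\phi$ along the absolutely continuous trajectory $s\mapsto X_s^{t_n,x_n,\alpha,\beta}$, using $\frac{d}{ds}\phi(s,X_s)=\partial_t\phi+f\cdot D\phi$ and the bounds $|X_s-x_n|\le Ch_n$, $|s-t_n|\le h_n$ together with the Lipschitz continuity of $f$ and the uniform continuity of $D\phi$ to replace $X_s$ by $x_n$ and $D\phi(s,X_s)$ by $\xi_n:=D\phi(t_n,x_n)$ up to an $o(h_n)$ error; this reduces the inequality to
\[
0\le\partial_t\phi(t_n,x_n)\,h_n+\inf_{\beta}\sup_{\alpha}E\Big[\int_{t_n}^{t_{k_n}}f(x_n,u_r,v_r)\cdot\xi_n\,dr\Big]+o(h_n).
\]
The decisive step is to identify the one-step value with the relaxed Hamiltonian. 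Because $[t_n,t_{k_n})$ is a single cell of $\pi_n$, the delay property forces the players' controls there to be deterministic functions $u_r=u(\zeta_{1,k_n-1}^{\pi_n},r)$ and $v_r=v(\zeta_{2,k_n-1}^{\pi_n},r)$ of their respective and \emph{independent} uniformly distributed randomization variables. Hence, by Fubini, $E[\int f(x_n,u_r,v_r)\cdot\xi_n\,dr]=\int_{t_n}^{t_{k_n}}\big(\int_{U\times V}f(x_n,u,v)\mu_r(du)\nu_r(dv)\cdot\xi_n\big)dr$, where $\mu_r,\nu_r\in\mathcal{P}(U),\mathcal{P}(V)$ are the laws of $u(\cdot,r),v(\cdot,r)$, and the one-step game becomes an $\inf_{(\nu_r)}\sup_{(\mu_r)}$ over such measurable families.

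To bound this one-step value by $H^*$, the inf-player (Player \Rmnum{2}) plays the constant relaxed control $\nu_r\equiv\nu^*$, where $\nu^*$ is $\varepsilon$-optimal in $H^*(x_n,\xi_n)=\inf_{\nu}\sup_{\mu}\int f\mu\nu\cdot\xi_n$; here the identity $\inf_\nu\sup_\mu=\sup_\mu\inf_\nu$ in the definition of $H^*$ is Sion's minimax theorem, the integrand being bilinear and $\mathcal{P}(U),\mathcal{P}(V)$ compact convex. By the Skorohod representation recalled in Section 2, $\nu^*$ is realized by an admissible $V$-valued control $v(\cdot,r)=\psi(\cdot)$ for a measurable $\psi:[0,1]\to V$ pushing Lebesgue measure forward to $\nu^*$, so $\beta\in\mathcal{B}_1^{\pi_n}(t_n,t_{k_n})$ is legitimate. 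Since then $\sup_\mu\int f\mu\nu^*\cdot\xi_n\le H^*(x_n,\xi_n)+\varepsilon$ for every $r$, the one-step value is at most $(H^*(x_n,\xi_n)+\varepsilon)h_n$. Dividing the last display by $h_n$, letting $n\to\infty$ (using $(t_n,x_n,\xi_n)\to(t_0,x_0,D\phi(t_0,x_0))$ and the continuity of $H^*$, which follows from the boundedness and Lipschitz continuity of $f$), and then $\varepsilon\to 0$, gives $0\le\partial_t\phi(t_0,x_0)+H^*(x_0,D\phi(t_0,x_0))$. The main obstacle is precisely this decisive step: exploiting the delay to decouple the two players over one cell, justifying via Fubini and measurable selection the passage from randomized open-loop controls to the relaxed pair $(\mu_r),(\nu_r)$, and realizing the minimax-optimal $\nu^*$ by an admissible control; the perturbed-maximum lemma, the Taylor expansion, and the terminal condition are then routine.
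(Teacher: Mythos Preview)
Your proof is correct and follows essentially the same route as the paper's: perturbed-maximum points $(t_n,x_n)$ for $V_1^{\pi_n*}-\phi$, the one-cell sub-DPP from Lemma \ref{le 3.5}, expansion along the trajectory with a modulus-of-continuity error, and then exploitation of the delay/independence structure on the first cell to reduce the one-step game to a relaxed Hamiltonian. The only cosmetic differences are that the paper works with a global maximum over a fixed compact ball (rather than a strict local maximum), first plugs in an arbitrary $\tilde v(\zeta_{2,k-1}^{\pi_n})$ and takes the infimum over $\tilde v$ at the end (rather than choosing an $\varepsilon$-optimal $\nu^*$ from the start), and introduces an explicit $\rho_n$-optimal $\alpha^n$ for Player~\Rmnum{1}; your handling of the terminal condition is also more explicit than the paper's.
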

\begin{proof}
For simplicity, we denote $\tilde{V}(t,x,\bar{p},q)$ by $\tilde{V}(t,x)$, for fixed $(\bar{p},q)\in\mathbb{R}^I\times\Delta(J)$.
For any fixed $(t,x)\in[0,T]\times\mathbb{R}^n$, since the coefficient $f$ is bounded, there is some $M>0$ such that,
$\bar{B}_M(x)\supset\{X_r^{s,y,\alpha,\beta},\ (s,y)\in[0,T]\times\bar{B}_1(x), (\alpha,\beta)\in\mathcal{A}_1^\pi(s,T)\times\mathcal{B}_1^\pi(s,T), r\in[s,T] \}$, where $\bar{B}_M(x)$ is the closed ball with the center $x$ and the radius $M$. From Lemma \ref{le 3.6}, we know $V_1^{\pi_n*}$ converge to $\tilde{V}$ over $[0,T]\times\bar{B}_M(x)$.
Let $\varphi\in C_b^1([0,T]\times\mathbb{R}^n)$
(the set of  bounded continuous functions where
the first order partial derivate is bounded and continuous) be a test function such that
\begin{equation}\label{equ 3.1.21.1}
(\tilde{V}-\varphi)(t,x)>(\tilde{V}-\varphi)(s,y),\ \text{for\ all}\ (s,y)\in[0,T]\times\bar{B}_M(x)\setminus\{(t,x)\}.
\end{equation}
Let $(s_n,x_n)\in[0,T]\times\bar{B}_M(x)$ be the maximum point of $V_1^{\pi_n*}-\varphi$ over $[0,T]\times\bar{B}_M(x)$, then
there exists a subsequence of $(s_n,x_n)$ still denoted by $(s_n,x_n)$, such that $(s_n,x_n)$ converges to $(t,x)$.

Indeed, since $[0,T]\times\bar{B}_M(x)$ is a compact set, there exists a subsequence $(s_n,x_n)$ and $(\bar{s},\bar{x})\in[0,T]\times\bar{B}_M(x)$ such that
$(s_n,x_n)\rightarrow(\bar{s},\bar{x})$. Due to $(V_1^{\pi_n*}-\varphi)(s_n,x_n)\geq (V_1^{\pi_n*}-\varphi)(t,x)$, for $n\geq 1$, we have
\begin{equation}\label{equ 3.1.21.2}
(\tilde{V}-\varphi)(\bar{s},\bar{x})\geq (\tilde{V}-\varphi)(t,x).
\end{equation}
From (\ref{equ 3.1.21.1}) and (\ref{equ 3.1.21.2}), we have $(\bar{s},\bar{x})=(t,x)$.\\
For the partition $\pi_n$, we assume $t^n_{k_{n-1}}\leq s_n<t^n_{k_n}$, for simplicity, we write $t^n_{k-1}\leq s_n<t^n_{k}$. Since $x_n\rightarrow x$, there
is a positive integer $N$ such that for all $n\geq N$, we have $|x_n-x|\leq 1$. Then from Lemma \ref{le 3.5}, we get
\begin{eqnarray}\label{equ 3.22}
\begin{split}
\varphi(s_n,x_n)= V_1^{\pi_n*}(s_n,x_n)
&\leq \inf_{\beta\in\mathcal{B}_1^{\pi_n}(s_n,t^n_{k})}\sup_{\alpha\in\mathcal{A}_1^{\pi_n}(s_n,t^n_{k})}
E[V_1^{\pi_n*}(t^n_{k},X^{s_n,x_n,\alpha,\beta}_{t_{k}^n})]\\
&\leq \inf_{\beta\in\mathcal{B}_1^{\pi_n}(s_n,t^n_{k})}\sup_{\alpha\in\mathcal{A}_1^{\pi_n}(s_n,t^n_{k})}
E[\varphi(t^n_{k},X^{s_n,x_n,\alpha,\beta}_{t_{k}^n})].
\end{split}
\end{eqnarray}
Thus we get
\begin{eqnarray}\label{equ 3.23}
\begin{split}
0\leq&\inf_{\beta\in\mathcal{B}_1^{\pi_n}(s_n,t^n_{k})}\sup_{\alpha\in\mathcal{A}_1^{\pi_n}(s_n,t^n_{k})}
E[\varphi(t^n_{k},X^{s_n,x_n,\alpha,\beta}_{t_{k}^n})-\varphi(s_n,x_n)]\\
=& \inf_{\beta\in\mathcal{B}_1^{\pi_n}(s_n,t^n_{k})}\sup_{\alpha\in\mathcal{A}_1^{\pi_n}(s_n,t^n_{k})}
E[\int_{s_n}^{t_{k}^n}(\frac{\partial \varphi}{\partial r}(r,X_r^{s_n,x_n,\alpha,\beta})+
f(X_r^{s_n,x_n,\alpha,\beta},\alpha_r,\beta_r)\cdot D\varphi(r,X_r^{s_n,x_n,\alpha,\beta}))dr].
\end{split}
\end{eqnarray}
For $(u,v)\in\mathcal{U}_{t,T}\times\mathcal{V}_{t,T}$, we introduce the following continuity modulus,
\begin{equation}\label{equ 3.24}
m(\delta):=\sup_{\mbox{\tiny
$\begin{array}{c}
|r-s|+|y-\bar{x}|\leq \delta,\\
u\in U,v\in V, \bar{x},y\in\bar{B}_M(x)
\end{array}$}
}\big|(\frac{\partial \varphi}{\partial r}(r,y)+
f(y,u,v)\cdot D\varphi(r,y))-(\frac{\partial \varphi}{\partial r}(s,\bar{x})+
f(\bar{x},u,v)\cdot D\varphi(s,\bar{x}))\big|.
\end{equation}
Obviously, $m(\delta)$ is nondecreasing in $\delta$ and $m(\delta)\rightarrow 0$, as $\delta\downarrow 0$. From (\ref{equ 2.1.1}), considering
that $|X_r^{s_n,x_n,\alpha,\beta}-x_n|\leq C|r-s_n|\leq C|t^n_{k}-s_n|,\ r\in[s_n,t^n_{k}]$ and
from $(\ref{equ 3.24})$
we know that
\begin{eqnarray}\label{equ 3.25}
\begin{array}{l}
|(\frac{\partial \varphi}{\partial r}(r,X_r^{s_n,x_n,\alpha,\beta})+
f(X_r^{s_n,x_n,\alpha,\beta},\alpha_r,\beta_r)\cdot D\varphi(r,X_r^{s_n,x_n,\alpha,\beta}))-\\
(\frac{\partial \varphi}{\partial r}(s_n,x_n)+
f(x_n,\alpha_r,\beta_r)\cdot D\varphi(s_n,x_n))|
\leq m(C|t_{k}^n-s_n|),\ r\in[s_n,t^n_{k}].
\end{array}
\end{eqnarray}
It follows from (\ref{equ 3.23}) and (\ref{equ 3.25}) that
\begin{equation}\label{equ 3.26}
\begin{split}
&-(t^n_{k}-s_n)\big(\frac{\partial \varphi}{\partial r}(s_n,x_n)+m(C|t_{k}^n-s_n|)\big)\\
\leq&
\inf_{\beta\in\mathcal{B}_1^{\pi_n}(s_n,t^n_{k})}\sup_{\alpha\in\mathcal{A}_1^{\pi_n}(s_n,t^n_{k})}
E[\int_{s_n}^{t_{k}^n}f(x_n,\alpha_r,\beta_r)\cdot D\varphi(s_n,x_n)dr]\\
\leq& \sup_{\alpha\in\mathcal{A}_1^{\pi_n}(s_n,t^n_{k})}
E[\int_{s_n}^{t_{k}^n}f(x_n,\alpha_r,\tilde{\beta}_r)\cdot D\varphi(s_n,x_n)dr],
\end{split}
\end{equation}
where we take $\tilde{\beta}_r=\tilde{v}(\zeta_{2,k-1}^{\pi_n})$, $r\in[s_n,t_{k}^n]$, $\tilde{v}$ is a $V$-valued  measurable function.
Define $\rho_n=(t_k^n-s_n)^2$, from (\ref{equ 3.26})
there exists
 a $\rho_n$-optimal strategy $\alpha^n$ (depending on $\tilde{\beta}$) such that
\begin{eqnarray}\label{equ 3.27}
\begin{split}
&-(t^n_{k}-s_n)\big(\frac{\partial \varphi}{\partial r}(s_n,x_n)+m(C|t_{k}^n-s_n|)+(t_k^n-s_n)\big)
\leq
E[\int_{s_n}^{t_{k}^n}f(x_n,\alpha^n_r,\tilde{\beta}_r)\cdot D\varphi(s_n,x_n)dr]\\
&= \int_{s_n}^{t_{k}^n}E[f(x_n,\alpha^n_r(\zeta_{1,k-1}^{\pi_n},\tilde{v}),\tilde{v}(\zeta_{2,k-1}^{\pi_n}))\cdot D\varphi(s_n,x_n)]dr.
\end{split}
\end{eqnarray}
Notice that on the interval $[s_n,t_k^n]$, $\alpha^n$ does not depend on the control $\tilde{v}$ due to the delay property. Then
thanks to the independence between $\zeta_{1,k-1}^{\pi_n}$ and $\zeta_{2,k-1}^{\pi_n}$, from (\ref{equ 3.27}) we get
\begin{eqnarray}\label{equ 3.28}
\begin{split}
&-(t^n_{k}-s_n)\big(\frac{\partial \varphi}{\partial r}(s_n,x_n)+m(C|t_{k}^n-s_n|)+(t_{k}^n-s_n)\big)\\
&\leq \int_{s_n}^{t_{k}^n}\sup_{\mu\in\mathcal{P}(U)}\int_{U}E[f(x_n,u,\tilde{v}(\zeta_{2,k-1}^{\pi_n}))\cdot D\varphi(s_n,x_n)]\mu(du)dr.
\end{split}
\end{eqnarray}
From the arbitrariness of $\tilde{v}$, from (\ref{equ 3.28}) we get
\begin{eqnarray}\label{equ 3.1.21.3}
\begin{split}
&-(t^n_{k}-s_n)\big(\frac{\partial \varphi}{\partial r}(s_n,x_n)+m(C|t_{k}^n-s_n|)+(t_{k}^n-s_n)\big)\\
&\leq \int_{s_n}^{t_{k}^n}\inf_{\nu\in\mathcal{P}(V)}\sup_{\mu\in\mathcal{P}(U)}\int_{U\times V}f(x_n,u,v)\cdot D\varphi(s_n,x_n)\mu(du)\nu(dv)dr\\
&= (t_k^n-s_n)\inf_{\nu\in\mathcal{P}(V)}\sup_{\mu\in\mathcal{P}(U)}\int_{U\times V}E[f(x_n,u,v)\cdot D\varphi(s_n,x_n)]\mu(du)\nu(dv),
\end{split}
\end{eqnarray}
which means that
\begin{equation}\label{equ 3.30}
\begin{split}
&-\big(\frac{\partial \varphi}{\partial r}(s_n,x_n)+m(C|t_{k}^n-s_n|)+(t_{k}^n-s_n)\big)\\
&\leq \inf_{\nu\in\mathcal{P}(V)}\sup_{\mu\in\mathcal{P}(U)}\int_{U\times V}E[f(x_n,u,v)\cdot D\varphi(s_n,x_n)]\mu(du)\nu(dv).
\end{split}
\end{equation}
Recall that $(s_n,x_n)\rightarrow (t,x)$ and $0\leq (t_k^n-s_n)\leq (t_k^n-t_{k-1}^n)\leq |\pi_n|$, when $n\rightarrow\infty$ we get
\begin{equation}\label{equ 3.31}
\frac{\partial\varphi}{\partial t}(t,x)+\inf_{\nu\in\mathcal{P}(V)}\sup_{\mu\in\mathcal{P}(U)}\int_{U\times V}
f(x,u,v)\cdot D\varphi(t,x)\mu(du)\nu(dv)\geq 0.
\end{equation}
\end{proof}
Now we want to prove $\tilde{W}$ is a viscosity supersolution of the HJI equation (\ref{equ 3.20}).
Notice that
\begin{equation}\label{equ 3.32}
-W_1^\pi(t,x,p,q)=\sup_{(\alpha_i)\in(\mathcal{A}_1^\pi(t,T))^I}
\inf_{(\beta_j)\in(\mathcal{B}_1^\pi(t,T))^J}
\sum_{i=1}^I\sum_{j=1}^Jp_iq_jE[-g_{ij}(X_T^{t,x,\alpha_i,\beta_j})].
\end{equation}
Then $-W_1^\pi(t,x,p,q)$ has the same form as $V_1^\pi$, only change the role
of players. Thus, the convex conjugate $-W_1^\pi(t,x,p,q)$ with respect to $q$,
i.e., $-(W_1^{\pi\#}(t,x,p,-\bar{q}))$ satisfies a sub-dynamic programming principle. Then
similar to Lemma \ref{le 3.5} and Theorem \ref{th 3.1}
we have the following result.
\begin{lemma}\label{le 3.7}
For any $(t,x,p,\bar{q})\in[0,T]\times\mathbb{R}^n\times\Delta(I)\times
\mathbb{R}^J$, and for all $l$ $(k\leq l\leq n)$, we have
\begin{equation}\label{equ 3.33}
W_1^{\pi\#}(t,x,p,\bar{q})\geq \sup_{\alpha\in\mathcal{A}_1^\pi(t,t_l)}\inf_{\beta\in\mathcal{B}_1^\pi(t,t_l)}
E[W_1^{\pi\#}(t_l,X_{t_l}^{t,x,\alpha,\beta},p,\bar{q})],
\end{equation}
and $\tilde{W}$ $($the limit of $(W_1^{\pi_n\#})$ on compacts$)$ is a supersolution of the HJI equation $(\ref{equ 3.20})$.
\end{lemma}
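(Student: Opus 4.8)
The plan is to derive both assertions from Lemma \ref{le 3.5} and Theorem \ref{th 3.1} by exploiting the symmetry recorded in (\ref{equ 3.32}): the function $-W_1^\pi$ has exactly the same minimax structure as $V_1^\pi$, with the two players interchanged, $p$ and $q$ interchanged, and each $g_{ij}$ replaced by $-g_{ij}$. Accordingly, I would first introduce the convex conjugate of $-W_1^\pi$ with respect to $q$,
\[
\Psi^\pi(t,x,p,\bar q):=\sup_{q\in\Delta(J)}\big\{\bar q\cdot q-\big(-W_1^\pi(t,x,p,q)\big)\big\},
\]
and record the elementary identity $\Psi^\pi(t,x,p,\bar q)=-W_1^{\pi\#}(t,x,p,-\bar q)$, which follows at once from the definition (\ref{equ 3.8}) of the concave conjugate. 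Since $W_1^\pi$ is convex in $p$ and concave in $q$ by Lemma \ref{le 3.2}, the map $q\mapsto -W_1^\pi(t,x,p,q)$ is convex, so this conjugation is legitimate and $\Psi^\pi$ is convex in $\bar q$.

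Next I would transcribe the proofs of Lemma \ref{le 3.3} and Lemma \ref{le 3.5} essentially verbatim, only interchanging the roles of the two players. This first yields the representation analogous to (\ref{equ 3.9}),
\[
\Psi^\pi(t,x,p,\bar q)=\inf_{(\alpha_i)\in(\mathcal A_1^\pi(t,T))^I}\sup_{\beta\in\mathcal B_1^\pi(t,T)}\max_{j\in\{1,\ldots,J\}}\Big\{\bar q_j+\sum_{i=1}^I p_iE[g_{ij}(X_T^{t,x,\alpha_i,\beta})]\Big\},
\]
and then the sub-dynamic programming principle: for every $l$ with $k\le l\le N$,
\[
\Psi^\pi(t,x,p,\bar q)\le \inf_{\alpha\in\mathcal A_1^\pi(t,t_l)}\sup_{\beta\in\mathcal B_1^\pi(t,t_l)}E\big[\Psi^\pi(t_l,X_{t_l}^{t,x,\alpha,\beta},p,\bar q)\big].
\]
Because the maximizing player is now Player \Rmnum{1}, the outer infimum is taken over $\alpha$ and the inner supremum over $\beta$, i.e. the mirror image of the ordering in Lemma \ref{le 3.5}.

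I would then translate back to $W_1^{\pi\#}$. Substituting $\Psi^\pi(\cdot)=-W_1^{\pi\#}(\cdot,p,-\bar q)$ into the last display and multiplying by $-1$ simultaneously reverses the inequality and turns $\inf_\alpha\sup_\beta$ into $\sup_\alpha\inf_\beta$; relabelling $-\bar q\mapsto\bar q$ (allowed, since $\bar q$ ranges over all of $\mathbb R^J$) produces exactly (\ref{equ 3.33}). For the supersolution claim I would rerun the argument of Theorem \ref{th 3.1} on the sequence $\Psi^{\pi_n}$: its uniform limit $\tilde\Psi=-\tilde W(\cdot,p,-\bar q)$ is, by the role-swapped version of that proof, a viscosity subsolution of $\partial_t u+H(x,Du)=0$, where now the Hamiltonian is $H(x,\xi)=\inf_{\mu\in\mathcal P(U)}\sup_{\nu\in\mathcal P(V)}\big(\int_{U\times V}f(x,u,v)\mu(du)\nu(dv)\cdot\xi\big)$, because in the sub-DPP the infimum is taken over $\mu$ (Player \Rmnum{1}) and the supremum over $\nu$ (Player \Rmnum{2}). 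Finally, using $\tilde W=-\tilde\Psi$ together with $H^*(x,\xi)=-H(x,-\xi)$, the standard sign-change correspondence between sub- and supersolutions turns the subsolution property of $\tilde\Psi$ into the supersolution property of $\tilde W$ for (\ref{equ 3.20}).

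The delicate point, and the only place where an error could enter, is the bookkeeping of signs and of the $\inf/\sup$ ordering across the Fenchel transform: one must verify that conjugating in $q$ rather than in $p$ interchanges precisely Player \Rmnum{1} and Player \Rmnum{2}, that the single factor $-1$ flips the inequality and the operator order at once, and that the Hamiltonian appearing for $\tilde\Psi$ is $H$ rather than $H^*$, so that the closing sign flip lands correctly on $H^*$. I would also invoke Sion's minimax theorem to equate $\inf_\mu\sup_\nu$ with $\sup_\nu\inf_\mu$ in the definition of $H$, exactly as is implicitly used for $H^*$ in Theorem \ref{th 3.1}, since the integrand is affine in each of $\mu$ and $\nu$ over the convex compact sets $\mathcal P(U)$ and $\mathcal P(V)$.
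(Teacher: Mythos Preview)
Your proposal is correct and follows exactly the route the paper takes: the paper itself merely records the duality identity $\Psi^\pi=-W_1^{\pi\#}(\cdot,p,-\bar q)$ (your $\Psi^\pi$ being the convex conjugate of $-W_1^\pi$ in $q$) and then says ``similar to Lemma \ref{le 3.5} and Theorem \ref{th 3.1}'', i.e.\ the symmetry argument you spell out. Your bookkeeping of signs, of the $\inf/\sup$ order, and of which Hamiltonian appears ($H$ for $\tilde\Psi$, hence $H^*$ for $\tilde W$ after the sign flip) is carried out correctly.
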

We now give the definition of dual solutions for the following HJI equation
\begin{equation}\label{equ 3.34}
\left\{
\begin{array}{ll}
\frac{\partial V}{\partial t}(t,x)+H(x,D{V}(t,x))=0,&(t,x)\in[0,T]\times\mathbb{R}^n,\\
V(T,x)=\sum_{i,j}p_iq_jg_{ij}(x),& (p,q)\in\Delta(I)\times\Delta(J),
\end{array}
\right.
\end{equation}
where $H(x,\xi)=\inf_{\mu\in\mathcal{P}(U)}\sup_{\nu\in\mathcal{P}(V)}
\big(\int_{U\times V}f(x,u,v)\mu(du)\nu(dv)\cdot\xi\big).$
\begin{definition}
A function $w:[0,T]\times\mathbb{R}^n\times\Delta(I)\times\Delta(J)\mapsto\mathbb{R}$ is called a dual viscosity
subsolution of the equation $(\ref{equ 3.34})$ if, firstly, $w$ is Lipschitz continuous with all its variables, convex with respect to $p$ and
concave with respect to $q$, secondly, for any $(p,\bar{q})\in\Delta(I)\times\mathbb{R}^J$,
$w^\#(t,x,p,\bar{q})$ is a viscosity supersolution of the dual HJI equation
\begin{equation}\label{equ 3.35}
\frac{\partial V}{\partial t}(t,x)+H^*(x,D{V}(t,x))=0,\ (t,x)\in[0,T]\times\mathbb{R}^n,
\end{equation}
where $H^*(x,\xi)=-H(x,-\xi)$.

A function $w:[0,T]\times\mathbb{R}^n\times\Delta(I)\times\Delta(J)\mapsto\mathbb{R}$ is called a dual viscosity
supersolution of the equation $(\ref{equ 3.34})$ if, firstly, $w$ is Lipschitz continuous with all its variables, convex with respect to $p$ and
concave with respect to $q$, secondly, for any $(\bar{p},q)\in\mathbb{R}^I\times\Delta(J)$,
$w^*(t,x,\bar{p},q)$ is a viscosity subsolution of the dual HJI equation $(\ref{equ 3.35})$.

The function $w$ is called the dual viscosity solution of the equation $(\ref{equ 3.34})$ if $w$ is a dual viscosity subsolution and
a dual viscosity supersolution of the equation $(\ref{equ 3.34})$.
\end{definition}
\begin{lemma}\label{le 3.8}
Let $w_1,w_2:[0,T]\times\mathbb{R}^n\times\Delta(I)\times\Delta(J)\mapsto\mathbb{R}$ be a
dual viscosity subsolution and a dual viscosity supersolution of the HJI equation $(\ref{equ 3.34})$, respectively. If, for all $(x,p,q)\in\mathbb{R}^
n\times\Delta(I)\times\Delta(J)$, $w_1(T,x,p,q)\leq w_2(T,x,p,q)$, then we have $w_1\leq w_2$ on
$[0,T]\times\mathbb{R}^n\times\Delta(I)\times\Delta(J)$.
\end{lemma}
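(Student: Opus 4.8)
The plan is to deduce the ordering $w_1\le w_2$ from the classical comparison principle for the first–order equation (\ref{equ 3.35}) with Hamiltonian $H^*$ (see \cite{CIL1992}), which applies because $f$ is bounded and Lipschitz in $x$, so that $H^*(x,\xi)$ is Lipschitz in $x$ locally uniformly in $\xi$ and Lipschitz in $\xi$. The essential point is that neither $w_1$ nor $w_2$ itself solves a Hamilton–Jacobi equation in $(t,x)$ for frozen beliefs $(p,q)$; only the conjugates carry PDE information, namely $w_1^\#(\cdot,\cdot,p,\bar q)$ is a supersolution and $w_2^*(\cdot,\cdot,\bar p,q)$ is a subsolution of (\ref{equ 3.35}). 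I will therefore transfer the whole argument to the conjugate level. Since $w_1,w_2$ are, by the definition of dual sub/supersolution, convex in $p$ and concave in $q$, the Fenchel inversion formulas
\begin{equation*}
w_1(t,x,p,q)=\inf_{\bar q\in\mathbb{R}^J}\{q\cdot\bar q-w_1^\#(t,x,p,\bar q)\},\qquad w_2(t,x,p,q)=\sup_{\bar p\in\mathbb{R}^I}\{p\cdot\bar p-w_2^*(t,x,\bar p,q)\}
\end{equation*}
hold, so that controlling the gap $w_1-w_2$ is equivalent to an inequality between the two functions that actually satisfy (\ref{equ 3.35}).

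I would argue by contradiction. Suppose $\theta:=\sup(w_1-w_2)>0$. Since $w_1,w_2$ are bounded and Lipschitz in all their variables (part of the definition), a penalization in the space variable localizes the supremum and produces an almost–maximizer, and the terminal hypothesis $w_1(T,\cdot,\cdot,\cdot)\le w_2(T,\cdot,\cdot,\cdot)$ forces the maximizing time to stay strictly below $T$. I would then double the space variable, studying $w_1(t,x,p,q)-w_2(t,y,p,q)-|x-y|^2/(2\varepsilon)$ while keeping the beliefs $(p,q)$ common (so that the terminal ordering remains usable), and at the contact point pass to the conjugates: choosing the optimal $\bar q$ in the first inversion formula and the optimal $\bar p$ in the second produces, through the Fenchel–Young equalities, sub/supergradients that touch $w_1$ and $w_2$ from the appropriate side and replace them, up to linear terms in $(p,q)$, by the functions $-w_1^\#(\cdot,\cdot,p,\bar q)$ and $-w_2^*(\cdot,\cdot,\bar p,q)$ that solve (\ref{equ 3.35}). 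The viscosity super/subsolution inequalities can then be written at the two spatial points, subtracted, and combined using $H^*(x,\xi)=-H(x,-\xi)$; as $\varepsilon\to0$ the Hamiltonian terms cancel up to a modulus $L_f|x_\varepsilon-y_\varepsilon|^2/\varepsilon\to0$, and a strictification term $-\sigma(T-t)$ turns the resulting consistency relation into the contradiction $\theta\le0$.

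The main obstacle is precisely the asymmetry of the two definitions: the subsolution property of $w_1$ is carried by its concave conjugate in $q$, whereas the supersolution property of $w_2$ is carried by its convex conjugate in $p$, so there is no single instance of (\ref{equ 3.35}) comparing $w_1$ and $w_2$ directly; the two conjugate pictures must be run simultaneously and reconciled through the Fenchel–Young contact relations while the linear coupling terms $p\cdot\bar p$ and $q\cdot\bar q$ are arranged to cancel. Two further technical points need care. First, the beliefs $(p,q)$ live on the simplices $\Delta(I),\Delta(J)$, so at a boundary contact one perturbs $(p,q)$ into the relative interior to guarantee genuine finite sub/supergradients. Second, the dual variables $\bar p,\bar q$ must remain in a fixed compact set; this is exactly what the Lipschitz continuity of $w_1,w_2$ in $(p,q)$ provides, since the relevant $\bar p,\bar q$ are bounded by the corresponding Lipschitz constants. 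Granting these, the classical comparison for (\ref{equ 3.35}) supplies the contradiction and yields $w_1\le w_2$ on $[0,T]\times\mathbb{R}^n\times\Delta(I)\times\Delta(J)$.
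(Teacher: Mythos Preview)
The paper does not give its own proof of this lemma; it simply refers to Theorem~5.1 in Cardaliaguet~\cite{C2007}. Your sketch is precisely the strategy of that reference: pass to the Fenchel conjugates $w_1^{\#}$ and $w_2^{*}$, exploit that they are respectively a viscosity supersolution and subsolution of the dual equation~(\ref{equ 3.35}), and run a doubling-of-variables comparison argument with the beliefs $(p,q)$ frozen and the dual variables $(\bar p,\bar q)$ kept in a compact set by the Lipschitz continuity in $(p,q)$. So your approach coincides with the one the paper invokes, and there is nothing to compare beyond noting that you have spelled out what the paper leaves to the citation.
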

The proof of Lemma \ref{le 3.8} is referred to Theorem 5.1 in \cite{C2007}.
\begin{theorem}\label{th 3.2}
The functions $(V_1^{\pi_n})$ and $(W_1^{\pi_n})$ converge uniformly
on compacts to a same Lipschitz continuous function $U$ when the mesh of the partition $\pi_n$ tends to $0$. Moreover, the function $U$ is  the unique dual viscosity  solution
of the HJI equation $(\ref{equ 3.34})$.
\end{theorem}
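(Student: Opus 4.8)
The plan is to realize both limit candidates through the Fenchel machinery already in place, to recognize one of them as a dual supersolution and the other as a dual subsolution of (\ref{equ 3.34}), and then to squeeze them together using the comparison principle of Lemma \ref{le 3.8}. First I would pass to the limit inside the Fenchel transform. By Lemma \ref{le 3.6} there is a subsequence (still written $\pi_n$) along which $V_1^{\pi_n*}\to\tilde V$ and $W_1^{\pi_n\#}\to\tilde W$ uniformly on compacts. Because $\Delta(I)$ and $\Delta(J)$ are compact and, by Lemma \ref{le 3.2}, each $V_1^{\pi_n}$ is convex in $p$ while each $W_1^{\pi_n}$ is concave in $q$, biconjugation gives $V_1^{\pi_n}=V_1^{\pi_n**}$ and $W_1^{\pi_n}=W_1^{\pi_n\#\#}$; since taking a supremum (resp. infimum) over the compact simplex of a uniformly convergent family is again uniformly convergent, the conjugation operation commutes with the limit. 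Hence
\begin{equation*}
V_1^{\pi_n}\longrightarrow U:=\tilde V^{*},\qquad W_1^{\pi_n}\longrightarrow U':=\tilde W^{\#}
\end{equation*}
uniformly on compacts, where $*$ and $\#$ now denote conjugation back to $p$ and $q$. Both $U$ and $U'$ inherit the uniform Lipschitz bound of Lemma \ref{le 3.1} and the convexity in $p$/concavity in $q$ of Lemma \ref{le 3.2}.

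Next I would identify these two limits with the two halves of the dual-solution notion. Since $\tilde V$ is convex in $\bar p$ and $\tilde W$ is concave in $\bar q$, Fenchel--Moreau yields $U^{*}=\tilde V$ and $U'^{\#}=\tilde W$. By Lemma \ref{th 3.1}, $\tilde V=U^{*}$ is a viscosity subsolution of the dual equation (\ref{equ 3.35}), so by definition $U$ is a dual viscosity supersolution of (\ref{equ 3.34}); by Lemma \ref{le 3.7}, $\tilde W=U'^{\#}$ is a viscosity supersolution of (\ref{equ 3.35}), so $U'$ is a dual viscosity subsolution of (\ref{equ 3.34}). At the terminal time the dynamics are trivial ($X_T=x$), so $V_1^{\pi}(T,x,p,q)=W_1^{\pi}(T,x,p,q)=\sum_{i,j}p_iq_jg_{ij}(x)$, whence $U(T,\cdot)=U'(T,\cdot)=\sum_{i,j}p_iq_jg_{ij}$, matching the terminal datum of (\ref{equ 3.34}) for both.

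Then I would close the argument by a two-sided comparison. The minimax inequality $V_1^{\pi}\le W_1^{\pi}$ passes to the limit to give $U\le U'$. Conversely $U'$ is a dual subsolution, $U$ a dual supersolution, and their terminal values agree, so Lemma \ref{le 3.8} gives $U'\le U$. Therefore $U=U'$, and this common function is simultaneously a dual viscosity sub- and supersolution, i.e. a dual viscosity solution of (\ref{equ 3.34}); uniqueness follows at once by applying Lemma \ref{le 3.8} to any two such solutions sharing the terminal data. Finally, since every subsequence of $(V_1^{\pi_n})$ (resp. $(W_1^{\pi_n})$) admits, through the compactness underlying Lemma \ref{le 3.6}, a further subsequence converging to a dual viscosity solution, and that solution is forced to equal $U$ by uniqueness, the full sequences $V_1^{\pi_n}$ and $W_1^{\pi_n}$ converge uniformly on compacts to $U$.

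The step I expect to demand the most care is the interchange of limit and Fenchel transform together with the correct matching of the polarities ``sub'' and ``super''. One must verify that convergence of the conjugates $V_1^{\pi_n*},W_1^{\pi_n\#}$ really forces convergence of $V_1^{\pi_n},W_1^{\pi_n}$ themselves — this is exactly where compactness of the simplices and the biconjugation identity enter — and that the subsolution property of $\tilde V$ translates, through the definition of a dual solution, into the \emph{supersolution} side for $U$, and dually for $U'$; inverting these roles would make the comparison inequalities point the wrong way and destroy the squeeze. The remaining verifications (the terminal data, the minimax inequality, and the subsequence bootstrap to convergence of the whole sequence) are routine.
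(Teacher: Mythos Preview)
Your proposal is correct and follows essentially the same route as the paper: extract subsequential limits via Lemma~\ref{le 3.6}, identify $\tilde V^{*}$ and $\tilde W^{\#}$ as the limits of $V_1^{\pi_n}$ and $W_1^{\pi_n}$ through biconjugation and uniform convergence on the compact simplices, recognize them as a dual supersolution and a dual subsolution of (\ref{equ 3.34}) with matching terminal data, and then squeeze using Lemma~\ref{le 3.8} against the minimax inequality $V_1^{\pi}\le W_1^{\pi}$; finally upgrade to convergence of the full sequence via uniqueness. The only cosmetic difference is that the paper first extracts the limits of $V_1^{\pi_n},W_1^{\pi_n}$ directly by Arzel\`a--Ascoli (Lemma~\ref{le 3.1}) and then shows these coincide with $\tilde V^{*},\tilde W^{\#}$, whereas you deduce convergence of $V_1^{\pi_n},W_1^{\pi_n}$ from that of their conjugates; the content is the same.
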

For this we first prove the following proposition, then we get Theorem \ref{th 3.2} directly.
\begin{proposition}\label{prop 3.1}
There exists a subsequence of partitions $\pi_n$ with $|\pi_n|\rightarrow 0$, still denoted by $(\pi_n)_{n\geq 1}$
such that $(V_1^{\pi_n})$ and $(W_1^{\pi_n})$ converges uniformly on compacts to the same function $U$, and the function $U$ is the unique dual viscosity solution
of the HJI equation $(\ref{equ 3.34})$.
\end{proposition}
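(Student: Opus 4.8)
The plan is to exploit the duality between the value functions and their Fenchel conjugates, together with the sub-dynamic programming principles and the comparison principle already at our disposal.

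First I fix the subsequence $(\pi_n)$ furnished by Lemma \ref{le 3.6}, along which $V_1^{\pi_n*}\to\tilde V$ and $W_1^{\pi_n\#}\to\tilde W$ uniformly on compact subsets. By Lemma \ref{le 3.1} the families $(V_1^{\pi_n})$ and $(W_1^{\pi_n})$ are uniformly bounded and uniformly Lipschitz, hence equicontinuous, so the Arzel\`a--Ascoli theorem and a diagonal extraction yield a further subsequence, still denoted $(\pi_n)$, along which $V_1^{\pi_n}\to\underline U$ and $W_1^{\pi_n}\to\overline U$ uniformly on compact subsets; the limits are Lipschitz and, by Lemma \ref{le 3.2}, convex in $p$ and concave in $q$. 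The crucial elementary point is that Fenchel conjugation is continuous for this convergence: since $\Delta(I)$ is compact, $|V_1^{\pi_n*}(t,x,\bar p,q)-\underline U^*(t,x,\bar p,q)|\le\sup_{p\in\Delta(I)}|V_1^{\pi_n}(t,x,p,q)-\underline U(t,x,p,q)|$, so $V_1^{\pi_n*}\to\underline U^*$, and comparison with $V_1^{\pi_n*}\to\tilde V$ forces $\underline U^*=\tilde V$; since $\underline U$ is continuous and convex in $p$ on the compact simplex, the Fenchel--Moreau biduality gives $\underline U=\underline U^{**}=\tilde V^*$. Symmetrically $\overline U^\#=\tilde W$ and $\overline U=\tilde W^\#$.

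Next I read off the dual viscosity structure directly from the definitions. Equation (\ref{equ 3.20}) is exactly the dual equation (\ref{equ 3.35}), both carrying the Hamiltonian $H^*$. Since $\underline U^*=\tilde V$ is a viscosity subsolution of (\ref{equ 3.20}) by Lemma \ref{th 3.1}, the function $\underline U$ is by definition a dual viscosity supersolution of (\ref{equ 3.34}); since $\overline U^\#=\tilde W$ is a viscosity supersolution of (\ref{equ 3.20}) by Lemma \ref{le 3.7}, the function $\overline U$ is a dual viscosity subsolution of (\ref{equ 3.34}). Furthermore, for every partition $\pi$ one has $V_1^\pi(T,x,p,q)=W_1^\pi(T,x,p,q)=\sum_{i,j}p_iq_jg_{ij}(x)$, so in the limit $\underline U(T,\cdot)=\overline U(T,\cdot)=\sum_{i,j}p_iq_jg_{ij}(x)$.

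Finally I close the argument by a two-sided comparison. Applying Lemma \ref{le 3.8} to the dual subsolution $\overline U$ and the dual supersolution $\underline U$, whose terminal data coincide, yields $\overline U\le\underline U$ on $[0,T]\times\mathbb{R}^n\times\Delta(I)\times\Delta(J)$; conversely the minimax inequality $W_1^\pi\ge V_1^\pi$ passes to the limit and gives $\overline U\ge\underline U$. Hence $\underline U=\overline U=:U$, and $U$, being simultaneously a dual viscosity subsolution and a dual viscosity supersolution with terminal datum $\sum_{i,j}p_iq_jg_{ij}(x)$, is a dual viscosity solution of (\ref{equ 3.34}); its uniqueness is immediate from Lemma \ref{le 3.8} applied in both directions to any two such solutions, and this uniqueness in turn upgrades the subsequential convergence to convergence of the full sequences $(V_1^{\pi_n})$ and $(W_1^{\pi_n})$. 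I expect the main subtlety to be precisely the transfer of convergence through the Fenchel transform: one must ensure that uniform convergence of the conjugates on compacts corresponds to uniform convergence of the value functions themselves, which relies on the uniform Lipschitz bound of Lemma \ref{le 3.1} and the compactness of the simplices, and that the double conjugation loses no information, i.e.\ that the limits are genuinely convex in $p$ and concave in $q$ so that $\underline U^{**}=\underline U$ and $\overline U^{\#\#}=\overline U$.
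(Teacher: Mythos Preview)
Your proof is correct and follows essentially the same approach as the paper's: extract a subsequence on which both the value functions and their conjugates converge, identify the limit conjugates with the conjugates of the limits via compactness of the simplices, read off the dual sub/supersolution properties from Lemmas \ref{th 3.1} and \ref{le 3.7}, and close with the comparison principle (Lemma \ref{le 3.8}) against the trivial inequality $W_1^{\pi}\ge V_1^{\pi}$. Your write-up is in fact slightly more explicit than the paper's about the continuity of the Fenchel transform and the role of biduality, and your final remark that uniqueness upgrades subsequential to full convergence is exactly the content of Remark \ref{re 3.2}.
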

\begin{remark}\label{re 3.2}
If Proposition \ref{prop 3.1} holds, then there exists a sub-subsequence $(\pi_{n_l})$ such that $(V_1^{\pi_{n_l}},W_1^{\pi_{n_l}})$
converges uniformly to the function $(U,U)$ and the limit $U$ is the unique
dual solution of the HJI equation $(\ref{equ 3.34})$. Therefore, the limits of all converging sub-subsequences are the same, then
Theorem \ref{th 3.2} holds.
\end{remark}
Now we prove (of Proposition \ref{prop 3.1}).
\begin{proof}
From Lemma \ref{le 3.1}, using the Arzel\`a-Ascoli Theorem we know there exist two bounded Lipschitz functions
$V_1$ and $W_1:$ $[0,T]\times\mathbb{R}^n\times\Delta(I)\times\Delta(J)\mapsto\mathbb{R}$ such that $(V_1^{\pi_n},W_1^{\pi_n})\rightarrow
(V_1,W_1)$ uniformly on compacts in $[0,T]\times\mathbb{R}^n\times\Delta(I)\times\Delta(J)$, and $(V_1,W_1)$ are convex in $p$,
concave in $q$.

From Lemma \ref{le 3.6}, $\tilde{W}=\lim\limits_{n\rightarrow\infty}W_1^{\pi_n\#},\ \tilde{V}=\lim\limits_{n\rightarrow\infty}V_1^{\pi_n*}$.
We know $\tilde{V}^*$ and $\tilde{W}^\#$ are a dual viscosity supersolution and a dual viscosity subsolution of HJI equation
$(\ref{equ 3.34})$, respectively, and the terminal value $\tilde{V}^*(T,x,p,q)=\tilde{W}^\#(T,x,p,q)=\sum_{ij}p_iq_jg_{ij}(x)$. Then from
Lemma \ref{le 3.8}, we have
\begin{equation}\label{equ 3.36}
\tilde{V}^*\geq\tilde{W}^\#,\ \text{on}\ [0,T]\times\mathbb{R}^n\times\Delta(I)\times\Delta(J).
\end{equation}
Since $V_1(t,x,p,q)=\lim\limits_{n\rightarrow\infty}V_1^{\pi_n}(t,x,p,q)$ for any $M>0$ and $(t,x,p,q)\in[0,T]\times\bar{B}_M(0)\times\Delta(I)\times\Delta(J)$. Then for any $\rho>0$, we know there exists a positive
integer $N_{\rho,M}$, such that for any $(t,x,p,q)\in[0,T]\times\bar{B}_M(0)\times\Delta(I)\times\Delta(J)$, it holds
$|V_1^{\pi_n}(t,x,p,q)-V_1(t,x,p,q)|\leq \rho.$
Thus, from the definition of convex
conjugate we have
\begin{eqnarray*}
&&|V_1^{\pi_n*}(t,x,\bar{p},q)-V_1^{*}(t,x,\bar{p},q)|
=|\sup_{p\in\Delta(I)}\{\bar{p}\cdot p-V_1^{\pi_n}(t,x,p,q)\}-\sup_{p\in\Delta(I)}\{\bar{p}\cdot p-V_1(t,x,p,q)\}|\\
&&\leq \sup_{p\in\Delta(I)}|V_1^{\pi_n}(t,x,p,q)-V_1(t,x,p,q)|\leq \rho.
\end{eqnarray*}
Therefore, $V_1^*(t,x,\bar{p},$ $q)=\lim\limits_{n\rightarrow\infty}V_1^{\pi_n*}(t,x,\bar{p},q)$. Therefore, $\tilde{V}=\lim\limits_{n\rightarrow\infty}V_1^{\pi_n*}=V_1^*$,
since $V_1$ is convex in $p$, we have $V_1=V_1^{**}=\tilde{V}^*$. Similarly, we have $W_1=\tilde{W}^\#$. From $(\ref{equ 3.36})$
 we have
\begin{equation}\label{equ 3.37}
 W_1\leq V_1,\ \text{on}\ [0,T]\times\mathbb{R}^n\times\Delta(I)\times\Delta(J).
\end{equation}
On the other hand, knowing that $W_1^{\pi_n}\geq V_1^{\pi_n}$,  we have
\begin{equation}\label{equ 3.38}
 W_1\geq V_1,\ \text{on}\ [0,T]\times\mathbb{R}^n\times\Delta(I)\times\Delta(J).
 \end{equation}
From $(\ref{equ 3.37})$ and $(\ref{equ 3.38})$, we know that $U:=V_1=W_1$ on $[0,T]\times\mathbb{R}^n\times\Delta(I)\times\Delta(J)$. Furthermore,
from the above proof, we also know that $U$ is the unique dual viscosity solution of HJI equation $(\ref{equ 3.34})$.
\end{proof}
From Theorem \ref{le 3.1.19} and Theorem \ref{th 3.2}, we obtain the following result directly.
\begin{theorem}\label{th 3.3}
The functions $(V^{\pi_n})$ and $(W^{\pi_n})$ converge uniformly
on compacts to a same Lipschitz continuous function $U$ when the mesh of the partition $\pi_n$ tends to $0$. Moreover, the function $U$ is  the unique dual viscosity  solution
of the HJI equation $(\ref{equ 3.34})$.
\end{theorem}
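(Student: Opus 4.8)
The plan is to reduce Theorem \ref{th 3.3} entirely to the two results already established, namely Theorem \ref{le 3.1.19} and Theorem \ref{th 3.2}, with essentially no new analysis required. First I would recall that Theorem \ref{le 3.1.19} gives the pointwise identities
$$V^{\pi_n}(t,x,p,q)=V_1^{\pi_n}(t,x,p,q),\qquad W^{\pi_n}(t,x,p,q)=W_1^{\pi_n}(t,x,p,q)$$
for \emph{every} partition $\pi_n$ and \emph{every} $(t,x,p,q)\in[0,T]\times\mathbb{R}^n\times\Delta(I)\times\Delta(J)$. Since these equalities hold at every point, the sequence $(V^{\pi_n})$ coincides, as a sequence of functions, with $(V_1^{\pi_n})$, and likewise $(W^{\pi_n})$ coincides with $(W_1^{\pi_n})$. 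In particular any mode of convergence --- and uniform convergence on compact sets in particular --- transfers verbatim from one sequence to the other.

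Second I would invoke Theorem \ref{th 3.2}, which asserts that $(V_1^{\pi_n})$ and $(W_1^{\pi_n})$ converge uniformly on compacts to one and the same Lipschitz continuous function $U$, and that this $U$ is the unique dual viscosity solution of the HJI equation (\ref{equ 3.34}). Substituting the identities above, I conclude that $(V^{\pi_n})$ and $(W^{\pi_n})$ also converge uniformly on compacts to $U$, and that $U$ inherits all the stated properties (Lipschitz continuity, and being the unique dual viscosity solution). This is exactly the claim of Theorem \ref{th 3.3}.

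Because the argument is a direct substitution, there is no genuine analytic obstacle here; the real content sits in the two inputs. The heavy lifting of Theorem \ref{le 3.1.19} is the equivalence between the full randomized strategy classes $\mathcal{A}^{\pi},\mathcal{B}^{\pi}$ and their subclasses $\mathcal{A}_1^{\pi},\mathcal{B}_1^{\pi}$, established through the auxiliary conditional value function $\tilde{V}^{\pi}$ and its Cameron--Martin translation invariance (Lemma \ref{le 3.1.20.1}); the heavy lifting of Theorem \ref{th 3.2} is the Fenchel-transform sub-dynamic-programming principle together with the viscosity comparison of Lemma \ref{le 3.8}. The only point one must be slightly careful about is that the relation in Theorem \ref{le 3.1.19} is a genuine equality of functions, holding for all arguments rather than merely asymptotically, so that passing to the limit introduces no loss of uniformity; this is immediate from the way Theorem \ref{le 3.1.19} is stated.
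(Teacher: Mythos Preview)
Your proposal is correct and matches the paper's approach exactly: the paper states Theorem \ref{th 3.3} as an immediate consequence of Theorem \ref{le 3.1.19} and Theorem \ref{th 3.2}, with no further argument given. Your write-up simply spells out that substitution in more detail, which is fine.
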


\section{{\protect \large Nash equilibrium payoffs for nonzero-sum differential games with symmetric information and without Isaacs condition}}
In this section we consider the existence of Nash equilibrium payoffs for nonzero-sum differential games with symmetric information $(\text{i.e.},\
I=J=1)$ and without Isaacs condition. From Theorem \ref{le 3.1.19}, we only need to consider the strategies $\alpha\in\mathcal{A}_1^\pi(t,T)$ and
$\beta\in\mathcal{B}_1^\pi(t,T)$ for our nonzero-sum games.
Let $g_1:\mathbb{R}^n\mapsto\mathbb{R}$ and $g_2:\mathbb{R}^n\mapsto\mathbb{R}$ be two bounded Lipschitz continuous functions. For $(t,x)\in[0,T]\times\mathbb{R}^n$, $(u,v)\in\mathcal{U}_{t,T}^{\pi,1}\times\mathcal{V}^{\pi,1}_{t,T}$ (the definition of $\mathcal{U}_{t,T}^{\pi,1}$ and
 $\mathcal{V}^{\pi,1}_{t,T}$ refer to Remark \ref{re 2.2}), we define
\begin{equation}\label{equ 5.1}
J_1(t,x,u,v)=E[g_1(X_T^{t,x,u,v})]\ \text{and}\ J_2(t,x,u,v)=E[g_2(X_T^{t,x,u,v})],
\end{equation}
where $X^{t,x,u,v}$ is the solution of the equation $(\ref{equ 2.1})$.
From Remark \ref{re 2.1}, we know for any $(\alpha,\beta)\in\mathcal{A}_1^\pi(t,T)\times\mathcal{B}_1^\pi(t,T)$, there exists
$(u,v)\in\mathcal{U}_{t,T}^{\pi,1}\times\mathcal{V}_{t,T}^{\pi,1}$, such that $\alpha(v)=u,\beta(u)=v$. Thus, we have
$J_m(t,x,\alpha,\beta)=J_m(t,x,u,v),\ m=1,2$, respectively.

 Here, for the nonzero-sum differential games
Player \Rmnum {1} wants to maximize $J_1(t,x,\alpha,\beta)$, while Player \Rmnum{2} wants to maximize $J_2(t,x,\alpha,\beta)$.
In general, a Nash equilibrium point is a couple strategies $(\bar{\alpha},\bar{\beta})$ such that for any other couples of
strategies $(\alpha,\beta)$, it holds
\begin{equation}\label{equ 5.2}
J_1(t,x,\bar{\alpha},\bar{\beta})\geq J_1(t,x,\alpha,\bar{\beta}),\ \text{and}\ J_2(t,x,\bar{\alpha},\bar{\beta})\geq J_2(t,x,\bar{\alpha},\beta),
\end{equation}
and the pair $(J_1(t,x,\bar{\alpha},\bar{\beta}),J_2(t,x,\bar{\alpha},\bar{\beta}))$ is called a Nash equilibrium payoff.

In our paper, we only concern the existence of the Nash equilibrium payoff which can be approximated by
$(J_1(t,x,\bar{\alpha}^\epsilon,\bar{\beta}^\epsilon),J_2(t,x,\bar{\alpha}^\epsilon,\bar{\beta}^\epsilon))$ when $\epsilon$ tends to $0$.
 Now we first give the definition of a Nash equilibrium payoff for our
nonzero-sum differential games.
\begin{definition}
A couple $(e_1,e_2)\in\mathbb{R}^2$ is called a Nash equilibrium payoff (NEP, for short) at the position $(t,x)$, if
for any $\epsilon>0$, there exists $\delta_\epsilon$ small enough satisfying that for any partition $\pi$ of the interval $[0,T]$ with $|\pi|\leq \delta_\epsilon$, there exist $(\alpha^\epsilon,\beta^\epsilon)\in\mathcal{A}_1
^\pi(t,T)\times\mathcal{B}_1^\pi(t,T)$ such that for all $(\alpha,\beta)\in\mathcal{A}_1^\pi(t,T)\times\mathcal{B}_1^\pi(t,T)$
\begin{equation}\label{equ 5.3}
J_1(t,x,\alpha^\epsilon,\beta^\epsilon)\geq J_1(t,x,\alpha,\beta^\epsilon)-\epsilon\ \text{and}\
 J_2(t,x,\alpha^\epsilon,\beta^\epsilon)\geq J_2(t,x,\alpha^\epsilon,\beta)-\epsilon,
\end{equation}
 and
 \begin{equation}\label{equ 5.4}
 \text{for}\ m=1,2,\ |J_m(t,x,\alpha^\epsilon,\beta^\epsilon)-e_m|\leq\epsilon,\ \text{respectively}.
 \end{equation}
\end{definition}
The following lemma gives an equivalent condition of assumption $(\ref{equ 5.3})$ which will be frequently
used in this section.
\begin{lemma}\label{le 5.1}
We assume $\epsilon>0$ and $(\alpha^\epsilon,\beta^\epsilon)\in\mathcal{A}_1^\pi(t,T)\times\mathcal{B}_1^\pi(t,T)$. Assumption (\ref{equ 5.3})
holds if and only if for any $(u,v)\in\mathcal{U}^{\pi,1}_{t,T}\times\mathcal{V}^{\pi,1}_{t,T}$,
\begin{equation}\label{equ 5.5}
J_1(t,x,\alpha^\epsilon,\beta^\epsilon)\geq J_1(t,x,u,\beta^\epsilon(u))-\epsilon\ \text{and}\
 J_2(t,x,\alpha^\epsilon,\beta^\epsilon)\geq J_2(t,x,\alpha^\epsilon(v),v)-\epsilon.
\end{equation}
\end{lemma}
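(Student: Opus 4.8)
The plan is to prove the two implications separately, using in both directions the control--strategy correspondence of Lemma \ref{le 2.1} together with Remark \ref{re 2.2}, which guarantees that the pair of controls $(u,v)$ generated by any $(\alpha,\beta)\in\mathcal{A}_1^\pi(t,T)\times\mathcal{B}_1^\pi(t,T)$ lies in $\mathcal{U}_{t,T}^{\pi,1}\times\mathcal{V}_{t,T}^{\pi,1}$ and satisfies $J_m(t,x,\alpha,\beta)=J_m(t,x,u,v)$, $m=1,2$. Since $\alpha^\epsilon$ and $\beta^\epsilon$ are fixed throughout, I would first fix once and for all the control pair associated to $(\alpha^\epsilon,\beta^\epsilon)$, so that the common left-hand side $J_m(t,x,\alpha^\epsilon,\beta^\epsilon)$ admits both a ``strategy'' and a ``control'' reading and the whole argument reduces to matching the deviation terms.

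For the implication (\ref{equ 5.3})$\Rightarrow$(\ref{equ 5.5}), I would take an arbitrary $u\in\mathcal{U}_{t,T}^{\pi,1}$ and associate to it the \emph{constant} strategy $\alpha\in\mathcal{A}_1^\pi(t,T)$ defined by $\alpha(\omega,v)(s):=u(\omega,s)$ for every $v\in\mathcal{V}_{t,T}$. Because $u$ already has the form described in Remark \ref{re 2.2} (dependence on $\omega$ only through the $\zeta^\pi_{1,\cdot}$ and the correct measurability), this $\alpha$ is a genuine element of $\mathcal{A}_1^\pi(t,T)$ in the sense of Definition \ref{def 2.3}; the delay requirement is vacuous since $\alpha$ ignores the opponent's control. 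Playing $\alpha$ against the fixed $\beta^\epsilon$ and invoking the uniqueness in Remark \ref{re 2.1}, the associated control pair is exactly $(u,\beta^\epsilon(u))$, so $J_1(t,x,\alpha,\beta^\epsilon)=J_1(t,x,u,\beta^\epsilon(u))$. Substituting this $\alpha$ into the first inequality of (\ref{equ 5.3}) yields the first inequality of (\ref{equ 5.5}); the second follows symmetrically by using the constant strategy $\beta$ that plays $v$ and the second inequality of (\ref{equ 5.3}).

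For the converse (\ref{equ 5.5})$\Rightarrow$(\ref{equ 5.3}), I would let $(\alpha,\beta)\in\mathcal{A}_1^\pi(t,T)\times\mathcal{B}_1^\pi(t,T)$ be arbitrary. To recover the first inequality of (\ref{equ 5.3}), play the arbitrary $\alpha$ against the fixed $\beta^\epsilon$: by Lemma \ref{le 2.1} there is a unique associated pair $(u,v)$ with $v=\beta^\epsilon(u)$, and by Remark \ref{re 2.2} one has $u\in\mathcal{U}_{t,T}^{\pi,1}$. Hence $J_1(t,x,\alpha,\beta^\epsilon)=J_1(t,x,u,\beta^\epsilon(u))$, and the first inequality of (\ref{equ 5.5}) applied to this particular $u$ gives $J_1(t,x,\alpha^\epsilon,\beta^\epsilon)\geq J_1(t,x,\alpha,\beta^\epsilon)-\epsilon$. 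The second inequality of (\ref{equ 5.3}) is obtained in the same way by playing the fixed $\alpha^\epsilon$ against the arbitrary $\beta$, extracting a control $v\in\mathcal{V}_{t,T}^{\pi,1}$ with $J_2(t,x,\alpha^\epsilon,\beta)=J_2(t,x,\alpha^\epsilon(v),v)$, and using the second inequality of (\ref{equ 5.5}).

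The only genuinely delicate point, and the one I would check most carefully, is the admissibility of the constant strategies used in the first implication: that $\alpha(\omega,v):=u$ really lands in $\mathcal{A}_1^\pi(t,T)$ for every $u\in\mathcal{U}_{t,T}^{\pi,1}$, and dually that the control produced by an arbitrary $\alpha$ against $\beta^\epsilon$ lies in $\mathcal{U}_{t,T}^{\pi,1}$ with Player \Rmnum{2}'s response equal to $\beta^\epsilon(u)$. Both are exactly the content of Remarks \ref{re 2.1} and \ref{re 2.2}, so once the identification $J_m(t,x,\alpha,\beta)=J_m(t,x,u,v)$ is in place the two implications reduce to substituting the appropriate control or strategy into (\ref{equ 5.3}) or (\ref{equ 5.5}).
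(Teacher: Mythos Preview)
Your proof is correct and follows essentially the same approach as the paper: for the forward implication you embed a control as a constant strategy and substitute into (\ref{equ 5.3}), and for the converse you apply Lemma~\ref{le 2.1}/Remark~\ref{re 2.2} to the pair $(\alpha,\beta^\epsilon)$ (resp.\ $(\alpha^\epsilon,\beta)$) to extract a control in $\mathcal{U}^{\pi,1}_{t,T}$ (resp.\ $\mathcal{V}^{\pi,1}_{t,T}$) and then invoke (\ref{equ 5.5}). One small inaccuracy in your description: a control $u\in\mathcal{U}^{\pi,1}_{t,T}$ depends on $\omega$ through \emph{both} families $\zeta^\pi_{1,\cdot}$ and $\zeta^\pi_{2,\cdot}$ (see the form in Remark~\ref{re 2.2}), not only through $\zeta^\pi_{1,\cdot}$; this does not affect the argument since Definition~\ref{def 2.3} allows exactly that dependence.
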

\begin{proof}
We assume (\ref{equ 5.3}) holds, then for any fixed $u\in\mathcal{U}^{\pi,1}_{t,T}$, we define $\alpha(v)\equiv u$,
for all $v\in\mathcal{V}^{\pi,1}_{t,T}$, then we know $\alpha\in\mathcal{A}_1^\pi(t,T)$.
Thus, from condition (\ref{equ 5.3}),
we have $J_1(t,x,\alpha^\epsilon,\beta^\epsilon)\geq J_1(t,x,u,\beta^\epsilon(u))-\epsilon.$
Similarly, for any $v\in\mathcal{V}_{t,T}^{\pi,1}$, we obtain $J_2(t,x,\alpha^\epsilon,\beta^\epsilon)\geq J_2(t,x,\alpha^\epsilon(v),v)-\epsilon$, then
condition (\ref{equ 5.5}) holds.

Conversely now (\ref{equ 5.5}) holds, for any $\alpha\in\mathcal{A}_1^\pi(t,T)$, from Remark \ref{re 2.2}, there exists $(u,v)\in\mathcal{U}_{t,T}^{\pi,1}
\times\mathcal{V}_{t,T}^{\pi,1}$ such that, $\alpha(v)=u,\beta^\epsilon(u)=v$. Then we know
$$J_1(t,x,\alpha,\beta^\epsilon)-\epsilon =J_1(t,x,u,\beta^\epsilon(u))-\epsilon \leq J_1(t,x,\alpha^\epsilon,\beta^\epsilon).$$
Similarly to $J_2$, then we get condition (\ref{equ 5.3}).
\end{proof}
From Theorem \ref{th 3.2} we know the upper value function $W_1^{\pi}$ and the lower value function $V_1^{\pi}$ converge to
the same function without Isaacs condition. Thus we can denote the following functions $U_1(t,x)$ and $U_2(t,x)$:
\begin{equation}\label{equ 5.6}
U_1(t,x)=\lim_{|\pi|\rightarrow 0}\inf_{\beta\in\mathcal{B}_1^\pi(t,T)}\sup_{\alpha\in\mathcal{A}_1^\pi(t,T)}J_1(t,x,\alpha,\beta)
=\lim_{|\pi|\rightarrow 0}\sup_{\alpha\in\mathcal{A}_1^\pi(t,T)}\inf_{\beta\in\mathcal{B}_1^\pi(t,T)}J_1(t,x,\alpha,\beta),
\end{equation}
and similarly,
\begin{equation}\label{equ 5.7}
U_2(t,x)=\lim_{|\pi|\rightarrow 0}\sup_{\beta\in\mathcal{B}_1^\pi(t,T)}\inf_{\alpha\in\mathcal{A}_1^\pi(t,T)}J_2(t,x,\alpha,\beta)
=\lim_{|\pi|\rightarrow 0}\inf_{\alpha\in\mathcal{A}_1^\pi(t,T)}\sup_{\beta\in\mathcal{B}_1^\pi(t,T)}J_2(t,x,\alpha,\beta).
\end{equation}
Now we announce the following two important results for our nonzero-sum differential games.
\begin{theorem}\label{th 5.1}(Characterization)
A couple $(e_1,e_2)\in\mathbb{R}^2$ is a NEP at the position $(t,x)$ if and only
if for any $\epsilon>0$, there exists $\delta_\epsilon$ satisfying that for any partition $\pi=\{0=t_0<t_1<\cdots<t_N=T\}$ with $|\pi|<\delta_\epsilon$ and $t=t_{k-1}$, there exists $(u^\epsilon,v^\epsilon)
\in\mathcal{U}_{t,T}^{\pi,1}\times\mathcal{V}_{t,T}^{\pi,1}$ such that for $i=k,\ldots,N$ and $m=1,2$, respectively,
\begin{equation}\label{equ 5.8}
P\big\{E[g_m(X_T^{t,x,u^\epsilon,v^\epsilon})|\mathcal{F}_{t_{k-2},t_{i-2}}]\geq U_m(t_{i-1},X_{t_{i-1}}^{t,x,u^\epsilon,v^\epsilon})-\epsilon\big\}\geq 1-\epsilon,
\end{equation}
and
\begin{equation}\label{equ 5.9}
|E[g_m(X_T^{t,x,u^\epsilon,v^\epsilon})]-e_m|\leq \epsilon.
\end{equation}
\end{theorem}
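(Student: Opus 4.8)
The plan is to prove the two implications separately, following the template of the Nash--equilibrium--payoff characterization of Buckdahn, Cardaliaguet and Rainer \cite{BCR2004}, adapted to the NAD--along--partition setting. Throughout I would use Remark \ref{re 2.1} to pass freely between a couple of strategies $(\alpha,\beta)$ and the induced couple of controls $(u,v)\in\mathcal{U}_{t,T}^{\pi,1}\times\mathcal{V}_{t,T}^{\pi,1}$, and Lemma \ref{le 5.1} to replace the deviation-in-strategies condition (\ref{equ 5.3}) by the deviation-in-controls condition (\ref{equ 5.5}). The functions $U_1$ and $U_2$ serve as threat/punishment levels: $U_1(s,y)$ is simultaneously what Player I can guarantee for $J_1$ from $(s,y)$ and the level to which Player II can force $J_1$ down, and symmetrically for $U_2$. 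By Theorem \ref{th 3.2} the relevant upper and lower values coincide in the limit, so on any partition with sufficiently small mesh both $\sup_\alpha\inf_\beta J_1$ and $\inf_\beta\sup_\alpha J_1$ lie within $\epsilon$ of $U_1$, and symmetrically for $U_2$.

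\textbf{Necessity.} Suppose $(e_1,e_2)$ is a NEP and fix $\epsilon>0$; take $\delta_\epsilon$ and, for a partition $\pi$ with $|\pi|<\delta_\epsilon$ and $t=t_{k-1}$, the $\epsilon$-equilibrium pair $(\alpha^\epsilon,\beta^\epsilon)$ with induced controls $(u^\epsilon,v^\epsilon)$. Inequality (\ref{equ 5.9}) is immediate from (\ref{equ 5.4}) together with $J_m(t,x,\alpha^\epsilon,\beta^\epsilon)=E[g_m(X_T^{t,x,u^\epsilon,v^\epsilon})]$. For (\ref{equ 5.8}) I would argue by contradiction for, say, $m=1$: if on some set $A\in\mathcal{F}_{t_{k-2},t_{i-2}}$ with $P(A)>\epsilon$ one had $E[g_1(X_T^{t,x,u^\epsilon,v^\epsilon})\,|\,\mathcal{F}_{t_{k-2},t_{i-2}}]<U_1(t_{i-1},X_{t_{i-1}}^{t,x,u^\epsilon,v^\epsilon})-\epsilon$, then Player I could improve by building $\alpha\in\mathcal{A}_1^\pi(t,T)$ that coincides with $\alpha^\epsilon$ on $[t,t_{i-1}]$ and, on $A$, switches from $t_{i-1}$ onward to a near-optimal strategy for the zero-sum value $U_1$ against the continuation of $\beta^\epsilon$. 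Since Player I can guarantee at least $U_1(t_{i-1},X_{t_{i-1}})-\epsilon$ from $t_{i-1}$ regardless of Player II, this raises $J_1(t,x,\alpha,\beta^\epsilon)$ by an amount bounded below by roughly $\epsilon\,P(A)-o(1)$, contradicting (\ref{equ 5.5}). The symmetric argument for Player II and $m=2$ gives (\ref{equ 5.8}) for both $m$.

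\textbf{Sufficiency.} Given $(u^\epsilon,v^\epsilon)$ satisfying (\ref{equ 5.8}) and (\ref{equ 5.9}), I would construct $\epsilon$-equilibrium strategies by the standard cooperate-then-punish device. Define $\beta^\epsilon\in\mathcal{B}_1^\pi(t,T)$ for Player II to play $v^\epsilon$ as long as the control observed from Player I agrees with $u^\epsilon$, and, at the first partition time $t_{i-1}$ where a discrepancy is detected, to switch to a near-optimal strategy minimizing $J_1$, thereby capping Player I at $U_1(t_{i-1},\cdot)$; define $\alpha^\epsilon\in\mathcal{A}_1^\pi(t,T)$ symmetrically with a near-optimal strategy minimizing $J_2$. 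When neither player deviates the induced controls are exactly $(u^\epsilon,v^\epsilon)$, so (\ref{equ 5.4}) holds by (\ref{equ 5.9}). If Player I deviates to some $u$, let $t_{i-1}$ be the first detection time; the trajectories agree up to $t_{i-1}$, after which Player II's punishment bounds the continuation payoff above by $E[U_1(t_{i-1},X_{t_{i-1}})]+o(1)$, while (\ref{equ 5.8}) gives $J_1(t,x,u^\epsilon,v^\epsilon)\geq E[U_1(t_{i-1},X_{t_{i-1}})]-C\epsilon$ once the exceptional event of probability at most $\epsilon$ is absorbed using the boundedness of $g_1$. Combining yields the first half of (\ref{equ 5.5}); the second half is symmetric, and Lemma \ref{le 5.1} converts (\ref{equ 5.5}) back into (\ref{equ 5.3}).

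The main obstacle lies in the sufficiency direction and is bound up with the NAD--along--partition machinery. First, the punishment-upon-first-detected-deviation strategies must be made genuine measurable elements of $\mathcal{B}_1^\pi$ and $\mathcal{A}_1^\pi$, respecting the delay and the prescribed $\zeta^\pi$-dependence of Definition \ref{def 2.3}; this forces the switch to occur at partition points and the punishment continuation to be concatenated exactly as in the proof of Lemma \ref{le 3.5}. Second, since $U_m$ is only the limit of $V_1^\pi,W_1^\pi$, the guarantee and threat relations hold merely up to an error shrinking with $|\pi|$, so $\delta_\epsilon$ must be chosen small enough that the errors accumulated over the steps $i=k,\ldots,N$, combined with the flow continuity estimates (\ref{equ 2.1.1}), remain within the prescribed $\epsilon$. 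Finally, the conditioning on $\mathcal{F}_{t_{k-2},t_{i-2}}$ and the high-probability (rather than almost-sure) form of (\ref{equ 5.8}) require the bookkeeping that turns an exceptional event of probability at most $\epsilon$ into an $O(\epsilon)$ payoff error via the uniform bound on $g_m$.
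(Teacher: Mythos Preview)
Your proposal is correct and follows essentially the same approach as the paper's proof: the necessity direction is the profitable-deviation contradiction (the paper carries it out with Lemma \ref{le 5.2} $b)$ and an $\epsilon\mapsto\epsilon^2/2$ rescaling to make the $\epsilon\cdot P(A)$ gain beat the NEP slack), and the sufficiency direction is exactly the cooperate-then-punish construction you describe, with the paper using Lemma \ref{le 5.2} $a)$ to build the punishment strategies, the stopping times $S^v,\tau^v$ to locate the first partition point after a deviation, and the choice $\epsilon_0=\epsilon/(8+4NC)$ together with the sets $\Omega_i$ of (\ref{equ 5.26}) to do the bookkeeping you mention in your obstacles paragraph. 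Your identification of the three technical obstacles (NAD-measurability of the glued strategies, the $|\pi|$-error from $U_m$ being only a limit, and the conversion of the exceptional probability into an $O(\epsilon)$ payoff error) matches precisely what the paper handles.
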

\begin{theorem}\label{th 5.2}
For any initial position $(t,x)\in[0,T]\times\mathbb{R}^n$, there exists some NEP at the position $(t,x)$.
\end{theorem}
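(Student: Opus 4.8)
The plan is to deduce Theorem \ref{th 5.2} from the characterization in Theorem \ref{th 5.1}. By that characterization, it suffices to produce, for every $\epsilon>0$ and every partition $\pi$ of small enough mesh with $t=t_{k-1}$, a control pair $(u^\epsilon,v^\epsilon)\in\mathcal{U}_{t,T}^{\pi,1}\times\mathcal{V}_{t,T}^{\pi,1}$ for which the domination inequalities (\ref{equ 5.8}) hold at every grid point $t_{i-1}$, $i=k,\ldots,N$, and for $m=1,2$. The payoff itself is not prescribed in advance: I would set $e_m^\epsilon:=E[g_m(X_T^{t,x,u^\epsilon,v^\epsilon})]$, note that $(e_1^\epsilon,e_2^\epsilon)$ stays in the compact box $\prod_m[-\|g_m\|_\infty,\|g_m\|_\infty]$, and extract along $\epsilon\downarrow 0$ a convergent subsequence with limit $(e_1,e_2)$; then (\ref{equ 5.9}) holds for the surviving controls and Theorem \ref{th 5.1} certifies that $(e_1,e_2)$ is a NEP at $(t,x)$.

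The crux is therefore the construction of $(u^\epsilon,v^\epsilon)$ realizing (\ref{equ 5.8}). The key structural input is that, with $I=J=1$, the functions $U_1$ and $U_2$ of (\ref{equ 5.6})--(\ref{equ 5.7}) are genuine values of zero-sum games (Theorem \ref{th 3.3}): $U_1$ is what Player \Rmnum{1} can guarantee while maximizing $g_1$ against \emph{any} behaviour of Player \Rmnum{2}, and symmetrically $U_2$ for Player \Rmnum{2} maximizing $g_2$. I would construct a strategy $\alpha^\epsilon\in\mathcal{A}_1^\pi(t,T)$ that is $\epsilon$-optimal for $U_1$ not merely from $(t,x)$ but in a time-consistent way from every position $(t_{i-1},y)$ reachable along the grid, and likewise a strategy $\beta^\epsilon\in\mathcal{B}_1^\pi(t,T)$ uniformly $\epsilon$-optimal for $U_2$. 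Such strategies are obtained by concatenating interval-wise near-optimal strategies: on each $[t_{i-1},t_i]$ one uses the dynamic-programming principle satisfied by the value $U_m$ (a consequence of the one-sided principles of Lemmas \ref{le 3.5} and \ref{le 3.7} specialized to $I=J=1$, where the Fenchel conjugates are trivial) together with the finite Borel-partition and measurable-selection device already employed in the proof of Lemma \ref{le 3.5} to glue state-dependent $\epsilon$-optimal continuations into a single admissible delayed strategy. Finally I let $(u^\epsilon,v^\epsilon)$ be the pair of controls induced by $(\alpha^\epsilon,\beta^\epsilon)$ through Lemma \ref{le 2.1} and Remark \ref{re 2.1}, which by Remark \ref{re 2.2} indeed lie in $\mathcal{U}_{t,T}^{\pi,1}\times\mathcal{V}_{t,T}^{\pi,1}$.

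With this choice the two domination inequalities \emph{decouple}, which is what makes existence go through without any fixed-point argument. Indeed, because $\alpha^\epsilon$ secures $U_1$ from every reached position against arbitrary admissible play of Player \Rmnum{2}, and the realized control $v^\epsilon$ restricted to $[t_{i-1},T]$ is one such play, the continuation payoff satisfies $E[g_1(X_T^{t,x,u^\epsilon,v^\epsilon})\,|\,\mathcal{F}_{t_{k-2},t_{i-2}}]\ge U_1(t_{i-1},X_{t_{i-1}}^{t,x,u^\epsilon,v^\epsilon})-\epsilon$ a.s.; the symmetric argument for $\beta^\epsilon$ gives the $m=2$ inequality, so (\ref{equ 5.8}) holds with its probability slack $1-\epsilon$ automatic. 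The main obstacle I anticipate is precisely the time-consistent gluing of the previous paragraph: one must keep the concatenated strategies inside $\mathcal{A}_1^\pi(t,T)$ and $\mathcal{B}_1^\pi(t,T)$ with the correct delay and $\mathcal{B}(\mathbb{R}^{2(l-k)+1})$-measurability, transfer the ``guarantee from every position'' property to the conditional expectation given $\mathcal{F}_{t_{k-2},t_{i-2}}$, and control the accumulation of the per-step $\epsilon$-errors (together with the gap $|V_1^\pi-U_m|$) over the $\sim 1/|\pi|$ grid points by letting the mesh-dependent slack vanish. By contrast, the interaction between the two players is harmless here, since each player's guarantee is valid against every opponent control, so no equilibrium fixed point between $\alpha^\epsilon$ and $\beta^\epsilon$ need be solved. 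Combining this construction with the reduction of the first paragraph yields, via Theorem \ref{th 5.1}, a NEP at $(t,x)$.
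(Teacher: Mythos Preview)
Your plan is correct and rests on the same idea as the paper: since $U_1$ and $U_2$ are genuine zero--sum values, a pair of near--optimal strategies for the two \emph{separate} problems, once played against each other, yields a control pair $(u^\epsilon,v^\epsilon)$ satisfying (\ref{equ 5.8}) for both $m$, and any cluster point of the resulting payoffs is a NEP via Theorem~\ref{th 5.1}. The decoupling you emphasize is exactly the mechanism the paper exploits.

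The organization, however, differs. The paper does not build a single time--consistent $\alpha^\epsilon$ (resp.\ $\beta^\epsilon$); instead it proves a \emph{one--step joint} statement (Lemma~\ref{le 5.3}): if $\alpha^\epsilon$ is $\epsilon$--optimal for $U_1$ on the whole horizon and $\beta^\epsilon$ for $U_2$, then the induced controls $(u^\epsilon,v^\epsilon)$ satisfy $E[U_m(t_k,X_{t_k})]\ge U_m(t,x)-\epsilon$ for \emph{both} $m$. This one--step increase is obtained not by a direct bound but by a contradiction argument using the punishment strategies of Lemma~\ref{le 5.2}. Proposition~\ref{prop 5.1} then concatenates these one--step \emph{controls} state--by--state along the grid (via the Borel--partition device) and iterates with per--step slack $\epsilon/N$ to obtain the almost--sure domination you need for Theorem~\ref{th 5.1}. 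Your route instead front--loads the work into constructing a strategy $\alpha^\epsilon$ that is $\epsilon$--optimal for $U_1$ \emph{from every grid position}, and then reads off (\ref{equ 5.8}) directly without contradiction. What you gain is a cleaner verification of (\ref{equ 5.8}); what you pay is exactly the ``main obstacle'' you flag: building the time--consistent strategy inside $\mathcal{A}_1^\pi(t,T)$ with controlled error accumulation is essentially the content of Lemma~\ref{le 5.2}~b) together with a backward concatenation of the type done in Proposition~\ref{prop 5.1}. Both routes face the same $\epsilon/N$ bookkeeping and are of comparable length.
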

The rest of this section mainly gives the proof for the above theorems, we first prove Theorem \ref{th 5.1} and then from this characterization, we prove the existence result (Theorem \ref{th 5.2}).
First of all, we give the following lemma which will be used to prove Theorem \ref{th 5.1} and \ref{th 5.2}.
\begin{lemma}\label{le 5.2}
$a)$ Fix $(t,x)\in[0,T]\times\mathbb{R}^n$. For any $\epsilon>0$, for any partition $\pi=\{0=t_0<t_1<\cdots< t_N=T\}$
 with $|\pi|<\delta_\epsilon$ (small enough)  and $t=t_{k-1}$,
any fixed $u'\in\mathcal{U}_{t,T}^{\pi,1}$, there exist  strategies
 $\alpha^i\in\mathcal{A}_1^\pi(t,T)$, $i=k,\ldots,N$, such that for any $v\in\mathcal{V}_{t,T}^{\pi,1}$,
\begin{equation}\label{equ 5.10}
\begin{array}{l}
\alpha^i(v)\equiv u',\ P\text{-a.s.},\ \text{on}\ [t,t_{i-1}],\\
E[g_2(X_T^{t,x,\alpha^i(v),v})|\mathcal{F}_{t_{k-2},t_{i-2}}]\leq U_2(t_{i-1},X_{t_{i-1}}^{t,x,\alpha^i(v),v})+\epsilon,\ P\text{-a.s.}
\end{array}
\end{equation}
$b)$ Fix $(t,x)\in[0,T]\times\mathbb{R}^n$. For any $\epsilon>0$,
 for any partition $\pi=\{0=t_0<t_1<\cdots< t_N=T\}$
 with $|\pi|<\delta_\epsilon$ (small enough) and $t=t_{k-1}$,
any fixed $u'\in\mathcal{U}_{t,T}^{\pi,1}$, there exist strategies
$\alpha^i\in\mathcal{A}_1^\pi(t,T)$, $i=k,\ldots,N$,  such that for any $v\in\mathcal{V}_{t,T}^{\pi,1}$,
\begin{equation}\label{equ 5.11}
\begin{array}{l}
\alpha^i(v)\equiv u',\ P\text{-a.s.},\ \text{on}\ [t,t_{i-1}],\\
E[g_1(X_T^{t,x,\alpha^i(v),v})|\mathcal{F}_{t_{k-2},t_{i-2}}]\geq U_1(t_{i-1},X_{t_{i-1}}^{t,x,\alpha^i(v),v})-\epsilon,\ P\text{-a.s.}
\end{array}
\end{equation}
\end{lemma}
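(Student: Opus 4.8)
We prove part a) only, since b) follows from the symmetric construction in which the minimization of $g_2$ and the value $U_2$ are replaced by the maximization of $g_1$ and the value $U_1$ (i.e. by exchanging the roles of the two players and reversing the inequality). The whole argument will be a concatenation of the frozen control $u'$ with an $\epsilon$-optimal minimizing strategy launched at time $t_{i-1}$, glued along a finite Borel partition of the reachable state space exactly as in the proof of Lemma \ref{le 3.5}.

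First I would fix the compact set on which everything happens. Since $f$ is bounded, (\ref{equ 2.1.1}) forces every reachable position to satisfy $|X_s^{t,x,u,v}-x|\le C(s-t)\le CT$, so all the $X_{t_{i-1}}^{t,x,\alpha^i(v),v}$ lie in a fixed ball $\bar B_R(x)$ with $R=CT$. For the game with payoff $g_2$, in which by (\ref{equ 5.7}) Player \Rmnum{1} is the minimizer, Theorem \ref{th 3.2} gives $\inf_{\alpha}\sup_{\beta}J_2(s,y,\alpha,\beta)\to U_2(s,y)$ uniformly on $[0,T]\times\bar B_R(x)$ as $|\pi|\to0$, and Lemma \ref{le 3.1} makes this family uniformly Lipschitz in $(s,y)$. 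Hence I would choose $\delta_\epsilon$ so that, for every $\pi$ with $|\pi|<\delta_\epsilon$ and every $y\in\bar B_R(x)$, it holds $\inf_{\alpha\in\mathcal{A}_1^\pi(t_{i-1},T)}\sup_{\beta\in\mathcal{B}_1^\pi(t_{i-1},T)}J_2(t_{i-1},y,\alpha,\beta)\le U_2(t_{i-1},y)+\tfrac{\epsilon}{2}$; uniformity in $(s,y)$ is what lets a single $\delta_\epsilon$ serve all $i=k,\ldots,N$ and all positions at once.

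Next, for each $i$ I would pick for every $y$ a strategy $\alpha^y\in\mathcal{A}_1^\pi(t_{i-1},T)$ that is $\epsilon$-optimal, so $\sup_\beta J_2(t_{i-1},y,\alpha^y,\beta)\le U_2(t_{i-1},y)+\epsilon$, observe by the uniform Lipschitz property that $\alpha^y$ stays $2\epsilon$-optimal on a small ball $B_r(y)$, and then cover $\bar B_R(x)$ by a finite Borel partition $(O_n)_{n=1}^{n_0}$ of diameter $<r$ with marked points $y_n\in O_n$, setting $\alpha^n:=\alpha^{y_n}$. I would then define $\alpha^i$ to play $u'$ on $[t,t_{i-1})$ and, on $[t_{i-1},T]$, to play $\sum_{n}\alpha^n(v|_{[t_{i-1},T]})\,I_{\{X_{t_{i-1}}^{t,x,u',v}\in O_n\}}$, exactly as in the construction preceding (\ref{equ 3.18}); the same verification as there shows $\alpha^i\in\mathcal{A}_1^\pi(t,T)$ and that the randomization used after $t_{i-1}$ is independent of $\mathcal{F}_{t_{k-2},t_{i-2}}$. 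The first line of (\ref{equ 5.10}) then holds by construction.

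The hard part is the conditional estimate. Because $u'$ and $v$ are non-anticipative with delay, $X_{t_{i-1}}^{t,x,u',v}$ is $\mathcal{F}_{t_{k-2},t_{i-2}}$-measurable while the data driving the dynamics on $[t_{i-1},T]$ are independent of $\mathcal{F}_{t_{k-2},t_{i-2}}$; conditioning on $\mathcal{F}_{t_{k-2},t_{i-2}}$ and invoking the flow property of (\ref{equ 2.1}) as in (\ref{equ 3.18}) should turn $E[g_2(X_T^{t,x,\alpha^i(v),v})\mid\mathcal{F}_{t_{k-2},t_{i-2}}]$ into $\sum_n J_2(t_{i-1},y,\alpha^n,v')\big|_{y=X_{t_{i-1}}}I_{\{X_{t_{i-1}}\in O_n\}}$, with $v'$ the control induced by $v$ on $[t_{i-1},T]$. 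On $\{X_{t_{i-1}}\in O_n\}$ this integrand is at most $\sup_\beta J_2(t_{i-1},y,\alpha^n,\beta)\le U_2(t_{i-1},y)+2\epsilon$, giving the second line of (\ref{equ 5.10}) after relabelling $\epsilon$. I expect the main obstacle to be precisely this conditioning step: justifying rigorously that freezing the past up to $t_{i-2}$ fixes the current position and makes the tail of the game an independent copy started from $(t_{i-1},X_{t_{i-1}})$, which is where the delay property and the independence of the fresh randomization $\zeta^\pi$ after $t_{i-1}$ are essential.
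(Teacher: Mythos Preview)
Your proposal is correct and follows essentially the same approach as the paper's proof: restrict to the reachable compact, pick near-optimal strategies $\alpha^y$ via the convergence in Theorem \ref{th 3.2}, extend them by Lipschitz continuity over a finite Borel partition $(O_n)$, and glue with $u'$ on $[t,t_{i-1}]$. The paper makes explicit precisely the conditioning step you flagged as the main obstacle: it rewrites the tail of $v$ as $v''(\omega,Q,s)$ with the early random variables $(\zeta^\pi_{k-1},\ldots,\zeta^\pi_{i-2})$ frozen to a constant vector $Q$, so that for each fixed $Q$ one has $v''(Q,\cdot)\in\mathcal{V}^{\pi,1}_{t_{i-1},T}$, and then uses that $Q_0$ and $X_{t_{i-1}}^{t,x,u',v}$ are $\mathcal{F}_{t_{k-2},t_{i-2}}$-measurable while $\alpha^i_{y_l}(v''(Q,\cdot))$ and $v''(Q,\cdot)$ are $\mathcal{F}_{t_{i-2},T}$-measurable to factor the conditional expectation exactly as you outlined.
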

\begin{proof}
We just give the proof for $a)$, the proof of $b)$ is analogous.

For any $\epsilon>0$, $y\in\mathbb{R}^n$, any fixed $i$, from the definition of the value function $U_2$, there
exists a strategy $\alpha^i_y\in\mathcal{A}_1^\pi(t_{i-1},T)$ such that
\begin{eqnarray}\label{equ 5.12}
\begin{split}
&U_2(t_{i-1},y)
=\lim_{|\pi|\rightarrow 0}\inf_{\alpha\in\mathcal{A}_1^\pi(t_{i-1},T)}\sup_{\beta\in\mathcal{B}_1^\pi(t_{i-1},T)}
E[g_2(X_T^{t_{i-1},y,\alpha,\beta})]\\
&\geq \inf_{\alpha\in\mathcal{A}_1^\pi(t_{i-1},T)}\sup_{\beta\in\mathcal{B}_1^\pi(t_{i-1},T)}
E[g_2(X_T^{t_{i-1},y,\alpha,\beta})]-\frac{\epsilon}{4}\ \ (\text{since}\ |\pi|<\delta_\epsilon) \\
&\geq \inf_{\alpha\in\mathcal{A}_1^\pi(t_{i-1},T)}\sup_{v\in\mathcal{V}_{t_{i-1},T}^{\pi,1}}
E[g_2(X_T^{t_{i-1},y,\alpha(v),v})]-\frac{\epsilon}{4}
\geq \sup_{v\in\mathcal{V}_{t_{i-1},T}^{\pi,1}}
E[g_2(X_T^{t_{i-1},y,\alpha^i_y(v),v})]-\frac{\epsilon}{2}.
\end{split}
\end{eqnarray}
Since the coefficient $f$ is bounded, for any $(u,v)\in\mathcal{U}_{t,T}^{\pi,1}\times\mathcal{V}_{t,T}^{\pi,1}$,
there exists a constant $R>0$ such that $|X^{t,x,u,v}|\leq R$. Then there exists a finite partition $(O_l)_{l=1,2,\ldots,n}$
of the closed ball $\bar{B}_R(0)$ with diam$(O_l)\leq\epsilon\diagup (4C)$. For any $l$, from $(\ref{equ 5.12})$ there is some $y_l\in O_l$ with
\begin{equation}\label{equ 5.13}
\forall z\in O_l,\ \sup_{v\in\mathcal{V}_{t_{i-1},T}^{\pi,1}}E[g_2(X_T^{t_{i-1},z,\alpha^i_{y_l}(v),v})]\leq U_2(t_{i-1},z)+\epsilon,
\end{equation}
since $U_2(t_{i-1},z)$ and $\sup_{v\in\mathcal{V}_{t_{i-1},T}^{\pi,1}}E[g_2(X_T^{t_{i-1},z,\alpha^i_{y_l}(v),v})]$
are Lipschitz continuous with respect to $z$.

For any $v\in\mathcal{V}_{t,T}^{\pi,1}$, from the definition of the control $v$, it has the following form (refer to Remark \ref{re 2.2})
\begin{equation}\label{equ 5.14}
v(\omega,s)=v^k(s,\zeta_{2,k-1}^{\pi})I_{[t,t_k)}(s)+\sum_{l=k+1}^Nv^l(s,\zeta_{k-1}^{\pi},\ldots,\zeta_{l-2}^{\pi},\zeta_{2,l-1}^{\pi})I_{[t_{l-1},t_l)}(s),
\end{equation}
then for $s\in[t_{i-1},T]$ we define
\begin{eqnarray}\label{equ 5.15}
v''(\omega,Q,s)=\sum_{l=i}^Nv^l(s,Q,\zeta_{2,i-1}^{\pi},\zeta_{1,i-1}^{\pi},\zeta_{2,i}^{\pi},\ldots,
\zeta_{2,l-1}^{\pi})I_{[t_{l-1},t_l)}(s),
\end{eqnarray}
where $Q$ is a ${2(i-k)}$-dimensional constant vector.
Therefore, $v''\in\mathcal{V}_{t_{i-1},T}^{\pi,1}$, and we define the following strategy $\alpha^i$,
\begin{equation}\label{equ 5.16}
\forall v\in\mathcal{V}_{t,T}^{\pi,1},\ \alpha^i(v):=
\left\{
\begin{array}{ll}
u',&\text{on}\ [t,t_{i-1}],\\
\alpha^i_{y_l}(v''(Q_0,s)),&\text{on}\ (t_{i-1},T]\times\{X_{t_{i-1}}^{t,x,u',v}\in O_l\},
\end{array}
\right.
\end{equation}
where $Q_0=(\zeta_{2,k-1}^{\pi},\zeta_{1,k-1}^{\pi},\ldots,
\zeta_{2,i-2}^{\pi},\zeta_{1,i-2}^{\pi})$.
Then, $\alpha^i\in\mathcal{A}_1^\pi(t,T)$. Notice that
$Q_0$ and $X^{t,x,u',v}_{t_{i-1}}$ are all $\mathcal{F}_{t_{k-2},t_{i-2}}$-measurable,
$v''(Q,s)$ and $\alpha^i_{y_l}(v''(Q,s))$ are all $\mathcal{F}_{t_{i-2},T}$-measurable.
Therefore, for all $v\in\mathcal{V}_{t,T}^{\pi,1}$ from (\ref{equ 5.13}) we have, $P$-a.s.,
\begin{equation}\label{equ 5.18}
\begin{split}
&E[g_2(X_T^{t,x,\alpha^i(v),v})|\mathcal{F}_{t_{k-2},t_{i-2}}]=
\sum_{l=1}^nE[g_2(X_T^{t_{i-1},z,\alpha^i_{y_l}(v''(Q,s)),v''(Q,s)})]_{Q=Q_0,z=X^{t,x,u',v}_{t_{i-1}}}
\cdot I_{\{X_{t_{i-1}}^{t,x,u',v}\in O_l\}}\\
&\leq \sum_{l=1}^nU_2(t_{i-1},X_{t_{i-1}}^{t,x,u',v})\cdot I_{\{X_{t_{i-1}}^{t,x,u',v}\in O_l\}}+\epsilon
= U_2({t_{i-1},X_{t_{i-1}}^{t,x,\alpha^i(v),v}})+\epsilon.
\end{split}
\end{equation}
\end{proof}
Now with the help of Lemma \ref{le 5.2}, we will prove Theorem \ref{th 5.1}.
\begin{proof}
\emph{Sufficient condition}.

Let us assume that $(e_1,e_2)$ satisfies condition (\ref{equ 5.8}) and (\ref{equ 5.9}) of Theorem \ref{th 5.1}, namely,
for any $\epsilon>0$, there exists $\delta_\epsilon$ small enough satisfying that for  any partition $\pi=\{0=t_0<t_1<\cdots<t_N=T\}$ with $|\pi|<\delta_\epsilon$ and $t=t_{k-1}$, there exists $(u^{\epsilon},v^{\epsilon})
\in\mathcal{U}_{t,T}^{\pi,1}\times\mathcal{V}_{t,T}^{\pi,1}$ such that for $i=k,\ldots,N$ and $m=1,2$,
\begin{equation}\label{equ 5.19}
P\big\{E[g_m(X_T^{t,x,u^{\epsilon},v^{\epsilon}})|\mathcal{F}_{t_{k-2},t_{i-2}}]\geq U_m(t_{i-1},X_{t_{i-1}}^{t,x,u^{\epsilon},v^{\epsilon}})-\epsilon\big\}\geq 1-\epsilon,
\end{equation}
and
\begin{equation}\label{equ 5.20}
|E[g_m(X_T^{t,x,u^{\epsilon},v^{\epsilon}})]-e_m|\leq \epsilon.
\end{equation}
Then we will prove that $(e_1,e_2)$ is a NEP for the initial position $(t,x)$. For this, we construct
$(\alpha^\epsilon,\beta^\epsilon)\in\mathcal{A}_1^\pi(t,T)\times\mathcal{B}_1^\pi(t,T)$ satisfying $(\ref{equ 5.3})$ and $(\ref{equ 5.4})$.

 Since $g_m, m=1,2,$ is bounded, without loss of generality, we assume
$g_m\geq 0$, which means $W_m\geq 0$, for $m=1,2$, respectively. Suppose $\epsilon_0=\frac{\epsilon}{8+4NC}$ and $(\bar{u},\bar{v})=(u^{\epsilon_0},v^{\epsilon_0})$, then $(\ref{equ 5.19})$ and $\ref{equ 5.20}$ also hold for $\epsilon=\epsilon_0$.
From Lemma \ref{le 5.2} $a)$, let $u':=\bar{u}$,  there
exist  strategies $\alpha_i\in\mathcal{A}_1^\pi(t,T)$, $i=k,\ldots,N$, such that for any
$v\in\mathcal{V}_{t,T}^{\pi,1}$,
\begin{equation}\label{equ 5.21}
\begin{array}{l}
\alpha_i(v)\equiv \bar{u}, P\text{-a.s.},\ \text{on}\ [t,t_{i-1}],\\
E[g_2(X_T^{t,x,\alpha_i(v),v})|\mathcal{F}_{t_{k-2},t_{i-2}}]\leq U_2(t_{i-1},X_{t_{i-1}}^{t,x,\alpha_i(v),v})+\frac{\epsilon}{8},\ P\text{-a.s.}
\end{array}
\end{equation}
For any $v\in\mathcal{V}_{t,T}^{\pi,1}$, we introduce the stopping times $S^v=\inf\{s|v_{s}\neq \bar{v}_{s},\ t\leq s\leq T\}\wedge T$,
$\tau^v=\inf\{t_{i-1}|t_{i-1}>S^v, k+1\leq i\leq N\}\wedge T$.
Now we define $\alpha^\epsilon$ as follows:
\begin{equation}\label{equ 5.22}
\forall v\in\mathcal{V}_{t,T}^{\pi,1},\ \alpha^\epsilon(v)=
\left\{
\begin{array}{ll}
\bar{u},& \text{on}\ [[t,\tau^v]],\\
\alpha_i(v), & \text{on}\ (t_{i-1},T]\times\{\tau^v=t_{i-1}\}.
\end{array}
\right.
\end{equation}
Then $\alpha^\epsilon\in\mathcal{A}_1^{\pi}(t,T)$. Furthermore, for any $v\in\mathcal{V}_{t,T}^{\pi,1}$,
\begin{equation}\label{equ 5.23}
X^{t,x,\alpha^\epsilon(v),v}=
\left\{
\begin{array}{ll}
X^{t,x,\bar{u},v}, &\text{on}\ [[t,\tau^v]],\ P\text{-a.s.},\\
\sum_{i=k+1}^NX^{t,x,\alpha_i(v),v}\cdot I_{\{\tau^v=t_{i-1}\}},&  \text{on}\ [[\tau^v,T]],\ P\text{-a.s.}
\end{array}
\right.
\end{equation}
Then, since $\{\tau^v=t_{i-1}\}\in\mathcal{F}_{t_{k-2},t_{i-2}}$ from (\ref{equ 5.21}) we get
\begin{equation}\label{equ 5.24}
E[g_2(X_T^{t,x,\alpha^\epsilon(v),v})|\mathcal{F}_{t_{k-2},\tau^v}]\leq U_2(\tau^v,X_{\tau^v}^{t,x,\alpha^\epsilon(v),v})+\frac{\epsilon}{8},\ P\text{-a.s.}
\end{equation}
Taking expectation on both side we have
\begin{equation}\label{equ 5.25}
J_2(t,x,\alpha^\epsilon(v),v)\leq E[U_2(\tau^v,X_{\tau^v}^{t,x,\alpha^\epsilon(v),v})]+\frac{\epsilon}{8}.
\end{equation}
Since $X_{S^v}^{t,x,\alpha^\epsilon(v),v}=X_{S^v}^{t,x,\bar{u},\bar{v}}$ and the coefficient $f$ is bounded, for  $\rho:=|\pi|>0$, we have
$$E[\sup_{0\leq r\leq \rho}|X_{(S^v+r)\wedge T}^{t,x,\alpha^\epsilon(v),v}-X_{(S^v+r)\wedge T}^{t,x,\bar{u},\bar{v}}|]\leq C\rho.$$
Moreover, since $U_2(s,x)$ is Lipschitz in $x$, and $S^v\leq \tau^v\leq S^v+\rho$,  then we have
\begin{equation}\label{equ 999}
E[|U_2(\tau^v,X_{\tau^v}^{t,x,\alpha^\epsilon(v),v})-U_2(\tau^v,X_{\tau^v}^{t,x,\bar{u},\bar{v}})|]\leq C\rho\leq \frac{\epsilon}{8}.
\end{equation}
From (\ref{equ 5.25}) and (\ref{equ 999}), we have
\begin{equation}\label{equ 9999}
J_2(t,x,\alpha^\epsilon(v),v)\leq E[U_2(\tau^v,X_{\tau^v}^{t,x,\bar{u},\bar{v}})]+\frac{\epsilon}{4}.
\end{equation}
Now we denote
\begin{equation}\label{equ 5.26}
\Omega_{i}:=\big\{E[g_2(X_T^{t,x,\bar{u},\bar{v}})|\mathcal{F}_{t_{k-2},t_{i-2}}]\geq U_2(t_{i-1},X_{t_{i-1}}^{t,x,\bar{u},\bar{v}})-\epsilon_0\big\},
\end{equation}
and from (\ref{equ 5.19}), we have $P(\Omega_i)\geq 1-\epsilon_0$.
Thus, from (\ref{equ 9999}), (\ref{equ 5.26}) and (\ref{equ 5.20}),
we have
\begin{equation}\label{equ 5.27}
\begin{split}
&J_2(t,x,\alpha^\epsilon(v),v)\\
&\leq\sum_{i=k+1}^N E[U_2(t_{i-1},X_{t_{i-1}})\cdot I_{\{\tau^v=t_{i-1}\}}\cdot I_{\Omega_i}]+
\sum_{i=k+1}^N E[U_2(t_{i-1},X_{t_{i-1}})\cdot I_{\{\tau^v=t_{i-1}\}}\cdot I_{\Omega_{i}^c}]+\frac{\epsilon}{4}\\
&\leq \sum_{i=k+1}^N E[(E[g_2(X_T)|\mathcal{F}_{t_{k-2},t_{i-2}}]+\epsilon_0)\cdot I_{\{\tau^v=t_{i-1}\}}\cdot 1_{\Omega_{i}}]+
\sum_{i=k+1}^N CP(\Omega_{i}^c\cap\{\tau^v=t_{i-1}\})+\frac{\epsilon}{4}\\
& \leq E[g_2(X_T)]+\epsilon_0+\sum_{i=k+1}^NCP(\Omega_{i}^c)+\frac{\epsilon}{4}
 \leq e_2+(2+NC) \epsilon_0+\frac{\epsilon}{4}=e_2+\frac{\epsilon}{2},
\end{split}
\end{equation}
where $X_.:=X_.^{t,x,\bar{u},\bar{v}}$. Then
 from (\ref{equ 5.27}) and (\ref{equ 5.22}) we obtain
\begin{equation}\label{equ 5.28}
\forall v\in\mathcal{V}_{t,T}^{\pi,1},\ J_2(t,x,\alpha^\epsilon(v),v)\leq e_2+\frac{\epsilon}{2},\ \text{and}\
 \alpha^\epsilon(\bar{v})=\bar{u}.
\end{equation}
Similarly, we can construct $\beta^\epsilon\in\mathcal{B}_1^\pi(t,T)$ such that
\begin{equation}\label{equ 5.29}
\forall u\in\mathcal{U}_{t,T}^{\pi,1},\ J_1(t,x,u,\beta^\epsilon(u))\leq e_1+\frac{\epsilon}{2},\
\text{and}\ \beta^\epsilon(\bar{u})=\bar{v}.
\end{equation}
From (\ref{equ 5.28}), (\ref{equ 5.29}) and (\ref{equ 5.20}), we have, for $m=1,2,$ respectively,
\begin{equation}\label{equ 5.29.1}
|J_m(t,x,\alpha^\epsilon,\beta^\epsilon)-e_m|=|J_m(t,x,\bar{u},\bar{v})-e_m|\leq \frac{\epsilon}{2},
\end{equation}
namely, we obtain (\ref{equ 5.4}).
From (\ref{equ 5.29.1}) we know, for $m=1,2,$ respectively,
\begin{equation}\label{equ 5.29.2}
e_m\leq J_m(t,x,\alpha^\epsilon,\beta^\epsilon)+\frac{\epsilon}{2}.
\end{equation}
From (\ref{equ 5.28}), (\ref{equ 5.29}) and (\ref{equ 5.29.2}), we have
\begin{eqnarray*}
J_2(t,x,\alpha^\epsilon(v),v)\leq e_2+\frac{\epsilon}{2}\leq J_2(t,x,\alpha^\epsilon,\beta^\epsilon)+\epsilon,\\
J_1(t,x,u,\beta^\epsilon(u))\leq e_1+\frac{\epsilon}{2}\leq J_1(t,x,\alpha^\epsilon,\beta^\epsilon)+\epsilon.
\end{eqnarray*}
From Lemma \ref{le 5.1}, we know (\ref{equ 5.3}) holds.\\

\noindent\emph{Necessary condition}.

We assume there exists a NEP $(e_1,e_2)\in\mathbb{R}^2$ at the position $(t,x)$, i.e., for any $\epsilon>0$,
there exists $\delta_\epsilon$ small enough satisfying that for any  partition
$\pi=\{0=t_0<\ldots<t_N=T\}$ with $|\pi|<\delta_\epsilon$ and $t=t_{k-1}$, there exists $(\alpha^\epsilon,\beta^\epsilon)\in\mathcal{A}_1^\pi(t,T)\times\mathcal{B}_1^\pi(t,T)$ be
such that for any $(u,v)\in\mathcal{U}^{\pi,1}_{t,T}\times\mathcal{V}^{\pi,1}_{t,T}$, the following inequalities hold:
\begin{equation}\label{equ 5.30}
J_1(t,x,\alpha^\epsilon,\beta^\epsilon)\geq J_1(t,x,u,\beta^\epsilon(u))-\frac{\epsilon^2}{2}\ \text{and}\
 J_2(t,x,\alpha^\epsilon,\beta^\epsilon)\geq J_2(t,x,\alpha^\epsilon(v),v)-\frac{\epsilon^2}{2},
\end{equation}
and for $m=1,2$, respectively,
\begin{equation}\label{equ 5.31}
 |J_m(t,x,\alpha^\epsilon,\beta^\epsilon)-e_m|\leq \frac{\epsilon^2}{2}.
\end{equation}
From Remark \ref{re 2.1}, we know there exist $(u^\epsilon,v^\epsilon)\in\mathcal{U}_{t,T}^{\pi,1}\times\mathcal{V}_{t,T}^{\pi,1}$,
such that $\alpha^\epsilon(v^\epsilon)=u^\epsilon$, $\beta^\epsilon(u^\epsilon)=v^\epsilon$. Now we see (\ref{equ 5.9}) holds obviously.
We suppose (\ref{equ 5.8}) doesn't hold, then we assume that there is some $j\in\{k,\ldots,N\}$, without loss of generality,
we consider the case $m=1$ such that
\begin{equation}\label{equ 5.32}
P\big\{E[g_1(X_T^{t,x,u^\epsilon,v^\epsilon})|\mathcal{F}_{t_{k-2},t_{j-2}}]<U_1(t_{j-1},X_{t_{j-1}}^{t,x,u^\epsilon,v^\epsilon})-\epsilon\big\}>\epsilon.
\end{equation}
Define
\begin{equation}\label{equ 5.33}
A=\big\{E[g_1(X_T^{t,x,u^\epsilon,v^\epsilon})|\mathcal{F}_{t_{k-2},t_{j-2}}]<U_1(t_{j-1},X_{t_{j-1}}^{t,x,u^\epsilon,v^\epsilon})-\epsilon\big\}.
\end{equation}
From Lemma \ref{le 5.2} $b)$, let $u':=u^\epsilon\in\mathcal{U}_{t,T}^{\pi,1}$, then
there exist a strategy $\alpha\in\mathcal{A}_1^\pi(t,T)$ such that,
for any $v\in\mathcal{V}_{t,T}^{\pi,1}$, $\alpha(v)=u^\epsilon$, on $[t,t_{j-1}]$, $P$-a.s., and
\begin{equation}\label{equ 5.34}
 E[g_1(X_T^{t,x,\alpha(v),v})|\mathcal{F}_{t_{k-2},t_{j-2}}]\geq U_1(t_{j-1},X_{t_{j-1}}^{t,x,\alpha(v),v})-\frac{\epsilon}{2}.
\end{equation}
 For $(\alpha,\beta^\epsilon)\in\mathcal{A}_1^\pi(t,T)\times\mathcal{B}_1^\pi(t,T)$, there exists a pair  $(u,v)\in\mathcal{U}_{t,T}^{\pi,1}\times\mathcal{V}_{t,T}^{\pi,1}$,
 such that $\alpha(v)=u$, $\beta^\epsilon(u)=v$. Notice that $u\equiv u^\epsilon, v\equiv v^\epsilon$, on $[t,t_{j-1}]$.
Define $\bar{u}$ by setting:
$$\bar{u}=
\left\{
\begin{array}{ll}
u^\epsilon,&\text{on}\ \big([t,t_{j-1}]\times\Omega\big)\cup\big([t_{j-1},T]\times A^c\big),\\
u,&\text{on}\ [t_{j-1},T]\times A.
\end{array}
\right.
$$
Obviously, $\bar{u}\in\mathcal{U}_{t,T}^{\pi}$.\\ And we know $\beta^\epsilon(\bar{u})\equiv v^\epsilon$,
on $[t,t_{j-1})$, and for $s\in[t_{j-1},T]$, $\beta^\epsilon(\bar{u})_s=\left\{
\begin{array}{ll}
v_s,& \text{on}\ A,\\
v_s^\epsilon,& \text{on}\ A^c.
\end{array}
\right.$\\
Then, we have $X^{t,x,\bar{u},\beta^\epsilon(\bar{u})}\equiv X^{t,x,u^\epsilon,v^\epsilon}$, on $[t,t_{j-1}]$.
For $s\in[t_{j-1},T]$, $X_s^{t,x,\bar{u},\beta^\epsilon(\bar{u})}=\left\{
\begin{array}{ll}
X_s^{t,x,\alpha(v),v}, &\text{on}\ A,\\
X_s^{t,x,u^\epsilon,v^\epsilon}, &\text{on}\ A^c.
\end{array}
\right.
$
Furthermore, we have
\begin{equation}\label{equ 5.35}
\begin{split}
J_1(t,x,\bar{u},\beta^\epsilon(\bar{u}))&=E[g_1(X_T^{t,x,u^\epsilon,v^\epsilon})\cdot I_{A^c}]
+E[g_1(X_T^{t,x,\alpha(v),v})\cdot I_{A}]\\
&=E[g_1(X_T^{t,x,u^\epsilon,v^\epsilon})\cdot I_{A^c}]
+E[E[g_1(X_T^{t,x,\alpha(v),v})|\mathcal{F}_{t_{k-2},t_{j-2}}]\cdot I_{A}]\\
&\geq E[g_1(X_T^{t,x,u^\epsilon,v^\epsilon})\cdot I_{A^c}]
+E[U_1(t_{j-1},X_{t_{j-1}}^{t,x,\alpha(v),v})\cdot I_{A}]-\frac{\epsilon}{2}P(A)\ \ (\text{from}\ (\ref{equ 5.34}))\\
&\geq  E[g_1(X_T^{t,x,u^\epsilon,v^\epsilon})]+\frac{\epsilon}{2}P(A)\ \ (\text{from}\ (\ref{equ 5.33}))\\
&>J_1(t,x,\alpha^\epsilon,\beta^\epsilon)+\frac{\epsilon^2}{2},\ \ (\text{from}\ (\ref{equ 5.32})\ \text{and}\ (\ref{equ 5.33}) )
\end{split}
\end{equation}
which is in contradiction with (\ref{equ 5.30}). Therefore, (\ref{equ 5.8}) holds.
\end{proof}
To prove Theorem \ref{th 5.2} we only need to prove that for any $\epsilon>0$, there exists $\delta_\epsilon$ small enough satisfying that for any partition $\pi=\{0=t_0<t_1<\cdots< t_N=T\}$
 with $|\pi|<\delta_\epsilon$ and $t=t_{k-1}$, there is a pair $(u^\epsilon,v^\epsilon)$
satisfying the conditions of Theorem \ref{th 5.1}. For this we show a stronger result.
\begin{proposition}\label{prop 5.1}
For any $\epsilon>0$, there exists $\delta_\epsilon$ small enough satisfying that for any partition $\pi=\{0=t_0<t_1<\cdots< t_N=T\}$
 with $|\pi|<\delta_\epsilon$ and $t=t_{k-1}$,
there exist a pair $(u^\epsilon,v^\epsilon)\in\mathcal{U}_{t,T}^{\pi,1}\times\mathcal{V}_{t,T}^{\pi,1}$,
such that, for any $k\leq i\leq l\leq N$, and $m=1,2$, respectively,
\begin{equation}\label{equ 5.36}
E[U_m(t_l,X_{t_l})|\mathcal{F}_{t_{k-2},t_{i-2}}]
\geq U_m(t_{i-1},X_{t_{i-1}})-\epsilon,\ P\text{-a.s.},
\end{equation}
where $X_.=X_.^{t,x,u^\epsilon,v^\epsilon}$.
\end{proposition}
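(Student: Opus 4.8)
The plan is to build the pair $(u^\epsilon,v^\epsilon)$ cell by cell along $\pi$ so that each value process $U_m(t_\cdot,X_\cdot)$ becomes an approximate submartingale, and then to telescope the one–step estimates. Fix $\eta:=\epsilon/N$. The decisive step is a \emph{one–step, simultaneous} guarantee: for every node $t_{i-1}$ and every starting point $y$, I want controls on $[t_{i-1},t_i)$ under which $E[U_1(t_i,X_{t_i})]\ge U_1(t_{i-1},y)-\eta$ and $E[U_2(t_i,X_{t_i})]\ge U_2(t_{i-1},y)-\eta$ hold at once. The key observation is that, since $t_{i-1}$ is a partition point, the delay forces any element of $\mathcal{A}_1^\pi(t_{i-1},t_i)$ (resp.\ $\mathcal{B}_1^\pi(t_{i-1},t_i)$) to reduce on this single cell to a control randomized only through $\zeta_{1,i-1}^\pi$ (resp.\ $\zeta_{2,i-1}^\pi$), and these two randomizations are independent. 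Thus the one–step problem is a genuine mixed–control game with independent randomization. I would let $u^{i,*}_y$ be an $(\eta/3)$–optimal randomized control for Player I in $\sup_u\inf_v E[U_1(t_i,X_{t_i})]$ and $v^{i,*}_y$ an $(\eta/3)$–optimal randomized control for Player II in $\sup_v\inf_u E[U_2(t_i,X_{t_i})]$. Because $u^{i,*}_y$ secures Player I's value against \emph{every} $v$ while $v^{i,*}_y$ secures Player II's value against \emph{every} $u$, playing the single pair $(u^{i,*}_y,v^{i,*}_y)$ yields both inequalities simultaneously; the two guarantees never conflict precisely because each is robust to the opponent's response.

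The identification of the one–step value with $U_m$ is where Section 3 enters: since $U_1,U_2$ are the values of the games with payoffs $g_1,g_2$ (Theorem \ref{th 3.3} applies verbatim with the Lipschitz bounded terminal $U_m(t_i,\cdot)$ in place of $g$), the dynamic programming principle together with the uniform Lipschitz continuity of $U_m$ (Lemma \ref{le 3.1}, Remark \ref{re 3.1}) gives that the value of the one–step game on $[t_{i-1},t_i]$ with terminal data $U_m(t_i,\cdot)$ equals $U_m(t_{i-1},\cdot)$ up to an error $\omega(|\pi|)$ uniform in the node; choosing $\delta_\epsilon$ with $\omega(\delta_\epsilon)\le \eta/3$ makes the value at least $U_m(t_{i-1},y)-\eta/3$. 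To pass from a fixed $y$ to the random state $X_{t_{i-1}}$ I would reuse the selection device of Lemma \ref{le 5.2}: boundedness of $f$ confines all trajectories to a fixed ball $\bar B_R(0)$, which I cover by finitely many Borel cells $(O_l)$ of small diameter; on $\{X_{t_{i-1}}\in O_l\}$ I use the controls attached to a representative $y_l$, and the uniform Lipschitz continuity of $U_m$ and of the one–step value (costing a further $\eta/3$) converts the pointwise bound into
\begin{equation*}
E[U_m(t_i,X_{t_i})\,|\,\mathcal{F}_{t_{k-2},t_{i-2}}]\ge U_m(t_{i-1},X_{t_{i-1}})-\eta,\qquad m=1,2,\ \ P\text{-a.s.}
\end{equation*}
Here I use that $X_{t_{i-1}}$ is $\mathcal{F}_{t_{k-2},t_{i-2}}$–measurable while the fresh randomizations $\zeta_{1,i-1}^\pi,\zeta_{2,i-1}^\pi$ are independent of that $\sigma$–field, so conditioning exactly reproduces the one–step expectation.

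Concatenating these cell constructions over $i=k,\dots,N$ produces one pair $(u^\epsilon,v^\epsilon)\in\mathcal{U}_{t,T}^{\pi,1}\times\mathcal{V}_{t,T}^{\pi,1}$ of the form permitted by Remark \ref{re 2.2}, for which the displayed one–step bound holds at every level $i$. The final step is telescoping: writing $M_j:=E[U_m(t_j,X_{t_j})\,|\,\mathcal{F}_{t_{k-2},t_{i-2}}]$ for $j\ge i-1$, the tower property (using $\mathcal{F}_{t_{k-2},t_{i-2}}\subset\mathcal{F}_{t_{k-2},t_{j-2}}$) together with the one–step estimate at level $j$ gives $M_j\ge M_{j-1}-\eta$ for $i\le j\le l$, while $M_{i-1}=U_m(t_{i-1},X_{t_{i-1}})$ since that term is already $\mathcal{F}_{t_{k-2},t_{i-2}}$–measurable. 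Summing the increments yields $M_l\ge U_m(t_{i-1},X_{t_{i-1}})-(l-i+1)\eta\ge U_m(t_{i-1},X_{t_{i-1}})-\epsilon$, which is exactly \eqref{equ 5.36}. I expect the main obstacle to be the one–step step itself: making the simultaneous guarantee for $U_1$ and $U_2$ rigorous and, above all, quantifying the error in the one–step dynamic programming principle uniformly over all nodes and all starting points. Once that uniform control is secured, the measurable selection and the telescoping are routine adaptations of the arguments already used in Lemma \ref{le 5.2} and of the tower property.
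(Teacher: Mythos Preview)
Your overall architecture---establish a one-step approximate submartingale inequality at every node, glue across nodes by a measurable selection over a finite Borel cover of the reachable set, then telescope via the tower property with $\eta=\epsilon/N$---is exactly the paper's; your selection and telescoping steps coincide with (\ref{equ 4.1.23.3})--(\ref{equ 4.1.23.6}).

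The genuine difference is in how the one-step \emph{simultaneous} guarantee (the analogue of Lemma~\ref{le 5.3} at an arbitrary node $t_j$) is obtained. The paper takes \emph{full-horizon} near-optimal strategies $\alpha^\epsilon\in\mathcal{A}_1^\pi(t_j,T)$ for the $U_1$-game and $\beta^\epsilon\in\mathcal{B}_1^\pi(t_j,T)$ for the $U_2$-game, forms $(u^\epsilon,v^\epsilon)$ via Lemma~\ref{le 2.1}, and proves the one-step bound $E[U_m(t_{j+1},X_{t_{j+1}})]\ge U_m(t_j,y)-\epsilon$ \emph{by contradiction}: if it failed for, say, $m=2$, the punishing strategy supplied by Lemma~\ref{le 5.2}~a) would produce a control $\bar u$ with $J_2(t_j,y,\bar u,\beta^\epsilon(\bar u))<U_2(t_j,y)-\epsilon/2$, contradicting the full-horizon near-optimality of $\beta^\epsilon$. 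Your route instead works directly with the one-step game on $[t_j,t_{j+1}]$ with terminal datum $U_m(t_{j+1},\cdot)$ and relies on a one-step dynamic programming principle identifying its value with $U_m(t_j,\cdot)$ up to $\omega(|\pi|)$---precisely the obstacle you flag. That DPP can indeed be extracted from Section~3 (apply Theorem~\ref{th 3.2} on $[t_j,t_{j+1}]$ with terminal $U_m(t_{j+1},\cdot)$ and invoke uniqueness of viscosity solutions to recognise the limit as $U_m(t_j,\cdot)$), but you must check that the modulus of convergence depends only on the Lipschitz constant of the terminal, so that the estimate is uniform over all nodes $j$. The paper's indirect argument via Lemma~\ref{le 5.2} sidesteps this entirely and is therefore more economical with the tools already proved; your direct argument is conceptually cleaner but requires this additional uniformity check.
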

\begin{remark}\label{re 5.1}
If Proposition \ref{prop 5.1} holds, then we set $l=N$, we have
$U_m(T,x)=g_m(x)$, i.e., $U_m(T,X_T^{t,x,u^\epsilon,v^\epsilon})=g_m(X_T^{t,x,u^\epsilon,v^\epsilon})$, then we know
the pair $(u^\epsilon,v^\epsilon)$
satisfy the conditions of Theorem \ref{th 5.1}, let $\epsilon\rightarrow 0$, we obtain the NEP $(e_1,e_2)$.
\end{remark}
We first give the following lemma.
\begin{lemma}\label{le 5.3}
For any $\epsilon>0$, there exists $\delta_\epsilon$ small enough satisfying that for any partition $\pi=\{0=t_0<t_1<\cdots< t_N=T\}$
 with $|\pi|<\delta_\epsilon$ and $t=t_{k-1}$, there exists a pair $(u^\epsilon,v^\epsilon)\in\mathcal{U}_{t,T}^{\pi,1}\times\mathcal{V}_{t,T}^{\pi,1}$,
such that, for $m=1,2$, respectively,
\begin{equation}\label{equ 5.37}
E[U_m(t_k,X_{t_k}^{t,x,u^\epsilon,v^\epsilon})]\geq U_m(t,x)-\epsilon.
\end{equation}
\end{lemma}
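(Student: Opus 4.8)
The plan is to reduce the statement to a one-step saddle-point argument and then exploit the independence of the two randomization devices $\zeta_{1,k-1}^\pi$ and $\zeta_{2,k-1}^\pi$. First I would observe that over the single mesh interval $[t,t_k]=[t_{k-1},t_k]$ the strategies collapse to controls: by the delay property of Definition \ref{def 2.3}, since $[t,t_{k-1}]$ is a singleton, every $\alpha\in\mathcal{A}_1^\pi(t,t_k)$ is on $[t,t_k)$ a control $u^k(\zeta_{1,k-1}^\pi,\cdot)$ not depending on $v$, and likewise every $\beta$ is a control $v^k(\zeta_{2,k-1}^\pi,\cdot)$; moreover $\zeta_{1,k-1}^\pi$ and $\zeta_{2,k-1}^\pi$ are independent and uniform on $[0,1]$. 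Hence it suffices to produce a single pair $(u^\epsilon,v^\epsilon)$ for which Player \Rmnum{1} secures $U_1$ and Player \Rmnum{2} secures $U_2$ in one step, namely $\inf_v E[U_1(t_k,X_{t_k}^{t,x,u^\epsilon,v})]\ge U_1(t,x)-\epsilon$ and $\inf_u E[U_2(t_k,X_{t_k}^{t,x,u,v^\epsilon})]\ge U_2(t,x)-\epsilon$; taking $v=v^\epsilon$ in the first bound and $u=u^\epsilon$ in the second then yields (\ref{equ 5.37}).

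The central ingredient is a one-step dynamic programming inequality in the correct direction for each $U_m$. For $U_1$, where Player \Rmnum{1} is the maximizer, I would apply the sub-DPP of Lemma \ref{le 3.5} to the auxiliary zero-sum game with running payoff $-g_1$; since $I=J=1$ the Fenchel conjugates are trivial, and the inequality reduces to $W^{\pi}(t,x)\le \inf_v\sup_u E[W^{\pi}(t_k,X_{t_k}^{u,v})]$, where $W^{\pi}$ denotes the upper value of the $g_1$-maximization game. The key point is that, because the state equation (\ref{equ 2.1}) is a pure ODE and the two randomizations are independent, the map $(u,v)\mapsto E[\phi(X_{t_k}^{u,v})]$ is affine in each player's randomization law; Sion's minimax theorem then furnishes the one-step Isaacs identity $\inf_v\sup_u E[\phi(X_{t_k}^{u,v})]=\sup_u\inf_v E[\phi(X_{t_k}^{u,v})]$ for every bounded continuous $\phi$ (this is the same mechanism that makes $H(x,\xi)=\inf_\mu\sup_\nu(\cdots)=\sup_\nu\inf_\mu(\cdots)$). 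Applying this with $\phi=W^{\pi}(t_k,\cdot)$ gives $W^{\pi}(t,x)\le \sup_u\inf_v E[W^{\pi}(t_k,X_{t_k}^{u,v})]$.

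I would then pass from $W^{\pi}$ to $U_1$. Since $f$ is bounded, (\ref{equ 2.1.1}) gives $|X_{t_k}^{u,v}-x|\le C|\pi|$, so all reachable endpoints lie in a fixed compact ball on which $W^{\pi}\to U_1$ uniformly (this is the convergence (\ref{equ 5.6}), coming from Theorem \ref{th 3.2} and Remark \ref{re 3.2}, so that the whole family and not merely a subsequence converges). Writing $\rho(|\pi|)$ for the sup-norm distance $\|W^{\pi}-U_1\|$ on this ball, which tends to $0$, I obtain $\sup_u\inf_v E[U_1(t_k,X_{t_k}^{u,v})]\ge U_1(t,x)-2\rho(|\pi|)$. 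Choosing $\delta_\epsilon$ so that $2\rho(|\pi|)<\epsilon/2$ and selecting an $(\epsilon/2)$-optimal control $u^\epsilon$ for the supremum yields $\inf_v E[U_1(t_k,X_{t_k}^{u^\epsilon,v})]\ge U_1(t,x)-\epsilon$, i.e. the bound holds for every $v$. The symmetric statement for $U_2$, now with Player \Rmnum{2} as maximizer, follows verbatim from the dual sub-DPP of Lemma \ref{le 3.7} with the roles of the players interchanged, producing $v^\epsilon$ with $\inf_u E[U_2(t_k,X_{t_k}^{u,v^\epsilon})]\ge U_2(t,x)-\epsilon$. Finally, $u^\epsilon$ is a function of $\zeta_{1,k-1}^\pi$ only and $v^\epsilon$ of $\zeta_{2,k-1}^\pi$ only, so the two are jointly admissible; extending them arbitrarily on $(t_k,T]$ places the pair in $\mathcal{U}_{t,T}^{\pi,1}\times\mathcal{V}_{t,T}^{\pi,1}$, and the two securing inequalities give (\ref{equ 5.37}) for $m=1,2$.

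\textbf{The main obstacle} I anticipate is making the one-step minimax identity fully rigorous at the level of randomized controls: one must realize arbitrary laws on the (noncompact) path space of admissible controls through the single uniform variables $\zeta_{i,k-1}^\pi$ by a Skorohod-type selection, verify weak compactness of the randomization sets together with weak continuity of $(u,v)\mapsto E[\phi(X_{t_k}^{u,v})]$ via the Lipschitz dependence in (\ref{equ 2.1.1}), and check that Sion's hypotheses (here reducing to affineness in each argument) are met. The remaining bookkeeping, namely the uniformity of $\rho(|\pi|)$ in $\pi$ and the reduction from strategies to controls on a single step, is routine given Lemma \ref{le 3.1} and Theorem \ref{th 3.2}.
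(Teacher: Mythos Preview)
Your route is genuinely different from the paper's, and the gap you flag as ``the main obstacle'' is in fact a real one. The one-step Sion identity you need,
\[
\inf_{v}\sup_{u}E\big[\phi(X_{t_k}^{t,x,u,v})\big]=\sup_{u}\inf_{v}E\big[\phi(X_{t_k}^{t,x,u,v})\big],
\]
is a minimax over probability measures on the \emph{path spaces} $\mathcal{U}_{t,t_k}$ and $\mathcal{V}_{t,t_k}$. These spaces are Polish but not compact (oscillating controls have no $L^1$-convergent subsequences), so $\mathcal{P}(\mathcal{U}_{t,t_k})$ is not weakly compact either, and Sion's hypotheses fail. Bilinearity alone is not enough; without compactness the mixed extension of a bounded continuous game need not have a value. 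One can probably rescue the argument through relaxed-control (Young-measure) compactification and a chattering lemma, but that is substantial extra work you do not supply, and it is not the mechanism behind the Hamiltonian identity $H^+=H^-$ you allude to (that identity lives on $\mathcal{P}(U)\times\mathcal{P}(V)$, which \emph{is} compact, not on path measures).

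The paper avoids this entirely. It does not argue one step at a time and never swaps $\inf$ and $\sup$. Instead it takes full-horizon $\frac{\epsilon}{2}$-optimal strategies $\alpha^\epsilon$ for $U_1$ (so $\inf_v J_1(t,x,\alpha^\epsilon(v),v)\ge U_1(t,x)-\frac{\epsilon}{2}$) and $\beta^\epsilon$ for $U_2$, lets $(u^\epsilon,v^\epsilon)$ be the fixed point of $(\alpha^\epsilon,\beta^\epsilon)$ furnished by Lemma~\ref{le 2.1}, and argues by contradiction. If, say, $E[U_2(t_k,X_{t_k}^{u^\epsilon,v^\epsilon})]<U_2(t,x)-\epsilon$, Lemma~\ref{le 5.2}\,a) provides a ``punishing'' strategy $\alpha$ that coincides with $u^\epsilon$ on $[t,t_k]$ and thereafter holds Player~\Rmnum{2} to $U_2(t_k,\cdot)+\frac{\epsilon}{2}$ in conditional expectation. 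The fixed point of $(\alpha,\beta^\epsilon)$ then agrees with $(u^\epsilon,v^\epsilon)$ on $[t,t_k]$, and taking expectations yields $J_2(t,x,\bar u,\beta^\epsilon(\bar u))<U_2(t,x)-\frac{\epsilon}{2}$, contradicting the $\frac{\epsilon}{2}$-optimality of $\beta^\epsilon$. The point is that the \emph{same} pair works for both $m=1,2$ because it is the fixed point of two independently chosen optimal strategies; no minimax swap or compactness on path space is needed.
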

\begin{proof}
From the definition of $U_1(t,x)$ and $U_2(t,x)$ (refer to (\ref{equ 5.6}) and (\ref{equ 5.7})),
there is some $\delta_\epsilon$ such that when $|\pi|<\delta_\epsilon$
$$U_1(t,x)=\lim_{|\pi|\rightarrow 0}\sup_{\alpha\in\mathcal{A}_1^\pi(t,T)}
\inf_{\beta\in\mathcal{B}_1^\pi(t,T)}J_1(t,x,\alpha,\beta)\leq \sup_{\alpha\in\mathcal{A}_1^\pi(t,T)}
\inf_{\beta\in\mathcal{B}_1^\pi(t,T)}J_1(t,x,\alpha,\beta)+\frac{\epsilon}{4}, $$
$$U_2(t,x)=\lim_{|\pi|\rightarrow 0}\sup_{\beta\in\mathcal{B}_1^\pi(t,T)}
\inf_{\alpha\in\mathcal{A}_1^\pi(t,T)}J_2(t,x,\alpha,\beta)\leq \sup_{\beta\in\mathcal{B}_1^\pi(t,T)}
\inf_{\alpha\in\mathcal{A}_1^\pi(t,T)}J_2(t,x,\alpha,\beta)+\frac{\epsilon}{4}.$$
Then we choose $\alpha^\epsilon\in\mathcal{A}_1^\pi(t,T)$ and $\beta^\epsilon\in\mathcal{B}_1^\pi(t,T)$
such that
\begin{equation}\label{equ 5.38}
\begin{split}
U_1(t,x)\leq \inf_{\beta\in\mathcal{B}_1^\pi(t,T)}J_1(t,x,\alpha^\epsilon,\beta)+\frac{\epsilon}{2}\leq
\inf_{v\in\mathcal{V}_{t,T}^{\pi,1}}J_1(t,x,\alpha^\epsilon(v),v)+\frac{\epsilon}{2},\\
U_2(t,x)\leq \inf_{\alpha\in\mathcal{A}_1^\pi(t,T)}J_2(t,x,\alpha,\beta^\epsilon)+\frac{\epsilon}{2}\leq
\inf_{u\in\mathcal{U}_{t,T}^{\pi,1}}J_2(t,x,u,\beta^\epsilon(u))+\frac{\epsilon}{2}.
\end{split}
\end{equation}
For $(\alpha^\epsilon,\beta^\epsilon)$, from Remark \ref{re 2.1} there exists a unique pair
$(u^\epsilon,v^\epsilon)$ such that, $\alpha^\epsilon(v^\epsilon)=u^\epsilon,\
\beta^\epsilon(u^\epsilon)=v^\epsilon$.\\
Now we want to prove that $(u^\epsilon,v^\epsilon)$ satisfy (\ref{equ 5.37}). For this, we suppose (\ref{equ 5.37})
doesn't hold, i.e., for $m=2$ ($m=1$, similar)
such that
\begin{equation}\label{equ 5.39}
E[U_2(t_k,X_{t_k}^{t,x,u^\epsilon,v^\epsilon})]<U_2(t,x)-\epsilon.
\end{equation}
From Lemma \ref{le 5.2} $a)$, for $u':=u^\epsilon$, there exists  a NAD strategy $\alpha\in\mathcal{A}_1
^\pi(t,T)$, for any $v\in\mathcal{V}_{t,T}^{\pi,1}$,  such that $\alpha(v)=u^\epsilon$, $P$-a.s.,
on $[t,t_k]$, and
\begin{equation}\label{equ 5.40}
E[g_2(X_T^{t,x,\alpha(v),v})|\mathcal{F}_{t_{k-2},t_{k-1}}]\leq U_2(t_k,X_{t_k}^{t,x,\alpha(v),v})+\frac{\epsilon}{2},\ P\text{-a.s.}
\end{equation}
From Remark \ref{re 2.1} we know there exists a couple $(\bar{u},\bar{v})\in\mathcal{U}_{t,T}^{\pi,1}\times\mathcal{V}_{t,T}^{\pi,1}$
such that, $\alpha(\bar{v})=\bar{u}$, $\beta^\epsilon(\bar{u})=\bar{v}$. Since
$\bar{u}\equiv u^\epsilon, \bar{v}\equiv v^\epsilon$, on $[t,t_k]$, we know
$X_{t_k}^{t,x,\bar{u},\bar{v}}=X_{t_k}^{t,x,\alpha(\bar{v}),\bar{v}}=X_{t_k}^{t,x,u^{\epsilon},v^\epsilon}$, $P$-a.s.\\
From (\ref{equ 5.40}) and (\ref{equ 5.39}), it follows that
\begin{equation}\label{equ 5.41}
\begin{split}
&J_2(t,x,\bar{u},\beta^\epsilon(\bar{u}))=J_2(t,x,\alpha(\bar{v}),\bar{v})=E[E[g_2(X_T^{t,x,\alpha(\bar{v}),\bar{v}})|\mathcal{F}_{t_{k-2},t_{k-1}}]]\\
\leq& E[U_2({t_k},X_{t_k}^{t,x,\alpha(\bar{v}),\bar{v}})]+\frac{\epsilon}{2}
<U_2(t,x)-\frac{\epsilon}{2},
\end{split}
\end{equation}
which is contradictory to (\ref{equ 5.38}). Hence, (\ref{equ 5.37}) holds.
\end{proof}
We now give the proof of Proposition \ref{prop 5.1}.
\begin{proof}
Firstly, we show that when $l=i$, Proposition \ref{prop 5.1} holds. \\
Similar to Lemma \ref{le 5.3}, we know for any $\epsilon>0$, there exists $\delta_\epsilon$ small enough satisfying that for any partition $\pi=\{0=t_0<t_1<\cdots< t_N=T\}$
 with $|\pi|<\delta_\epsilon$ and $t=t_{k-1}$, for any $y\in\mathbb{R}^n$ there exist
 $(u_j^{\epsilon,y},v_j^{\epsilon,y})\in\mathcal{U}_{t_j,T}^{\pi,1}\times\mathcal{V}_{t_j,T}^{\pi,1}$, $j=k-1,\ldots,N-1$,
such that for $m=1,2$, respectively,
\begin{equation}\label{equ 4.1.23.1}
E[U_m(t_{j+1},X_{t_{j+1}}^{t_j,y,u_j^{\epsilon,y},v_j^{\epsilon,y}})]\geq U_m(t_j,y)-\epsilon.
\end{equation}
For the partition $\pi$ with $|\pi|<\delta_\epsilon$, we now give the construction of $(u^\epsilon,v^\epsilon)\in\mathcal{U}_{t,T}^{\pi,1}\times\mathcal{V}_{t,T}^{\pi,1}$ by induction on $[t_{i-1},t_i)$
satisfying, for $i=k,\ldots,N$,
\begin{equation}\label{equ 4.1.23.2}
E[U_m(t_i,X_{t_i}^{t,x,u^\epsilon,v^\epsilon})|\mathcal{F}_{t_{k-2},t_{i-2}}]
\geq U_m(t_{i-1},X_{t_{i-1}}^{t,x,u^\epsilon,v^\epsilon})-\epsilon,\ P\text{-a.s.}.
\end{equation}
For $i=k$, from (\ref{equ 4.1.23.1}) we know there is $(u_{k-1}^{\epsilon,x},v_{k-1}^{\epsilon,x})$ satisfying (\ref{equ 4.1.23.2}). We
define $u^\epsilon|_{[t_{k-1},t_k)}:=u_{k-1}^{\epsilon,x}$, $v^\epsilon|_{[t_{k-1},t_k)}:=v_{k-1}^{\epsilon,x}$. \\
For $i=k+1$, from (\ref{equ 4.1.23.1}) we know for any $y\in\mathbb{R}^n$, there is $(u_k^{\epsilon,y},v_k^{\epsilon,y})\in\mathcal{U}_{t_k,T}^{\pi,1}\times\mathcal{V}_{t_k,T}^{\pi,1}$ such that, for $m=1,2$, respectively,
 \begin{equation}\label{equ 4.1.23.3}
E[U_m(t_{k+1},X_{t_{k+1}}^{t_k,y,u_k^{\epsilon,y},v_k^{\epsilon,y}})]\geq U_m(t_k,y)-\frac{\epsilon}{2}.
\end{equation}
Since the coefficient $f$ is bounded, there is some $R>0$ such that $|X_{t_k}^{t,x,u^\epsilon,v^\epsilon}|<R$. Then
there exists a finite Borel partition $(O_l)_{l=1}^n$ of $\bar{B}_R(0)$. From (\ref{equ 4.1.23.3}),
we have for any $z\in O_l$, there is some $y_l\in O_l$, such that
 \begin{equation}\label{equ 4.1.23.4}
E[U_m(t_{k+1},X_{t_{k+1}}^{t_k,z,u_k^{\epsilon,y_l},v_k^{\epsilon,y_l}})]\geq U_m(t_k,z)-\epsilon.
\end{equation}
Now we define $u^\epsilon|_{[t_{k},t_{k+1})}:=\sum\limits_{l=1}^n u_{k}^{\epsilon,y_l} I_{\{X^{t,x,u^\epsilon,v^\epsilon}_{t_k}\in O_l\}}$, $v^\epsilon|_{[t_{k},t_{k+1})}:=\sum\limits_{l=1}^nv_{k}^{\epsilon,y_l} I_{\{X^{t,x,u^\epsilon,v^\epsilon}_{t_k}\in O_l\}}$. Then from (\ref{equ 4.1.23.4}) we have
\begin{equation}\label{equ 4.1.23.5}
\begin{split}
&E[U_m(t_{k+1},X_{t_{k+1}}^{t,x,u^{\epsilon},v^{\epsilon}})|\mathcal{F}_{t_{k-2},t_{k-1}}]=
\sum\limits_{l=1}^nE[U_m(t_{k+1},X_{t_{k+1}}^{t_k,z,u_k^{\epsilon,y_l},v_k^{\epsilon,y_l}})]_{z=X^{t,x,u^\epsilon,v^\epsilon}_{t_k}}
I_{\{X^{t,x,u^\epsilon,v^\epsilon}_{t_k}\in O_l\}}\\
&\geq \sum\limits_{l=1}^n[U_m(t_k,z)-\epsilon]_{z=X^{t,x,u^\epsilon,v^\epsilon}_{t_k}} I_{\{X^{t,x,u^\epsilon,v^\epsilon}_{t_k}\in O_l\}}=U_m(t_k,X_{t_k}^{t,x,u^\epsilon,v^\epsilon})-\epsilon,\ P\text{-a.s.}
\end{split}
\end{equation}
Repeating the above step, we can get $(u^\epsilon,v^\epsilon)\in\mathcal{U}_{t,T}^{\pi,1}\times\mathcal{V}_{t,T}^{\pi,1}$ satisfying (\ref{equ 4.1.23.2}).

Next for $l>i$, from (\ref{equ 4.1.23.2}) with using $\epsilon:=\frac{\epsilon}{N}$ we get
\begin{equation}\label{equ 4.1.23.6}
\begin{split}
&E[U_m(t_{l},X_{t_{l}}^{t,x,u^{\epsilon},v^{\epsilon}})|\mathcal{F}_{t_{k-2},t_{i-2}}]=
E[E[U_m(t_{l},X_{t_{l}}^{t,x,u^{\epsilon},v^{\epsilon}})|\mathcal{F}_{t_{k-2},t_{l-2}}]|\mathcal{F}_{t_{k-2},t_{i-2}}]\\
&\geq E[U_m(t_{l-1},X_{t_{l-1}}^{t,x,u^{\epsilon},v^{\epsilon}})|\mathcal{F}_{t_{k-2},t_{i-2}}]-\frac{\epsilon}{N}
\cdots=U_m(t_{i-1},X_{t_{i-1}}^{t,x,u^\epsilon,v^\epsilon})-\epsilon,\ P\text{-a.s.}
\end{split}
\end{equation}
\end{proof}

\section{ {\protect \large  Characterization for the functions $W(t,x,p,q)$ and $V(t,x,p,q)$}}
This section mainly gives a characterization for $W(t,x,p,q)$ and $V(t,x,p,q)$. Under some equivalent Isaacs
condition, we prove that $W(t,x,p,q)=V(t,x,p,q)$. This characterization guarantees that we can
consider the discrete case (with the strategies along the partition $\pi$) for some indiscrete zero-sum
differential games with asymmetric information. With this property, we provide a new method to calculate the value
of the zero-sum differential games through considering all the partitions.

For simplicity, we only consider the case that
 Player \Rmnum {1} and \Rmnum {2} have no private  information in a small time from beginning, then they observe each other
  and only know the opponent's probability, i.e., along with
the partition $\pi=\{0=t_0<t_1<\ldots<t_N=T\}$, $t_{k-1}\leq t<t_k$, the strategy $\alpha:\Omega\times[t,T]\times\mathcal{V}_{t,T}\rightarrow\mathcal{U}_{t,T}$
of Player \Rmnum {1} has the following form
$$\alpha(\omega,v)(s)=\alpha_k(v)(s)I_{[t,t_k)}(s)+
\sum_{l=k+1}^N\alpha_l((\zeta_{1,k}^\pi,\zeta_{2,k}^\pi,\zeta_{1,k+1}^\pi,\ldots,\zeta_{1,l-1}^\pi)(\omega),v)I_
{[t_{l-1},t_l)}(s),\ s\in[t,T],$$
where $\alpha_k: [t,t_k)\times\mathcal{V}_{t,T}\mapsto\mathcal{U}_{t,T}$, $\alpha_l: \mathbb{R}^{2(l-k)-1}\times[t_{l-1},t_l)\times\mathcal{V}_{t,T}\mapsto\mathcal{U}_{t,T},\ k+1\leq l\leq N$, are  Borel measurable functions satisfying:
For all $v,v'\in\mathcal{V}_{t,T}$, it holds that, whenever $v=v'$ a.e. on $[t,t_{l-1}]$, we have for all $x\in\mathbb{R}^{2(l-k)-1}$, $\alpha_l(x,v)(s)=
\alpha_l(x,v')(s)$, a.e. on $[t_{l-1},t_l],\  k+1\leq l\leq N$.\\ Obviously, the strategy $\alpha$ such defined is a special case of Definition \ref{def 2.2},
still denoted by $\mathcal{A}^\pi(t,T)$ for the set of the strategy $\alpha$ that have the above form. Similarly, we have the definition for the strategy $\beta$ and for the set we still
denoted by $\mathcal{B}^\pi(t,T)$. Obviously, for $\pi'\subset\pi$, we have $\mathcal{A}^{\pi'}(t,T)\subset\mathcal{A}^\pi(t,T)$.
$\mathcal{A}(t,T)$ and $\mathcal{B}(t,T)$ are the union of $\mathcal{A}^\pi(t,T)$ and $\mathcal{B}^\pi(t,T)$ with all partition $\pi$, respectively.
It is noticed that the strategies used in this section have the above forms, the rest corresponding definitions are the
same with that defined in Section 2.

To give the characterization, we introduce the following upper and lower value functions as follows
\begin{eqnarray}
\bar{W}^\pi(t,x,p,q)&=&\inf_{\hat{\alpha}\in(\mathcal{A}^\pi(t,T))^I}\sup_{\hat{\beta}\in(\mathcal{B}(t,T))^J}
J(t,x,\hat{\alpha},\hat{\beta},p,q),\\
\bar{V}^\pi(t,x,p,q)&=&\sup_{\hat{\beta}\in(\mathcal{B}(t,T))^J}\inf_{\hat{\alpha}\in(\mathcal{A}^\pi(t,T))^I}
J(t,x,\hat{\alpha},\hat{\beta},p,q),\\
\bar{\bar{W}}^\pi(t,x,p,q)&=&\inf_{\hat{\alpha}\in(\mathcal{A}(t,T))^I}\sup_{\hat{\beta}\in(\mathcal{B}^\pi(t,T))^J}
J(t,x,\hat{\alpha},\hat{\beta},p,q),\\
\bar{\bar{V}}^\pi(t,x,p,q)&=&\sup_{\hat{\beta}\in(\mathcal{B}^\pi(t,T))^J}\inf_{\hat{\alpha}\in(\mathcal{A}(t,T))^I}
J(t,x,\hat{\alpha},\hat{\beta},p,q).
\end{eqnarray}
Next we first prove $(\bar{W}^\pi(t,x,p,q),\bar{V}^\pi(t,x,p,q))$ and
$(\bar{\bar{W}}^\pi(t,x,p,q),\bar{\bar{V}}^\pi(t,x,p,q))$ converge
uniformly on compacts to the same couple $(U(t,x,p,q),U(t,x,p,q))$,
as $|\pi|\rightarrow0$, under the condition
\begin{equation}\label{equ 4.1}
\inf_{u\in U}\sup_{\nu\in\mathcal{P}(V)}f(x,u,\nu)\cdot \xi=\sup_{\nu\in\mathcal{P}(V)}\inf_{u\in U}
f(x,u,\nu)\cdot \xi,
\end{equation}
\begin{equation}\label{equ 4.2}
\sup_{v\in V}\inf_{\mu\in\mathcal{P}(U)}f(x,\mu,v)\cdot \xi=\inf_{\mu\in\mathcal{P}(U)}\sup_{v\in V}
f(x,\mu,v)\cdot \xi,
\end{equation}
respectively, where $f(x,\mu,v):=\int_Uf(x,u,v)\mu(du)$, $f(x,u,\nu):=\int_Vf(x,u,v)\nu(dv)$,
and the function $U(t,x,p,q)$ is the unique solution of the HJI equation (\ref{equ 3.34}).
Then we show that the functions $W(t,x,p,q)=U(t,x,p,q)=V(t,x,p,q)$ under the conditions (\ref{equ 4.1}) and (\ref{equ 4.2}).
\begin{remark}\label{re 4.1}
The assumptions $(\ref{equ 4.1})$ and $(\ref{equ 4.2})$ hold, if and only if the following classical Isaacs condition holds:
\begin{equation}\label{equ 4.3}
 \inf\limits_{u\in U}\sup\limits_{v\in V}f(x,u,v)\cdot\xi=\sup\limits_{v\in V}\inf\limits_{u\in U}f(x,u,v)\cdot\xi.
\end{equation}
Indeed, we have
$$\inf_{\mu\in\mathcal{P}(U)}f(x,\mu,v)\cdot\xi=\inf_{\mu\in\mathcal{P}(U)}
\int_{U}f(x,u,v)\cdot\xi d\mu(u)\geq \inf_{u\in U}
f(x,u,v)\cdot\xi\geq \inf_{\mu\in\mathcal{P}(U)}f(x,\mu,v)\cdot\xi,$$
hence, $\inf\limits_{\mu\in\mathcal{P}(U)}f(x,\mu,v)
\cdot\xi=\inf\limits_{u\in U}f(x,u,v)\cdot\xi$.
 Similarly,
$\sup\limits_{\nu\in\mathcal{P}(V)}f(x,u,\nu)\cdot\xi=\sup\limits_{v\in V}
f(x,u,v)\cdot\xi$.\\
If $(\ref{equ 4.1})$ and $(\ref{equ 4.2})$ hold, then we have
$$\inf_{u\in U}\sup_{v\in V}f(x,u,v)\cdot \xi=
\inf_{u\in U}\sup_{\nu\in\mathcal{P}(V)}f(x,u,\nu)\cdot \xi=
\sup_{\nu\in\mathcal{P}(V)}\inf_{u\in U}f(x,u,\nu)\cdot \xi=
\sup_{\nu\in\mathcal{P}(V)}\inf_{\mu\in \mathcal{P}(U)}f(x,\mu,\nu)\cdot \xi.$$
$$\sup_{v\in V}\inf_{u\in U}f(x,u,v)\cdot \xi=
\sup_{v\in V}\inf_{\mu\in\mathcal{P}(U)}f(x,\mu,v)\cdot \xi
=\inf_{\mu\in\mathcal{P}(U)}\sup_{v\in V}f(x,\mu,v)\cdot \xi=
\inf_{\mu\in\mathcal{P}(U)}\sup_{\nu\in \mathcal{P}(V)}f(x,\mu,\nu)\cdot \xi.$$
Then, we get classical Isaacs condition $(\ref{equ 4.3})$ holds.\\
If $(\ref{equ 4.3})$ holds, then we have
$$\inf_{u\in U} \sup_{\nu\in\mathcal{P}(V)} f(x,u,\nu)\cdot \xi=\inf_{u\in U}\sup_{v\in V}f(x,u,v)\cdot \xi=
\sup_{v\in V}\inf_{u\in U}f(x,u,v)\cdot \xi=\sup_{\nu\in\mathcal{P}(V)}\inf_{u\in U}f(x,u,\nu)\cdot \xi,$$
then we know $(\ref{equ 4.1})$ holds.
Similarly, we get $(\ref{equ 4.2})$.
\end{remark}
Similar to the proof of Theorem \ref{th 3.2}, we get the following result.
\begin{theorem}\label{th 5.1.24.1}
The functions $(V^{\pi_n})$ and $(W^{\pi_n})$ converge uniformly
on compacts to a same Lipschitz continuous function $U$ when the mesh of the partition $\pi_n$ tends to $0$. Moreover, the function $U$ is  the unique dual viscosity  solution
of the HJI equation $(\ref{equ 3.34})$.
\end{theorem}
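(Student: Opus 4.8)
The plan is to reproduce, for the strategy classes $\mathcal{A}^\pi(t,T),\mathcal{B}^\pi(t,T)$ just introduced in this section, the entire chain of arguments that led from Lemma~\ref{le 3.1} to Theorem~\ref{th 3.2}. Since the present strategies are a special case of Definition~\ref{def 2.2} — the players share a common, deterministic initial slice $[t,t_k)$ and thereafter randomize along $\pi$, so in particular they depend on $\omega$ only through the $\zeta^\pi$'s, just as the members of $\mathcal{A}_1^\pi$ do — the auxiliary functions, the Fenchel transforms of Section~3, and the comparison result Lemma~\ref{le 3.8} are all available; what must be re-checked is that each structural step survives the passage to this sub-class. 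Concretely, I would first verify that $V^\pi$ and $W^\pi$ are Lipschitz in $(t,x,p,q)$ uniformly in $\pi$ (as in Lemma~\ref{le 3.1}) and convex in $p$, concave in $q$ (as in Lemma~\ref{le 3.2}); the splicing constructions used there — freezing a constant control on an extra initial interval and relabelling the randomization variables — adapt directly, because the new strategies differ from those of Definition~\ref{def 2.2} only in the indexing of the $\zeta^\pi$'s.

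Next I would pass to the convex conjugate $V^{\pi*}$ (with respect to $p$) and the concave conjugate $W^{\pi\#}$ (with respect to $q$), rewrite them as in Lemma~\ref{le 3.3}, and prove the sub-dynamic programming principles of Lemma~\ref{le 3.5} and Lemma~\ref{le 3.7} for them. Then, using the uniform Lipschitz bound (Lemma~\ref{le 3.4}) and the Arzel\`a--Ascoli theorem as in Lemma~\ref{le 3.6}, I would extract from $(\pi_n)$ a subsequence along which $(V^{\pi_n*},W^{\pi_n\#})$ converges uniformly on compacts to a pair $(\tilde V,\tilde W)$, and show (exactly as in Lemma~\ref{th 3.1} and Lemma~\ref{le 3.7}) that $\tilde V$ is a viscosity subsolution and $\tilde W$ a viscosity supersolution of the dual HJI equation~(\ref{equ 3.35}). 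By the comparison principle Lemma~\ref{le 3.8}, together with the matching terminal data $\tilde V^*(T,\cdot)=\tilde W^\#(T,\cdot)=\sum_{ij}p_iq_jg_{ij}$, this yields $\tilde V^*\ge\tilde W^\#$; identifying $V:=\lim V^{\pi_n}=\tilde V^*$ and $W:=\lim W^{\pi_n}=\tilde W^\#$ via the continuity of the Fenchel conjugation under uniform convergence (as in Proposition~\ref{prop 3.1}) then gives $W\le V$, while the trivial inequality $W^{\pi_n}\ge V^{\pi_n}$ gives $W\ge V$. Hence $U:=V=W$ is the unique dual viscosity solution of~(\ref{equ 3.34}), and the subsequence argument of Remark~\ref{re 3.2} upgrades subsequential convergence to convergence of the full sequence.

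I expect the main obstacle to be the sub-dynamic programming principle (the analogue of Lemma~\ref{le 3.5}) for the modified strategy class, since its proof rests on concatenating an $\varepsilon$-optimal strategy on $[t,t_l]$ with spatially localized $\varepsilon$-optimal strategies on $[t_l,T]$, and on the measurability and independence bookkeeping of the randomization variables. With the initial interval now carrying no private randomization and with Player~\Rmnum{1}'s later strategies indexed by $(\zeta_{1,k}^\pi,\zeta_{2,k}^\pi,\zeta_{1,k+1}^\pi,\ldots,\zeta_{1,l-1}^\pi)$, one must re-verify that the spliced strategy still lies in $\mathcal{A}^\pi(t,T)$ and that the delay property continues to make $\alpha$ independent of the opponent's current-block control, so that the one-step inequality produces precisely the Hamiltonian $H^*(x,\xi)=\inf_{\nu\in\mathcal{P}(V)}\sup_{\mu\in\mathcal{P}(U)}\int_{U\times V}f(x,u,v)\mu(du)\nu(dv)\cdot\xi$ in the viscosity argument. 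Once this bookkeeping is confirmed, the remaining steps are routine transcriptions of the proofs in Section~3.
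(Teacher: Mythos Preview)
Your proposal is correct and follows exactly the paper's approach: the paper proves this theorem by the single sentence ``Similar to the proof of Theorem~\ref{th 3.2}, we get the following result,'' and your outline is precisely a careful unpacking of that similarity, identifying the points (Lipschitz regularity, convexity/concavity, the conjugate representation, the sub-DPP, and the viscosity argument) that must be re-verified for the modified strategy classes of Section~5. Your observation that these strategies already depend on $\omega$ only through the $\zeta^\pi$'s---so that no analogue of Theorem~\ref{le 3.1.19} is needed and one can work directly as in the proofs for $V_1^\pi,W_1^\pi$---is the right structural point, and the bookkeeping concern you flag about the sub-DPP is indeed the only place requiring genuine care.
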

Now we only prove the convergence of $(\bar{W}^\pi(t,x,p,q),\bar{V}^\pi(t,x,p,q))$ and the proof of
$(\bar{\bar{W}}^\pi$ $(t,x,p,q),\bar{\bar{V}}^\pi(t,x,p,q))$ is similar.

Notice that for any fixed $\beta\in\mathcal{B}(t,T)$, there exists some partition $\bar{\pi}$ such that
$\beta\in\mathcal{B}^{\bar{\pi}}(t,T)$. Using this technique and the  method which have been used in Section 3, we have
the following lemmas.
\begin{lemma}\label{le 4.1}
The functions $\bar{W}^\pi$ and $\bar{V}^\pi$ are Lipschitz continuous with respect to $(t,x,p,q)$, uniformly with respect to $\pi$.
\end{lemma}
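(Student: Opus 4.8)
The plan is to follow the scheme of the proof of Lemma \ref{le 3.1} almost verbatim, treating $\bar{V}^\pi$ first (the argument for $\bar{W}^\pi$ being symmetric, with the roles of the two players interchanged), and to check throughout that every constant produced depends only on the bound of $f$ and on the Lipschitz constants of $f$ and of the $g_{ij}$, hence is independent of $\pi$. Lipschitz continuity in $p$ and $q$ is immediate from the definition $J(t,x,\hat{\alpha},\hat{\beta},p,q)=\sum_{i,j}p_iq_jE[g_{ij}(X_T^{t,x,\alpha_i,\beta_j})]$ together with the boundedness of the $g_{ij}$, and this passes through both $\inf\sup$ and $\sup\inf$ unchanged. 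Lipschitz continuity in $x$ follows from the flow estimate (\ref{equ 2.1.1}) and the Lipschitz property of the $g_{ij}$: for fixed controls $J$ is Lipschitz in $x$ with a constant independent of $(\hat{\alpha},\hat{\beta})$ and of $\pi$, so the same constant controls $\bar{V}^\pi$.

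The only real work is continuity in $t$. I would fix $x,(p,q)$ and $t<t'<T$, and for one of the two inequalities pick $\hat{\beta}=(\beta_j)\in(\mathcal{B}(t,T))^J$ that is $\varepsilon$-optimal for $\bar{V}^\pi(t,x,p,q)$. Here is where the present framework differs from Lemma \ref{le 3.1}: Player \Rmnum{2} ranges over the whole union $\mathcal{B}(t,T)=\bigcup_{\pi'}\mathcal{B}^{\pi'}(t,T)$, so I would first invoke the observation that there is a single partition $\bar{\pi}$, which I may take to satisfy $\pi\subset\bar{\pi}$ (replace any $\bar{\pi}_0$ with $\beta\in\mathcal{B}^{\bar{\pi}_0}(t,T)$ by $\bar{\pi}=\pi\cup\bar{\pi}_0$), such that $\hat{\beta}\in(\mathcal{B}^{\bar{\pi}}(t,T))^J$. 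I then transplant $\hat{\beta}$ onto $[t',T]$ exactly as in Lemma \ref{le 3.1}: define $\tilde{\beta}_j$ by freezing the control on $[t,t')$ to a fixed constant, and, if $t'$ falls strictly inside the $\bar{\pi}$-cell $[t_{l-1},t_l)$, split the randomization carried by $\zeta^{\bar{\pi}}_{2,l-1}$ via auxiliary independent uniform variables so as to obtain $\beta'_j\in\mathcal{B}^{\bar{\pi}}(t',T)\subset\mathcal{B}(t',T)$ obeying the same law. Symmetrically, from an arbitrary $\alpha\in\mathcal{A}^\pi(t',T)$ I would build $\alpha'\in\mathcal{A}^\pi(t,T)$ by prescribing a fixed constant control on $[t,t')$ and $\alpha$ afterwards. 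By Lemma \ref{le 2.1} and Remark \ref{re 2.1} the admissible controls attached to $(\alpha',\beta_j)$ and to $(\alpha,\tilde{\beta}_j)$ coincide on $[t',T]$; the flow estimate (\ref{equ 2.1.1}) then gives $E[|X_s^{t,x,\alpha',\beta_j}-X_s^{t',x,\alpha,\tilde{\beta}_j}|]\le M|t'-t|$, and through the Lipschitz $g_{ij}$, $J(t',x,\hat{\alpha},\hat{\beta}',p,q)=J(t',x,\hat{\alpha},(\tilde{\beta}_j),p,q)\ge J(t,x,\hat{\alpha}',\hat{\beta},p,q)-C|t'-t|$. Taking first the infimum over $\hat{\alpha}\in(\mathcal{A}^\pi(t',T))^I$ and then the supremum over $\hat{\beta}$ yields $\bar{V}^\pi(t',x,p,q)\ge \bar{V}^\pi(t,x,p,q)-\varepsilon-C|t'-t|$; the reverse inequality is obtained the same way, starting from an $\varepsilon$-optimal strategy at the later time $t'$ and letting $\varepsilon\downarrow 0$.

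The \emph{main obstacle} is precisely this asymmetry: Player \Rmnum{1} is confined to $\mathcal{A}^\pi$ while Player \Rmnum{2} is allowed every partition, so the time-shift surgery must be carried out on two different partitions simultaneously — on $\pi$ for the $\alpha$-side and on the a priori finer $\bar{\pi}$ for the $\beta$-side — and one must verify that the randomization-splitting step keeps $\beta'_j$ inside the restricted form prescribed in Section 5 (no private information near the initial time) and that the resulting estimate constants $M,C$ do not depend on $\bar{\pi}$. This is the case because, although $\bar{\pi}$ depends on the chosen $\varepsilon$-optimal $\hat{\beta}$, the constant $C$ depends only on the bound of $f$ and the Lipschitz constants of $f$ and of the $g_{ij}$, so uniformity in $\pi$ is preserved. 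The same construction, with the players interchanged, then gives the statement for $\bar{W}^\pi$ and completes the proof.
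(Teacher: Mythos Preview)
Your proposal is correct and takes essentially the same approach as the paper. The paper does not give a detailed proof of this lemma but only the remark that ``for any fixed $\beta\in\mathcal{B}(t,T)$, there exists some partition $\bar{\pi}$ such that $\beta\in\mathcal{B}^{\bar{\pi}}(t,T)$; using this technique and the method which has been used in Section~3, we have the following lemmas'' --- which is precisely the strategy you spell out: reduce the $\mathcal{B}(t,T)$-side to a fixed (possibly finer) partition $\bar{\pi}\supset\pi$ and then repeat the transplantation argument of Lemma~\ref{le 3.1}, noting that the Lipschitz constants depend only on the data of $f$ and $g_{ij}$ and not on $\bar{\pi}$.
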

\begin{lemma}\label{le 4.2}
For any $(t,x)\in[0,T]\times\mathbb{R}^n$, the functions $\bar{W}^\pi(t,x,p,q)$ and $\bar{V}^\pi(t,x,p,q)$
are convex in $p$ and concave in $q$ on $\Delta(I)\times\Delta(J)$, respectively.
\end{lemma}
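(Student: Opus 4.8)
The plan is to follow the proof of Lemma~\ref{le 3.2} almost verbatim, the only genuinely new ingredient being the passage to a common refinement of two partitions. This passage is legitimate because, by the definition of $\mathcal{B}(t,T)$ as a union, every $\beta\in\mathcal{B}(t,T)$ lies in $\mathcal{B}^{\bar\pi}(t,T)$ for some partition $\bar\pi$, and because $\mathcal{B}^{\pi'}(t,T)\subset\mathcal{B}^{\pi''}(t,T)$ whenever $\pi'\subset\pi''$ (and likewise for Player~\Rmnum{1}). I treat $\bar V^\pi$ in detail; the statement for $\bar W^\pi$ is obtained by interchanging the roles of the two players.

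I first dispose of the two \emph{monotone} directions, which need no refinement. Exactly as in (\ref{equ 3.5}), since Player~\Rmnum{1} knows his index $i$ the infimum over $\hat\alpha=(\alpha_1,\dots,\alpha_I)\in(\mathcal{A}^\pi(t,T))^I$ decouples across $i$, so that
\[
\bar V^\pi(t,x,p,q)=\sup_{\hat\beta\in(\mathcal{B}(t,T))^J}\ \sum_{i=1}^I p_i\ \inf_{\alpha\in\mathcal{A}^\pi(t,T)}\ \sum_{j=1}^J q_j\,E[g_{ij}(X_T^{t,x,\alpha,\beta_j})].
\]
For each fixed $\hat\beta$ the inner expression is affine in $p$, so $\bar V^\pi$ is a supremum of affine functions of $p$, hence convex in $p$. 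Dually, writing $\bar W^\pi$ with the supremum over $\hat\beta$ decoupled across $j$ exhibits $\bar W^\pi$ as an infimum of functions affine in $q$, hence concave in $q$.

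It remains to prove concavity of $\bar V^\pi$ in $q$ (and, symmetrically, convexity of $\bar W^\pi$ in $p$). Fix $(t,x,p)$, $q^0,q^1\in\Delta(J)$, $\lambda\in(0,1)$, set $q^\lambda=(1-\lambda)q^0+\lambda q^1$ (one may assume $q^\lambda_j>0$ for all $j$), and choose $\hat\beta^0,\hat\beta^1\in(\mathcal{B}(t,T))^J$ that are $\varepsilon$-optimal for $\bar V^\pi(t,x,p,q^0)$ and $\bar V^\pi(t,x,p,q^1)$, respectively. Pick partitions $\bar\pi_0,\bar\pi_1$ with $\hat\beta^\iota\in(\mathcal{B}^{\bar\pi_\iota}(t,T))^J$, put $\bar\pi=\bar\pi_0\cup\bar\pi_1$, and use the monotonicity above to regard both $\hat\beta^0,\hat\beta^1$ as elements of $(\mathcal{B}^{\bar\pi}(t,T))^J$. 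Now the construction of Lemma~\ref{le 3.2} applies along $\bar\pi$: with the cut points $c_j=\tfrac{(1-\lambda)q^0_j}{q^\lambda_j}$ I split the last uniform slot of $\bar\pi$, feeding $\hat\beta^0$ the rescaled value on $[0,c_j]$ and $\hat\beta^1$ the rescaled value on $[c_j,1]$, thereby gluing them into a single $\hat\beta^\lambda\in(\mathcal{B}^{\bar\pi}(t,T))^J\subset(\mathcal{B}(t,T))^J$. Integrating out this last variable then yields the convex combination of the two payoffs, and superadditivity of $\inf_{\alpha\in\mathcal{A}^\pi(t,T)}$ gives, as in (\ref{equ 3.6}),
\[
\bar V^\pi(t,x,p,q^\lambda)\ \ge\ \inf_{\hat\alpha\in(\mathcal{A}^\pi(t,T))^I}J(t,x,\hat\alpha,\hat\beta^\lambda,p,q^\lambda)\ \ge\ (1-\lambda)\bar V^\pi(t,x,p,q^0)+\lambda\bar V^\pi(t,x,p,q^1)-\varepsilon;
\]
letting $\varepsilon\downarrow0$ gives concavity in $q$. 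For the convexity of $\bar W^\pi$ in $p$ the mirror construction is carried out on Player~\Rmnum{1}'s side; here the two $\varepsilon$-optimal strategies already lie in the fixed set $(\mathcal{A}^\pi(t,T))^I$, so no refinement is needed to build $\hat\alpha^\lambda$, and the supremum over $\hat\beta\in(\mathcal{B}(t,T))^J$ is absorbed by subadditivity of the supremum.

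The step I expect to require the most care is verifying that the splitting construction survives the fact that the two players now randomize along \emph{different} partitions. This is what makes the change of variables legitimate: once both $\hat\beta^0,\hat\beta^1$ are written in $\mathcal{B}^{\bar\pi}(t,T)$-form, their final slot is the variable $\zeta^{\bar\pi}_{2,M-1}$, which (being one member of the jointly independent uniform family attached to $\bar\pi$) is a genuinely fresh $\mathrm{Unif}[0,1]$ randomization, so rescaling it onto $[0,c_j]$ and $[c_j,1]$ preserves the laws of $\hat\beta^0$ and $\hat\beta^1$ and produces exactly the weights $c_j$ and $1-c_j$. Moreover, because Player~\Rmnum{1}'s controls in $\mathcal{A}^\pi(t,T)$ are driven by $B^1$ while this randomizing variable is built from the increments of the independent Brownian motion $B^2$, the relevant expectation factorizes regardless of the mismatch between $\pi$ and $\bar\pi$. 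Once these two independence facts are in hand, every remaining estimate is a routine repetition of the computations in Lemmas~\ref{le 3.1}--\ref{le 3.2}.
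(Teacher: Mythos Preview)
Your approach is exactly what the paper intends: it does not write out a separate proof of this lemma but says ``Notice that for any fixed $\beta\in\mathcal{B}(t,T)$, there exists some partition $\bar\pi$ such that $\beta\in\mathcal{B}^{\bar\pi}(t,T)$. Using this technique and the method which have been used in Section~3, we have the following lemmas.'' Your refinement-then-Lemma~\ref{le 3.2} argument is precisely this.

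One point in the final paragraph is inaccurate and would break the computation if taken literally. You claim that ``Player~\Rmnum{1}'s controls in $\mathcal{A}^\pi(t,T)$ are driven by $B^1$'', so that independence from the $B^2$-built splitting variable is automatic ``regardless of the mismatch between $\pi$ and $\bar\pi$''. That is false: in the Section~5 form (as in Definition~\ref{def 2.2}), on the last subinterval $\alpha_N$ depends on $\zeta^\pi_{1,k},\zeta^\pi_{2,k},\ldots,\zeta^\pi_{2,N-2},\zeta^\pi_{1,N-1}$, hence on $B^2$ up to time $t_{N-2}$. What makes the factorisation in Lemma~\ref{le 3.2} work is not that $\alpha$ ignores $B^2$, but that the \emph{delay} prevents $\alpha$ from seeing the \emph{last} $B^2$-slot of the partition used by $\beta$. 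For that to carry over here you must ensure the splitting interval $[\bar t_{M-2},\bar t_{M-1}]$ lies to the right of $t_{N-2}$; the clean way to guarantee this is to include $\pi$ in the common refinement, i.e.\ set $\bar\pi=\pi\cup\bar\pi_0\cup\bar\pi_1$ rather than $\bar\pi_0\cup\bar\pi_1$. With that small correction the integration-out step and the rest of your argument go through verbatim.
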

\begin{lemma}\label{le 4.3}
For all $(t,x,\bar{p},q)\in[0,T]\times\mathbb{R}^n\times\mathbb{R}^I\times\Delta(J)$, we have
\begin{equation}\label{equ 4.4}
\bar{V}^{\pi*}(t,x,\bar{p},q)=\inf_{(\beta_j)\in(\mathcal{B}(t,T))^J}\sup_{\alpha\in\mathcal{A}_0^\pi(t,T)}
\max_{i\in\{1,...,I\}}\{\bar{p}_i
-\sum_{j=1}^Jq_jE[g_{ij}(X_T^{t,x,\alpha,\beta_j})]\}.
\end{equation}
\end{lemma}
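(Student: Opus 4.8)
The plan is to follow the proof of Lemma~\ref{le 3.3} line by line, transplanted to the enlarged strategy classes of this section; the single new ingredient is the passage between the randomized class $\mathcal{A}^\pi(t,T)$ that defines the value and the pure class $\mathcal{A}_0^\pi(t,T)$ appearing in~\eqref{equ 4.4}. First I would record the analogue of~\eqref{equ 3.5}. Because $\hat{\alpha}=(\alpha_1,\dots,\alpha_I)$ has independent components and each $\alpha_i$ enters $J(t,x,\hat\alpha,\hat\beta,p,q)$ only through the $i$-th summand, the infimum over $(\mathcal{A}^\pi(t,T))^I$ decouples into separate infima, so that
$$\bar{V}^\pi(t,x,p,q)=\sup_{(\beta_j)\in(\mathcal{B}(t,T))^J}\sum_{i=1}^I p_i\,\inf_{\alpha\in\mathcal{A}^\pi(t,T)}\sum_{j=1}^J q_j E[g_{ij}(X_T^{t,x,\alpha,\beta_j})].$$

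Next I would set $F(t,x,\bar p,q)$ equal to the right-hand side of~\eqref{equ 4.4} and compute its convex conjugate in $\bar p$ exactly as in~\eqref{equ 3.12}--\eqref{equ 3.13}. Writing $h_i(\alpha):=\sum_{j}q_jE[g_{ij}(X_T^{t,x,\alpha,\beta_j})]$, the computation rests on two elementary identities: the monotonicity identity $\sup_{\alpha}\max_i\{\bar p_i-h_i(\alpha)\}=\max_i\{\bar p_i-\inf_{\alpha}h_i(\alpha)\}$ (the nontrivial inequality following from $h_i(\alpha)\ge\inf_\alpha h_i(\alpha)$ and monotonicity of $\max_i$), and the finite minimax identity $\sup_{\bar p\in\mathbb{R}^I}\min_i\{p\cdot\bar p-\bar p_i+h_i\}=h\cdot p$ for $p\in\Delta(I)$. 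Carrying these out as in Lemma~\ref{le 3.3} gives
$$F^*(t,x,p,q)=\sup_{(\beta_j)}\sum_{i=1}^I p_i\,\inf_{\alpha\in\mathcal{A}_0^\pi(t,T)}\sum_{j=1}^J q_j E[g_{ij}(X_T^{t,x,\alpha,\beta_j})].$$

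To identify this with $\bar V^\pi$ I would then prove the derandomization reduction
$$\inf_{\alpha\in\mathcal{A}_0^\pi(t,T)}\sum_j q_jE[g_{ij}(X_T^{t,x,\alpha,\beta_j})]=\inf_{\alpha\in\mathcal{A}^\pi(t,T)}\sum_j q_jE[g_{ij}(X_T^{t,x,\alpha,\beta_j})]$$
for each fixed $\hat\beta$ and each $i$. The inequality $\ge$ is immediate from $\mathcal{A}_0^\pi(t,T)\subset\mathcal{A}^\pi(t,T)$; the reverse is obtained by successively conditioning on the randomization variables $\zeta^\pi_{\cdot,\cdot}$ used by $\alpha$ and selecting, interval by interval, a favourable realization — the cost being an expectation, i.e.\ an average over these variables, a pure strategy does at least as well up to $\varepsilon$. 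Here one uses that any fixed $\beta_j\in\mathcal{B}(t,T)$ lies in some $\mathcal{B}^{\bar\pi}(t,T)$, so that the controls generated by $(\alpha,\beta_j)$ are well defined through Lemma~\ref{le 2.1} and the selection is compatible with the delay structure, much as in the partition-and-selection argument of Lemma~\ref{le 5.2}. Finally, $F^*=\bar V^\pi$ yields $F^{**}=\bar V^{\pi*}$; since $F$ is convex and lower semicontinuous in $\bar p$ — convexity coming from Player~\Rmnum{2}'s hidden randomization, which lets her mix two profiles $\hat\beta^0,\hat\beta^1$ so that the achievable payoff vector dominates the convex combination, exactly as in the concavity argument of Lemma~\ref{le 4.2} — we get $\bar V^{\pi*}=F^{**}=F$, which is~\eqref{equ 4.4}.

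I expect the derandomization reduction $\inf_{\mathcal{A}_0^\pi}=\inf_{\mathcal{A}^\pi}$ to be the main obstacle: unlike Lemma~\ref{le 3.3}, where a single strategy class is used on both sides and the identity is purely formal, here one must genuinely show that neither the player's own randomization nor the observed randomization of the opponent helps the minimizer, and this has to be argued through the non-anticipativity-with-delay structure rather than by the formal conjugate manipulations alone.
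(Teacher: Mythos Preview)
Your plan is sound and would yield the lemma, but the paper organizes the argument differently and in a way that makes the derandomization step shorter than you anticipate.

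The paper does not recompute $F^*$ with the pure class $\mathcal{A}_0^\pi$ and then argue $\inf_{\mathcal{A}_0^\pi}=\inf_{\mathcal{A}^\pi}$ on the primal side. Instead it first invokes the Lemma~\ref{le 3.3} computation verbatim (only replacing $(\mathcal{A}_1^\pi,\mathcal{B}_1^\pi)$ by $(\mathcal{A}^\pi,\mathcal{B})$) to obtain
\[
\bar V^{\pi*}(t,x,\bar p,q)=\inf_{(\beta_j)\in(\mathcal{B}(t,T))^J}\sup_{\alpha\in\mathcal{A}^\pi(t,T)}\max_i\Big\{\bar p_i-\sum_j q_jE[g_{ij}(X_T^{t,x,\alpha,\beta_j})]\Big\},
\]
and then compares this expression directly with the right-hand side of~\eqref{equ 4.4}. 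The inequality $\sup_{\mathcal{A}_0^\pi}\le\sup_{\mathcal{A}^\pi}$ is trivial; for the converse the paper freezes \emph{all} the randomization variables of a given $\alpha\in\mathcal{A}^\pi$ at once, writes the expectation as an integral over $y\in[0,1]^{2(N-k)-1}$, and uses Jensen for $\max_i$:
\[
\max_i\Big\{\bar p_i-\sum_j q_j E[g_{ij}(X_T^{t,x,\alpha,\beta_j})]\Big\}
\le\int\max_i\Big\{\bar p_i-\sum_j q_j E[g_{ij}(X_T^{t,x,\alpha(y,\cdot),\beta_j})]\Big\}dy
\le\sup_{\alpha'\in\mathcal{A}_0^\pi}\max_i\{\cdots\}.
\]
So the ``main obstacle'' you flag is dispatched in one line on the conjugate side, not by an interval-by-interval selection on the primal side.

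What your route buys: by working on the primal expression $\sum_j q_jE[g_{ij}]$ for fixed $i$ the derandomization is linear and Jensen is not needed; and you correctly locate the one point the paper treats as ``obvious,'' namely the convexity of the inf--sup--max expression in $\bar p$, which indeed requires Player~\Rmnum{2}'s ability to mix $\hat\beta^0,\hat\beta^1$ as in Lemma~\ref{le 3.2}. What the paper's route buys: no second Fenchel computation, and no separate convexity argument for the $\mathcal{A}_0^\pi$-version, since one is comparing two already-established expressions for $\bar V^{\pi*}$.
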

\begin{proof}
Define $\bar{V}_1^{\pi*}(t,x,\bar{p},q)=\inf_{(\beta_j)\in(\mathcal{B}(t,T))^J}\sup_{\alpha\in\mathcal{A}_0^\pi(t,T)}
\max_{i\in\{1,...,I\}}\{\bar{p}_i
-\sum_{j=1}^Jq_jE[g_{ij}(X_T^{t,x,\alpha,\beta_j})]\}.$
Similar to the proof of Lemma \ref{le 3.3}, we have
\begin{equation}\label{equ 1.15.1}
\bar{V}^{\pi*}(t,x,\bar{p},q)=\inf_{(\beta_j)\in(\mathcal{B}(t,T))^J}\sup_{\alpha\in\mathcal{A}^\pi(t,T)}
\max_{i\in\{1,...,I\}}\{\bar{p}_i
-\sum_{j=1}^Jq_jE[g_{ij}(X_T^{t,x,\alpha,\beta_j})]\}.
\end{equation}
Since $\mathcal{A}_0^\pi(t,T)\subset\mathcal{A}^\pi(t,T)$, we have $\bar{V}_1^{\pi*}(t,x,\bar{p},q)\leq \bar{V}^{\pi*}(t,x,\bar{p},q)$.
Now we prove the $\bar{V}_1^{\pi*}(t,x,\bar{p},q)\geq \bar{V}^{\pi*}(t,x,\bar{p},q)$.  For any $\alpha\in\mathcal{A}^\pi(t,T)$ , for any $y=(y_1,y_2,...,y_{2(N-k)-1})\in\mathbb{R}^{2(N-k)-1}$, it holds $\alpha(y,\cdot)\in\mathcal{A}_0^\pi(t,T)$. For any $(\beta_j)\in(\mathcal{B}(t,T))^J$,
 we have the following inequalities
\begin{eqnarray}\label{equ 3.15}
\begin{split}
&\sup_{\alpha\in\mathcal{A}^\pi(t,T)}\max_{i\in\{1,...,I\}}
\{\bar{p}_i-\sum_{j=1}^Jq_jE[g_{ij}(X_T^{t,x,\alpha,\beta_j})]\}\\
\leq&\sup_{\alpha\in\mathcal{A}^\pi(t,T)}\int_{[0,1]^{2(N-k)-1}}
\max_{i\in\{1,...,I\}}\{\bar{p}_i-\sum_{j=1}^Jq_jE[g_{ij}(X_T^{t,x,\alpha
((y_1,y_2,...,y_{2(N-k)-1})
,\cdot),\beta_j})]\}dy_1...dy_{2(N-k)-1}\\
\leq&\sup_{\alpha\in\mathcal{A}_0^\pi(t,T)}\sup_{y\in[0,1]^{2(N-k)-1}}\max_{i\in\{1,...,I\}}
\{\bar{p}_i-\sum_{j=1}^Jq_jE[g_{ij}(X_T^{t,x,\alpha
((y_1,y_2,...,y_{2(N-k)-1})
,\cdot),\beta_j})]\}\\
\leq&\sup_{\alpha\in\mathcal{A}_0^\pi(t,T)}\max_{i\in\{1,...,I\}}
\{\bar{p}_i-\sum_{j=1}^Jq_jE[g_{ij}(X_T^{t,x,\alpha,\beta_j})]\}.
\end{split}
\end{eqnarray}
Then taking infimum over ${(\beta_j)\in(\mathcal{B}(t,T))^J}$ on both side we get the desired result.
\end{proof}
\begin{lemma}\label{le 4.4}
For all the partition $\pi$ of the interval $[0,T]$,
 the convex conjugate function $\bar{V}^{\pi*}(t,x,\bar{p},q)$ is Lipschitz with respect to $(t,x,\bar{p},q)$,
 the concave conjugate function $\bar{W}^{\pi\#}(t,x,p,\bar{q})$ is Lipschitz with respect to $(t,x,p,\bar{q})$.
\end{lemma}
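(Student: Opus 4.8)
The plan is to mirror exactly the proof of Lemma \ref{le 3.4}, replacing the role of Lemma \ref{le 3.1} by Lemma \ref{le 4.1}. First I would invoke Lemma \ref{le 4.1}, which asserts that $\bar{W}^\pi$ and $\bar{V}^\pi$ are Lipschitz continuous in $(t,x,p,q)$ with a Lipschitz constant $L$ that does not depend on the partition $\pi$. It then remains only to check that the Fenchel conjugations (\ref{equ 3.7}) and (\ref{equ 3.8}) transport this regularity to $\bar{V}^{\pi*}$ and $\bar{W}^{\pi\#}$, with a constant still independent of $\pi$.

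For the convex conjugate, recall $\bar{V}^{\pi*}(t,x,\bar{p},q)=\sup_{p\in\Delta(I)}\{\bar{p}\cdot p-\bar{V}^\pi(t,x,p,q)\}$. For the variables $(t,x,q)$ I would use the elementary inequality $|\sup_a F(a)-\sup_a G(a)|\leq\sup_a|F(a)-G(a)|$ applied to the supremum over $p\in\Delta(I)$: since by Lemma \ref{le 4.1} the map $(t,x,q)\mapsto\bar{V}^\pi(t,x,p,q)$ is $L$-Lipschitz uniformly in $p$ and in $\pi$, the same holds for $\bar{V}^{\pi*}$ in $(t,x,q)$. For the dual variable $\bar{p}$, the dependence is carried by the pairing $\bar{p}\cdot p$, and Cauchy--Schwarz together with $|p|\leq1$ on $\Delta(I)$ gives $|\bar{V}^{\pi*}(t,x,\bar{p},q)-\bar{V}^{\pi*}(t,x,\bar{p}',q)|\leq\sup_{p\in\Delta(I)}|(\bar{p}-\bar{p}')\cdot p|\leq|\bar{p}-\bar{p}'|$, a bound independent of $\pi$. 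Combining these two estimates yields the joint Lipschitz continuity of $\bar{V}^{\pi*}$ in $(t,x,\bar{p},q)$, uniformly in $\pi$.

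The concave conjugate $\bar{W}^{\pi\#}(t,x,p,\bar{q})=\inf_{q\in\Delta(J)}\{\bar{q}\cdot q-\bar{W}^\pi(t,x,p,q)\}$ is handled symmetrically, using the analogous inequality for infima and $|q|\leq1$ on $\Delta(J)$. I do not expect any genuine obstacle here: the only point requiring a little care is that the two groups of variables are treated by different mechanisms — the $(t,x,q)$-regularity is inherited from $\bar{V}^\pi$ through the sup-difference inequality, while the $\bar{p}$-regularity comes purely from the bilinear pairing and the boundedness of the simplex — and that one must verify both resulting constants are independent of $\pi$. They are, since Lemma \ref{le 4.1} supplies a $\pi$-uniform constant and the simplex bound $|p|\leq1$ is itself $\pi$-free. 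This uniformity is precisely what will later permit an Arzel\`a--Ascoli argument, exactly as in Lemma \ref{le 3.6}.
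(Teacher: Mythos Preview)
Your proposal is correct and matches the paper's approach exactly: the paper does not spell out a proof for this lemma but simply indicates that it follows from Lemma \ref{le 4.1} together with the definitions of the conjugates, just as Lemma \ref{le 3.4} was obtained from Lemma \ref{le 3.1}. Your argument is precisely the natural elaboration of that one-line justification, and the care you take to note that both resulting Lipschitz constants are $\pi$-independent is appropriate for the subsequent Arzel\`a--Ascoli step.
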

\begin{lemma}\label{le 4.5}
For any $(t,x,\bar{p},q)\in[t_{k-1},t_k)\times\mathbb{R}^n\times\mathbb{R}^I\times\Delta(J)$, and for all $l$ $(k\leq l \leq N)$,
we have
\begin{equation}\label{equ 4.6}
\begin{split}
\bar{V}^{\pi*}(t,x,\bar{p},q)&\leq \inf_{\beta\in\mathcal{B}(t,t_l)}\sup_{\alpha\in\mathcal{A}_0^\pi(t,t_l)}
E[\bar{V}^{\pi*}(t_l,X_{t_l}^{t,x,\alpha,\beta},\bar{p},q)]\\
&\leq \inf_{\beta\in\mathcal{B}(t,t_l)}\sup_{\alpha\in\mathcal{A}^\pi(t,t_l)}
E[\bar{V}^{\pi*}(t_l,X_{t_l}^{t,x,\alpha,\beta},\bar{p},q)].
\end{split}
\end{equation}
\end{lemma}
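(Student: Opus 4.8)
The plan is to mirror the proof of Lemma \ref{le 3.5}, but with the conjugate function represented through the \emph{pure}-strategy formula established in Lemma \ref{le 4.3}. The second inequality in (\ref{equ 4.6}) is immediate: since $\mathcal{A}_0^\pi(t,t_l)\subset\mathcal{A}^\pi(t,t_l)$, enlarging the feasible set of $\alpha$ can only raise the inner supremum, and applying $\inf_{\beta\in\mathcal{B}(t,t_l)}$ preserves the ordering. Hence only the first inequality carries real content.

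For the first inequality I would set $\bar G(t,t_l,x,\bar{p},q):=\inf_{\beta\in\mathcal{B}(t,t_l)}\sup_{\alpha\in\mathcal{A}_0^\pi(t,t_l)}E[\bar{V}^{\pi*}(t_l,X_{t_l}^{t,x,\alpha,\beta},\bar{p},q)]$ and aim at $\bar{V}^{\pi*}(t,x,\bar{p},q)\le \bar G(t,t_l,x,\bar{p},q)$. Fix $\varepsilon>0$; choose $\beta^0\in\mathcal{B}(t,t_l)$ that is $\varepsilon$-optimal for $\bar G$, and, using the representation (\ref{equ 4.4}) transported to initial time $t_l$, choose for each $y$ an $\varepsilon$-optimal $\hat{\beta}^y=(\beta_j^y)\in(\mathcal{B}(t_l,T))^J$ for $\bar{V}^{\pi*}(t_l,y,\bar{p},q)$. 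By the spatial Lipschitz continuity of $\bar{V}^{\pi*}$ and of the associated $\sup$--$\max$ functional (Lemma \ref{le 4.4}), each $\hat{\beta}^y$ is $2\varepsilon$-optimal on a small ball; covering the bounded reachable set (bounded because $f$ is bounded) by a finite Borel partition $(O_n)$ with representatives $y_n$, I glue
\[
\beta_j(\omega,u)(s)=\beta^0(\omega,u)(s)I_{[t,t_l)}(s)+\sum_n \beta_j^{y_n}(\omega,u|_{[t_l,T)})I_{\{X_{t_l}^{t,x,u,\beta^0(u)}\in O_n\}}I_{[t_l,T]}(s),
\]
which lies in $(\mathcal{B}(t,T))^J$ along a common refinement of the partitions underlying $\beta^0$ and the $\beta_j^{y_n}$.

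Next I would decompose a \emph{pure} strategy $\alpha\in\mathcal{A}_0^\pi(t,T)$ at $t_l$: its restriction $\tilde{\alpha}$ to $[t_l,T]$ is an element of $\mathcal{A}_0^\pi(t_l,T)$, and because $\alpha$ is deterministic this step is cleaner than its counterpart in Lemma \ref{le 3.5} (no auxiliary vector of randomizations is needed). Splitting the dynamics at $t_l$ and using that $X_{t_l}^{t,x,\alpha,\beta^0}$ is measurable with respect to the information up to $t_l$ while $\tilde{\alpha},\beta_j^{y_n}$ act on the independent increments afterwards, I obtain a factorization of $E[g_{ij}(X_T^{t,x,\alpha,\beta_j})]$ analogous to (\ref{equ 3.18}). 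Pulling $\max_i$ inside the expectation as in (\ref{equ 3.19}), bounding each conditional term by $\bar{V}^{\pi*}(t_l,X_{t_l}^{t,x,\alpha,\beta^0},\bar{p},q)+2\varepsilon$ via the $2\varepsilon$-optimality of $\beta_j^{y_n}$, and then by $\bar G+3\varepsilon$ via the $\varepsilon$-optimality of $\beta^0$, I conclude through the representation (\ref{equ 4.4}) that $\bar{V}^{\pi*}(t,x,\bar{p},q)\le \bar G(t,t_l,x,\bar{p},q)+C\varepsilon$; letting $\varepsilon\to 0$ yields the first inequality.

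The main obstacle is the admissibility bookkeeping for the glued strategy: since Player \Rmnum{2} ranges over $\mathcal{B}(t,T)=\bigcup_\pi\mathcal{B}^\pi(t,T)$, the pieces $\beta^0$ and the $\beta_j^{y_n}$ may live along different partitions, so I must pass to a common refinement and check that the concatenation still respects the delay property and the $\sigma$-algebra measurability demanded by the special strategy form of this section, and that the indicator $I_{\{X_{t_l}^{t,x,u,\beta^0(u)}\in O_n\}}$ does not break nonanticipativity. Establishing the measurability identity that underlies the factorization of $E[g_{ij}(X_T^{t,x,\alpha,\beta_j})]$ — the analog of (\ref{equ 3.18}) — is the other delicate point, but it follows the same conditioning-and-independence scheme already employed in Section 3.
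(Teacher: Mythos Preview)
Your proposal is correct and follows essentially the same approach as the paper: the paper's proof consists only of a remark stating that the first inequality is proved as in Lemma~\ref{le 3.5} with the help of the pure-strategy representation of Lemma~\ref{le 4.3} (noting, exactly as you do, that $\alpha\in\mathcal{A}_0^\pi(t,t_l)$ being deterministic simplifies the decomposition), and that the second inequality is immediate from $\mathcal{A}_0^\pi(t,t_l)\subset\mathcal{A}^\pi(t,t_l)$. Your write-up is in fact more detailed than the paper's, in particular on the common-refinement bookkeeping needed to glue $\beta^0$ and the $\beta_j^{y_n}$ into a single element of $\mathcal{B}(t,T)$.
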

For the proof of this lemma, we give the following remarks.
\begin{remark}
The proof of the first inequality is similar to Lemma \ref{le 3.5} with the help of Lemma \ref{le 4.3} and one should be noticed that $\alpha\in\mathcal{A}_0^\pi(t,t_l)$ means
$\alpha$ is a deterministic strategy. The second inequality is obviously since $\mathcal{A}_0^\pi(t,t_l)\subset\mathcal{A}^\pi(t,t_l)$.
\end{remark}
\begin{lemma}\label{le 4.6}
There exists a subsequence of partitions $(\pi_n)_{n\geq 1}$, still denoted by $(\pi_n)_{n\geq 1}$, and two
 functions $\tilde{V}:[0,T]\times\mathbb{R}^n\times\mathbb{R}^I\times\Delta(J)\mapsto\mathbb{R}$
and $\tilde{W}:[0,T]\times\mathbb{R}^n\times\Delta(I)\times\mathbb{R}^J\mapsto\mathbb{R}$ such that $(\bar{V}^{\pi_n*},
\bar{W}^{\pi_n\#})\rightarrow(\tilde{V},\tilde{W})$ uniformly on compacts in $[0,T]\times\mathbb{R}^n\times\Delta(I)
\times\Delta(J)\times\mathbb{R}^I\times\mathbb{R}^J$. Furthermore, the functions $\tilde{V}$ and $\tilde{W}$ are Lipschitz continuous
with respect to all their variables.
\end{lemma}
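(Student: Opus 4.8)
The plan is to reproduce, in the present setting, the argument behind Lemma \ref{le 3.6} together with Remark \ref{re 3.1}: extract a convergent subsequence from the equi-Lipschitz, equibounded families $\{\bar{V}^{\pi_n*}\}_{n\ge 1}$ and $\{\bar{W}^{\pi_n\#}\}_{n\ge 1}$ by the Arzel\`a--Ascoli theorem, and then read off the Lipschitz regularity of the limits directly from that of the prelimit functions. The two inputs I need are (i) a modulus of continuity for these conjugate families that is \emph{uniform} in the partition $\pi_n$, and (ii) a uniform bound on compact sets.

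First I would record equicontinuity. By Lemma \ref{le 4.1} the functions $\bar{W}^\pi$ and $\bar{V}^\pi$ are Lipschitz in $(t,x,p,q)$ with a constant that does not depend on $\pi$; passing to the Fenchel conjugates exactly as in the derivation of Lemma \ref{le 3.4} from Lemma \ref{le 3.1}, Lemma \ref{le 4.4} then yields that $\bar{V}^{\pi*}$ (resp. $\bar{W}^{\pi\#}$) is Lipschitz in $(t,x,\bar{p},q)$ (resp. $(t,x,p,\bar{q})$) with a constant independent of $\pi$. Hence $\{\bar{V}^{\pi_n*}\}$ and $\{\bar{W}^{\pi_n\#}\}$ are equi-Lipschitz, in particular equicontinuous. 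Equiboundedness on compacts is immediate: since the $g_{ij}$ are bounded, $\bar{V}^\pi$ and $\bar{W}^\pi$ are bounded by a constant independent of $\pi$, and therefore, with $\bar{p}$ (resp. $\bar{q}$) ranging over a fixed compact subset of $\mathbb{R}^I$ (resp. $\mathbb{R}^J$) and using the compactness of $\Delta(I)$ and $\Delta(J)$ in the defining suprema and infima (\ref{equ 3.7}) and (\ref{equ 3.8}), the conjugates are uniformly bounded on every compact set.

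With equicontinuity and equiboundedness in hand I would apply Arzel\`a--Ascoli. The only genuine complication is that the domain $[0,T]\times\mathbb{R}^n\times\Delta(I)\times\Delta(J)\times\mathbb{R}^I\times\mathbb{R}^J$ is not compact, owing to the unbounded factors $\mathbb{R}^n,\mathbb{R}^I,\mathbb{R}^J$. So I would exhaust it by an increasing sequence of compact sets $K_1\subset K_2\subset\cdots$, apply Arzel\`a--Ascoli on each $K_m$ to the \emph{pair} $(\bar{V}^{\pi_n*},\bar{W}^{\pi_n\#})$ jointly, and extract a diagonal subsequence (still written $(\pi_n)_{n\ge 1}$) along which $(\bar{V}^{\pi_n*},\bar{W}^{\pi_n\#})$ converges uniformly on each $K_m$, hence uniformly on compacts, to a pair $(\tilde{V},\tilde{W})$. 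Carrying out the extraction for the two families simultaneously guarantees that one subsequence serves both limits. Finally, since a uniform-on-compacts limit of functions sharing a single Lipschitz constant is itself Lipschitz with the same constant, $\tilde{V}$ and $\tilde{W}$ are Lipschitz in all their variables, which is the last assertion. The main obstacle is organizational rather than deep: securing the $\pi$-independence of the Lipschitz constants (through Lemma \ref{le 4.1} and Lemma \ref{le 4.4}) and performing the joint diagonal extraction correctly over the non-compact domain.
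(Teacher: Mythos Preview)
Your proposal is correct and follows exactly the paper's approach: the paper proves the analogous Lemma~\ref{le 3.6} in one line by invoking Lemma~\ref{le 3.4} and the Arzel\`a--Ascoli theorem (with Remark~\ref{re 3.1} noting the Lipschitz regularity of the limits), and Lemma~\ref{le 4.6} is stated without proof as one of the results that carry over verbatim from Section~3. Your write-up simply spells out the standard details (uniform boundedness on compacts, diagonal extraction over a compact exhaustion, preservation of the Lipschitz constant under uniform limits) that the paper leaves implicit.
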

\begin{lemma}\label{le 4.7}
The limit function $\tilde{V}(t,x,\bar{p},q)$ is
a viscosity subsolution of the same HJI equation $(\ref{equ 3.20})$.
\end{lemma}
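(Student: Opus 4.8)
The plan is to follow the proof of Theorem~\ref{th 3.1} almost verbatim, substituting the sub-dynamic programming principle of Lemma~\ref{le 4.5} for that of Lemma~\ref{le 3.5} and the compactness/convergence input of Lemma~\ref{le 4.6} for Lemma~\ref{le 3.6}. First I would fix $(\bar{p},q)\in\mathbb{R}^I\times\Delta(J)$ and abbreviate $\tilde{V}(t,x):=\tilde{V}(t,x,\bar{p},q)$. For an arbitrary $(t,x)$ and a test function $\varphi\in C_b^1([0,T]\times\mathbb{R}^n)$ with $(\tilde{V}-\varphi)(t,x)>(\tilde{V}-\varphi)(s,y)$ for all $(s,y)\neq(t,x)$ in $[0,T]\times\bar{B}_M(x)$ (where $M$ is chosen via the boundedness of $f$ so that every trajectory issued near $x$ stays in $\bar{B}_M(x)$), I would use the uniform convergence $\bar{V}^{\pi_n*}\to\tilde{V}$ from Lemma~\ref{le 4.6} to pick maximizers $(s_n,x_n)$ of $\bar{V}^{\pi_n*}-\varphi$ over $[0,T]\times\bar{B}_M(x)$ and argue, exactly as in Theorem~\ref{th 3.1}, that $(s_n,x_n)\to(t,x)$.

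Writing $t_{k-1}^n\leq s_n<t_k^n$ and applying Lemma~\ref{le 4.5} with $l=k$ at $(s_n,x_n)$, together with the standard normalization of $\varphi$ at the comparison point, I obtain
\begin{equation*}
\varphi(s_n,x_n)=\bar{V}^{\pi_n*}(s_n,x_n)\leq\inf_{\beta\in\mathcal{B}(s_n,t_k^n)}\sup_{\alpha\in\mathcal{A}_0^{\pi_n}(s_n,t_k^n)}E\big[\varphi(t_k^n,X_{t_k^n}^{s_n,x_n,\alpha,\beta})\big].
\end{equation*}
Rewriting the right-hand side through the fundamental theorem of calculus and the continuity modulus $m$ of \eqref{equ 3.24}, precisely as in \eqref{equ 3.23}--\eqref{equ 3.26}, and dividing by $h_n:=t_k^n-s_n>0$, reduces the inequality to a statement about the frozen quantity $\inf_\beta\sup_\alpha E\big[\frac{1}{h_n}\int_{s_n}^{t_k^n}f(x_n,\alpha_r,\beta_r)\cdot D\varphi(s_n,x_n)\,dr\big]$, up to an error $m(Ch_n)+h_n\to0$.

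The decisive step, and the one genuinely different from Section~3, is the identification of this frozen inf-sup with the mixed Hamiltonian $H^*(x_n,D\varphi(s_n,x_n))$. In Section~3 both players randomized through the coarse-partition variables $\zeta_{1,k-1}^{\pi_n},\zeta_{2,k-1}^{\pi_n}$, whereas here Lemma~\ref{le 4.5} allows Player~\Rmnum{2} to choose $\beta$ from the full set $\mathcal{B}(s_n,t_k^n)=\bigcup_{\bar{\pi}}\mathcal{B}^{\bar{\pi}}(s_n,t_k^n)$ along arbitrarily fine partitions $\bar{\pi}$. By subdividing $[s_n,t_k^n)$ and randomizing the $V$-control independently on the pieces, Player~\Rmnum{2} realizes a relaxed control whose law is any prescribed $\nu\in\mathcal{P}(V)$, while Player~\Rmnum{1}, though restricted to deterministic $\alpha\in\mathcal{A}_0^{\pi_n}$, still attains $\sup_{u\in U}f(x_n,u,\nu)\cdot\xi=\sup_{\mu\in\mathcal{P}(U)}\int f\,\mu\nu\cdot\xi$ by linearity in $\mu$ (this is exactly why Lemma~\ref{le 4.3} is phrased with $\mathcal{A}_0^\pi$). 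Since the delay property renders the two controls mutually independent on this first interval, the inf over $\beta$ and the sup over $\alpha$ decouple into $\inf_{\nu\in\mathcal{P}(V)}\sup_{\mu\in\mathcal{P}(U)}\int f(x_n,u,v)\cdot D\varphi(s_n,x_n)\,\mu(du)\nu(dv)=H^*(x_n,D\varphi(s_n,x_n))$, in the spirit of \eqref{equ 3.27}--\eqref{equ 3.1.21.3}. Letting $n\to\infty$, with $(s_n,x_n)\to(t,x)$, $h_n\leq|\pi_n|\to0$ and $m(Ch_n)\to0$, then gives $\frac{\partial\varphi}{\partial t}(t,x)+H^*(x,D\varphi(t,x))\geq0$, which is the required subsolution inequality. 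I expect the main obstacle to be making the realization of the mixed strategy $\nu$ by Player~\Rmnum{2} on the vanishing interval $[s_n,t_k^n)$ fully rigorous: controlling the approximation of the relaxed control through finer and finer partitions, and verifying that the delay structure forces the \emph{product} measure $\mu\otimes\nu$, rather than a correlated one, to appear in the limiting Hamiltonian.
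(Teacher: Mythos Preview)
Your proposal is correct and tracks the paper's intended argument. The paper does not write out a proof of Lemma~\ref{le 4.7}; it merely says, before stating Lemmas~\ref{le 4.1}--\ref{le 4.7}, that ``for any fixed $\beta\in\mathcal{B}(t,T)$ there exists some partition $\bar{\pi}$ such that $\beta\in\mathcal{B}^{\bar{\pi}}(t,T)$; using this technique and the method of Section~3, we have the following lemmas.'' Your outline is precisely this: repeat the proof of Lemma~\ref{th 3.1}, replacing Lemma~\ref{le 3.5} and Lemma~\ref{le 3.6} by Lemma~\ref{le 4.5} and Lemma~\ref{le 4.6}, and choosing a specific $\tilde{\beta}$ to bound the $\inf_\beta$ from above.

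The one genuine modification you spotted is exactly the right one. In Section~3 the strategies in $\mathcal{B}_1^{\pi_n}$ already randomize on the first subinterval via $\zeta_{2,k-1}^{\pi_n}$, so one could take $\tilde{\beta}_r=\tilde{v}(\zeta_{2,k-1}^{\pi_n})$ directly. In Section~5 the first subinterval is deterministic, so to realize a prescribed $\nu\in\mathcal{P}(V)$ one must pass to a finer partition $\bar{\pi}\supset\pi_n$ and set $\tilde{\beta}_r=\tilde{v}(\zeta_{2,l-1}^{\bar{\pi}})$ on each subsequent piece; the deterministic first piece contributes only $O(|\bar{\pi}|)$, which can be absorbed into the error term $m(Ch_n)+h_n$. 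Since $\alpha\in\mathcal{A}_0^{\pi_n}(s_n,t_k^n)$ is a deterministic $u_r$ on $[s_n,t_k^n)$ (by the delay property it cannot see $v$), and the chosen $\tilde{\beta}$ does not depend on $u$, the expectation factors and $\sup_{\alpha}$ becomes $\sup_{u\in U}\int_V f(x_n,u,v)\,\nu(dv)\cdot D\varphi(s_n,x_n)=\sup_{\mu}\int f\,\mu\nu\cdot D\varphi(s_n,x_n)$ by linearity in $\mu$. Taking the infimum over $\nu$ then yields $H^*(x_n,D\varphi(s_n,x_n))$, with no need for the Isaacs-type condition~(\ref{equ 4.1}); that condition is required only for the companion supersolution statement, Proposition~\ref{prop 4.1}, where the roles of the players are reversed and one cannot freely choose the strategy living in the larger set $\mathcal{B}$.
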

Notice that
\begin{equation}\label{equ 4.6}
-\bar{W}^\pi(t,x,p,q)=\sup_{(\alpha_i)\in(\mathcal{A}^\pi(t,T))^I}
\inf_{(\beta_j)\in(\mathcal{B}(t,T))^J}
\sum_{i=1}^I\sum_{j=1}^Jp_iq_jE[-g_{ij}(X_T^{t,x,\alpha_i,\beta_j})].
\end{equation}
Hence, the convex conjugate of $(-\bar{W}^\pi)$ with respect to $q$, i.e.,
$-(\bar{W}^{\pi\#}(t,x,p,-\bar{q}))$ satisfying a sub-dynamic programming principle. Then, we have
the following lemma.
\begin{lemma}\label{le 4.8}
For any $(t,x,p,\bar{q})\in[t_{k-1},t_k)\times\mathbb{R}^n\times\Delta(I)\times\mathbb{R}^J$, and for all $l$ $(k\leq l \leq N)$,
we have
\begin{equation}\label{equ 4.7}
\begin{split}
\bar{W}^{\pi\#}(t,x,p,\bar{q})&\geq \sup_{\alpha\in\mathcal{A}^\pi(t,t_l)}\inf_{\beta\in\mathcal{B}(t,t_l)}
E[\bar{W}^{\pi\#}(t_l,X_{t_l}^{t,x,\alpha,\beta},p,\bar{q})].
\end{split}
\end{equation}
\end{lemma}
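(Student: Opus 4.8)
The plan is to exploit the complete symmetry between $-\bar{W}^\pi$ and $\bar{V}^\pi$. The starting point is the identity displayed just above the statement,
$$-\bar{W}^\pi(t,x,p,q)=\sup_{\hat{\alpha}\in(\mathcal{A}^\pi(t,T))^I}\inf_{\hat{\beta}\in(\mathcal{B}(t,T))^J}\sum_{i,j}p_iq_jE[-g_{ij}(X_T^{t,x,\alpha_i,\beta_j})].$$
From this I read off that $-\bar{W}^\pi$ is a value function of exactly the same type as $\bar{V}^\pi$: an outer supremum over one player's strategies and an inner infimum over the other's, against the bounded Lipschitz terminal cost $-g_{ij}$, the only difference being that the roles of Players \Rmnum{1} and \Rmnum{2} are interchanged (here Player \Rmnum{1}, using $\mathcal{A}^\pi$, is the outer player and Player \Rmnum{2}, using $\mathcal{B}$, is the inner player). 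Consequently the whole chain Lemma \ref{le 4.1}--Lemma \ref{le 4.7} transfers to $-\bar{W}^\pi$ after this interchange.

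Concretely, I would write $\phi^\pi(t,x,p,\bar{q})$ for the convex conjugate of $q\mapsto-\bar{W}^\pi(t,x,p,q)$, taken with respect to the inner (infimum) player's probability variable, exactly as $\bar{V}^{\pi*}$ was built from $\bar{V}^\pi$. First I would record, straight from (\ref{equ 3.7}) and (\ref{equ 3.8}), the conjugate identity
$$\phi^\pi(t,x,p,\bar{q})=\sup_{q}\{\bar{q}\cdot q+\bar{W}^\pi(t,x,p,q)\}=-\bar{W}^{\pi\#}(t,x,p,-\bar{q}).$$
Next, applying the analog of Lemma \ref{le 4.3} (which reduces the conjugated inner player to its deterministic strategies, now drawn from $\mathcal{B}_0(t,T):=\bigcup_\pi\mathcal{B}_0^\pi(t,T)$) and then repeating the argument of Lemma \ref{le 4.5}, which itself mirrors Lemma \ref{le 3.5}, with the two players' roles swapped, yields the sub-dynamic programming inequality
$$\phi^\pi(t,x,p,\bar{q})\leq\inf_{\alpha\in\mathcal{A}^\pi(t,t_l)}\sup_{\beta\in\mathcal{B}(t,t_l)}E[\phi^\pi(t_l,X_{t_l}^{t,x,\alpha,\beta},p,\bar{q})],$$
in which the outer player (Player \Rmnum{1}, using $\mathcal{A}^\pi$) supplies the infimum and the inner player (Player \Rmnum{2}, using $\mathcal{B}$) the supremum, exactly the pattern of (\ref{equ 4.4}) and Lemma \ref{le 4.5} with the sets in interchanged roles.

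It then remains to translate this back. Substituting $\phi^\pi(\cdot,\bar{q})=-\bar{W}^{\pi\#}(\cdot,-\bar{q})$ on both sides, pulling the minus sign through the expectation, and using $\sup(-g)=-\inf g$ together with $\inf(-g)=-\sup g$ to reverse the inequality, I obtain
$$\bar{W}^{\pi\#}(t,x,p,-\bar{q})\geq\sup_{\alpha\in\mathcal{A}^\pi(t,t_l)}\inf_{\beta\in\mathcal{B}(t,t_l)}E[\bar{W}^{\pi\#}(t_l,X_{t_l}^{t,x,\alpha,\beta},p,-\bar{q})].$$
Since $\bar{q}$ ranges over all of $\mathbb{R}^J$, replacing $-\bar{q}$ by $\bar{q}$ gives precisely (\ref{equ 4.7}).

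The step I expect to be the main obstacle is not the sign bookkeeping of the last paragraph, which is routine, but confirming that the proof of Lemma \ref{le 4.5} transfers faithfully under the role swap. The delicate point is the analog of Lemma \ref{le 4.3} when the \emph{conjugated inner} player is Player \Rmnum{2} equipped with the full strategy set $\mathcal{B}(t,T)$ rather than a partition-restricted one: any $\beta\in\mathcal{B}(t,T)$ lies in some $\mathcal{B}^{\bar{\pi}}(t,T)$, and I must verify that evaluating it at a frozen seed produces a deterministic strategy in $\mathcal{B}_0^{\bar{\pi}}(t,T)\subset\mathcal{B}_0(t,T)$, so that the reduction-to-deterministic estimate of (\ref{equ 3.15})---resting on the convexity of $\max_i$, here in the form of $\min_j$---still applies to Player \Rmnum{2}. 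Once this reduction is secured, the measurability and concatenation construction of Lemma \ref{le 3.5} carries over unchanged and the inequality direction is preserved, so the argument closes.
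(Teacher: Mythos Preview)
Your proposal is correct and follows precisely the route the paper indicates. The paper does not give a detailed proof of Lemma \ref{le 4.8}; it simply observes (in equation (\ref{equ 4.6}) and the sentence following it) that $-\bar{W}^\pi$ has the same structure as $\bar{V}^\pi$ with the players' roles swapped, so that $-(\bar{W}^{\pi\#}(t,x,p,-\bar{q}))$ satisfies a sub-DPP analogous to Lemma \ref{le 4.5}, and then states the lemma. Your write-up is an accurate fleshing-out of that one-line argument, including the correct identification of the one subtle point---that the analog of Lemma \ref{le 4.3} now reduces the inner player from $\mathcal{B}(t,T)$ to $\mathcal{B}_0(t,T)=\bigcup_\pi\mathcal{B}_0^\pi(t,T)$, which goes through because each $\beta\in\mathcal{B}(t,T)$ lies in some $\mathcal{B}^{\bar\pi}(t,T)$ and freezing its seeds lands in $\mathcal{B}_0^{\bar\pi}(t,T)$.
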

\begin{proposition}\label{prop 4.1}
The limit function $\tilde{W}(t,x,p,\bar{q})$ is
a viscosity supersolution of the same HJI equation (\ref{equ 3.20}) under the condition $(\ref{equ 4.1})$.
\end{proposition}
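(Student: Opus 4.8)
The plan is to mirror the argument that $\tilde V$ is a viscosity subsolution (Lemma \ref{th 3.1}), but to start from the reverse sub-dynamic programming principle for $\bar W^{\pi\#}$ recorded in Lemma \ref{le 4.8}, and then to read off from the special Section~5 form of the strategies exactly why the one-sided condition (\ref{equ 4.1}) is needed. Fix $(p,\bar q)\in\Delta(I)\times\mathbb{R}^J$ and abbreviate $\tilde W(t,x):=\tilde W(t,x,p,\bar q)$. I would take a test function $\varphi\in C_b^1([0,T]\times\mathbb{R}^n)$ such that $\tilde W-\varphi$ has a strict local minimum at $(t,x)$ over $[0,T]\times\bar B_M(x)$, with $M$ chosen as in Lemma \ref{th 3.1} so that all trajectories launched near $(t,x)$ stay in $\bar B_M(x)$. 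Using the uniform convergence $\bar W^{\pi_n\#}\to\tilde W$ on compacts (Lemma \ref{le 4.6}), I pick minimum points $(s_n,x_n)$ of $\bar W^{\pi_n\#}-\varphi$ with $(s_n,x_n)\to(t,x)$, and let $[t^n_{k-1},t^n_k)$ be the subinterval of $\pi_n$ containing $s_n$.

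Applying Lemma \ref{le 4.8} with $l=k$ at $(s_n,x_n)$, and using $\bar W^{\pi_n\#}\ge\varphi$ with equality at $(s_n,x_n)$, I obtain
\begin{equation*}
0\ \ge\ \sup_{\alpha\in\mathcal{A}^{\pi_n}(s_n,t^n_k)}\inf_{\beta\in\mathcal{B}(s_n,t^n_k)}E\Big[\int_{s_n}^{t^n_k}\big(\tfrac{\partial\varphi}{\partial r}(r,X_r^{s_n,x_n,\alpha,\beta})+f(X_r^{s_n,x_n,\alpha,\beta},\alpha_r,\beta_r)\cdot D\varphi(r,X_r^{s_n,x_n,\alpha,\beta})\big)dr\Big].
\end{equation*}
I then freeze the coefficients at $(s_n,x_n)$ exactly as in (\ref{equ 3.24})--(\ref{equ 3.26}), the resulting error being bounded by $m(C|t^n_k-s_n|)=o(1)$.

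The decisive point is the structure on this single block. Since $s_n\in[t^n_{k-1},t^n_k)$, every $\alpha\in\mathcal{A}^{\pi_n}(s_n,t^n_k)$ collapses on $[s_n,t^n_k)$ to the deterministic piece $\alpha_k$: in the Section~5 form Player~\Rmnum{1} has no randomization available on the first block, only a pure control $u\in U$. Hence, to get a lower bound it suffices to let Player~\Rmnum{1} play the constant control $u^*\in U$ realizing $\sup_{u\in U}\inf_{\nu\in\mathcal{P}(V)}f(x_n,u,\nu)\cdot D\varphi(s_n,x_n)$ (attained by compactness of $U$ and continuity of $f$). For this fixed $\alpha$ and any $\beta\in\mathcal{B}(s_n,t^n_k)$, the realized control $v_\omega$ obeys $f(x_n,u^*,v_\omega(r))\cdot D\varphi(s_n,x_n)\ge\inf_{\nu\in\mathcal{P}(V)}f(x_n,u^*,\nu)\cdot D\varphi(s_n,x_n)$ pointwise, since the infimum of a linear functional over $\mathcal{P}(V)$ is attained at a Dirac mass. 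Thus $\inf_\beta(\cdots)\ge(t^n_k-s_n)\big(\tfrac{\partial\varphi}{\partial r}(s_n,x_n)+\sup_{u\in U}\inf_{\nu\in\mathcal{P}(V)}f(x_n,u,\nu)\cdot D\varphi(s_n,x_n)\big)$. Dividing the frozen inequality by $t^n_k-s_n>0$ and letting $n\to\infty$ gives
\begin{equation*}
\frac{\partial\varphi}{\partial t}(t,x)+\sup_{u\in U}\inf_{\nu\in\mathcal{P}(V)}f(x,u,\nu)\cdot D\varphi(t,x)\ \le\ 0.
\end{equation*}

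It remains to upgrade this pure outer supremum to $H^*$. Here condition (\ref{equ 4.1}), assumed for every $\xi$, is exactly what repairs the gap: applying it with $\xi=-D\varphi(t,x)$ and using $\sup(-g)=-\inf g$ and $\inf(-g)=-\sup g$ turns $\inf_{u\in U}\sup_{\nu}f\cdot(-\xi)=\sup_{\nu}\inf_{u\in U}f\cdot(-\xi)$ into $\sup_{u\in U}\inf_{\nu}f(x,u,\nu)\cdot\xi=\inf_{\nu}\sup_{u\in U}f(x,u,\nu)\cdot\xi$. Since $\inf_{\nu\in\mathcal{P}(V)}\sup_{u\in U}f(x,u,\nu)\cdot\xi=\inf_{\nu\in\mathcal{P}(V)}\sup_{\mu\in\mathcal{P}(U)}\int_{U\times V}f\,\mu\,\nu\cdot\xi=H^*(x,\xi)$, I conclude $\sup_{u\in U}\inf_{\nu}f(x,u,\nu)\cdot D\varphi=H^*(x,D\varphi)$, hence $\tfrac{\partial\varphi}{\partial t}(t,x)+H^*(x,D\varphi(t,x))\le0$, which is the supersolution inequality. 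The main obstacle is conceptual rather than computational: one must recognize that the Section~5 strategies force the pure supremum $\sup_{u\in U}$ (not the mixed $\sup_{\mu\in\mathcal{P}(U)}$) on the first block, and that (\ref{equ 4.1}) is precisely the partial minimax identity that removes this discrepancy; the freezing and the limit passage are routine copies of Lemma \ref{th 3.1}.
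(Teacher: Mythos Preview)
Your proof is correct and follows essentially the same route as the paper: start from Lemma~\ref{le 4.8}, freeze coefficients, observe that on the first block the Section~5 strategies for Player~\Rmnum{1} reduce to a deterministic control, choose a constant $u^*\in U$, bound the $\beta$-infimum, and invoke~(\ref{equ 4.1}) to pass from $\sup_{u}\inf_{\nu}$ to $\inf_{\nu}\sup_{\mu}=H^*$. One minor streamlining: where the paper takes a $\rho_n$-optimal $\beta^n$, locates it in some $\mathcal{B}^{\pi^0}$, and bounds $E[(-f)(x_n,\tilde u_k,\beta^n_r)\!\cdot\! D\varphi]$ via the law of $\beta^n_r$ on each subinterval of $\pi^0$, you bypass this by the trivial pointwise inequality $f(x_n,u^*,v)\!\cdot\! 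D\varphi\ge\inf_{\nu}\int f(x_n,u^*,\cdot)\,d\nu\!\cdot\! D\varphi$, which already gives the required lower bound on $\inf_\beta$ without ever choosing a near-optimal $\beta$.
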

\begin{proof}
For simplicity, we denote $\tilde{W}(t,x,p,\bar{q})$ by $\tilde{W}(t,x)$, for fixed $(p,\bar{q})\in\Delta(I)\times\mathbb{R}^J$.
For any fixed $(t,x)\in[0,T]\times\mathbb{R}^n$, since the coefficient $f$ is bounded, there is some $M>0$ such that,
$\bar{B}_M(x)\supset\{X_r^{s,y,\alpha,\beta},\ (s,y)\in[0,T]\times\bar{B}_1(x), (\alpha,\beta)\in\mathcal{A}^\pi(s,T)\times\mathcal{B}(s,T), r\in[s,T] \}$, where $\bar{B}_M(x)$ is the closed ball with the center $x$ and the radius $M$. From Lemma \ref{le 4.6}, we know $\bar{W}^{\pi_n\#}$ converge to $\tilde{W}$ over $[0,T]\times\bar{B}_M(x)$.
Let $\varphi\in C_b^1([0,T]\times\mathbb{R}^n)$
 be a test function such that
\begin{equation}\label{equ 6.1}
(-\tilde{W}-(-\varphi))(t,x)>(-\tilde{W}-(-\varphi))(s,y),\ \text{for\ all}\ (s,y)\in[0,T]\times\bar{B}_M(x)\setminus\{(t,x)\}.
\end{equation}
Let $(s_n,x_n)\in[0,T]\times\bar{B}_M(x)$ be the maximum point of $-\bar{W}^{\pi_n\#}-(-\varphi)$ over $[0,T]\times\bar{B}_M(x)$, then
there exists a subsequence of $(s_n,x_n)$ still denoted by $(s_n,x_n)$, such that $(s_n,x_n)$ converges to $(t,x)$.

Indeed, since $[0,T]\times\bar{B}_M(x)$ is a compact set, there exists a subsequence $(s_n,x_n)$ and $(\bar{s},\bar{x})\in[0,T]\times\bar{B}_M(x)$ such that
$(s_n,x_n)\rightarrow(\bar{s},\bar{x})$. Due to $(-\bar{W}^{\pi_n\#}-(-\varphi))(s_n,x_n)\geq (-\bar{W}^{\pi_n\#}-(-\varphi))(t,x)$, for $n\geq 1$, we have
\begin{equation}\label{equ 6.2}
(-\tilde{W}-(-\varphi))(\bar{s},\bar{x})\geq (-\tilde{W}-(-\varphi))(t,x).
\end{equation}
From (\ref{equ 6.1}) and (\ref{equ 6.2}), we have $(\bar{s},\bar{x})=(t,x)$.\\
For the partition $\pi_n$, we assume $t^n_{k_{n-1}}\leq s_n<t^n_{k_n}$, for simplicity, we write $t^n_{k-1}\leq s_n<t^n_{k}$. Since $x_n\rightarrow x$, there
is a positive integer $N$ such that for all $n\geq N$, we have $|x_n-x|\leq 1$. Then from Lemma \ref{le 4.8}, we get
\begin{eqnarray}\label{equ 6.3}
\begin{split}
-\varphi(s_n,x_n)=- \bar{W}^{\pi_n\#}(s_n,x_n)
&\leq \inf_{\alpha\in\mathcal{A}^{\pi_n}(s_n,t^n_{k})}\sup_{\beta\in\mathcal{B}(s_n,t^n_{k})}
E[-\bar{W}^{\pi_n\#}(t^n_{k},X^{s_n,x_n,\alpha,\beta}_{t_{k}^n})]\\
&\leq \inf_{\alpha\in\mathcal{A}^{\pi_n}(s_n,t^n_{k})}\sup_{\beta\in\mathcal{B}(s_n,t^n_{k})}
E[-\varphi(t^n_{k},X^{s_n,x_n,\alpha,\beta}_{t_{k}^n})].
\end{split}
\end{eqnarray}
Thus we get
\begin{eqnarray}\label{equ 6.4}
\begin{split}
0\leq&\inf_{\alpha\in\mathcal{A}^{\pi_n}(s_n,t^n_{k})}\sup_{\beta\in\mathcal{B}(s_n,t^n_{k})}
E[-\varphi(t^n_{k},X^{s_n,x_n,\alpha,\beta}_{t_{k}^n})-(-\varphi(s_n,x_n))]\\
=& \inf_{\alpha\in\mathcal{A}^{\pi_n}(s_n,t^n_{k})}\sup_{\beta\in\mathcal{B}(s_n,t^n_{k})}
E[-\int_{s_n}^{t_{k}^n}(\frac{\partial \varphi}{\partial r}(r,X_r^{s_n,x_n,\alpha,\beta})+
f(X_r^{s_n,x_n,\alpha,\beta},\alpha_r,\beta_r)\cdot D\varphi(r,X_r^{s_n,x_n,\alpha,\beta}))dr].
\end{split}
\end{eqnarray}
For $(u,v)\in\mathcal{U}_{t,T}\times\mathcal{V}_{t,T}$, we introduce the following continuity modulus,
\begin{equation}\label{equ 6.5}
m(\delta):=\sup_{\mbox{\tiny
$\begin{array}{c}
|r-s|+|y-\bar{x}|\leq \delta,\\
u\in U,v\in V, \bar{x},y\in\bar{B}_M(x)
\end{array}$}
}\big|(\frac{\partial \varphi}{\partial r}(r,y)+
f(y,u,v)\cdot D\varphi(r,y))-(\frac{\partial \varphi}{\partial r}(s,\bar{x})+
f(\bar{x},u,v)\cdot D\varphi(s,\bar{x}))\big|.
\end{equation}
Obviously, $m(\delta)$ is nondecreasing in $\delta$ and $m(\delta)\rightarrow 0$, as $\delta\downarrow 0$. From (\ref{equ 2.1.1}), considering
that $|X_r^{s_n,x_n,\alpha,\beta}-x_n|\leq C|r-s_n|\leq C|t^n_{k}-s_n|,\ r\in[s_n,t^n_{k}]$ and
from $(\ref{equ 6.5})$
we know that
\begin{eqnarray}\label{equ 6.6}
\begin{array}{l}
|(\frac{\partial \varphi}{\partial r}(r,X_r^{s_n,x_n,\alpha,\beta})+
f(X_r^{s_n,x_n,\alpha,\beta},\alpha_r,\beta_r)\cdot D\varphi(r,X_r^{s_n,x_n,\alpha,\beta}))-\\
(\frac{\partial \varphi}{\partial r}(s_n,x_n)+
f(x_n,\alpha_r,\beta_r)\cdot D\varphi(s_n,x_n))|
\leq m(C|t_{k}^n-s_n|),\ r\in[s_n,t^n_{k}].
\end{array}
\end{eqnarray}
It follows from (\ref{equ 6.4}) and (\ref{equ 6.6}) that
\begin{equation}\label{equ 6.7}
\begin{split}
&-(t^n_{k}-s_n)\big(-\frac{\partial \varphi}{\partial r}(s_n,x_n)+m(C|t_{k}^n-s_n|)\big)\\
\leq&
\inf_{\alpha\in\mathcal{A}^{\pi_n}(s_n,t^n_{k})}\sup_{\beta\in\mathcal{B}(s_n,t^n_{k})}
E[\int_{s_n}^{t_{k}^n}(-f)(x_n,\alpha_r,\beta_r)\cdot D\varphi(s_n,x_n)dr]\\
\leq& \sup_{\beta\in\mathcal{B}(s_n,t^n_{k})}
E[\int_{s_n}^{t_{k}^n}(-f)(x_n,\tilde{\alpha}_r,\beta_r)\cdot D\varphi(s_n,x_n)dr],
\end{split}
\end{equation}
where we take $\tilde{\alpha}_r=\tilde{u}_k$, $r\in[t,t_k^n]$, $\tilde{u}_k\in U$.
Define $\rho_n=(t_k^n-s_n)^2$, then from (\ref{equ 6.7}) there exists a $\rho_n$-optimal strategy $\beta^n\in\mathcal{B}(s_n,t_k^n)$ (depending on $\tilde{\alpha}_r$) such that
\begin{equation}\label{equ 6.8}
-(t^n_{k}-s_n)\big(-\frac{\partial \varphi}{\partial r}(s_n,x_n)+m(C|t_{k}^n-s_n|)+(t_{k}^n-s_n)\big)\leq
E[\int_{s_n}^{t_{k}^n}(-f)(x_n,\tilde{\alpha}_r,\beta_r^n)\cdot D\varphi(s_n,x_n)dr].
\end{equation}
Since $\beta^n\in\mathcal{B}(s_n,t_k^n)$ there is some partition $\pi^0$, such that
$\beta^n\in\mathcal{B}^{\pi^0}(s_n,t_k^n)$, without loss of generality, $\pi^0\supset\pi_n$. Assume
$\{s_n=\theta_0<\theta_1<\ldots<\theta_m=t_k^n\}\subset\pi^0$. Therefore,
\begin{equation}\label{equ 6.9}
E[\int_{s_n}^{t_{k}^n}(-f)(x_n,\tilde{\alpha}_r,\beta_r^\rho)\cdot D\varphi(s_n,x_n)dr]=
\sum_{l=1}^m\int_{\theta_{l-1}}^{\theta_l}E[(-f)(x_n,\tilde{\alpha}_r,\beta_r^\rho)\cdot D\varphi(s_n,x_n)]dr,
\end{equation}
where $\beta^\rho$ depends on $(\zeta^{\pi^0}_{2,1},\ldots,\zeta^{\pi^0}_{2,l-1})$ on $[\theta_{l-1},\theta_l]$. Then
for  $r\in[\theta_{l-1},\theta_l]$, we have,
\begin{equation}\label{equ 6.10}
\begin{split}
&E[(-f)(x_n,\tilde{\alpha}_r,\beta_r^\rho)\cdot D\varphi(s_n,x_n)]
=E[(-f)(x_n,\tilde{u}_k,\beta_r^\rho
(\zeta^{\pi^0}_{2,1},\ldots,\zeta^{\pi^0}_{2,l-1}))\cdot D\varphi(s_n,x_n)]\\
&=\int_V(-f)(x_n,\tilde{u}_k,v)\cdot D\varphi(s_n,x_n)P_{\beta_r^\rho
(\zeta^{\pi^0}_{2,1},\ldots,\zeta^{\pi^0}_{2,l-1})}(dv)
\leq \sup_{\nu\in\mathcal{P}(V)}\int_V(-f)(x_n,\tilde{u}_k,v)\cdot D\varphi(s_n,x_n)\nu(dv).
\end{split}
\end{equation}
W e define $I(\tilde{u}_k):=\sup_{\nu\in\mathcal{P}(V)}\int_V(-f)(x_n,\tilde{u}_k,v)\cdot D\varphi(s_n,x_n)\nu(dv)$, from the arbitrariness of $\tilde{u}_k$, we can choose $\tilde{u}_k:=u^*$, such that
$I(u^*)=\min\limits_{\tilde{u}_k\in U} I(\tilde{u}_k)$. Then, for all $u\in U$, from (\ref{equ 6.10}), we have
\begin{eqnarray}\label{equ 6.11}
\begin{split}
\int_{\theta_{l-1}}^{\theta_l}I(\tilde{u}_k)dr&= \int_{\theta_{l-1}}^{\theta_l}I(u^*)dr
\leq \int_{\theta_{l-1}}^{\theta_l}\sup_{\nu\in\mathcal{P}(V)}\int_{V}(-f)(x_n,u,v)
\cdot D\varphi(s_n,x_n)d\nu(v)dr.
\end{split}
\end{eqnarray}
From (\ref{equ 6.11}) and the condition (\ref{equ 4.1}) we obtain
\begin{eqnarray}\label{equ 6.12}
\begin{split}
\int_{\theta_{l-1}}^{\theta_l}I(\tilde{u}_k)dr
\leq& (\theta_l-\theta_{l-1})\inf_{u\in U}\sup_{\nu\in\mathcal{P}(V)}\int_{V}(-f)(x_n,u,v)
\cdot D\varphi(s_n,x_n)d\nu(v)\\
=&(\theta_l-\theta_{l-1})\sup_{\nu\in\mathcal{P}(V)}\inf_{u\in U}\int_{V}(-f)(x_n,u,v)
\cdot D\varphi(s_n,x_n)d\nu(v)\\
=& (\theta_l-\theta_{l-1})\sup_{\nu\in\mathcal{P}(V)}\inf_{\mu\in\mathcal{P}(U)}\int_{U\times V}(-f)(x_n,u,v)
\cdot D\varphi(s_n,x_n)d\mu(u)d\nu(v).
\end{split}
\end{eqnarray}
From (\ref{equ 6.8}), (\ref{equ 6.9}), (\ref{equ 6.10}) and (\ref{equ 6.12}), we have
\begin{equation}\label{equ 6.13}
\begin{split}
&-(t^n_{k}-s_n)\big(-\frac{\partial \varphi}{\partial r}(s_n,x_n)+m(C|t_{k}^n-s_n|)+(t_{k}^n-s_n)\big)\\
 \leq&
(t_k^n-s_n)\sup_{\nu\in\mathcal{P}(V)}\inf_{\mu\in\mathcal{P}(U)}\int_{U\times V}(-f)(x_n,u,v)
\cdot D\varphi(s_n,x_n)\mu(du)\nu(dv).
\end{split}
\end{equation}
Then we have
\begin{equation}\label{equ 6.14}
\frac{\partial \varphi}{\partial r}(s_n,x_n)-m(C|t_{k}^n-s_n|)-(t_{k}^n-s_n)\big)
 \leq
\sup_{\nu\in\mathcal{P}(V)}\inf_{\mu\in\mathcal{P}(U)}\int_{U\times V}(-f)(x_n,u,v)
\cdot D\varphi(s_n,x_n)\mu(du)\nu(dv).
\end{equation}
Recall that $(s_n,x_n)\rightarrow (t,x)$ and $0\leq (t_k^n-s_n)\leq (t_k^n-t_{k-1}^n)\leq |\pi_n|$, when $n\rightarrow\infty$ we get
\begin{equation}\label{equ 3.31}
\frac{\partial\varphi}{\partial t}(t,x)+\inf_{\nu\in\mathcal{P}(V)}\sup_{\mu\in\mathcal{P}(U)}\int_{U\times V}
f(x,u,v)\cdot D\varphi(t,x)\mu(du)\nu(dv)\leq 0.
\end{equation}
Therefore, $\tilde{W}(t,x,p,\bar{q})$ is a viscosity supersolution of the HJI equation (\ref{equ 3.20}).
\end{proof}
Similar to Section 3 (Proposition \ref{prop 3.1} and Theorem \ref{th 3.2}), we have the following results.
\begin{proposition}\label{prop 4.2}
If condition (\ref{equ 4.1}) holds, then for all sequences of partitions $(\pi_n)$ of the interval $[0,T]$ with
$|\pi_n|\rightarrow 0$, as $n\rightarrow\infty$, there exists a subsequence of partitions, still denoted by $(\pi_n)_{n\geq 1}$
such that $(\bar{V}^{\pi_n})$ and $(\bar{W}^{\pi_n})$ converges uniformly on compacts to a couple $(U,U)$,
and the function $U$ is the unique dual solution
of the HJI equation $(\ref{equ 3.34})$.
\end{proposition}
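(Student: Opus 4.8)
The plan is to follow exactly the scheme of Proposition~\ref{prop 3.1}, replacing the value functions of Section~3 by their barred analogues and inserting the equivalent Isaacs condition~(\ref{equ 4.1}) precisely at the point where the supersolution property is needed. First I would invoke Lemma~\ref{le 4.1} together with the Arzel\`a--Ascoli theorem to extract, from any sequence $(\pi_n)$ with $|\pi_n|\rightarrow 0$, a subsequence (still denoted $(\pi_n)$) along which $(\bar{V}^{\pi_n},\bar{W}^{\pi_n})$ converges uniformly on compacts to a pair of bounded Lipschitz functions $(\bar{V},\bar{W})$ on $[0,T]\times\mathbb{R}^n\times\Delta(I)\times\Delta(J)$; by Lemma~\ref{le 4.2} these limits are convex in $p$ and concave in $q$. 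By Lemma~\ref{le 4.6} the conjugate families satisfy $\bar{V}^{\pi_n*}\rightarrow\tilde{V}$ and $\bar{W}^{\pi_n\#}\rightarrow\tilde{W}$ uniformly on compacts, with $\tilde{V},\tilde{W}$ Lipschitz in all their variables.

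Next I would show that $\tilde{V}^*$ is a dual viscosity supersolution and $\tilde{W}^\#$ a dual viscosity subsolution of the HJI equation~(\ref{equ 3.34}). Indeed, Lemma~\ref{le 4.7} states that $\tilde{V}$ is a viscosity subsolution of the dual equation~(\ref{equ 3.35}) (which is exactly~(\ref{equ 3.20})), so by definition $\tilde{V}^*$ is a dual viscosity supersolution. Proposition~\ref{prop 4.1}, which is where condition~(\ref{equ 4.1}) enters, guarantees that $\tilde{W}$ is a viscosity supersolution of the same equation~(\ref{equ 3.35}), hence $\tilde{W}^\#$ is a dual viscosity subsolution. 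Since the game is over at time $T$, the terminal data agree, $\tilde{V}^*(T,x,p,q)=\tilde{W}^\#(T,x,p,q)=\sum_{i,j}p_iq_jg_{ij}(x)$, and so the comparison principle of Lemma~\ref{le 3.8} yields
\[
\tilde{W}^\#\leq \tilde{V}^*\quad\text{on } [0,T]\times\mathbb{R}^n\times\Delta(I)\times\Delta(J).
\]

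I would then identify the limits with their conjugates, as in the proof of Proposition~\ref{prop 3.1}. Uniform convergence $\bar{V}^{\pi_n}\rightarrow\bar{V}$ on compacts passes through the convex conjugate via the bound $|\bar{V}^{\pi_n*}-\bar{V}^*|\leq \sup_{p\in\Delta(I)}|\bar{V}^{\pi_n}-\bar{V}|$, so $\tilde{V}=\bar{V}^*$; since $\bar{V}$ is convex in $p$, this gives $\bar{V}=\bar{V}^{**}=\tilde{V}^*$, and symmetrically $\bar{W}=\tilde{W}^\#$. Combining with the displayed inequality we obtain $\bar{W}\leq\bar{V}$. On the other hand, for every $\pi$ one has $\bar{W}^\pi\geq\bar{V}^\pi$ (an inf--sup always dominates the corresponding sup--inf), and passing to the limit yields $\bar{W}\geq\bar{V}$. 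Hence $U:=\bar{V}=\bar{W}$, and by the above $U$ is a dual viscosity solution of~(\ref{equ 3.34}); uniqueness follows from Lemma~\ref{le 3.8} applied to any two such solutions sharing the terminal data.

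The only genuinely new point compared with Section~3 is the supersolution property of $\tilde{W}$. Here the two players' strategy sets are asymmetric: Player~\Rmnum{1} ranges over $\mathcal{A}^\pi(t,T)$ while Player~\Rmnum{2} ranges over the full union $\mathcal{B}(t,T)$, so the argument that the limiting Hamiltonian coincides with $H^*$ no longer closes automatically and instead relies on the equivalent Isaacs condition~(\ref{equ 4.1}) through Proposition~\ref{prop 4.1}. This is the main obstacle, and it is exactly what forces condition~(\ref{equ 4.1}) into the hypothesis; everything else is a routine transcription of the sandwich argument of Proposition~\ref{prop 3.1}.
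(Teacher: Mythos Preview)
Your proposal is correct and follows exactly the route the paper intends: the paper itself does not spell out a proof of Proposition~\ref{prop 4.2} but simply says ``Similar to Section~3 (Proposition~\ref{prop 3.1} and Theorem~\ref{th 3.2})'', and your argument is precisely that transcription, with Lemmas~\ref{le 4.1}, \ref{le 4.2}, \ref{le 4.6}, \ref{le 4.7} and Proposition~\ref{prop 4.1} playing the roles of their Section~3 counterparts and condition~(\ref{equ 4.1}) entering exactly where you say, via Proposition~\ref{prop 4.1}.
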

\begin{theorem}\label{th 4.1}
Suppose condition (\ref{equ 4.1}) holds, then for all sequences of partitions $(\pi_n)$ with
$|\pi_n|\rightarrow 0$, the sequences $(\bar{V}^{\pi_n})$ and $(\bar{W}^{\pi_n})$ converge uniformly
on compacts to the same Lipschitz continuous function $U$. Moreover, the function $U$ is the unique dual solution
of the HJI equation $(\ref{equ 3.34})$.
\end{theorem}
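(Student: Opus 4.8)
The plan is to reproduce, in the present ``strategy-along-$\pi$ against arbitrary strategy'' setting, the passage that carried us in Section~3 from Proposition~\ref{prop 3.1} to Theorem~\ref{th 3.2} (see Remark~\ref{re 3.2}). All of the analytic substance has already been isolated: the equi-Lipschitz bounds of Lemma~\ref{le 4.1}, the convexity/concavity of Lemma~\ref{le 4.2}, the formula and Lipschitz regularity of the conjugates in Lemmas~\ref{le 4.3}--\ref{le 4.4}, the sub-dynamic programming principles of Lemmas~\ref{le 4.5} and~\ref{le 4.8}, and, crucially, Lemma~\ref{le 4.7} together with Proposition~\ref{prop 4.1}, which show (using the minimax hypothesis (\ref{equ 4.1})) that the limit conjugates $\tilde V$ and $\tilde W$ are respectively a viscosity subsolution and a viscosity supersolution of the \emph{same} HJI equation (\ref{equ 3.20}). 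Proposition~\ref{prop 4.2} then already delivers, for an arbitrary sequence with $|\pi_n|\to0$, a subsequence along which $(\bar V^{\pi_n},\bar W^{\pi_n})\to(U,U)$, with $U$ the unique dual viscosity solution of (\ref{equ 3.34}). It remains only to promote this subsequential statement to convergence of the full sequence.

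First I would record that, by Lemma~\ref{le 4.1}, the families $\{\bar V^{\pi_n}\}_n$ and $\{\bar W^{\pi_n}\}_n$ are uniformly bounded (the $g_{ij}$ are bounded) and equi-Lipschitz in $(t,x,p,q)$, uniformly in $\pi$. Since uniform convergence on compacts of real-valued functions on $[0,T]\times\mathbb{R}^n\times\Delta(I)\times\Delta(J)$ is induced by a metric, the Arzel\`a--Ascoli theorem makes both families relatively compact: every subsequence of $(\pi_n)$ contains a further subsequence along which $(\bar V^{\pi_n},\bar W^{\pi_n})$ converges uniformly on compacts. Applying Proposition~\ref{prop 4.2} to any such sub-subsequence identifies its limit as $(U,U)$, where $U$ is the unique dual viscosity solution of (\ref{equ 3.34}). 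Because that solution is unique---its terminal value is fixed to be $\sum_{i,j}p_iq_jg_{ij}(x)$ and uniqueness is guaranteed by the comparison principle of Lemma~\ref{le 3.8}---the limit $U$ is one and the same function for every sub-subsequence, independently of the starting sequence.

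I would then close with the standard subsequence (Urysohn) principle. Suppose, for contradiction, that $\bar V^{\pi_n}$ fails to converge to $U$ uniformly on some compact $K$: there would exist $\varepsilon>0$ and a subsequence with $\sup_K|\bar V^{\pi_n}-U|\ge\varepsilon$. By the previous paragraph this subsequence has a further subsequence converging uniformly on $K$ to $U$, contradicting the lower bound. Hence $\bar V^{\pi_n}\to U$ uniformly on compacts, and the same argument gives $\bar W^{\pi_n}\to U$. Lipschitz continuity of $U$ in all its variables is inherited from the uniform Lipschitz bound of Lemma~\ref{le 4.1} by passing to the limit, and uniqueness of $U$ as a dual viscosity solution is exactly Lemma~\ref{le 3.8}.

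The genuine difficulty of the whole argument lies upstream, in Proposition~\ref{prop 4.1}: there the hypothesis (\ref{equ 4.1}) is what converts the asymmetric order $\inf_{u\in U}\sup_{\nu\in\mathcal{P}(V)}$, produced by playing a deterministic control along $\pi$ against a randomized strategy, into the symmetric Isaacs Hamiltonian $\sup_{\nu}\inf_{\mu}$, so that $\tilde W$ solves the \emph{same} equation as $\tilde V$. For the present theorem the only point demanding care is the verification that $\tilde V^*$ and $\tilde W^\#$ qualify as a dual viscosity super- and subsolution with matching terminal data, so that Lemma~\ref{le 3.8} applies and forces $\tilde V^*\ge\tilde W^\#$; combined with the elementary minimax inequality $\bar W^{\pi}\ge\bar V^{\pi}$ this sandwiches the limits and yields $\bar V=\bar W=U$. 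Without (\ref{equ 4.1}) the two Hamiltonians would differ and this sandwich would fail, which is precisely why the condition is imposed.
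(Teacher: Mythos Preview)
Your proposal is correct and follows essentially the same approach as the paper. The paper does not give an explicit proof of Theorem~\ref{th 4.1}; it simply states that the passage from Proposition~\ref{prop 4.2} to Theorem~\ref{th 4.1} is ``similar to Section~3 (Proposition~\ref{prop 3.1} and Theorem~\ref{th 3.2})'', i.e., the Arzel\`a--Ascoli plus sub-subsequence argument of Remark~\ref{re 3.2}, which is exactly what you have written out in detail.
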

Similar to $(\bar{W}^{\pi_n},\bar{V}^{\pi_n})$, we obtain the following theorem.
\begin{theorem}\label{th 4.2}
Suppose condition (\ref{equ 4.2}) holds, then for all sequences of partitions $(\pi_n)$ with
$|\pi_n|\rightarrow 0$, the sequences $(\bar{\bar{V}}^{\pi_n})$ and $(\bar{\bar{W}}^{\pi_n})$ converge uniformly
on compacts to the same Lipschitz continuous function $U$. Moreover, the function $U$ is the unique dual solution
of the HJI equation $(\ref{equ 3.34})$.
\end{theorem}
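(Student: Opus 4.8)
The plan is to run the entire machinery of Section 4 developed for $(\bar W^{\pi},\bar V^{\pi})$ a second time, with the roles of Player I and Player II interchanged, so that condition (\ref{equ 4.1}) is replaced throughout by condition (\ref{equ 4.2}). Concretely, I would first reproduce, for the pair $(\bar{\bar W}^{\pi},\bar{\bar V}^{\pi})$, the analogues of Lemmas \ref{le 4.1}--\ref{le 4.6} and \ref{le 4.8}: uniform (in $\pi$) Lipschitz continuity in $(t,x,p,q)$, convexity in $p$ and concavity in $q$, the Fenchel-transform representations of $\bar{\bar V}^{\pi*}$ and $\bar{\bar W}^{\pi\#}$, uniform Lipschitz continuity of these conjugates, and the two sub-dynamic programming principles. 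The only structural change is that now Player I is the \emph{free} player (strategies in $\mathcal{A}(t,T)$) while Player II is restricted to the coarse partition (strategies in $\mathcal{B}^\pi(t,T)$); since every $\alpha\in\mathcal{A}(t,T)$ lies in some $\mathcal{A}^{\bar\pi}(t,T)$, the arguments of Section 4 transfer verbatim. The Arzel\`a--Ascoli theorem then extracts a subsequence along which $(\bar{\bar V}^{\pi_n*},\bar{\bar W}^{\pi_n\#})$ converges uniformly on compacts to Lipschitz limits $(\tilde V,\tilde W)$.

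The heart of the proof is the viscosity characterization of $\tilde V$ and $\tilde W$, and this is where condition (\ref{equ 4.2}) enters. Because the players are interchanged, the roles of the two conjugates are swapped relative to Section 4: the supersolution property of $\tilde W=\lim_n\bar{\bar W}^{\pi_n\#}$ now comes \emph{for free}, paralleling the unconditional argument of Lemma \ref{le 4.7} (equivalently Lemma \ref{th 3.1}), since on the leading cell the free Player I can realize any $\mu\in\mathcal{P}(U)$ through a finer partition while the coarse Player II contributes only a pure control, and the resulting Hamiltonian $\inf_{\mu\in\mathcal{P}(U)}\sup_{v\in V}\int_U(-f)(x,u,v)\cdot\xi\,\mu(du)=-\sup_{\mu}\inf_{\nu}\int f\,\mu\,\nu\cdot\xi=-H^*(x,\xi)$ is an identity requiring no assumption. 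The delicate step is the subsolution property of $\tilde V=\lim_n\bar{\bar V}^{\pi_n*}$, mirroring Proposition \ref{prop 4.1}. Following that proof line, I would fix a test function $\varphi$ touching $\tilde V$ from above at $(t,x)$, pass to maximum points $(s_n,x_n)$ of $\bar{\bar V}^{\pi_n*}-\varphi$, and use the sub-DPP to obtain $0\le\inf_{\beta\in\mathcal{B}^{\pi_n}}\sup_{\alpha\in\mathcal{A}}E[\int_{s_n}^{t^n_k}(\partial_r\varphi+f\cdot D\varphi)\,dr]$. Bounding the infimum by a single pure constant control $\tilde v_k\in V$ for the coarse Player II, optimizing the free Player I (whose $\rho_n$-optimal strategy along a finer partition $\pi^0\supset\pi_n$ realizes an arbitrary $\mu\in\mathcal{P}(U)$), and finally taking the infimum over $\tilde v_k$, yields $-\partial_t\varphi(t,x)\le\inf_{v\in V}\sup_{\mu\in\mathcal{P}(U)}f(x,\mu,v)\cdot D\varphi(t,x)$. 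Here condition (\ref{equ 4.2}), applied with $\xi$ replaced by $-\xi$, upgrades this pure-in-$v$/mixed-in-$\mu$ quantity to the full Hamiltonian, since it gives $\inf_{v\in V}\sup_{\mu\in\mathcal{P}(U)}f(x,\mu,v)\cdot\xi=\sup_{\mu\in\mathcal{P}(U)}\inf_{v\in V}f(x,\mu,v)\cdot\xi=H^*(x,\xi)$; thus $\partial_t\varphi+H^*(x,D\varphi)\ge0$ and $\tilde V$ is a subsolution of (\ref{equ 3.20}).

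With $\tilde V$ a subsolution and $\tilde W$ a supersolution of (\ref{equ 3.20}), I would conclude exactly as in Proposition \ref{prop 3.1} and Proposition \ref{prop 4.2}. Convexity and concavity give $\bar{\bar V}^{\pi_n}=(\bar{\bar V}^{\pi_n*})^*\to\tilde V^*=:V$ and $\bar{\bar W}^{\pi_n}\to\tilde W^\#=:W$, where $V$ is a dual viscosity supersolution and $W$ a dual viscosity subsolution of the HJI equation (\ref{equ 3.34}), both with terminal value $\sum_{i,j}p_iq_jg_{ij}(x)$. The comparison principle (Lemma \ref{le 3.8}) gives $W\le V$, while the trivial inequality $\bar{\bar W}^{\pi}\ge\bar{\bar V}^{\pi}$ (an $\inf\sup$ dominates the corresponding $\sup\inf$) passes to the limit to give $W\ge V$; hence $W=V=:U$ is the unique dual viscosity solution of (\ref{equ 3.34}). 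Since every convergent subsequence has this same limit, the full sequences $(\bar{\bar V}^{\pi_n})$ and $(\bar{\bar W}^{\pi_n})$ converge uniformly on compacts to $U$, proving Theorem \ref{th 4.2}. The main obstacle is precisely the bookkeeping in the subsolution step: ensuring that after the interchange it is Player II who is frozen to a pure control on the leading cell and Player I who approximates mixed controls, so that condition (\ref{equ 4.2}), and not (\ref{equ 4.1}), is the one that closes the min--max gap in the correct direction.
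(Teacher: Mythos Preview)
Your proposal is correct and follows essentially the same approach as the paper: the paper merely states ``Similar to $(\bar{W}^{\pi_n},\bar{V}^{\pi_n})$, we obtain the following theorem,'' and your write-up spells out exactly that role-swap argument. In particular, you correctly identify that after exchanging the players the supersolution property of $\tilde W=\lim_n\bar{\bar W}^{\pi_n\#}$ becomes unconditional (paralleling Lemma~\ref{le 4.7}) while the subsolution property of $\tilde V=\lim_n\bar{\bar V}^{\pi_n*}$ is the step requiring condition~(\ref{equ 4.2}) (paralleling Proposition~\ref{prop 4.1}).
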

Now from Theorem \ref{th 5.1.24.1}, Theorem \ref{th 4.1} and Theorem \ref{th 4.2}, we get the following result.
\begin{theorem}\label{th 4.3}
Under the conditions $(\ref{equ 4.1})$ and $(\ref{equ 4.2})$, the function $W(t,x,p,q)$ is equal to $V(t,x,p,q)$, for all compacts in $[0,T]
\times\mathbb{R}^n\times\Delta(I)\times\Delta(J)$.
\end{theorem}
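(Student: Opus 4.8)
The plan is to sandwich the partition-free value functions $W$ and $V$ between the two auxiliary families $(\bar{W}^\pi,\bar{V}^\pi)$ and $(\bar{\bar{W}}^\pi,\bar{\bar{V}}^\pi)$, whose limits have already been identified in Theorems \ref{th 4.1} and \ref{th 4.2}, and then to invoke a squeeze. The only structural input needed is the inclusion $\mathcal{A}^\pi(t,T)\subset\mathcal{A}(t,T)$ and $\mathcal{B}^\pi(t,T)\subset\mathcal{B}(t,T)$, valid for every partition $\pi$ since $\mathcal{A}(t,T)$ and $\mathcal{B}(t,T)$ are defined as the unions over all partitions. Throughout, the payoff functional $J(t,x,\hat{\alpha},\hat{\beta},p,q)$ is kept fixed, so only the index sets of the $\inf$ and $\sup$ change.

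First I would record the elementary monotonicity of the value expressions under shrinking a strategy class: an infimum over a smaller set is no smaller, and a supremum over a smaller set is no larger. Restricting Player \Rmnum{1}'s infimum set in $W$ from $\mathcal{A}(t,T)$ to $\mathcal{A}^\pi(t,T)$ (keeping Player \Rmnum{2} on the full $\mathcal{B}(t,T)$) can only raise the value, giving $\bar{W}^\pi\geq W$; restricting Player \Rmnum{2}'s supremum set in $W$ from $\mathcal{B}(t,T)$ to $\mathcal{B}^\pi(t,T)$ (keeping Player \Rmnum{1} on the full $\mathcal{A}(t,T)$) can only lower it, giving $\bar{\bar{W}}^\pi\leq W$. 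Interchanging the roles of the two players yields the corresponding bounds for $V$. In summary, for every partition $\pi$,
\begin{equation*}
\bar{\bar{W}}^\pi(t,x,p,q)\leq W(t,x,p,q)\leq \bar{W}^\pi(t,x,p,q),\qquad
\bar{\bar{V}}^\pi(t,x,p,q)\leq V(t,x,p,q)\leq \bar{V}^\pi(t,x,p,q).
\end{equation*}

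Next I would fix a sequence of partitions $(\pi_n)$ with $|\pi_n|\to 0$ and pass to the limit. Under condition $(\ref{equ 4.1})$, Theorem \ref{th 4.1} gives that $(\bar{W}^{\pi_n})$ and $(\bar{V}^{\pi_n})$ converge uniformly on compacts to the unique dual viscosity solution $U$ of the HJI equation $(\ref{equ 3.34})$; under condition $(\ref{equ 4.2})$, Theorem \ref{th 4.2} gives that $(\bar{\bar{W}}^{\pi_n})$ and $(\bar{\bar{V}}^{\pi_n})$ converge to a dual solution of the same equation, and uniqueness of that solution (Lemma \ref{le 3.8}) forces this limit to be the same $U$ as in Theorem \ref{th 5.1.24.1}. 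Since $W$ and $V$ do not depend on $\pi$, letting $n\to\infty$ in the two displayed chains squeezes both the upper and the lower bounds to $U$, so that $W(t,x,p,q)=U(t,x,p,q)=V(t,x,p,q)$ pointwise, hence $W=V$ on every compact subset of $[0,T]\times\mathbb{R}^n\times\Delta(I)\times\Delta(J)$, as claimed.

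I do not expect a genuine obstacle at this final stage: the entire analytic content is already packaged inside Theorems \ref{th 4.1} and \ref{th 4.2}, and what remains is a monotone sandwich. The one point that must be handled with care is that both conditions $(\ref{equ 4.1})$ and $(\ref{equ 4.2})$ are indispensable and play distinct roles: $(\ref{equ 4.1})$ identifies the common limit of the $\bar{W}^{\pi_n},\bar{V}^{\pi_n}$ family (in which Player \Rmnum{2} ranges over the full union $\mathcal{B}(t,T)$), while $(\ref{equ 4.2})$ identifies that of the $\bar{\bar{W}}^{\pi_n},\bar{\bar{V}}^{\pi_n}$ family (in which Player \Rmnum{1} ranges over $\mathcal{A}(t,T)$). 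It is precisely the uniqueness of the dual viscosity solution that makes these two independently constructed limits coincide, so the upper and lower sandwich bounds actually meet and the squeeze collapses to a single value.
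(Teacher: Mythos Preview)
Your proof is correct and follows the same sandwich strategy as the paper. The only difference is in how two of the four bounds are obtained: for $W\geq U$ (and analogously $V\leq U$) the paper picks an $\varepsilon$-optimal $\hat{\alpha}^\varepsilon\in(\mathcal{A}(t,T))^I$, locates it in some $(\mathcal{A}^{\pi^\varepsilon}(t,T))^I$, and compares with $W^\pi$ from Theorem~\ref{th 5.1.24.1}, whereas you bypass this $\varepsilon$-argument by using the direct inequality $\bar{\bar{W}}^\pi\leq W$ together with Theorem~\ref{th 4.2}. Your route is slightly more streamlined since it relies only on Theorems~\ref{th 4.1} and~\ref{th 4.2} and avoids the auxiliary step through $W^\pi,V^\pi$; the paper's route, on the other hand, makes explicit where the union structure $\mathcal{A}(t,T)=\bigcup_\pi\mathcal{A}^\pi(t,T)$ enters.
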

\begin{proof}
We have shown that the value functions $W^\pi(t,x,p,q),V^\pi(t,x,p,q)$, $\bar{W}^\pi(t,x,p,q),\bar{V}^\pi(t,x,p,q)$,
$\bar{\bar{W}}^\pi(t,x,p,q),\bar{\bar{V}}^\pi(t,x,p,q)$  converges uniformly on compacts to the function
 $U(t,x,p,q)$, as $|\pi|\rightarrow0$, under the assumptions $(\ref{equ 4.1})$ and $(\ref{equ 4.2})$,  where the function $U(t,x,p,q)$ is the unique solution of  the HJI equation (\ref{equ 3.34}).

Then, from the definition of $W(t,x,p,q)$, for any $\varepsilon>0$, there exist
$\hat{\alpha}^\varepsilon\in(\mathcal{A}(t,T))^I$, such that
\begin{equation}\label{equ 4.23}
\varepsilon+W(t,x,p,q)\geq\sup_{\hat{\beta}\in(\mathcal{B}(t,T))^J}J(t,x,\hat{\alpha}^\varepsilon,\hat{\beta},p,q).
\end{equation}
For $\hat{\alpha}^\varepsilon\in(\mathcal{A}(t,T))^I$, there exist a partition $\pi^\varepsilon$, such that $\hat{\alpha}^\varepsilon\in(\mathcal{A}^{\pi^\varepsilon}(t,T))^I\subset(\mathcal{A}^\pi(t,T))^I$, for $\pi\supset\pi^\varepsilon$.
Thus for all $\pi\supset\pi^\varepsilon$, it holds that
\begin{equation}\label{equ 4.24}
\varepsilon+W(t,x,p,q)\geq\sup_{\hat{\beta}\in(\mathcal{B}^\pi(t,T))^J}J(t,x,\hat{\alpha}^\varepsilon,\hat{\beta},p,q)\geq W^\pi(t,x,p,q).
\end{equation}
From the arbitrariness of $\varepsilon$, we have $W(t,x,p,q)\geq W^{\pi}(t,x,p,q)$, then let $|\pi|\rightarrow 0$,
we have
\begin{equation}\label{equ 4.25}
W(t,x,p,q)\geq U(t,x,p,q).
\end{equation}
Similarly, we have
\begin{equation}\label{equ 4.26}
U(t,x,p,q)\geq V(t,x,p,q).
\end{equation}

On the other hand, since $W(t,x,p,q)\leq\inf\limits_{\hat{\alpha}\in(\mathcal{A}^\pi(t,T))^I}\sup\limits_{\hat{\beta}\in(\mathcal{B}(t,T))^J}
J(t,x,\hat{\alpha},\hat{\beta},p,q)=\bar{W}^\pi(t,x,p,q)$, let $|\pi|\rightarrow0$, we have
\begin{equation}\label{equ 4.27}
W(t,x,p,q)\leq U(t,x,p,q).
\end{equation}
Similarly, since $V(t,x,p,q)\geq \bar{\bar{V}}^\pi(t,x,p,q)$,
let $|\pi|\rightarrow0$, we have
\begin{equation}\label{equ 4.28}
V(t,x,p,q)\geq U(t,x,p,q).
\end{equation}

Now from $(\ref{equ 4.25})$, $(\ref{equ 4.26})$, $(\ref{equ 4.27})$ and $(\ref{equ 4.28})$, we get $W(t,x,p,q)=U(t,x,p,q)= V(t,x,p,q)$.
\end{proof}
Now we give a example to explain that the conditions (\ref{equ 4.1}) and (\ref{equ 4.2}) are necessary even for the games with symmetric information.
\begin{example}
We assume $U=V=[-1,1]$, $I=J=1$, $g(x)=x$, $f(x,u,v)=|u-v|^2$. For any given $(t,x)\in[0,T]\times\mathbb{R}^n$, the dynamic is
$$X_s=x+\int_t^s|u_s-v_s|^2ds,\ s\in[t,T].$$
The payoffs $$J(t,x,\alpha,\beta)=E[g(X_T^{t,x,\alpha,\beta})]=E[X_T^{t,x,\alpha,\beta}].$$
From Remark \ref{re 2.1}, we know there exists the unique $(u,v)\in\mathcal{U}_{t,T}\times\mathcal{V}_{t,T}$, such that
$\alpha(v)=u$, $\beta(u)=v$, then we have
\begin{equation}\label{equ 123321}
J(t,x,\alpha,\beta)=x+E[\int_t^T|\alpha_r-\beta_r|^2dr].
\end{equation}
For any $x\in\mathbb{R}$, $(u,v)\in U\times V$, $p\in\mathbb{R}$, the Hamiltonian function
$H(x,u,v,p)=|u-v|^2p$. Moreover, we define
$$\tilde{H}^+(x,p)\triangleq\inf_{u\in U}\sup_{v\in V}H(x,u,v,p)=\inf_{u\in U}(1+|u|)^2p^+=p^+;$$
$$\tilde{H}^-(x,p)\triangleq\sup_{v\in V}\inf_{u\in U}H(x,u,v,p)=\sup_{v\in V}\big(-(1+|v|)^2p^-\big)=-p^-.$$
Obviously, for any $p\neq0$, $\tilde{H}^-(x,p)=-p^-<p^+=\tilde{H}^+(x,p)$. For the measure-valued controls
$\mu\in\mathcal{P}(U), \nu\in\mathcal{P}(V)$ and $(x,p)\in\mathbb{R}^2$, the Hamiltonian function
$H(x,\mu,\nu,p)=\int_U\int_V|u-v|^2p\nu(dv)\mu(du).$
We define
$$H^+(x,p)=\inf_{\mu\in\mathcal{P}(U)}\sup_{\nu\in\mathcal{P}(V)}H(x,\mu,\nu,p);\ \
H^-(x,p)=\sup_{\nu\in\mathcal{P}(V)}\inf_{\mu\in\mathcal{P}(U)}H(x,\mu,\nu,p).$$
Then we know $H(x,p):=H^+(x,p)=H^-(x,p)$. Now we compute $H^-(x,p)$.
For the case $p\geq 0$, since
\begin{equation}
\begin{split}
&H(x,\mu,\nu,p)=\int_V\int_U|u-v|^2p\mu(du)\nu(dv)\\
&=\int_V\int_U\big(|v-\int_Vv\nu(dv)|^2+|u-\int_Vv\nu(dv)|^2\big)p\mu(du)\nu(dv)\\
&\geq \int_V|v-\int_Vv\nu(dv)|^2p\nu(dv)=H(x,\delta_{\int_Vv\nu(dv)},\nu,p),
\end{split}
\end{equation}
we know
$\inf_{\mu\in\mathcal{P}(U)}H(x,\mu,\nu,p)=H(x,\delta_{\int_Vv\nu(dv)},\nu,p)=\big(\int_V|v-\int_Vv\nu(dv)|^2\nu(dv)\big)p.$ Therefore,
$H^-(x,p)=\sup_{\nu\in\mathcal{P}(V)}(\int_V|v-\int_Vv\nu(dv)|^2\nu(dv))p=\sup_{\nu\in\mathcal{P}(V)}
\Big(\int_Vv^2\nu(dv)-(\int_Vv\nu(dv))^2\Big)p\leq p$, and when $\nu=\frac{1}{2}(\delta_1+\delta_{-1})$ it attains the maximum value.\\
For the case $p<0$, $H^-(x,p)=-\Big(\inf_{\nu\in\mathcal{P}(V)}\sup_{\mu\in\mathcal{P}(U)}\int_U\int_V|u-v|^2\mu(du)\nu(dv)(-p)\Big)=-(-p)^+=p.$
Thus, we get $H(x,p)=H^-(x,p)=p$.\\
The corresponding HJI equation is as follows:
\begin{equation}
\left\{
\begin{array}{l}
\partial_tV+H(x,\partial_xV)=0,\\
V(T,x)=g(x)=x.
\end{array}
\right.
\end{equation}
Notice that $V(t,x)=x+T-t,$ $t\in[0,T],x\in\mathbb{R}$ is the solution of this equation.\\
If both players use the same partition $\pi$, as $|\pi|\rightarrow 0$, from Theorem \ref{th 3.2} we have $(W^\pi(t,x),V^\pi(t,x))$
converge to the same function $V(t,x)$.
If not, for example, we calculate\\  $$\bar{W}^\pi(t,x)=\inf_{\alpha\in\mathcal{A}^\pi(t,T)}\sup_{\beta\in\mathcal{B}(t,T)}E[g(X_T^{t,x,\alpha,\beta})].$$
For Player \Rmnum {1}, we assume the partition $\pi=\{0=t_0<t_1<\cdots<t_N=T\}$, $t\in[t_{k-1},t_k)$, without loss of generality, we assume $t=t_{k-1}$ and
$\alpha_r=\alpha_k(\zeta^\pi_{1,k-1})_r$, $r\in[t_k,t_{k+1}]$.\\
For Player \Rmnum {2}, we consider the partition $\pi_n=\{0=(t_0=)S_0<S_1<\ldots<S_{2^n}(=t_1)<S_{2^n+1}<\ldots
<S_{2^n+2^n}(=t_2)<\ldots<S_{(l-1)2^n+j}<\ldots<S_{(N-1)2^n+2^n}(=t_N)=T\}$, where
$S_{(l-1)2^n+j}=t_{l-1}+j(t_l-t_{l-1})2^{-n}$, $0\leq j\leq 2^n$, $1\leq l\leq N.$
On the subinterval $\Delta_m^{l,n}:=[S_{(l-1)2^n+m-1},S_{(l-1)2^n+m}]$, Player \Rmnum {2}
uses the strategy $\beta_r=\beta_{S_{(l-1)2^n+m}}(\zeta^{\pi_n}_{2,(l-1)2^n+m-1},\alpha|_{[t,S_{(l-1)2^n+m-1}]})_r$, $1\leq m\leq 2^n$,
$k\leq l\leq N.$
Obviously, $\beta_r(u)\triangleq u_{r-|\pi_n|}$ satisfy the above situation.\\
Now let $\beta_r(u)\triangleq\left\{
\begin{array}{ll}
-\mathop{\rm sgn}(u_{r-|\pi_n|}),\  r\in[t+|\pi_n|,T],\\
0,\ r\in[t,t+|\pi_n|],
\end{array}
\right.
$  and from the equation (\ref{equ 123321}), we get
\begin{equation}\label{equ 123333}
\begin{split}
&J(t,x,\alpha,\beta)=x+\int_t^{t+|\pi_n|}E[|\alpha_r|^2]dr+E[\int_{t+|\pi_n|}^T|\alpha_r+\mathop{\rm sgn}(\alpha_{r-\pi_n})|^2dr]\\
&=x+E[\int_t^T|\alpha_r+\mathop{\rm sgn}(\alpha_r)|^2dr]+R_n=x+E[\int_t^T(1+|\alpha_r|)^2dr]+R_n,
\end{split}
\end{equation}
where $R_n=\int_t^{t+|\pi_n|}E[|\alpha_r|^2]dr-\int_t^{t+|\pi_n|}E[|\alpha_r+\mathop{\rm sgn}(\alpha_{r-|\pi_n|})|^2]dr+
E[\int_t^T(|\alpha_r+\mathop{\rm sgn}(\alpha_{r-|\pi_n|})|^2-|\alpha_r+\mathop{\rm sgn}(\alpha_{r})|^2)dr]\leq |\pi_n|+4|\pi_n|+
4E[\int_t^T|\mathop{\rm sgn}(\alpha_r)-\mathop{\rm sgn}(\alpha_{r-|\pi_n|})|dr]$. Now we give the following lemma to explain
$R^n\rightarrow 0$, as $n\rightarrow 0$.
\begin{lemma}\label{le e.g.1}
For all $u\in L^2([0,T])$, it holds $\lim_{\varepsilon\rightarrow 0}\int_0^T|u_s-u_{s-\varepsilon}|^2ds=0.$
\end{lemma}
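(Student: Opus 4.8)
The plan is to recognise Lemma \ref{le e.g.1} as the classical statement that translation acts continuously on $L^2$, and to prove it by the standard three-step density argument. First I would dispose of the boundary issue by extending $u$ to the whole line: set $\bar{u}(s)=u(s)$ for $s\in[0,T]$ and $\bar{u}(s)=0$ otherwise, so that $\bar{u}\in L^2(\mathbb{R})$. Writing $(\tau_\varepsilon w)(s):=w(s-\varepsilon)$, and interpreting $u_{s-\varepsilon}=\bar{u}(s-\varepsilon)$ (which is the interpretation forced by the application, where the control vanishes before $t+|\pi_n|$), one has $\int_0^T|u_s-u_{s-\varepsilon}|^2\,ds\leq \int_{\mathbb{R}}|\bar{u}(s)-\bar{u}(s-\varepsilon)|^2\,ds=\|\tau_\varepsilon\bar{u}-\bar{u}\|_{L^2(\mathbb{R})}^2$, since the integrand is nonnegative and the domain of integration has only been enlarged. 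It therefore suffices to show $\|\tau_\varepsilon\bar{u}-\bar{u}\|_{L^2(\mathbb{R})}\to 0$ as $\varepsilon\to 0$.

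Next I would establish the claim on a dense subclass. Recall that the continuous functions with compact support $C_c(\mathbb{R})$ are dense in $L^2(\mathbb{R})$. If $g\in C_c(\mathbb{R})$, then $g$ is uniformly continuous, so $\sup_{s}|g(s-\varepsilon)-g(s)|\to 0$ as $\varepsilon\to 0$; moreover, for $|\varepsilon|\leq 1$ all the differences $\tau_\varepsilon g-g$ are supported in one fixed bounded set $K$, whence $\|\tau_\varepsilon g-g\|_{L^2(\mathbb{R})}^2\leq |K|\cdot\sup_{s}|g(s-\varepsilon)-g(s)|^2\to 0$, where $|K|$ denotes the Lebesgue measure of $K$.

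Finally I would pass from $g$ to $\bar{u}$ by approximation, exploiting that translation is an isometry of $L^2(\mathbb{R})$, i.e. $\|\tau_\varepsilon w\|_{L^2(\mathbb{R})}=\|w\|_{L^2(\mathbb{R})}$. Given $\eta>0$, choose $g\in C_c(\mathbb{R})$ with $\|\bar{u}-g\|_{L^2(\mathbb{R})}<\eta$. Then by the triangle inequality and the isometry property, $\|\tau_\varepsilon\bar{u}-\bar{u}\|_{L^2}\leq \|\tau_\varepsilon(\bar{u}-g)\|_{L^2}+\|\tau_\varepsilon g-g\|_{L^2}+\|g-\bar{u}\|_{L^2}=2\|\bar{u}-g\|_{L^2}+\|\tau_\varepsilon g-g\|_{L^2}<2\eta+\|\tau_\varepsilon g-g\|_{L^2}$. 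Taking $\limsup_{\varepsilon\to0}$ and invoking the previous step yields $\limsup_{\varepsilon\to 0}\|\tau_\varepsilon\bar{u}-\bar{u}\|_{L^2}\leq 2\eta$; since $\eta>0$ is arbitrary, the limit is $0$, which is the assertion.

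There is no deep obstacle here, as the lemma is a purely real-analytic fact independent of the game-theoretic framework; the only points requiring care are bookkeeping ones. Specifically, one must make the zero-extension legitimate so that the boundary contributions near $s=\varepsilon$ are genuinely absorbed into the whole-line estimate, and one must note that the isometry used in the last step holds on $L^2(\mathbb{R})$ but fails on $L^2([0,T])$ because of edge effects — which is exactly why the reduction to the whole line in the first step is indispensable.
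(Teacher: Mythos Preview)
Your argument is correct and follows essentially the same density-plus-triangle-inequality route as the paper. The only cosmetic differences are that the paper approximates by $C^1([0,T])$ functions and works directly on $[0,T]$ (using the derivative to get $|u^\rho_s-u^\rho_{s-\varepsilon}|\leq C\varepsilon$), whereas you extend by zero to $\mathbb{R}$ and approximate by $C_c(\mathbb{R})$; your version is in fact more scrupulous about the edge effects and the translation isometry, which the paper leaves implicit.
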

\begin{proof}
For any fixed $u\in L^2([0,T])$, since $C^1([0,T])$ is dense in $L^2([0,T])$, for all $\rho>0$, there exists
$u^\rho\in C^1([0,T])$, such that $(\int_0^T|u_s-u_s^\rho|^2ds)^{\frac{1}{2}}\leq \rho$. Then, we have\\
$(\int_0^T|u_s-u_{s-\varepsilon}|^2ds)^{\frac{1}{2}}\leq C\rho+C(\int_0^T|u_s^\rho-u^\rho_{s-\varepsilon}|^2ds)^{\frac{1}{2}}\leq
C\rho+C\varepsilon\sqrt{T}\rightarrow 0,$ when $\varepsilon$ and $\rho\rightarrow 0$.
\end{proof}
From this lemma we know that when $n\rightarrow\infty$, $R_n\rightarrow 0.$ Thus, from our choice of $\beta\in\mathcal{B}^{\pi_n}(t,T)$
and the equation (\ref{equ 123333})
we have
\begin{equation}\label{equ e.g.1}
J(t,x,\alpha,\beta)=x+E[\int_t^T(1+|\alpha_r|)^2dr]+R_n,\ \text{and}\ R_n\rightarrow 0, \text{as}\ n\rightarrow\infty.
\end{equation}
On the other hand, for any $\beta\in\mathcal{B}(t,T)\triangleq \cup_{\pi'\supset\pi}\mathcal{B}^{\pi'}(t,T)$, we have
\begin{equation}\label{equ e.g.2}
J(t,x,\alpha,\beta)=x+E[\int_t^T|\alpha_r-\beta_r|^2dr]\leq x+E[\int_t^T(1+|\alpha_r|)^2dr].
\end{equation}
From (\ref{equ e.g.1}) and (\ref{equ e.g.2}), we have
$\sup_{\beta\in\mathcal{B}(t,T)}J(t,x,\alpha,\beta)=x+E[\int_t^T(1+|\alpha_r|)^2dr].$ Then
\begin{equation}
\bar{W}^\pi(t,x)=\inf_{\alpha\in\mathcal{A}^\pi(t,T)}\sup_{\beta\in\mathcal{B}(t,T)}E[g(X_T^{t,x,\alpha,\beta})]=x+(T-t)=V(t,x).
\end{equation}
We assume $\pi$ and $\pi_n$ as before, choose $\alpha_r=
\left\{
\begin{array}{ll}
0,&\ [t,t+|\pi_n|];\\
\beta_{r-|\pi_n|},&\ [t+|\pi_n|,T];
\end{array}
\right.
$,  then we have
$J(t,x,\alpha,\beta)=x+E[\int_t^{t+|\pi_n|}|\beta_r|^2dr]+E[\int_{t+|\pi_n|}^T|\beta_r-\beta_{r-\pi_n}|^2dr]$,
when $n\rightarrow \infty$, from Lemma \ref{le e.g.1}, we have
$$\inf_{\alpha\in\mathcal{A}(t,T)}E[g(X_T^{t,x,\alpha,\beta})]=x.$$ Then, we know
\begin{equation}\label{equ e.g.3}
\bar{\bar{V}}^\pi(t,x)=\sup_{\beta\in\mathcal{B}^\pi(t,T)}\inf_{\alpha\in\mathcal{A}(t,T)}E[g(X_T^{t,x,\alpha,\beta})]=x.
\end{equation}
Using the same argument we have
\begin{equation}\label{equ e.g.4}
\begin{split}
W(t,x)&=\inf_{\alpha\in\mathcal{A}(t,T)}\sup_{\beta\in\mathcal{B}(t,T)}E[g(X_T^{t,x,\alpha,\beta})]=x+(T-t);\\
V(t,x)&=\sup_{\beta\in\mathcal{B}(t,T)}\inf_{\alpha\in\mathcal{A}(t,T)}E[g(X_T^{t,x,\alpha,\beta})]=x.
\end{split}
\end{equation}
Obviously, the upper value function $W(t,x)$ is not equal to the lower value function $V(t,x)$ if we do not consider the conditions (\ref{equ 4.1}) and (\ref{equ 4.2}).
\end{example}


 \end{document}